\documentclass[11pt]{amsart} 
\usepackage{amssymb,amscd,eucal}
\usepackage[all]{xy}
\usepackage{color}
\usepackage{cite,hyperref}
\usepackage{mathbbol}
\usepackage{latexsym}
\usepackage{appendix}
\usepackage{tikz}
\usetikzlibrary{decorations.markings}
\usetikzlibrary{decorations.pathreplacing}
\usetikzlibrary{arrows,shapes,positioning}
\tikzstyle directed=[postaction={decorate,decoration={markings,
    mark=at position #1 with {\arrow{>}}}}]
\tikzstyle rdirected=[postaction={decorate,decoration={markings,
    mark=at position #1 with {\arrow{<}}}}]
\usepackage{youngtab}
\usepackage{ytableau}
\usepackage[margin=1in]{geometry}
\usepackage{tensor,enumitem,verbatim}

\numberwithin{equation}{subsection}
\newtheorem{theorem}{Theorem}[subsection]
\newtheorem{proposition}[theorem]{Proposition}
\newtheorem{corollary}[theorem]{Corollary}
\newtheorem{lemma}[theorem]{Lemma}

\theoremstyle{definition}
\newtheorem{definition}[theorem]{Definition}
\newtheorem{example}[theorem]{Example}
\newtheorem{remarks}[theorem]{Remarks}
\newtheorem{remark}[theorem]{Remark}

\newcommand{\ec}{\mathcal{E}}
\newcommand{\fc}{\mathcal{F}}
\newcommand{\gc}{\mathcal{G}}

\newcommand{\Af}{\mathsf{A}}
\newcommand{\Cf}{\mathsf{C}}
\newcommand{\Df}{\mathsf{D}}
\newcommand{\Nf}{\mathsf{N}}
\newcommand{\Pf}{\mathsf{P}} 
\newcommand{\Qf}{\mathsf{Q}}
\newcommand{\Hf}{\mathsf{H}}
\newcommand{\Sf}{\mathsf{S}}

\newcommand{\Zr}{\mathrm{Z}}
\newcommand{\Dr}{\mathrm{D}}
\newcommand{\Ir}{\mathrm{I}}
\newcommand{\vb}{\mathbf{v}}
\newcommand{\xb}{\mathbf{x}}
\newcommand{\diag}{\mathsf{diag}}
\newcommand{\dimm}{\mathsf{dim}}
\newcommand{\modd}{\mathsf{mod\,}}
\newcommand{\UU}{\mathsf{U}}
\newcommand{\Wc}{\mathcal{W}}
\newcommand{\Uc}{\mathcal{U}}
\newcommand{\Vc}{\mathcal{V}}
\newcommand{\Lc}{\mathcal{L}}
\newcommand{\WW}{\mathsf{W}}
\newcommand{\VV}{\mathsf{V}}
\newcommand{\XX}{\mathsf{X}}
\newcommand{\YY}{\mathsf{Y}}
\newcommand{\CC}{\mathbb{C}}
\newcommand{\ZZ}{\mathbb{Z}}
\newcommand{\GG}{\mathsf{G}}
\newcommand{\Kf}{\mathsf{K}}
\newcommand{\McV}{\mathsf{M_{V}}}
\newcommand{\Mf}{\mathsf{M}}
\newcommand{\QV}{\mathsf{Q_{V}}}
\newcommand{\QVT}{\mathsf{Q_{V}^{\tt T}}}

\newcommand{\Tf}{\mathsf{T}}

\newcommand{\ot}{\otimes}
\newcommand{\half}{\frac{1}{2}}
\newcommand{\tr}{ \mathsf{tr}}

\newcommand{\Trs}{\operatorname{\mathsf{Tr}_\Sf}}
\newcommand{\Trp}{\operatorname{\mathsf{Tr}_\Pf}}
\newcommand{\cto}{\buildrel {\color{black} \mathsf{c}} \over  \longrightarrow} 
\newcommand{\isit}{\buildrel {\color{black} \mathbf{?}} \over  =} 
 

\title{McKay Matrices for Finite-dimensional Hopf Algebras}
 
\author[Benkart]{Georgia Benkart} 
\address[G.~Benkart]{Department of Mathematics, University of Wisconsin-Madison, Madison, WI 53706, U.S.A.}
\email{benkart@math.wisc.edu}

\author[Biswal]{Rekha Biswal}
\address[R.~Biswal]{Max Planck Institute for Mathematics, Bonn, Germany}
\email{rekhabiswal27@gmail.com} 

\author[Kirkman]{Ellen Kirkman}
\address[E.~Kirkman]{Department of Mathematics and Statistics, Wake Forest University, Winston-Salem, NC, 27109, U.S.A.} 
\email{kirkman@wfu.edu} 

\author[Nguyen]{Van C.~Nguyen}
\address[V.C.~Nguyen]{Department of Mathematics, United States Naval Academy, Annapolis, MD 21402, U.S.A.}
\email{vnguyen@usna.edu} 

\author[Zhu]{Jieru Zhu}
\address[J.~Zhu]{Department of Mathematics, University at Buffalo, 
Buffalo, NY 14260-2900, U.S.A.}
\email{jieruzhu699@gmail.com} 
\date{}

\allowdisplaybreaks

\makeatletter
\@namedef{subjclassname@2020}{%
  \textup{2020} Mathematics Subject Classification}
\makeatother

\subjclass[2020]{Primary 16T05. Secondary 19A49,  33C45}
\linespread{1.1}


\begin{document}

\keywords{McKay matrix, Drinfeld double, character, Chebyshev polynomial}

\begin{abstract}
For a finite-dimensional Hopf algebra $\mathsf{A}$, the McKay matrix $\McV$ of an $\Af$-module
$\VV$ encodes the relations for tensoring the simple $\Af$-modules with $\VV$.    We prove results about  
the eigenvalues and the right and left (generalized) eigenvectors of $\McV$ by relating them to
characters. We show how the projective McKay matrix $\Qf_\VV$  obtained by tensoring the projective indecomposable modules
of $\Af$ with $\VV$ is related  to the McKay matrix of the dual module of $\VV$.   We illustrate these results for the Drinfeld double
$\Df_n$ of  the Taft algebra by deriving
expressions for the eigenvalues and eigenvectors  of $\McV$ and $\Qf_\VV$ in terms of several kinds of Chebyshev polynomials.
For the matrix $\Nf_\VV$ that encodes the fusion rules for tensoring $\VV$ with a basis of projective indecomposable 
$\Df_n$-modules for the image of the Cartan map, we show
that the eigenvalues and eigenvectors also have such Chebyshev expressions. 
\end{abstract}

\maketitle

\noindent \section{\bf Introduction}  

Assume $\Af$ is a finite-dimensional associative algebra over an algebraically closed field $\mathbb k$. Let $\Sf_1,\Sf_2, \dots, \Sf_m$
be the nonisomorphic simple (irreducible)
$\Af$-modules and   $\Pf_1,\Pf_2,\dots, \Pf_m$ be  their projective covers, that is, the nonisomorphic indecomposable projective modules  such that for each $j=1,\dots, m$, the module
$\Pf_j$ modulo its radical is isomorphic to $\Sf_j$.   The dimensions of these $\Af$-modules determine two column vectors
in $\ZZ^m$,
\begin{equation}\label{eq:sandp}  \mathbf{s} = [\dimm(\Sf_1)\,\;\dimm(\Sf_2)\,\; \dots\,\; \dimm(\Sf_m)]^{\tt T} \quad \text{and} \quad
\mathbf{p} = [\dimm(\Pf_1)\,\;\dimm(\Pf_2)\,\; \dots\,\; \dimm(\Pf_m)]^{\tt T},\end{equation}
where ${\tt T}$ denotes ``transpose.''   If $\Af$ is viewed as a left $\Af$-module under left multiplication,  then 
\begin{equation}\label{eq:Adecomp} \Af = \bigoplus_{j=1}^m \Pf_{j}^{\oplus \dimm(\Sf_{j})} \quad \text{and} 
\quad \mathbf{p}^{\tt T}  \, \mathbf{s} = \sum_{j=1}^{m}  \dimm(\Pf_j) \ \dimm(\Sf_j)  =  \dimm(\Af).
\end{equation}
When $\Af$ is semisimple,  then $\Pf_j = \Sf_j$ for all $j$, and  the second part of \eqref{eq:Adecomp} is the familiar 
result $\dimm (\Af) = \sum_{j=1}^m \left(\dimm(\Sf_j)\right)^2$.

There are two Grothendieck groups, $\GG_0(\Af)$ and $\Kf_0(\Af)$ associated to $\Af$:
\begin{itemize} 
\item  $\GG_0(\Af)$ is the quotient of the free abelian group on the set of all isomorphism
classes $[\VV]$ of finite-dimensional $\Af$-modules $\VV$ subject to the relations $[\UU]- [\VV] + [\WW] = 0$ for each short exact sequence
$0 \rightarrow \UU \rightarrow \VV \rightarrow \WW \rightarrow 0$ of $\Af$-modules. By the Jordan-H\"older theorem, this group has a $\ZZ$-module basis consisting of the classes
$[\Sf_1], [\Sf_2],. . . , [\Sf_m]$,  and  
$$[\Af] = \sum_{j=1}^m\dimm(\Pf_j) [\Sf_j]  \quad \text{in} \; \; \GG_0(\Af).$$
\smallskip

\item $\Kf_0(\Af)$ is the quotient of the free abelian group on the set of all
isomorphism classes $[\VV]$ of finite-dimensional projective $\Af$-modules $\VV$, subject to the relations $[\UU] - [\VV]+[\WW] = 0$ for each direct sum decomposition $\VV = \UU \oplus \WW$ of $\Af$-modules. This group has a $\ZZ$-module basis consisting of
the classes $[\Pf_1], [\Pf_2],\dots, [\Pf_m]$  due to the Krull-Remak-Schmidt theorem, and \eqref{eq:Adecomp} says
$$[\Af] = \sum_{j=1}^m \dimm(\Sf_j) [\Pf_j] \quad \text{in} \; \; \Kf_0(\Af).$$
\end{itemize}

When $\Af$ is a Hopf algebra,  the tensor product of two $\Af$-modules is an $\Af$-module with the $\Af$-action given by the coproduct, and the dual vector space of an $\Af$-module  is an $\Af$-module via the antipode.
Both $\GG_0(\Af)$ and $\Kf_0(\Af)$ have products using $\ot$, so that $[\UU] [\WW] = [\UU \ot \WW]$, where
$\GG_0(\Af)$ is a ring with a unit element, which is the one-dimensional $\Af$-module $\mathbb{k}$ with action given by 
the counit, and  $\Kf_0(\Af)$ is a ring without a unit element. 
 
 Let $\VV$ be an $\Af$-module, and set $d = \dimm(\VV)$.     The \emph{McKay matrix} $\McV$  for tensoring with $\VV$ has as its
 $(i,j)$ entry $\Mf_{ij} =[\Sf_i \ot \VV: \Sf_j]$, the multiplicity of $\Sf_j$ as a composition factor of the $\Af$-module $\Sf_i \ot \VV$, or
 equivalently, the coefficient of $[\Sf_j]$ when $[\Sf_i \ot \VV]$ is expressed as a $\ZZ$-linear combination of the basis elements in $\GG_0(\Af)$.
 Then 
\begin{equation*} d \, \mathbf{s}_i  = \dimm(\VV)  \dimm(\Sf_i) = \dimm(\Sf_i \ot \VV) 
= \sum_{j=1}^m
[\Sf_i \ot \VV : \Sf_j] \dimm(\Sf_j) = \sum_{j=1}^m \mathsf{M}_{ij} \dimm(\Sf_j)  =  (\McV\,\mathbf s)_i\end{equation*}
shows that the $i$th entry of $\McV\,\mathbf s$ is $d$ times the $i$th entry of $\mathbf s$ for $1\le i \le m$,  which implies that $\mathbf s$ is a right eigenvector of $\McV$ for the eigenvalue $d = \dimm(\VV)$.

One can argue as in the paper \cite{GHR} by Grinberg, Huang, and Reiner that the dimension vector  $\mathbf{p}^{\tt T} = [\dimm(\Pf_1)\,\; \dimm(\Pf_2)\,\; \ldots\,\; \dimm(\Pf_m)]$ is a left eigenvector for $\McV$ with eigenvalue $d$.   (But note that their McKay matrix and ours are transposes of one another, so for them $\mathbf{p}^{\tt T}$ is a right eigenvector, and $\mathbf{s}$ is a left one.)

The matrix $\McV$ can be viewed as the adjacency matrix of a quiver (the so-called \emph{McKay quiver determined by $\VV$}) having nodes  labeled by $1 \le i \le m$  that correspond to
the simple modules $\Sf_i$.  There are $[\Sf_i \ot \VV : \Sf_j]$ arrows from $i$ to $j$.   If there is an arrow from $i$ to
$j$ and one from $j$ to $i$, they  are  replaced by a single undirected edge.    The idea to consider such a quiver and matrix goes back to 
McKay's insight \cite{M} that the quivers determined
by tensoring with the $\GG$-module  $\VV = \mathbb{C}^2$ for a
 finite subgroup $\GG$  of $\mathsf{SU}_2$ exactly correspond to the affine Dynkin diagrams of types $\Af, \Df, \mathsf{E}$.  This result, 
subsequently referred to as the \emph{McKay correspondence},  has been the inspiration for much work on a host of topics in singularity theory,
group theory, orbifolds, and many other subjects.  

In expanding on McKay's result,  Steinberg \cite{St} showed that for the group algebra
$\mathbb{C} \GG$ of any finite group $\GG$, the columns of the character table of $\GG$  give a complete set of right eigenvectors for the McKay matrix determined by tensoring with \emph{any} finite-dimensional $\GG$-module $\VV$,
and the associated eigenvalues are given by the character values $\chi_{{}_\VV}(g_j)$, as $g_j$ ranges over a set of conjugacy class representatives of  $\GG$.   Group algebras over algebraically closed fields of characteristic $p > 0$ were considered in
  \cite{GHR}, where it was shown that the columns of the Brauer character table of $\GG$ are right eigenvectors of $\McV$ with corresponding
eigenvalues $\chi_{{}_\VV}(g_j)$, as $g_j$ ranges over a set of representatives for the $p'$ conjugacy
classes of  $\GG$ (that is, elements whose order is relatively prime to $p$).  Each  such $g_j$ determines a left eigenvector  
whose coordinates are  the character values of the projective indecomposable modules $\Pf_j$ evaluated at $g_j$, and whose
eigenvalue is $\chi_{{}_\VV}(g_j)$.

Aside from the result of Grinberg, Huang, and Reiner in \cite{GHR} mentioned earlier,
very little is known about the eigenvalues and eigenvectors of the McKay matrices for \emph{arbitrary} finite-dimensional Hopf algebras,
and the purpose of this paper is to remedy that situation.
The specific case of the quantum group $\mathfrak{u}_q(\mathfrak{sl}_2)$ at $q$ an odd root of unity  was examined
in depth in \cite{BDLT}.   In that work,  it was shown that the McKay matrix for tensoring with $\VV = \mathbb{C}^2$ has two-dimensional
generalized eigenspaces for every eigenvalue different from  $2 = \dimm(\VV)$, and  it is necessary to work with Jordan blocks of size
$2 \times 2$ in that example.    The main point of  \cite{BDLT} is that McKay matrices and quivers determine interesting
Markov chains, and in the particular case of $\mathfrak{u}_q(\mathfrak{sl}_2)$, the chain exhibits new phenomena due  to the
existence of such Jordan blocks.   The examples in \cite{BDLT} show that the analysis of the rates of convergence of the Markov chains determined by McKay matrices is quite delicate.

Most of the work on characters of Hopf algebras has been in the case that the algebra is semisimple. For example, Witherspoon defined a notion of a character table for finite-dimensional semisimple, almost cocommutative Hopf algebras and showed
 that the characters provide eigenvectors for the McKay matrix for tensoring with the simple modules (see \cite[proof of Thm.~3.2]{W}). 
In\cite{CW3}, Cohen and Westreich determined Verlinde formulas for semisimple, almost cocommutative Hopf algebras, and
in \cite{CW1, CW2},  for (nonsemisimple) factorizable ribbon Hopf algebras such as the Drinfeld double of the Taft algebra considered here.  Such formulas
were introduced by Verlinde \cite{V}  for diagonalizing fusion relations in 2D rational conformal field theory
and have played an important role in physics. They have been considered subsequently in many different contexts. 
 
For a Hopf algebra $\Af$ with nonisomorphic simple modules $\Sf_i$ and  
corresponding  projective covers $\Pf_i$,  the Cartan matrix $\Cf = (\Cf_{ij})$ records the multiplicity $\Cf_{ij}
= [\Pf_i:\Sf_j]$ of $[\Sf_j]$ when $[\Pf_i]$ is expressed in the $\ZZ$-basis of $\GG_0(\Af)$.
 Let $r$ be the rank of
$\Cf$ and select $r$ of the $\Pf_i$ so that the corresponding rows of $\Cf$ are linearly independent.  For each simple module $\Sf_j$
there is an $r \times r$ matrix $\Nf^j$, that contains the fusion rules for tensoring those $r$ projective modules with $\Sf_j$  and
writing the answer $[\Pf_i \ot \Sf_j]$ as a linear combination of the $r$  chosen projectives.  
It was shown in \cite{CW1} that the matrices $\Nf^j$ are diagonalizable and have eigenvectors that can be expressed
using the primitive idempotents $e_i$ corresponding to the projective modules $\Pf_i$.  

In Section 2 of this work,  we consider \emph{arbitrary} finite-dimensional Hopf algebras $\Af$
and prove general results about McKay matrices, their eigenvalues,  and their (left and right) eigenvectors by using the coproduct  and the characters of simple and projective modules (Theorem
\ref{Thm:tracesimp} and Proposition \ref{prop:reigen}).   The tensor product $\Pf \ot \VV$  of a projective module $\Pf$ with a finite-dimensional module $\VV$  is projective, and so $[\Pf \ot \VV]$ can be written as an integral combination of the classes $[\Pf_j]$ of the
projective indecomposable modules.  Letting $\Qf_\VV = (\Qf_{ij})$,   where $\Qf_{ij}$ is the multiplicity
$[\Pf_i \ot \VV: \Pf_j]$ of $[\Pf_j]$ in $[\Pf_i \ot \VV]$, we obtain what we term a \emph{projective McKay matrix}.  Theorems
\ref{thm:QV} and \ref{thm:projevec} and Corollary \ref{cor:V=V**} show how $\Qf_\VV$ and its
eigenvectors are related to the McKay  matrix of the dual module $\VV^*.$   
In the special case that the Hopf algebra is semisimple, the McKay matrix $\McV$ and the projective
McKay matrix $\Qf_\VV$ are the same,  and if the module 
$\VV$ is self-dual,  then the McKay matrix is orthogonally diagonalizable (Corollary \ref{cor:Q}). 

In Section 3, we  illustrate the general 
results of Section 2 by applying them to a family of nonsemisimple Hopf algebras, namely,  the Drinfeld double $\Df_n$ of the Taft algebra 
$\mathsf{A}_n$ for $n$ odd, $n \ge 3$.  When $n$ is even, the eigenvalues exhibit different
patterns, and that case will not be considered here.
The algebras $\Df_n$ provide a convenient testing ground,  as their representation theory 
has been developed in great detail by Chen and coauthors (see \cite{Chen99, Chen2, Chen2.5, Chen2.75, Chen, Chen3}). 
Unlike the situation for semisimple, almost cocommutative Hopf algebras, the
McKay matrices for $\Df_n$ fail to be diagonalizable.     More specifically, we
\begin{itemize}
\item determine the eigenvalues, right and left eigenvectors and generalized eigenvectors of the McKay matrix $\McV$  obtained  by tensoring the simple $\Df_n$-modules with one of the two-dimensional simple $\Df_n$-modules,  $\VV(2,0)$ (Secs. \ref{S3.3}-\ref{S3.9}); 
\item  express the coordinates of  these vectors using Chebyshev polynomials (Secs. \ref{S3.4}, \ref{S3.5}, \ref{S3.8});
\item  relate the eigenvectors  to character values of the grouplike elements and other special elements of $\Df_n$ (Secs. \ref{S3.6}, \ref{S3.7}, \ref{S3.9}) and show that the character value of a grouplike element on any simple $\Df_n$-module can be
computed using Chebyshev polynomials (Thm. \ref{thm:rightevs} of  Sec.~\ref{S3.6};
\item prove that the (generalized) eigenvectors for $\McV$, $\VV= \VV(2,0)$, are (generalized) eigenvectors for the McKay matrix 
of \emph{any} simple $\Df_n$-module
(Sec. \ref{S3.10});
\item  show that the eigenvalues of the McKay matrix of any simple $\Df_n$-module can be expressed in terms of Chebyshev polynomials of the second kind (Thm. \ref{thm:Ml0evals} of Sec. \ref{S3.11});
\item find the eigenvectors and eigenvalues of the projective McKay matrix $\Qf_\VV$ by relating them
to the McKay matrix of the dual module $\VV^*$ (Prop. \ref{prop:Ql0evals} of Sec. \ref{S3.12});
\item determine the structure of the complex Grothendieck algebra $\GG_0^{\CC}(\Df_n) = \CC \ot_{\ZZ} \GG_0(\Df_n)$ and prove that  its Jacobson radical squares to 0; (Thm. \ref{thm:Groth} of Sec. \ref{S3.13});
\item construct certain idempotents  in $\GG_0^{\CC}(\Df_n)$,  and 
show how they provide an alternate approach to producing the eigenvectors and generalized eigenvectors of $\McV$ (Sec. \ref{S3.13});
\item compute the eigenvectors and eigenvalues of the matrix $\Nf_\VV$ that encodes the fusion rules for tensoring
a maximal set  of independent projective covers in $\GG_0(\Df_n)$ with 
$\VV = \VV(2,0)$  (Sec. \ref{S3.14}). 
\end{itemize}

It has been said that ``Chebyshev polynomials are everywhere dense in
numerical analysis'' (see \cite[Sec.~1.1]{MH} for a discussion of this quotation).   In this paper,  Chebyshev polynomials (of the
second, third, and fourth kind)
are everywhere dense in
expressing eigenvalues, eigenvectors, and generalized  eigenvectors of McKay matrices and fusion rules for $\Df_n$.  
The characters of the simple $\Df_n$-modules evaluated on the grouplike elements of $\Df_n$ also have
Chebyshev polynomial expressions. 

When $n$ is odd,  $\Df_n$ is a ribbon Hopf algebra  \cite{KR},   and $\Df_n$ provides an invariant of 3-manifolds \cite{H}.  
In  \cite{BBKNZ},  we determine the unique ribbon element of
$\Df_n$ explicitly.  We use the $\mathsf{R}$-matrix and ribbon element of the quasitriangular Hopf algebra $\Df_n$ to obtain an algebra homomorphism  from the Temperley-Lieb algebra $\mathsf{TL}_k(-(q^{\half}+q^{-\half}))$ to the centralizer algebra
$\mathsf{End}_{\Df_n}(\VV^{\ot k})$, 
when $\VV$ is any two-dimensional simple $\Df_n$-module.   In the special case that $\VV$ is
the unique two-dimensional simple module that is self-dual,  we show that the homomorphism is
injective for all $k \ge 1$ and an isomorphism for $k \le 2(n-1)$. This leads to
a realization of $\mathsf{End}_{\Df_n}(\VV^{\ot k})$ for $\VV$ self-dual as a diagram algebra.

\subsection*{Acknowledgments}  Our joint work began in connection with  the workshop WINART2 at  the University of Leeds in May 2019.  The authors would like to extend thanks 
to the organizers of WINART2 and  to the University of Leeds for its hospitality, and to acknowledge support
provided by a University of Leeds conference grant,  the London Mathematical Society Workshop Grant WS-1718-03,  the U.S. National Science Foundation grant DMS-1900575, the Association for Women in Mathematics (NSF Grant DMS-1500481), and by a research fellowship from the Alfred P. Sloan Foundation.  Van C. Nguyen was also supported by the Naval Academy Research Council in summer 2020.  Rekha Biswal gratefully acknowledges the Max Planck Institute for the fellowship she received as a postdoctoral researcher there
and for providing an excellent atmosphere for research. The authors thank the referees for their helpful comments.

\noindent \section{\bf McKay Matrices for Arbitrary Hopf Algebras}

In the classical representation theory of finite groups, the columns of the character table are obtained by evaluating the characters (traces) of the simple modules on one element from each conjugacy class \cite[Sec.~2.1]{FH}. It is known that these columns are right eigenvectors for any McKay matrix determined by tensoring with a finite-dimensional module of the group (see \cite{St}).  In this section, we develop an analog of this result by showing how grouplike elements and generalizations of skew primitive elements in an arbitrary Hopf algebra can be used to construct eigenvectors for McKay matrices.  In the special case that the algebra is semisimple, more detailed results are possible. 

\emph{Throughout Section 2, $\Af$ denotes a finite-dimensional Hopf algebra over an algebraically closed field $\mathbb{k}$.
All $\Af$-modules are assumed to be finite-dimensional, and all tensor products are over $\mathbb{k}$. We adopt Sweedler's
notation for the coproduct $\Delta$ applied to an element $x \in \Af$,}
$$\Delta(x) = \sum_{x} x_{(1)} \ot x_{(2)}.$$

\noindent \subsection{Right eigenvectors from traces of simple modules}
\label{S2.1}
We will use the coproduct of $\Af$ to obtain right (generalized) eigenvectors of McKay matrices by
taking the trace of certain elements of $\Af$ on simple modules.  To accomplish this, we apply
the following well-known results on traces.  Parts (a) and (b) hold for
any finite-dimensional algebra $\Af$.
\begin{lemma}\label{lem:hopftrace} \begin{itemize}
\item[\rm{(a)}] Assume $\Sf$, $\Tf$, $\UU$ are finite-dimensional modules over the algebra $\Af$ such that $\UU \simeq \Sf/\Tf$.  Then for any $x\in \Af$,
the trace $\tr_\UU(x)$ of $x$ on $\UU$ satisfies $\tr_\UU(x)=\operatorname{tr}_{\Sf}(x)+\tr_{\Tf}(x)$.

\item[\rm{(b)}]
If $\UU_1,\dots,\UU_s$ are the composition factors of a finite-dimensional $\Af$-module $\UU$, where $\UU_i$ occurs with multiplicity $c_i$, then for any $x \in \Af$,  $\tr_\UU(x)=c_1\tr_{\UU_1}(x)+\dots+c_s\tr_{\UU_s}(x).$

\item[ \rm{(c)}] {\rm (See \cite[Proposition 10.21\,(b)]{LM}.)
For any finite-dimensional $\Af$-modules $\UU$ and $\WW$, and any $x \in \Af$,
\begin{equation}\label{eq:tenstrace}
\operatorname{tr}_{\UU \otimes \WW}(x)= \sum_x  \operatorname{tr}_{\UU}\big(x_{(1)}\big)\operatorname{tr}_{\WW}\big(x_{(2)}\big). 
\end{equation}}
\end{itemize}
\end{lemma}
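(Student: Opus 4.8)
The plan is to prove the three parts of Lemma~\ref{lem:hopftrace} in order, as parts (a) and (b) are elementary facts about traces over any finite-dimensional algebra, while part (c) is the genuinely Hopf-algebraic statement requiring the coproduct.

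For part (a), I would choose a basis of $\UU \simeq \Sf/\Tf$ and lift it to a set of vectors in $\Sf$; together with a basis of $\Tf$ this gives a basis of $\Sf$ adapted to the short exact sequence $0 \to \Tf \to \Sf \to \UU \to 0$. With respect to this basis, the matrix of the action of $x \in \Af$ on $\Sf$ is block upper-triangular, with the $\Tf$-block in the upper-left and the $\UU$-block in the lower-right. Since the trace of a block upper-triangular matrix is the sum of the traces of its diagonal blocks, we get $\tr_\Sf(x) = \tr_\Tf(x) + \tr_\UU(x)$ immediately, which rearranges to the stated identity. The only point to verify is that the induced action of $x$ on the quotient $\Sf/\Tf$ agrees with its action on $\UU$ under the isomorphism, which is exactly what it means for $\UU \simeq \Sf/\Tf$ to be an isomorphism of $\Af$-modules.

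For part (b), I would induct on the composition length $s$ of $\UU$. A composition series $0 = \UU^{(0)} \subset \UU^{(1)} \subset \cdots \subset \UU^{(\ell)} = \UU$ has successive quotients $\UU^{(k)}/\UU^{(k-1)}$ each isomorphic to one of the simple modules $\UU_i$, and applying part (a) repeatedly to each step telescopes the trace as $\tr_\UU(x) = \sum_{k} \tr_{\UU^{(k)}/\UU^{(k-1)}}(x)$. Collecting the terms according to which simple module appears in each successive quotient, and using that $\UU_i$ occurs with total multiplicity $c_i$ (independent of the chosen series, by Jordan--H\"older), yields $\tr_\UU(x) = \sum_{i=1}^s c_i\, \tr_{\UU_i}(x)$.

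Part (c) is the main content and the step I expect to require the most care. Here I would expand the action of $x$ on $\UU \otimes \WW$ directly through the coproduct: for $x \in \Af$, the module structure on $\UU \ot \WW$ is $x \cdot (u \ot w) = \sum_x (x_{(1)} \cdot u) \ot (x_{(2)} \cdot w)$, so the operator by which $x$ acts on $\UU \ot \WW$ is $\sum_x \rho_\UU(x_{(1)}) \ot \rho_\WW(x_{(2)})$, where $\rho_\UU, \rho_\WW$ denote the representing maps on $\UU$ and $\WW$. Taking the trace and using the multiplicativity of trace under tensor products of operators, namely $\tr_{\UU \ot \WW}(f \ot g) = \tr_\UU(f)\,\tr_\WW(g)$ for linear maps $f$ on $\UU$ and $g$ on $\WW$, together with linearity of the trace, gives $\tr_{\UU \ot \WW}(x) = \sum_x \tr_\UU(x_{(1)})\,\tr_\WW(x_{(2)})$. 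Since this is cited from \cite[Proposition 10.21\,(b)]{LM}, I would either reproduce this short computation or simply invoke the reference; the only subtlety worth flagging is that the sum over Sweedler components is well defined precisely because the expression is multilinear in the tensor factors, so it does not depend on the particular way the coproduct is written as a sum of simple tensors.
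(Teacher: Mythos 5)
Your proofs of parts (b) and (c) are the standard ones and are correct. Note that the paper itself gives no argument for this lemma: parts (a) and (b) are asserted as well-known facts about finite-dimensional algebras, and part (c) is simply cited from \cite[Prop.~10.21\,(b)]{LM}; so there is no ``paper approach'' to compare against, and a self-contained write-up along your lines (adapted basis and block-triangularity for (a), Jordan--H\"older telescoping for (b), and $\tr_{\UU \ot \WW}(x) = (\tr_{\UU} \ot \tr_{\WW})\big(\Delta(x)\big)$ together with $\tr(f \ot g) = \tr(f)\tr(g)$ for (c)) is exactly what one would supply. Your closing remark in (c) is the right subtlety to flag: the right-hand side of \eqref{eq:tenstrace} is the value of the well-defined linear functional $\tr_{\UU} \ot \tr_{\WW}$ on $\Delta(x) \in \Af \ot \Af$, hence independent of how $\Delta(x)$ is written as a sum of simple tensors.

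The one genuine problem is the last sentence of your part (a). What your block-triangular argument establishes is $\tr_{\Sf}(x) = \tr_{\Tf}(x) + \tr_{\UU}(x)$, and this does \emph{not} ``rearrange to the stated identity'' $\tr_{\UU}(x) = \tr_{\Sf}(x) + \tr_{\Tf}(x)$; rearranging gives $\tr_{\UU}(x) = \tr_{\Sf}(x) - \tr_{\Tf}(x)$. In fact the identity as printed in the lemma cannot be literally correct: taking $x = 1$ it would assert $\dimm(\UU) = \dimm(\Sf) + \dimm(\Tf)$, whereas $\dimm(\UU) = \dimm(\Sf) - \dimm(\Tf)$ since $\UU \simeq \Sf/\Tf$. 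So the printed statement contains a sign slip, and the relation you actually derived is the correct one --- it is also the version that is needed downstream, both for the telescoping in part (b) and for Theorem \ref{Thm:tracesimp}, where the multiplicities $\Mf_{ij}$ enter with plus signs precisely because they count composition factors of the big module $\Sf_i \ot \VV$. You should say explicitly that you are proving the corrected identity (equivalently, point out the typo), rather than claim a rearrangement that does not hold; as written, that one sentence is false even though the mathematics preceding it is sound.
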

Let $\Sf_1,\Sf_2, \dots,\Sf_m$ be a $\ZZ$-basis of simple modules for the Grothendieck ring $\GG_0(\Af)$ of the Hopf algebra $\Af$, and for any $x \in \Af$   set 
 \begin{equation}\label{eq:tracevec} \Trs(x) :=  [ \tr_{\Sf_1}(x) \,\; \tr_{\Sf_2}(x) \;\,
\dots \; \,\tr_{\Sf_m}(x)]^{\tt T}.  \end{equation}
Observe that $\Trs(1) = [\dimm(\Sf_1) \,\; \dimm(\Sf_2)\, \; \ldots \; \,\dimm(\Sf_m)]^{\tt T} = \mathbf{s},$
the vector of dimensions of the simple $\Af$-modules.      We know that $ \mathbf{s}$  gives a right eigenvector for any McKay matrix determined
by tensoring with a finite-dimensional $\Af$-module.
Next we explore some other elements of the Hopf algebra $\Af$ that give such right eigenvectors.     
 
\begin{theorem} 
\label{Thm:tracesimp}   Assume  $\McV = (\Mf_{ij})$, where $\Mf_{ij} = [\Sf_i \ot \VV: \Sf_j]$,  is the McKay matrix associated to tensoring with an  $\Af$-module $\VV$.    Then  the trace of $x \in \Af$ on $\Sf_i \ot \VV$ for any $1 \le i \le m$ is given by
\begin{equation}
\label{eq:coprodtrace} 
\sum_x\tr_{\Sf_i}(x_{(1)}) \tr_{\VV}(x_{(2)}) =
\sum_{j=1}^m \Mf_{ij}\tr_{\Sf_j}(x), \ \text{and} \end{equation} 
\begin{equation}\label{eq:matprodtrace}  \McV\Trs(x) =\sum_x\tr_\VV(x_{(2)})\Trs(x_{(1)}).
\end{equation}
\end{theorem}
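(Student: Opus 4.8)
The plan is to derive both identities directly from Lemma~\ref{lem:hopftrace}, treating the tensor-trace formula in part~(c) as the engine and the definition of the McKay matrix as the bookkeeping device. First I would establish \eqref{eq:coprodtrace}. Applying Lemma~\ref{lem:hopftrace}(c) to the modules $\Sf_i$ and $\VV$ immediately gives
\[
\tr_{\Sf_i \ot \VV}(x) = \sum_x \tr_{\Sf_i}(x_{(1)}) \tr_{\VV}(x_{(2)}),
\]
which is the left-hand side of \eqref{eq:coprodtrace}. For the right-hand side, I would invoke the definition $\Mf_{ij} = [\Sf_i \ot \VV : \Sf_j]$: the integers $\Mf_{ij}$ are precisely the multiplicities of the composition factors $\Sf_j$ in $\Sf_i \ot \VV$. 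Hence Lemma~\ref{lem:hopftrace}(b), applied to $\UU = \Sf_i \ot \VV$ with composition factors drawn from $\Sf_1, \dots, \Sf_m$ occurring with multiplicities $\Mf_{i1}, \dots, \Mf_{im}$, yields
\[
\tr_{\Sf_i \ot \VV}(x) = \sum_{j=1}^m \Mf_{ij}\, \tr_{\Sf_j}(x).
\]
Equating the two expressions for $\tr_{\Sf_i \ot \VV}(x)$ proves \eqref{eq:coprodtrace}.

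Next I would obtain the matrix identity \eqref{eq:matprodtrace} by repackaging \eqref{eq:coprodtrace} in vector form. The right-hand side $\sum_{j=1}^m \Mf_{ij}\, \tr_{\Sf_j}(x)$ is, by the definition \eqref{eq:tracevec} of $\Trs(x)$, exactly the $i$th coordinate of the matrix-vector product $\McV \Trs(x)$. Thus \eqref{eq:coprodtrace} reads coordinatewise as
\[
\big(\McV \Trs(x)\big)_i = \sum_x \tr_{\Sf_i}(x_{(1)})\, \tr_{\VV}(x_{(2)}),
\]
for each $1 \le i \le m$. To assemble these into a single vector identity, I would recognize the right side as the $i$th coordinate of $\sum_x \tr_\VV(x_{(2)})\, \Trs(x_{(1)})$, since the $i$th coordinate of $\Trs(x_{(1)})$ is $\tr_{\Sf_i}(x_{(1)})$ and the scalar $\tr_\VV(x_{(2)})$ factors out. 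Stacking over all $i$ then gives \eqref{eq:matprodtrace}.

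The only subtlety, and the one point I would state carefully rather than gloss over, concerns the interaction of Sweedler notation with the vector-valued sum in \eqref{eq:matprodtrace}. The expression $\sum_x \tr_\VV(x_{(2)})\, \Trs(x_{(1)})$ is a $\mathbb{k}$-linear combination of the vectors $\Trs(x_{(1)})$ with scalar coefficients $\tr_\VV(x_{(2)})$, so one must check that taking the $i$th coordinate commutes with the Sweedler sum; this is immediate from linearity of the coordinate projection, but it is worth noting explicitly since $\Trs(x_{(1)})$ is not literally $\Trs$ applied to a single element. I do not expect a genuine obstacle here: both parts are short consequences of the already-established trace lemma, and the main work is organizational—matching the multiplicities $\Mf_{ij}$ against the composition-factor decomposition and then transcribing the scalar identity \eqref{eq:coprodtrace} into the coordinate-free form \eqref{eq:matprodtrace}.
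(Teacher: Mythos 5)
Your proof is correct and follows essentially the same route as the paper: both arguments combine Lemma~\ref{lem:hopftrace}(c) with the interpretation of $\Mf_{ij}$ as composition-factor multiplicities (i.e., Lemma~\ref{lem:hopftrace}(b)) to prove \eqref{eq:coprodtrace}, and then read \eqref{eq:matprodtrace} off coordinatewise. Your version merely spells out the steps the paper compresses into two sentences, including the harmless linearity check for the Sweedler sum.
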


\begin{proof} 
This follows from Lemma \ref{lem:hopftrace}\,(c) and  the fact that $\Mf_{ij}$ is the multiplicity of $\Sf_j$ as a composition factor of $\Sf_i \ot \VV$.
Equation \eqref{eq:coprodtrace} gives the $i$th coordinate of the matrix equation in \eqref{eq:matprodtrace}. 
\end{proof}

 \begin{corollary}\label{rem:coprod} For the McKay matrix $\Mf_\VV$ associated to tensoring the simple $\Af$-modules with $\VV$, the following hold:
\begin{itemize}\item[\rm{(a)}]  When $g \in \Af$ is grouplike, then $\Delta(g) = g \ot g$,  and \eqref{eq:matprodtrace} says
\begin{equation}\label{eq:grouplike} \McV\Trs(g) =  \tr_{\VV}(g) \Trs(g). \end{equation}
Consequently,  for every grouplike element of $\Af$,  $\Trs(g)$ is a right eigenvector for $\McV$ with eigenvalue
$\tr_{\VV}(g)$.  When $g = 1$, this reverts to the result that the dimension vector $\mathbf{s}$  is a right eigenvector with
eigenvalue $\tr_\VV(1) = \dimm(\VV)$. 
\item[\rm{(b)}]  When $x \in \Af$ has the property that $\Delta(x) = x \ot y + z \ot x$  for some nonzero $y,z \in \Af$,  
then  \eqref{eq:matprodtrace} says
 \begin{equation}\label{eq:skewprim} \Mf_\VV \Trs(x) = \tr_{\VV}(y)\Trs(x) + \tr_{\VV}(x)
\Trs(z).
 \end{equation} 
\end{itemize} 
\end{corollary}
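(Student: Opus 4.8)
The plan is to obtain both statements as direct specializations of equation \eqref{eq:matprodtrace} in Theorem \ref{Thm:tracesimp}, which asserts that $\McV\Trs(x) = \sum_x \tr_\VV(x_{(2)}) \Trs(x_{(1)})$ for \emph{every} $x \in \Af$. Since the right-hand side depends on $x$ only through its coproduct, each part amounts to inserting the prescribed form of $\Delta(x)$ and reading off the resulting sum.

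For part (a), I would set $x = g$ with $g$ grouplike, so that $\Delta(g) = g \ot g$. In Sweedler notation this is a single summand with $g_{(1)} = g_{(2)} = g$; substituting into \eqref{eq:matprodtrace} collapses the sum to $\tr_\VV(g)\Trs(g)$. This is exactly the eigenvalue equation \eqref{eq:grouplike}, exhibiting $\Trs(g)$ as a right eigenvector of $\McV$ with eigenvalue $\tr_\VV(g)$. Taking $g = 1$ and using $\Trs(1) = \mathbf{s}$ together with $\tr_\VV(1) = \dimm(\VV)$ then recovers the dimension-vector eigenvector noted earlier.

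For part (b), I would take an element $x$ with $\Delta(x) = x \ot y + z \ot x$. Here the Sweedler sum has two summands, indexed by the pairs $(x_{(1)}, x_{(2)}) = (x, y)$ and $(x_{(1)}, x_{(2)}) = (z, x)$. Substituting these into \eqref{eq:matprodtrace} and invoking the linearity of the trace yields $\tr_\VV(y)\Trs(x) + \tr_\VV(x)\Trs(z)$, which is precisely \eqref{eq:skewprim}.

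Because both parts follow immediately once \eqref{eq:matprodtrace} is in hand, there is no genuine obstacle in the argument itself; the only point requiring attention is the correct bookkeeping of the $(x_{(1)}, x_{(2)})$ pairs in each coproduct, after which linearity of the trace distributes the sum into the stated terms with no further computation. It is worth emphasizing for part (b) that the two coefficients $\tr_\VV(y)$ and $\tr_\VV(x)$ are scalars independent of $i$, so that \eqref{eq:skewprim} is a genuine relation among the vectors $\Trs(x)$ and $\Trs(z)$ under $\McV$; in particular, $\Trs(x)$ is an honest eigenvector precisely when the term $\tr_\VV(x)\Trs(z)$ either vanishes or is proportional to $\Trs(x)$, which anticipates the role such skew-primitive-type elements will play in producing generalized eigenvectors later in the paper.
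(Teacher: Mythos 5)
Your proof is correct and follows exactly the paper's intended argument: the corollary is a direct specialization of \eqref{eq:matprodtrace}, obtained by substituting $\Delta(g)=g\ot g$ in part (a) and $\Delta(x)=x\ot y+z\ot x$ in part (b) and reading off the Sweedler terms. Your closing remark on when $\Trs(x)$ is a genuine eigenvector is a nice anticipation of Proposition \ref{prop:reigen}, but it is not needed for the corollary itself.
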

 
The next result draws some useful conclusions from this equation.  In part (2)(b) of Proposition
\ref{prop:reigen}, the phrase {\it generalized right
eigenvector of $\Mf_\VV$ with eigenvalue $\lambda$} refers to a vector $u$ such that $(\Mf_\VV-\lambda \mathrm{I}) u$
is a nonzero right eigenvector of $\Mf_\VV$ for $\lambda$.  Similarly, in Proposition 
 \ref{prop:leigen}\,(2)(b) below,  a generalized left eigenvector for the matrix $\QVT$ with eigenvalue $\lambda$ is a vector
 $w$ such that $w(\QVT-\lambda{\mathrm I)}$ is a nonzero left eigenvector for $\QVT$ with eigenvalue $\lambda$.

 \begin{proposition} \label{prop:reigen}  Assume  \eqref{eq:skewprim} holds for some $x \in \Af$,  and $\Trs(x) \ne \mathbf{0}$.    
 \begin{itemize}
 \item [{\rm (1)}]  
 If  $\tr_{\VV}(x) = 0$,  then  $\Trs(x)$ is a right eigenvector for $\Mf_\VV$ with eigenvalue  $\tr_{\VV}(y)$. 
\item [{\rm (2)}]   If  $\tr_{\VV}(x) \ne  0$, and  $\Trs(z)$ is a right eigenvector for $\Mf_\VV$ with eigenvalue $\lambda$, then  
\begin{itemize} 
\item [{\rm (a)}] 
 $\tr_{\VV}(y) \ne \lambda$ implies that
 $(\tr_{\VV}(y)-\lambda)\Trs(x) + \tr_{\VV}(x)\Trs(z)$ is a right eigenvector for $\Mf_\VV$
 with eigenvalue $\tr_{\VV}(y)$;
 \item [{\rm (b)}] $\tr_{\VV}(y) = \lambda$ and $\Trs(x) \not \in \mathbb{k}\Trs(z)$ imply that $\Trs(x)$ is a generalized right eigenvector for $\Mf_\VV$ with eigenvalue $\lambda$: \,  $\McV\Trs(x)= \lambda \Trs(x) + \tr_{\VV}(x)\Trs(z)$.
 \end{itemize}
\end{itemize}
  \end{proposition}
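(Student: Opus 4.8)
The plan is to derive all three conclusions from the single identity \eqref{eq:skewprim}, namely $\McV\Trs(x) = \tr_\VV(y)\Trs(x) + \tr_\VV(x)\Trs(z)$, so that the only tools required are the linearity of $\McV$ and, in part (2), the defining relation $\McV\Trs(z) = \lambda\Trs(z)$. To keep the bookkeeping transparent I would abbreviate $\alpha := \tr_\VV(y)$ and $\beta := \tr_\VV(x)$, and I would use throughout the convention that an eigenvector is by definition nonzero, so that the hypothesis ``$\Trs(z)$ is a right eigenvector'' already supplies $\Trs(z) \neq \mathbf 0$.

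For part (1) I would simply put $\beta = 0$ in \eqref{eq:skewprim}, which collapses the identity to $\McV\Trs(x) = \alpha\,\Trs(x)$. Since the standing hypothesis of the proposition guarantees $\Trs(x) \neq \mathbf 0$, this already exhibits $\Trs(x)$ as a genuine right eigenvector with eigenvalue $\alpha = \tr_\VV(y)$, and nothing further is needed.

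For part (2)(a) I would test the proposed vector $\mathbf w := (\alpha-\lambda)\Trs(x) + \beta\,\Trs(z)$ directly. Applying $\McV$, replacing $\McV\Trs(x)$ by the right-hand side of \eqref{eq:skewprim} and $\McV\Trs(z)$ by $\lambda\Trs(z)$, and then collecting terms, the coefficient of $\Trs(x)$ becomes $\alpha(\alpha-\lambda)$ while the coefficient of $\Trs(z)$ combines as $\beta\big[(\alpha-\lambda)+\lambda\big] = \alpha\beta$; hence $\McV\mathbf w = \alpha\,\mathbf w$. The one point that genuinely requires care is that $\mathbf w$ be nonzero in order to count as an eigenvector: since $\alpha \neq \lambda$, the equality $\mathbf w = \mathbf 0$ would force $\Trs(x)$ to be the scalar multiple $-\beta(\alpha-\lambda)^{-1}\Trs(z)$ of $\Trs(z)$, and in that degenerate situation $\Trs(x)$ is itself already an eigenvector (with eigenvalue $\lambda$). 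Thus $\mathbf w$ is an eigenvector for $\alpha$ exactly when $\Trs(x) \notin \mathbb{k}\,\Trs(z)$, which is the case of interest.

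For part (2)(b) I would set $\alpha = \lambda$ in \eqref{eq:skewprim} and rewrite it as $(\McV - \lambda\,\mathrm I)\Trs(x) = \beta\,\Trs(z)$, which is precisely the displayed formula $\McV\Trs(x) = \lambda\Trs(x) + \beta\Trs(z)$. Because $\beta = \tr_\VV(x) \neq 0$ and $\Trs(z) \neq \mathbf 0$, the right-hand side $\beta\Trs(z)$ is nonzero, and since $\McV\Trs(z) = \lambda\Trs(z)$ it is a right eigenvector of $\McV$ with eigenvalue $\lambda$; this is exactly the defining property of a generalized right eigenvector. I would also note that the hypothesis $\Trs(x) \notin \mathbb{k}\,\Trs(z)$ is automatic in this case, since $\Trs(x) = c\,\Trs(z)$ would give $(\McV - \lambda\,\mathrm I)\Trs(x) = \mathbf 0$, contradicting $\beta\Trs(z) \neq \mathbf 0$. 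Overall there is no real obstacle: once \eqref{eq:skewprim} is available the argument is pure linear algebra, and the only step demanding attention is the nonvanishing of $\mathbf w$ in part (2)(a).
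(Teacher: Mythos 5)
Your proof is correct and follows the same route the paper implicitly takes: the proposition is a direct linear-algebra consequence of \eqref{eq:skewprim} together with $\McV\Trs(z) = \lambda\Trs(z)$, and the paper offers no separate computation beyond this (part (1) by setting $\tr_\VV(x)=0$, part (2)(a) by applying $\McV$ to the displayed combination, part (2)(b) by rewriting \eqref{eq:skewprim} as $(\McV-\lambda\mathrm{I})\Trs(x)=\tr_\VV(x)\Trs(z)$ and invoking the paper's definition of generalized right eigenvector). Your extra care in part (2)(a) --- observing that $(\tr_\VV(y)-\lambda)\Trs(x)+\tr_\VV(x)\Trs(z)$ vanishes exactly when $\Trs(x)\in\mathbb{k}\,\Trs(z)$, in which case $\Trs(x)$ is itself an eigenvector of eigenvalue $\lambda$ --- is a correct refinement of a point the paper's statement glosses over, since Remark \ref{rem:Trs} rules out that degeneracy only in case (2)(b), where $\tr_\VV(y)=\lambda$ forces a contradiction, whereas for $\tr_\VV(y)\neq\lambda$ it is not excluded by the hypotheses alone.
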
    
 
 \begin{remark} \label{rem:Trs}  The case
$\tr_{\VV}(y) = \lambda$ and  $\Trs(x) = \delta\Trs(z)$ for some $0 \ne \delta \in \mathbb{k}$ cannot happen
when  $\Trs(z)$ is assumed to be a right eigenvector for $\Mf_\VV$ with eigenvalue $\lambda$ in (2),  as this
would imply that  $\Trs(x)$ is a right eigenvector for $\McV$ with both
eigenvalue $\lambda$ and eigenvalue $\lambda+\delta^{-1}\tr_{\VV}(x) \neq \lambda$. \end{remark}
  
\subsection{Left eigenvectors from traces of projective modules}\label{S2.2}  This section discusses using projective modules to
produce left eigenvectors.  Suppose $\Pf_1, \Pf_2, \dots, \Pf_m$ are the nonisomorphic indecomposable projective modules for the Hopf algebra $\Af$.  The tensor product of any  $\Af$-module $\VV$ with a projective $\Af$-module
is projective, hence the corresponding isomorphism class is a $\ZZ$-linear combination of the $[\Pf_j]$ in the Grothendieck group $\Kf_0(\Af)$
 (see \cite[Sec. 3.1]{L}).   We use $[\Pf_i \ot \VV:\Pf_j]$ to denote the multiplicity of $[\Pf_j]$ in $[\Pf_i \ot \VV]$ and define
\begin{equation}\label{eq:QV} \QV=\left(\Qf_{ij}\right), \;  \; \text{where}\; \;
\Qf_{ij} = [\Pf_i \ot \VV:  \Pf_j].\end{equation}    We refer to the matrix $\QV$ as the \emph{projective McKay matrix} and relate
$\QV$ to a McKay matrix in Theorem \ref{thm:QV} below.  If $x \in \Af$ and $\Delta(x) = \sum_x  x_{(1)} \ot x_{(2)}$,  then  
\begin{equation}
\label{eq:proj} 
\sum_x  \tr_{\Pf_i}(x_{(1)}) \tr_{\VV}(x_{(2)}) =  \tr_{\Pf_i \ot \VV}(x)  = \sum_{j=1}^m \Qf_{ij} \tr_{\Pf_j}(x) = \sum_{j=1}^m \tr_{\Pf_j}(x) \left(\QVT\right)_{ji}.
\end{equation}
This is the $i$th component of the matrix equation
\begin{equation}
\label{eq:Mproj} 
\Trp(x) \QVT = \sum_x  \tr_\VV(x_{(2)})  \Trp(x_{(1)}), \quad \text{where}
\end{equation}
\begin{align}
\label{eq:Trproj}
\Trp(x):=\begin{bmatrix}
\tr_{\Pf_1}(x)\; \,\tr_{\Pf_2}(x) \; \,\dots \; \,\tr_{\Pf_{m}}(x)
\end{bmatrix}.
\end{align}
We have the following analogs of \eqref{eq:grouplike} and \eqref{eq:skewprim}:
\begin{corollary}\label{cor:left}   Let $\QVT$ be the McKay matrix associated to tensoring the projective indecomposable $\Af$-modules with $\VV$. Then
\begin{itemize}\item[\rm{(a)}]  When $g \in \Af$ is grouplike, 
\begin{equation}\label{eq:prgrouplike} \Trp(g)\QVT  =  \tr_{\VV}(g)  \Trp(g). \end{equation}
Consequently,  for every grouplike element of $\Af$,  $\Trp(g)$ is a left eigenvector for $\QVT$ with eigenvalue
$\tr_{\VV}(g)$.    When $g = 1$,  the eigenvalue is $\tr_\VV(1) = \dimm(\VV)$, and the eigenvector $\Trp(1)$
is just the dimension vector $\mathbf{p} = 
\begin{bmatrix}
\dimm(\Pf_1)\; \,\dimm(\Pf_2)\,\; \dots \;\,\dimm(\Pf_{m})
\end{bmatrix}$.
\item[{\rm(b)}] When $x \in \Af$ has the property that $\Delta(x) = x \ot y + z \ot x$  for some nonzero $y,z \in \Af$,  
then \eqref{eq:Mproj} says  
 \begin{equation}\label{eq:skewpprim} \Trp(x) \QVT = \tr_{\VV}(y)\Trp(x) + \tr_{\VV}(x)
 \Trp(z).
 \end{equation} 
 \end{itemize} 
 \end{corollary}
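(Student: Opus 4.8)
The plan is to read off both parts directly from the matrix identity \eqref{eq:Mproj}, in exact parallel with the way Corollary \ref{rem:coprod} was obtained from \eqref{eq:matprodtrace}. In fact, all of the content has already been packaged into \eqref{eq:proj}--\eqref{eq:Mproj}: invoking Lemma \ref{lem:hopftrace}\,(c), using that $\Pf_i \ot \VV$ is again projective so that $[\Pf_i \ot \VV]$ expands in the $[\Pf_j]$, and using additivity of the trace over the decomposition into indecomposable projectives. What remains is only to specialize the coproduct $\Delta(x)$ in the two relevant cases and check that the Sweedler sum collapses to the stated form.

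For part (a), I would set $x = g$ with $g$ grouplike, so that $\Delta(g) = g \ot g$. In Sweedler notation this is a single-term sum with $g_{(1)} = g_{(2)} = g$, and substituting into \eqref{eq:Mproj} turns the right-hand side into $\tr_\VV(g)\,\Trp(g)$, which is \eqref{eq:prgrouplike}. Reading the resulting identity $\Trp(g)\QVT = \tr_\VV(g)\,\Trp(g)$ as a row-vector equation exhibits $\Trp(g)$ as a left eigenvector of $\QVT$ with eigenvalue $\tr_\VV(g)$. The specialization $g = 1$ then uses $\Delta(1) = 1 \ot 1$ together with $\tr_\VV(1) = \dimm(\VV)$ and $\tr_{\Pf_j}(1) = \dimm(\Pf_j)$ to identify the eigenvector $\Trp(1)$ with the dimension vector $\mathbf{p}$ and the eigenvalue with $\dimm(\VV)$.

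For part (b), I would take $x$ with $\Delta(x) = x \ot y + z \ot x$ for nonzero $y,z$. This is a two-term Sweedler sum with $(x_{(1)}, x_{(2)}) = (x, y)$ and $(x_{(1)}, x_{(2)}) = (z, x)$, so substituting into \eqref{eq:Mproj} yields $\Trp(x)\QVT = \tr_\VV(y)\,\Trp(x) + \tr_\VV(x)\,\Trp(z)$, which is precisely \eqref{eq:skewpprim}. There is no genuine obstacle at this stage, since the analytic work was already done in deriving \eqref{eq:Mproj}; the only thing worth double-checking is the bookkeeping of the Sweedler indices, namely that it is the \emph{second} tensor slot that feeds $\tr_\VV$, so that in (a) the eigenvalue comes out as $\tr_\VV(g)$ and in (b) the two summands carry the arguments $\tr_\VV(y)$ and $\tr_\VV(x)$ in the order stated.
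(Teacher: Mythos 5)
Your proposal is correct and matches the paper exactly: the paper states Corollary \ref{cor:left} without further proof precisely because, as you observe, it follows immediately by substituting $\Delta(g)=g\ot g$ (resp.\ $\Delta(x)=x\ot y+z\ot x$) into \eqref{eq:Mproj}, whose derivation already contains the projectivity of $\Pf_i\ot\VV$ and the trace identities. Your Sweedler bookkeeping, with the second tensor slot feeding $\tr_\VV$, is also the correct reading of \eqref{eq:Mproj}.
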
  
 \smallskip

 The next result is the projective version of Proposition \ref{prop:reigen}.  
 \smallskip
 
 \begin{proposition} 
 \label{prop:leigen}  
 Assume \eqref{eq:skewpprim} holds for $x \in \Af$ and $\Trp(x) \ne \mathbf{0}$.    
 \begin{itemize}
 \item [{\rm (1)}]  
 If  $\tr_{\VV}(x) = 0$,  then  $\Trp(x)$ is a left eigenvector for $\QVT$ with eigenvalue  $\tr_{\VV}(y)$. 
\item [{\rm (2)}]   If  $\tr_{\VV}(x) \ne  0$, and  $\Trp(z)$ is a left eigenvector for $\QVT$ with eigenvalue $\lambda$, then  
\begin{itemize} 
\item [{\rm (a)}] 
 $\tr_{\VV}(y) \ne \lambda$ implies that
 $(\tr_{\VV}(y)-\lambda)\Trp(x) + \tr_{\VV}(x) \Trp(z)$ is a left eigenvector for $\QVT$
 with eigenvalue $\tr_{\VV}(y)$;
 \item [{\rm (b)}]  $\tr_{\VV}(y) = \lambda$ and  $\Trp(x) \not \in \mathbb{k}\Trp(z)$ imply that $\Trp(x)$ is a generalized left eigenvector for $\QVT$ with eigenvalue $\lambda$:  $\Trp(x)\QVT= \lambda \Trp(x) + \tr_{\VV}(x)\Trp(z)$.
\end{itemize}
\end{itemize}
  \end{proposition}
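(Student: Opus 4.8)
The plan is to imitate, essentially line for line, the proof of Proposition \ref{prop:reigen}, transporting every statement about right eigenvectors of $\McV$ (column vectors acted on from the left) to the corresponding statement about left eigenvectors of $\QVT$ (row vectors acted on from the right), with the governing identity now being \eqref{eq:skewpprim} in place of \eqref{eq:skewprim}. To keep the bookkeeping transparent I would abbreviate $a := \tr_\VV(y)$, $b := \tr_\VV(x)$, $w := \Trp(x)$, and $v := \Trp(z)$, so that \eqref{eq:skewpprim} reads $w\QVT = a\,w + b\,v$ and the standing hypothesis is $w \ne \mathbf{0}$.

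Parts (1) and (2)(b) then fall out of \eqref{eq:skewpprim} with no computation. For (1), substituting $b = 0$ collapses the identity to $w\QVT = a\,w$; since $w \ne \mathbf{0}$, this exhibits $\Trp(x)$ as a genuine left eigenvector for $\QVT$ with eigenvalue $a = \tr_\VV(y)$. For (2)(b), substituting $a = \lambda$ rewrites \eqref{eq:skewpprim} as $w(\QVT - \lambda\mathrm{I}) = b\,v$; because $\Trp(z)$ is a left eigenvector for $\lambda$ we have $v \ne \mathbf{0}$, and $b \ne 0$ by hypothesis, so the right-hand side is a nonzero left eigenvector for $\lambda$. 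By the definition of generalized left eigenvector recorded just before Proposition \ref{prop:leigen}, this is exactly the claim that $\Trp(x)$ is a generalized left eigenvector for $\QVT$ with eigenvalue $\lambda$. I would also note, as the projective analog of Remark \ref{rem:Trs}, that the listed hypothesis $\Trp(x)\notin\mathbb{k}\Trp(z)$ is automatic here: $w\in\mathbb{k}v$ would make $w$ an eigenvector for $\lambda$, forcing $b\,v = w(\QVT - \lambda\mathrm{I}) = \mathbf{0}$ and contradicting $b\neq 0$, $v\neq\mathbf{0}$.

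The one genuine computation is part (2)(a). Here I would form the candidate $u := (a-\lambda)\,w + b\,v$ and evaluate $u\QVT$ directly, feeding in $w\QVT = a\,w + b\,v$ from \eqref{eq:skewpprim} and $v\QVT = \lambda\,v$ from the eigenvector hypothesis on $\Trp(z)$. Collecting the $w$- and $v$-terms yields $u\QVT = (a-\lambda)(a\,w+b\,v)+b\lambda\,v = a(a-\lambda)\,w + a\,b\,v = a\bigl((a-\lambda)\,w + b\,v\bigr) = a\,u$, so $u$ is a left eigenvector for $\QVT$ with eigenvalue $a = \tr_\VV(y)$, which is precisely the claimed vector $(\tr_\VV(y)-\lambda)\Trp(x) + \tr_\VV(x)\Trp(z)$.

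There is no deep obstacle: the entire content is the one-line identity of the previous paragraph, and the only thing demanding care is the consistent use of right-multiplication by $\QVT$ on the row vectors $\Trp(\cdot)$, mirroring the left-multiplication by $\McV$ on column vectors used for Proposition \ref{prop:reigen}. The lone subtlety is the nondegeneracy bookkeeping, i.e.\ confirming in each case that the produced vector is nonzero so that ``eigenvector'' is meant literally: for (1) this is the hypothesis $\Trp(x)\neq\mathbf{0}$, and for (2)(b) it is the observation above that $w\in\mathbb{k}v$ cannot occur when $a=\lambda$ and $b\neq 0$. For (2)(a) the vector $u$ vanishes exactly when $\Trp(x)\in\mathbb{k}\Trp(z)$, in which case $\Trp(x) = \tfrac{b}{\lambda-a}\Trp(z)$ is itself the left eigenvector for $\lambda$; outside this degenerate situation $u\neq\mathbf{0}$ and is a genuine left eigenvector for $\tr_\VV(y)$, completing the argument.
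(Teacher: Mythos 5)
Your proof is correct and is exactly the direct computation from \eqref{eq:skewpprim} that the paper leaves implicit: Proposition \ref{prop:leigen} is stated without proof as the projective analog of Proposition \ref{prop:reigen}, and your three verifications (substituting $\tr_\VV(x)=0$, the one-line eigenvector computation for (2)(a), and invoking the definition of generalized left eigenvector for (2)(b)) are the intended argument. Your extra bookkeeping is sound and slightly more careful than the paper's own remarks, in particular the observation that in (2)(a) the vector $(\tr_\VV(y)-\lambda)\Trp(x)+\tr_\VV(x)\Trp(z)$ vanishes precisely when $\Trp(x)\in\mathbb{k}\Trp(z)$, a degenerate case the paper does not address.
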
   
 
\begin{remark} As in Remark \ref{rem:Trs}, the case  $\tr_{\VV}(y) = \lambda$ and  $\Trp(x) = \delta \Trp(z)$ for some $0\ne \delta \in
 \mathbb{k}$ cannot happen when $\Trp(z)$ is assumed to be a left eigenvector for $\QVT$ with eigenvalue $\lambda$.
 \end{remark}  
  
\subsection{The Cartan map and Cartan matrix}\label{S2.3}  We will use the Cartan map to relate McKay matrices and 
projective McKay matrices for any finite-dimensional Hopf algebra $\Af$.  

The two Grothendieck groups mentioned in the Introduction are related by the \emph{Cartan map} $\mathsf{c}:  \Kf_0(\Af) \rightarrow  \GG_0(\Af)$, $[\Pf] \mapsto [\Pf]$.
The \emph{Cartan matrix} is the integral matrix $\Cf = \big(\Cf_{ij})$ representing $\mathsf{c}$ in the bases $\{[\Pf_j] \mid 1\le j \le m\}$
and $\{[\Sf_j] \mid 1\le j \le m\}$ of $ \Kf_0(\Af)$ and $\GG_0(\Af)$ respectively. It has  as its $(i,j)$ entry $\Cf_{ij} = [\Pf_i:\Sf_j]$, the multiplicity of
$[\Sf_j]$, when $[\Pf_i]$ is written as a $\ZZ$-combination of the classes $[\Sf_j]$, which is also the multiplicity of
$\Sf_j$   in a composition series for $\Pf_i$.   
When $\Af$ is semisimple, $\Cf$ is the identity matrix, as
$\Pf_j = \Sf_j$ for all $j$.    In general, the Cartan matrix  is not  invertible.   

Here is a tiny example exhibiting a non-invertible $\Cf$: The Sweedler Hopf algebra has a basis
$1,a,b,ab$, where $a^2 = 0$, $b^2 = 1$,  $ba = -ab$ (it is the baby Taft algebra $\Af_2$ - compare Section \ref{S3.1} with $q=-1$).    It has two one-dimensional simple modules 
$\Sf_\alpha$ and $\Sf_\varepsilon$,
where  $\alpha(a) = 0$, $\alpha(b) = -1$,  and $\varepsilon$ is the counit with $\varepsilon(a) = 0$ and $\varepsilon(b) = 1$.   The corresponding projective covers $\mathsf{P}_\alpha$
and $\Pf_\varepsilon$ are two-dimensional and have both simple modules as composition factors.
Thus, $\Cf= \left( \begin{matrix} 1 & 1 \\ 1 & 1 \end{matrix} \right)$.    In fact, the Cartan matrix of any Taft algebra 
$\Af_n$ defined using
a primitive root of unity of order $n\ge2$ is the $n \times n$ matrix with all entries equal to 1 (see \cite[Exer. 10.2.4]{LM}). 

Now suppose that $\QV = \big(\Qf_{ij}\big)$ is the projective McKay matrix for tensoring with $\VV$ as in \eqref{eq:QV}.
Then the relation between $\QV$ and  the McKay matrix $\McV$ for tensoring simple modules
with $\VV$ can be described as follows:

\begin{proposition} For any $\Af$-module $\VV$,  \hspace{-.3cm}  
\begin{itemize} \item[{\rm(a)}]  $\QV\Cf= \Cf \McV$, where $\Cf = \big(\Cf_{ij}\big)$ is the Cartan matrix.
 \item[{\rm(b)}] If $v$ is a right eigenvector for $\McV$ with eigenvalue $\lambda$, then
 $\Cf v$  is a right eigenvector for $\QV$ with eigenvalue $\lambda$.
 \item[{\rm(c)}] If $w$ is a left eigenvector for $\QV$ with eigenvalue $\mu$,  then $w\Cf$ is a left 
eigenvector for $\McV$ with eigenvalue $\mu$.    
\end{itemize}
\end{proposition}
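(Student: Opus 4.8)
The plan is to prove part (a) first, since parts (b) and (c) will follow from it by a one-line linear-algebra argument. For (a) I would compute the class of the module $\Pf_i \ot \VV$ in the Grothendieck ring $\GG_0(\Af)$ in two different ways and then compare coefficients in the $\ZZ$-basis $\{[\Sf_1], \dots, [\Sf_m]\}$. The point is that $\Pf_i \ot \VV$ determines a single well-defined class in $\GG_0(\Af)$, so any two computations of it must agree; one computation will produce $\Cf\McV$ and the other $\QV\Cf$.

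For the first computation I would expand $[\Pf_i]$ via the Cartan matrix and use multiplicativity of the product in $\GG_0(\Af)$ (which is legitimate because $-\ot\VV$ is exact over the field $\mathbb{k}$):
\[
[\Pf_i \ot \VV] = [\Pf_i]\,[\VV] = \sum_j \Cf_{ij}\,[\Sf_j]\,[\VV] = \sum_j \Cf_{ij}\,[\Sf_j \ot \VV] = \sum_k \Big(\sum_j \Cf_{ij}\Mf_{jk}\Big)[\Sf_k] = \sum_k (\Cf\McV)_{ik}\,[\Sf_k].
\]
For the second computation I would use that $\Pf_i \ot \VV$ is projective, so $[\Pf_i \ot \VV] = \sum_j \Qf_{ij}[\Pf_j]$ holds in $\Kf_0(\Af)$ by the definition of $\QV$; applying the Cartan map $\mathsf{c}$, which simply re-expresses each $[\Pf_j]$ as $\sum_k \Cf_{jk}[\Sf_k]$ inside $\GG_0(\Af)$, gives
\[
[\Pf_i \ot \VV] = \sum_j \Qf_{ij}\,[\Pf_j] = \sum_k \Big(\sum_j \Qf_{ij}\Cf_{jk}\Big)[\Sf_k] = \sum_k (\QV\Cf)_{ik}\,[\Sf_k].
\]
Since $[\Sf_1], \dots, [\Sf_m]$ are linearly independent, comparing coefficients yields $(\Cf\McV)_{ik} = (\QV\Cf)_{ik}$ for all $i,k$, i.e.\ $\QV\Cf = \Cf\McV$. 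Parts (b) and (c) are then immediate: if $\McV v = \lambda v$, left-multiplying by $\Cf$ and applying (a) gives $\QV(\Cf v) = \Cf\McV v = \lambda(\Cf v)$; if $w\QV = \mu w$, right-multiplying by $\Cf$ and applying (a) gives $(w\Cf)\McV = w\QV\Cf = \mu(w\Cf)$.

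The one genuinely substantive step is the \emph{agreement} of the two computations of $[\Pf_i \ot \VV]$ in $\GG_0(\Af)$: reading the composition-factor multiplicities by first decomposing $\Pf_i \ot \VV$ into projective indecomposables and then passing to simples (the $\QV\Cf$ route), versus first resolving $\Pf_i$ into its simple composition factors and then tensoring with $\VV$ (the $\Cf\McV$ route). This rests precisely on the compatibility of the Cartan map with the Grothendieck-ring products together with the exactness of $-\ot\VV$, and once both routes are phrased entirely inside $\GG_0(\Af)$ they are forced to coincide, so no delicate estimate is required. A minor caveat I would flag in (b) and (c): because $\Cf$ may be singular, the images $\Cf v$ and $w\Cf$ can in principle vanish; the eigenvalue equation always holds, and the conclusion is an eigenvector in the strict sense exactly when these images are nonzero.
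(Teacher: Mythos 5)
Your proof is correct and follows essentially the same route as the paper: both compute $[\Pf_i \ot \VV]$ in $\GG_0(\Af)$ two ways --- once by decomposing into projective indecomposables and applying the Cartan map (yielding $\QV\Cf$), and once by expanding $[\Pf_i][\VV]$ through the Cartan and McKay matrices (yielding $\Cf\McV$) --- and then deduce (b) and (c) by the same one-line multiplication argument. Your closing caveat that $\Cf v$ or $w\Cf$ could vanish when $\Cf$ is singular is a legitimate refinement the paper leaves implicit, but it does not change the substance of the argument.
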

  
 \begin{proof} (a) With notation as in \eqref{eq:QV}, we have on one hand
$$[\mathsf{P}_i \ot \VV]= \sum_{t=1}^m  \Qf_{it} [\mathsf{P}_t] \;
\cto \;  \sum_{t=1}^m  \Qf_{it}[\mathsf{P}_t]= \sum_{t,\ell = 1}^m  \Qf_{it}\Cf_{t\ell} 
 [\Sf_\ell],$$
  and on the other,   
$$[\mathsf{P}_i \ot \VV] \; \cto \; [\mathsf{P}_i \ot \VV] = 
[\mathsf{P}_i]  [ \VV]  = \sum_{t=1}^m \Cf_{it}[\Sf_t]
 [\VV] =  \sum_{t,\ell = 1}^m \Cf_{it} \Mf_{t\ell} [\Sf_\ell].$$ 
Thus,  $\QV\mathsf{C = CM_V}$.  

(b) If $v$ is a right eigenvector for $\McV$ with eigenvalue $\lambda$, then $\QV\mathsf{C}v = \mathsf{CM_V}v = \lambda {\mathsf C} v$ so that 
$\mathsf{C}v$  is a right eigenvector for $\Qf_\VV$ with eigenvalue $\lambda$.   (c) Similarly,
if $w$ is a left eigenvector for $\QV$ with eigenvalue $\mu$,  then $w\Cf$ is a left 
eigenvector for $\McV$ with eigenvalue $\mu$.  \end{proof}

For simplicity, we usually omit the brackets on the isomorphism class representatives of $\Kf_0(\Af)$ and
$\GG_0(\Af)$  in what follows unless 
they are needed for clarity.   We will use the next result,  which can be found for example in \cite[Prop. 9.2.3]{Et}. 

\begin{proposition}\label{prop:N} Let $\Pf_i$ be the
projective cover of the simple module $\Sf_i$. Then for any $\Af$-module $\Nf$,  
$$\mathsf{dim}_{\mathbb k} \mathsf{Hom}_{\Af}(\Pf_i, \mathsf{N}) = [\mathsf{N}: \Sf_i],$$ the
multiplicity of $\Sf_i$ in a Jordan-H\"older series of $\mathsf{N}$. \end{proposition}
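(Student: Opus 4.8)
The statement is the standard fact that for a finite-dimensional algebra $\Af$, the projective cover $\Pf_i$ of a simple module $\Sf_i$ corepresents the multiplicity functor $\Nf \mapsto [\Nf:\Sf_i]$. The plan is to establish the identity $\dimm_{\mathbb k}\mathsf{Hom}_\Af(\Pf_i,\Nf) = [\Nf:\Sf_i]$ by an induction on the composition length of $\Nf$, using the exactness properties of $\mathsf{Hom}_\Af(\Pf_i,-)$ that follow from projectivity of $\Pf_i$.

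First I would record the base case: for a simple module $\Sf_j$, every nonzero homomorphism $\Pf_i \to \Sf_j$ is surjective (its image is a nonzero submodule of the simple $\Sf_j$), so it factors through $\Pf_i/\mathsf{rad}(\Pf_i) \simeq \Sf_i$, which forces the target to be $\Sf_j \simeq \Sf_i$; conversely when $j=i$ there is exactly a one-dimensional space of such maps because $\mathsf{End}_\Af(\Sf_i) = \mathbb{k}$ over the algebraically closed field $\mathbb{k}$ (Schur's lemma). Hence $\dimm_{\mathbb k}\mathsf{Hom}_\Af(\Pf_i,\Sf_j) = \delta_{ij} = [\Sf_j:\Sf_i]$, establishing the claim for length-one modules. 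For the inductive step, I would choose a composition series and take a short exact sequence
$$0 \longrightarrow \Tf \longrightarrow \Nf \longrightarrow \Sf_j \longrightarrow 0,$$
where $\Sf_j$ is the top composition factor and $\Tf$ has strictly smaller length. Because $\Pf_i$ is projective, the functor $\mathsf{Hom}_\Af(\Pf_i,-)$ is exact, so applying it yields a short exact sequence of $\mathbb{k}$-vector spaces, whence
$$\dimm_{\mathbb k}\mathsf{Hom}_\Af(\Pf_i,\Nf) = \dimm_{\mathbb k}\mathsf{Hom}_\Af(\Pf_i,\Tf) + \dimm_{\mathbb k}\mathsf{Hom}_\Af(\Pf_i,\Sf_j).$$
Invoking the inductive hypothesis on $\Tf$ and the base case on $\Sf_j$, the right-hand side equals $[\Tf:\Sf_i] + [\Sf_j:\Sf_i]$, which is exactly $[\Nf:\Sf_i]$ by additivity of composition multiplicities along the short exact sequence. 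This closes the induction.

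The only genuinely delicate point is the base case, and within it the step asserting $\dimm_{\mathbb k}\mathsf{Hom}_\Af(\Pf_i,\Sf_i) = 1$: here I rely on the defining property $\Pf_i/\mathsf{rad}(\Pf_i) \simeq \Sf_i$ stated in the Introduction together with the fact that any map to a simple module kills the radical, reducing $\mathsf{Hom}_\Af(\Pf_i,\Sf_i)$ to $\mathsf{Hom}_\Af(\Sf_i,\Sf_i) = \mathbb{k}$ via Schur. Everything else is formal: projectivity gives exactness, and composition length is well-defined by Jordan–Hölder. Since this is a cited textbook result (\cite[Prop. 9.2.3]{Et}), the proof in the paper may simply point to the reference, but the inductive argument above is the natural self-contained route.
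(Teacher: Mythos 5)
Your proof is correct, but it cannot be compared to a proof in the paper because the paper gives none: Proposition \ref{prop:N} is stated with only the citation \cite[Prop.~9.2.3]{Et}, exactly as you anticipated in your closing remark. Your inductive argument is the standard self-contained route and all the steps check out: the base case correctly uses that any homomorphism from $\Pf_i$ to a simple module kills $\mathsf{rad}(\Pf_i)$ (since $\mathsf{rad}(\Pf_i)=(\mathsf{rad}\,\Af)\Pf_i$ maps into $(\mathsf{rad}\,\Af)\Sf_j=0$), so that $\mathsf{Hom}_{\Af}(\Pf_i,\Sf_j)\cong\mathsf{Hom}_{\Af}(\Sf_i,\Sf_j)$, which Schur's lemma over the algebraically closed field $\mathbb{k}$ identifies as $\delta_{ij}\,\mathbb{k}$; the inductive step correctly combines exactness of $\mathsf{Hom}_{\Af}(\Pf_i,-)$ (from projectivity) with additivity of composition multiplicities along a short exact sequence, and the induction is legitimate because all modules in the paper are assumed finite-dimensional, hence of finite length. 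What the citation buys the authors is brevity for a textbook fact that plays only an auxiliary role (it feeds into Theorem \ref{thm:QV}); what your argument buys is transparency about where each hypothesis enters — projectivity for exactness, the projective-cover property $\Pf_i/\mathsf{rad}(\Pf_i)\simeq\Sf_i$ together with Schur for the base case — which is worth having if one wants the paper self-contained.
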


For every $\Pf \in \Kf_0(\Af)$ and  $\VV,\WW  \in \GG_0(\Af)$, the following holds (see \cite[Sec.~3.1]{L})
\begin{equation}\label{eq:dimHom}\mathsf{dim}_{\mathbb k} \mathsf{Hom}_{\Af}(\Pf \ot \VV,\WW) = 
 \mathsf{dim}_{\mathbb k} \mathsf{Hom}_{\Af}(\Pf,\WW \ot \VV^*).\end{equation} 
 This will enable us to relate the projective McKay matrix to the McKay matrix  of the dual module.

Assume  $\McV
= \big(\Mf_{ij}\big)$ is the McKay matrix for tensoring with $\VV \in \GG_0(\Af)$,
and $\Mf_{\VV^*}
= \big(\Mf^*_{ij}\big)$ is the McKay matrix for tensoring with the dual module
$\VV^*$.     Let   $\Mf_{\VV^*}^{\tt T} = \big(\Mf^*_{ji}\big)$ be the transpose of  $\Mf_{\VV^*}$, and let 
$\Qf_\VV =\big(\Qf_{ij}\big)$ be the projective McKay matrix for $\VV$.   Then we have the following consequence of  Proposition \ref{prop:N} and \eqref{eq:dimHom}.  Variations of this result have appeared in several different contexts such as 
\cite[Lem.~8]{AM} and \cite[Lem.~10]{MM}, and it can be regarded as a special case of 
the tensor category result \cite[Prop. 6.1.2]{EGNO}. 

\begin{theorem} \label{thm:QV} Assume $\VV$ is a module for a finite-dimensional Hopf algebra $\Af$.     Then 
$$\Qf_{ij} = \mathsf{dim}_{\mathbb k} \mathsf{Hom}_{\Af}(\Pf_i \ot \VV,\Sf_j) = 
 \mathsf{dim}_{\mathbb k} \mathsf{Hom}_{\Af}(\Pf_i,\Sf_j \ot \VV^*) = [\Sf_j \ot \VV^*: \Sf_i] = \Mf_{ji}^*.$$ 
 Therefore,  $\Qf_\VV = \Mf_{\VV^*}^{\tt T}$. 
\end{theorem}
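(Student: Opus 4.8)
The plan is to establish the chain of equalities displayed in the statement, since the final claim $\Qf_\VV = \Mf_{\VV^*}^{\tt T}$ is immediate once we know $\Qf_{ij} = \Mf_{ji}^*$ for all $i,j$. The key inputs are Proposition \ref{prop:N}, which identifies $\dimm_{\mathbb k}\mathsf{Hom}_\Af(\Pf_i, \Nf)$ with the composition multiplicity $[\Nf:\Sf_i]$, and the adjunction isomorphism \eqref{eq:dimHom}, which lets us move a tensor factor of $\VV$ across a $\mathsf{Hom}$ at the cost of dualizing it to $\VV^*$.

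First I would unpack the definition $\Qf_{ij} = [\Pf_i \ot \VV : \Pf_j]$. The subtlety is that this multiplicity is a priori a $\Kf_0(\Af)$-coefficient: it counts how often the \emph{projective indecomposable} $\Pf_j$ occurs as a direct summand of the projective module $\Pf_i \ot \VV$, not a composition-factor multiplicity. The bridge is the standard fact that the multiplicity of $\Pf_j$ as a summand of a projective module equals $\dimm_{\mathbb k}\mathsf{Hom}_\Af(\Pf_i \ot \VV, \Sf_j)$: applying $\mathsf{Hom}_\Af(-,\Sf_j)$ to the decomposition $\Pf_i \ot \VV = \bigoplus_t \Pf_t^{\oplus \Qf_{it}}$ and using that $\dimm_{\mathbb k}\mathsf{Hom}_\Af(\Pf_t, \Sf_j) = [\Sf_j : \Sf_t] = \delta_{tj}$ (since $\Sf_j$ is simple and $\Pf_t$ is the projective cover of $\Sf_t$) picks out exactly $\Qf_{ij}$. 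This gives the first equality $\Qf_{ij} = \dimm_{\mathbb k}\mathsf{Hom}_\Af(\Pf_i \ot \VV, \Sf_j)$.

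Next I would apply \eqref{eq:dimHom} with $\Pf = \Pf_i$ and $\WW = \Sf_j$ to obtain the second equality $\dimm_{\mathbb k}\mathsf{Hom}_\Af(\Pf_i \ot \VV, \Sf_j) = \dimm_{\mathbb k}\mathsf{Hom}_\Af(\Pf_i, \Sf_j \ot \VV^*)$. Then Proposition \ref{prop:N}, applied with $\Nf = \Sf_j \ot \VV^*$, yields the third equality $\dimm_{\mathbb k}\mathsf{Hom}_\Af(\Pf_i, \Sf_j \ot \VV^*) = [\Sf_j \ot \VV^* : \Sf_i]$. Finally, recognizing the right-hand multiplicity as the $(j,i)$ entry of the McKay matrix $\Mf_{\VV^*}$ for tensoring with $\VV^*$ gives $[\Sf_j \ot \VV^*:\Sf_i] = \Mf_{ji}^*$, completing the chain. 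Reading off $\Qf_{ij} = \Mf_{ji}^*$ for all $i,j$ is precisely the statement $\Qf_\VV = \Mf_{\VV^*}^{\tt T}$.

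I expect the main obstacle to be the very first equality rather than anything later: the passage from the $\Kf_0$-multiplicity $[\Pf_i \ot \VV : \Pf_j]$ to a $\mathsf{Hom}$-dimension requires care because one must justify that $\Pf_i \ot \VV$ is itself projective (so that the Krull--Remak--Schmidt decomposition into indecomposable projectives exists and the multiplicity is well defined) and then invoke the orthogonality $\dimm_{\mathbb k}\mathsf{Hom}_\Af(\Pf_t,\Sf_j) = \delta_{tj}$. Projectivity of $\Pf_i \ot \VV$ is exactly the fact recalled in Section \ref{S2.2} that a projective tensored with any finite-dimensional module is projective; once that is in hand, the remaining three equalities are formal applications of the two cited results and the definition of the McKay matrix.
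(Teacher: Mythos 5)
Your proof is correct and takes essentially the same route as the paper, which presents the theorem precisely as this chain of equalities justified by Proposition \ref{prop:N} and \eqref{eq:dimHom}. The only difference is that you spell out the first equality in full---passing from the summand multiplicity $[\Pf_i \ot \VV : \Pf_j]$ to $\dimm_{\mathbb k}\mathsf{Hom}_{\Af}(\Pf_i \ot \VV, \Sf_j)$ via projectivity of $\Pf_i \ot \VV$ and the orthogonality $\dimm_{\mathbb k}\mathsf{Hom}_{\Af}(\Pf_t,\Sf_j)=\delta_{tj}$---a detail the paper leaves implicit.
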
 

\begin{corollary}\label{cor:Q} For any module $\VV$ over a finite-dimensional Hopf algebra $\Af$,
 \begin{itemize}  \item[{\rm(a)}] $\Mf_{\VV^*}^{\tt T}\, \Cf = \Cf\, \Mf_{\VV}$, where $\Cf$ is the Cartan matrix.
 \item[{\rm(b)}] If $v$ is a right eigenvector
of $\McV$ with eigenvalue $\lambda$, then $ \Cf v$ is a right eigenvector for 
$\Mf_{\VV^*}^{\tt T}$ with eigenvalue $\lambda$.  Similarly,  if $w$ is a left eigenvector of $ \Mf_{\VV^*}^{\tt T}$ with eigenvalue $\mu$, then 
$w \Cf$ is a left eigenvector for $\McV$ with eigenvalue $\mu$. 
\item[{\rm(c)}] If $\mathsf{C}$ is invertible, then $ \Mf_{\VV^*}^{\tt T} = \Cf \Mf_{\VV} \Cf^{-1}$.  Therefore, $\McV$ and $\Mf_{\VV^*}^{\tt T}$ (and also $\Mf_{\VV^*}$) have the same eigenvalues when $\Cf$ is invertible. 
 \item[{\rm(d)}] If $\Af$ is semisimple, the Cartan matrix is the identity matrix, and consequently,   $\Mf_{\VV^*}^{\tt T} =\McV.$  
Moreover, if $\VV$ is self-dual, then $\McV$ is symmetric, hence orthogonally diagonalizable. 
 \end{itemize}
\end{corollary}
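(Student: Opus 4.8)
The plan is to derive Corollary \ref{cor:Q} directly from Theorem \ref{thm:QV}, which already supplies the key identity $\Qf_\VV = \Mf_{\VV^*}^{\tt T}$, together with Proposition (a) above, the Cartan relation $\QV \Cf = \Cf \McV$. First I would establish part (a): substituting $\Qf_\VV = \Mf_{\VV^*}^{\tt T}$ into $\QV \Cf = \Cf \McV$ immediately yields $\Mf_{\VV^*}^{\tt T}\, \Cf = \Cf\, \Mf_{\VV}$. This is purely a matter of rewriting what we already know, so no genuine computation is needed.

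For part (b), I would apply the earlier eigenvector-transport results. If $v$ is a right eigenvector of $\McV$ with eigenvalue $\lambda$, then part (a) gives $\Mf_{\VV^*}^{\tt T}(\Cf v) = \Cf(\McV v) = \lambda\, \Cf v$, so $\Cf v$ is a right eigenvector for $\Mf_{\VV^*}^{\tt T}$ with eigenvalue $\lambda$. The left-eigenvector claim follows symmetrically: if $w \Mf_{\VV^*}^{\tt T} = \mu w$ wait---$w$ is a left eigenvector of $\Mf_{\VV^*}^{\tt T}$, so $w \Mf_{\VV^*}^{\tt T} = \mu w$, and then $(w\Cf)\McV = w(\Cf \McV) = w(\Mf_{\VV^*}^{\tt T}\Cf) = \mu\, w\Cf$, giving that $w\Cf$ is a left eigenvector for $\McV$. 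These are routine once part (a) is in hand; this is essentially the same argument recorded in the Cartan-map Proposition, now specialized to $\VV^*$ instead of $\Qf_\VV$.

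For part (c), I would invoke the invertibility of $\Cf$: rearranging $\Mf_{\VV^*}^{\tt T}\Cf = \Cf \McV$ gives $\Mf_{\VV^*}^{\tt T} = \Cf \McV \Cf^{-1}$, exhibiting $\McV$ and $\Mf_{\VV^*}^{\tt T}$ as conjugate, hence similar, so they share the same characteristic polynomial and therefore the same eigenvalues. Since a matrix and its transpose always have the same eigenvalues, $\Mf_{\VV^*}$ shares them too. For part (d), semisimplicity forces $\Pf_j = \Sf_j$, so $\Cf = \mathrm{I}$ and the identity of part (a) collapses to $\Mf_{\VV^*}^{\tt T} = \McV$. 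If in addition $\VV$ is self-dual, $\VV^* \simeq \VV$ gives $\Mf_{\VV^*} = \McV$, whence $\McV^{\tt T} = \McV$; a real symmetric matrix is orthogonally diagonalizable by the spectral theorem (the entries $\Mf_{ij}$ are nonnegative integers, hence real).

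The main obstacle is conceptual rather than computational: there is essentially no hard step, since Theorem \ref{thm:QV} has already done the substantive work of identifying $\Qf_\VV$ with $\Mf_{\VV^*}^{\tt T}$. The only point requiring a moment of care is the self-dual claim in (d)---one must note that $\VV$ self-dual means $\VV \simeq \VV^*$ \emph{as $\Af$-modules}, so that the McKay matrices $\Mf_\VV$ and $\Mf_{\VV^*}$ literally coincide (the composition-factor multiplicities agree), and then combine this with $\Mf_{\VV^*}^{\tt T} = \McV$ from the semisimple case to conclude symmetry before applying the spectral theorem over the reals.
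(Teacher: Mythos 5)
Your proposal is correct and follows essentially the same route as the paper: the corollary is exactly the combination of Theorem \ref{thm:QV} ($\Qf_\VV = \Mf_{\VV^*}^{\tt T}$) with the Cartan-map proposition $\QV\Cf = \Cf\McV$ and its eigenvector-transport parts, which is how the paper intends it (stating it without separate proof). Your added care in part (d) --- that self-duality gives $\Mf_{\VV^*} = \McV$ literally, so symmetry follows and the real spectral theorem applies --- is precisely the intended reading.
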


\subsection{Eigenvectors from projective modules}  \label{S2.4}  We combine results from the previous sections 
to obtain left eigenvectors from traces of projective modules. Theorem \ref{thm:QV} implies that
$\QVT = \Mf_{\VV^*}$ holds for any finite-dimensional Hopf algebra $\Af$, where $\VV^*$ is the dual module to $\VV$, and $\Qf_\VV$
is the projective McKay matrix.
As a consequence of  \eqref{eq:Mproj},  we have 

\begin{theorem}\label{thm:projevec} For any $\Af$-module $\VV$ and all $x \in \Af$, \smallskip 
\begin{itemize} \item[{\rm(a)}]  
$\displaystyle{\Trp(x)\Mf_{\VV^*}  = \sum_x  \tr_\VV(x_{(2)})  \Trp(x_{(1)}),}$\,
where \, $\Trp(x) = \left[
\tr_{\Pf_1}(x)\, \; \tr_{\Pf_2}(x)\, \; \dots \; \,\tr_{\Pf_{m}}(x)
\right].$
 \smallskip 
 \item[{\rm (b)}]   If $(\VV^*)^* \cong \VV$, then 
$\displaystyle{
\Trp(x)\McV = \sum_x  \tr_{\VV^*}(x_{(2)})  \Trp(x_{(1)}).}$
\end{itemize}  \end{theorem}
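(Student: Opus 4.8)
The plan is to obtain both parts directly from the apparatus already assembled, with essentially no fresh computation. The engine is the matrix identity \eqref{eq:Mproj}, namely $\Trp(x)\QVT = \sum_x \tr_\VV(x_{(2)})\Trp(x_{(1)})$, which repackages the trace of $x$ on each $\Pf_i \ot \VV$ as a combination of traces on the $\Pf_j$. The one additional ingredient is the identification of $\QVT$ with a bona fide McKay matrix, and this is exactly what Theorem \ref{thm:QV} provides.

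For part (a), I would begin from $\Qf_\VV = \Mf_{\VV^*}^{\tt T}$ (Theorem \ref{thm:QV}) and transpose to get $\QVT = \Mf_{\VV^*}$, as already recorded just above the statement. Substituting this into \eqref{eq:Mproj} yields $\Trp(x)\Mf_{\VV^*} = \sum_x \tr_\VV(x_{(2)})\Trp(x_{(1)})$ at once. The only subtlety is the transpose bookkeeping: because $\Qf_\VV$ is itself the transpose of $\Mf_{\VV^*}$, its transpose $\QVT$ is $\Mf_{\VV^*}$, and one must check that the row vector $\Trp(x)$ multiplies $\Mf_{\VV^*}$ on the left in precisely the way it multiplied $\QVT$.

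For part (b), the idea is to feed the dual module $\VV^*$ into part (a). Since $\VV^*$ is again a finite-dimensional $\Af$-module, with action via the antipode, part (a) applies verbatim and gives $\Trp(x)\Mf_{(\VV^*)^*} = \sum_x \tr_{\VV^*}(x_{(2)})\Trp(x_{(1)})$. Under the hypothesis $(\VV^*)^* \cong \VV$ I would then replace $\Mf_{(\VV^*)^*}$ by $\McV$; this is legitimate because the entries of a McKay matrix are composition-factor multiplicities, which depend only on the isomorphism class of the tensoring module. That replacement turns the identity into $\Trp(x)\McV = \sum_x \tr_{\VV^*}(x_{(2)})\Trp(x_{(1)})$, which is exactly (b). I anticipate no real obstacle: the whole argument is one substitution plus a single reuse of (a) with $\VV^*$, and the only care needed is clerical---tracking the transpose in Theorem \ref{thm:QV} and noting that isomorphic tensoring modules give the same McKay matrix.
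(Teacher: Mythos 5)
Your proposal is correct and follows exactly the paper's route: the paper also obtains (a) by combining Theorem \ref{thm:QV} (giving $\QVT = \Mf_{\VV^*}$) with equation \eqref{eq:Mproj}, and (b) by applying (a) to the module $\VV^*$ together with the hypothesis $(\VV^*)^* \cong \VV$. Your clerical remarks about the transpose and about McKay matrices depending only on the isomorphism class of the tensoring module are exactly the (implicit) checks the paper relies on.
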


\begin{corollary}\label{cor:V=V**}  Under the assumption that $(\VV^*)^*\cong \VV$ for  the $\Af$-module $\VV$,  the following 
results hold (compare these to the corresponding results for simple modules  \eqref{eq:grouplike} and \eqref{eq:skewprim}):
\begin{itemize}
\item[{\rm(a)}] When $g \in \Af$ is grouplike, 
\begin{equation}\label{eq:pgrouplike} \Trp(g)\McV  =  \tr_{\VV^*}(g)  \Trp(g). \end{equation}
Hence,  for every grouplike element of $\Af$,  $\Trp(g)$ is a left eigenvector of $\McV$ of eigenvalue
$\tr_{\VV^*}(g)$.  The vector $\Trp(1)$
is just the dimension vector $\mathbf{p} = 
\begin{bmatrix}
\dimm(\Pf_1)\;\, \dimm(\Pf_2)\; \,\dots \;\,\dimm(\Pf_{m})
\end{bmatrix}$,
and the eigenvalue is $\tr_{\VV^*}(1) = \dimm(\VV^*) = \dimm(\VV) =\tr_{\VV}(1)$.  \smallskip

\item[{\rm(b)}] When $x \in \Af$ has the property that $\Delta(x) = x \ot y + z \ot x$  for some nonzero $y,z \in \Af$,  
then \eqref{eq:Mproj} says  
 \begin{equation}\label{eq:skewprim2} \Trp(x)\McV = \tr_{\VV^*}(y) \Trp(x) + \tr_{\VV^*}(x)
 \Trp(z).
 \end{equation} 
 \end{itemize} 
 \end{corollary}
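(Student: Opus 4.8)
The plan is to obtain this corollary as a direct specialization of Theorem \ref{thm:projevec}(b), exactly as Corollary \ref{rem:coprod} was extracted from Theorem \ref{Thm:tracesimp}. Theorem \ref{thm:projevec}(b) supplies, under the standing hypothesis $(\VV^*)^* \cong \VV$, the master identity
\begin{equation*}
\Trp(x)\McV = \sum_x \tr_{\VV^*}(x_{(2)})\,\Trp(x_{(1)})
\end{equation*}
valid for every $x \in \Af$. The whole strategy is then to substitute two specific families of elements $x$ and collapse the Sweedler sum on the right.

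For part (a), I would take $x = g$ to be a grouplike element, so that $\Delta(g) = g \ot g$ and the Sweedler sum degenerates to the single term $\tr_{\VV^*}(g)\,\Trp(g)$; this is precisely \eqref{eq:pgrouplike}, and since $\Trp(g)$ is a fixed row vector, it is thereby exhibited as a left eigenvector of $\McV$ with eigenvalue $\tr_{\VV^*}(g)$. The special case $g = 1$ then follows because $1$ is grouplike, because $\Trp(1) = \mathbf{p}$ by the definition of $\Trp$ together with $\tr_{\Pf_i}(1) = \dimm(\Pf_i)$, and because the eigenvalue simplifies via $\tr_{\VV^*}(1) = \dimm(\VV^*) = \dimm(\VV)$, using that a finite-dimensional vector space and its dual have equal dimension.

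For part (b), I would take an element $x$ with $\Delta(x) = x \ot y + z \ot x$ for nonzero $y,z \in \Af$. The Sweedler sum then has exactly two terms and yields $\tr_{\VV^*}(y)\,\Trp(x) + \tr_{\VV^*}(x)\,\Trp(z)$, which is \eqref{eq:skewprim2}. At the level of the corollary there is essentially no obstacle: all the substantive content has already been absorbed into Theorem \ref{thm:projevec}(b), whose proof rests on the identification $\QVT = \Mf_{\VV^*}$ from Theorem \ref{thm:QV} together with the self-duality input $(\VV^*)^* \cong \VV$ needed to trade $\VV$-traces for $\VV^*$-traces. The only point requiring (routine) care in the corollary itself is checking that the Sweedler-notation sum collapses correctly for each of the two chosen coproduct forms.
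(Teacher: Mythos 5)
Your proposal is correct and follows exactly the route the paper intends: the corollary is stated as an immediate consequence of Theorem \ref{thm:projevec}(b) (itself resting on $\QVT = \Mf_{\VV^*}$ from Theorem \ref{thm:QV} and the hypothesis $(\VV^*)^* \cong \VV$), with parts (a) and (b) obtained by collapsing the Sweedler sum for $\Delta(g) = g \ot g$ and $\Delta(x) = x \ot y + z \ot x$ respectively, just as you describe.
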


\subsection{Eigenvectors for McKay matrices from the Grothendieck algebra $\GG_0^{\CC}(\Af)$}\label{S2.5}
Next we describe a way to produce left eigenvectors and generalized left eigenvectors for McKay matrices
using the Grothendieck algebra $\GG_0^{\CC}(\Af) = \CC \ot_{\ZZ} \GG_0(\Af)$ of any Hopf algebra $\Af$.
The classes $[\Sf_1],[\Sf_2], \dots, [\Sf_m]$
of the nonisomorphic simple modules give a $\CC$-basis for
$\GG_0^{\CC}(\Af)$.  

\begin{proposition}\label{prop:rightmult}\begin{itemize} \item[{\rm(a)}] Let $\VV$ be an $\Af$-module, and assume 
$[\XX] = c_1 [\Sf_1]+ \cdots + c_m [\Sf_m] \in \GG_0^{\CC}(\Af)$,
$c_j \in \CC$ for all $j$,   is an eigenvector for the right multiplication operator $\mathsf{R}_{[\VV]}$ of $\GG_0^{\CC}(\Af)$ with eigenvalue $\lambda$.   Then  $[c_1\; c_2\;\, \dots\;\, c_m]$ is a left eigenvector for $\McV$
with eigenvalue $\lambda$.
\item[{\rm(b)}]  Suppose that $[\YY] = d_1 [\Sf_1] + \cdots + d_k [\Sf_m] \in \GG_0^{\CC}(\Af)$ has the property that 
$(\mathsf{R}_{[\VV]}- \lambda \mathrm{I})^\ell(\YY) = 0$.   Then for the McKay matrix $\McV$, we have
 $[d_1\,\;d_2\,\; \dots\;\; d_m] (\McV - \lambda \mathrm{I})^\ell = 0.$
 \end{itemize} \end{proposition}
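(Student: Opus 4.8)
The plan is to recognize that the McKay matrix $\McV$ is precisely the matrix of the right multiplication operator $\mathsf{R}_{[\VV]}$ with respect to the basis $[\Sf_1],\dots,[\Sf_m]$ of $\GG_0^{\CC}(\Af)$, and then read off both statements from this single identification. Concretely, since $\Mf_{ij} = [\Sf_i \ot \VV : \Sf_j]$, we have in $\GG_0^{\CC}(\Af)$
\begin{equation*}
\mathsf{R}_{[\VV]}([\Sf_i]) = [\Sf_i][\VV] = [\Sf_i \ot \VV] = \sum_{j=1}^m \Mf_{ij}\,[\Sf_j].
\end{equation*}
I would introduce the coefficient isomorphism $\Phi : \GG_0^{\CC}(\Af) \to \CC^m$ sending $\sum_i c_i [\Sf_i]$ to the row vector $[c_1\ c_2\ \cdots\ c_m]$, and observe that the display above says exactly that $\Phi$ intertwines $\mathsf{R}_{[\VV]}$ with right multiplication by $\McV$, i.e. $\Phi(\mathsf{R}_{[\VV]}(u)) = \Phi(u)\,\McV$ for every $u \in \GG_0^{\CC}(\Af)$.

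For part (a), I would simply expand: writing $[\XX] = \sum_i c_i[\Sf_i]$ and using the display, the coefficient of $[\Sf_j]$ in $\mathsf{R}_{[\VV]}([\XX])$ is $\sum_i c_i \Mf_{ij} = (c\,\McV)_j$, where $c = [c_1\ c_2\ \cdots\ c_m]$. The eigenvector hypothesis $\mathsf{R}_{[\VV]}([\XX]) = \lambda[\XX]$ then becomes, coefficientwise, $c\,\McV = \lambda c$, which is precisely the assertion that $c$ is a left eigenvector of $\McV$ with eigenvalue $\lambda$.

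For part (b), I would iterate the intertwining relation. Since $\Phi$ is linear, $\Phi(\mathsf{R}_{[\VV]}(u)) = \Phi(u)\,\McV$ gives $\Phi((\mathsf{R}_{[\VV]} - \lambda\mathrm{I})(u)) = \Phi(u)(\McV - \lambda\mathrm{I})$, and an easy induction yields $\Phi((\mathsf{R}_{[\VV]} - \lambda\mathrm{I})^\ell(u)) = \Phi(u)(\McV - \lambda\mathrm{I})^\ell$ for all $\ell \ge 1$. Taking $u = [\YY]$, applying $\Phi$ to the hypothesis $(\mathsf{R}_{[\VV]} - \lambda\mathrm{I})^\ell([\YY]) = 0$, and using that $\Phi$ is injective (it is a vector-space isomorphism) forces $[d_1\ d_2\ \cdots\ d_m](\McV - \lambda\mathrm{I})^\ell = 0$, as desired.

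The argument is essentially bookkeeping, so there is no serious obstacle; the only point demanding care is the left/right convention. Because the multiplicity index in $\Mf_{ij} = [\Sf_i \ot \VV : \Sf_j]$ places the ``output'' index $j$ second, right multiplication by $[\VV]$ corresponds to right multiplication of coefficient \emph{row} vectors by $\McV$ rather than left multiplication of column vectors, which is exactly why the (generalized) eigenvectors produced here are left rather than right eigenvectors. I would state the intertwining identity $\Phi(\mathsf{R}_{[\VV]}(u)) = \Phi(u)\,\McV$ carefully once and let both parts follow from it.
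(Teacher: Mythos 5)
Your proof is correct and takes essentially the same approach as the paper: both expand $\mathsf{R}_{[\VV]}$ coefficientwise in the basis $[\Sf_1],\dots,[\Sf_m]$, identify its matrix as $\McV$ acting on coefficient row vectors from the right, and get part (b) by iterating part (a) (the paper states this only as ``follows by induction,'' which your intertwining identity $\Phi(\mathsf{R}_{[\VV]}(u)) = \Phi(u)\,\McV$ makes explicit). One cosmetic remark: in part (b) you do not need injectivity of $\Phi$, only linearity, since you apply $\Phi$ to the hypothesis $(\mathsf{R}_{[\VV]}-\lambda \mathrm{I})^\ell([\YY])=0$ rather than invert it.
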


\begin{proof} {\rm (a)} We are assuming that $\mathsf{R}_{[\VV]}([\XX])= \lambda [\XX]$, or more specifically,
$$(c_1 [\Sf_1] + \cdots + c_m [\Sf_m])[\VV] =  c_1 [\Sf_1][\VV] +  \cdots + c_m [\Sf_m] [\VV] = \lambda(c_1 [\Sf_1] + \cdots + c_m [\Sf_m]).$$
Since multiplication in $\GG_0^{\CC}(\Af)$ is given by tensoring, this implies that
$$\sum_{i=1}^m c_i [\Sf_i \ot \VV: \Sf_j] =\lambda c_j$$
holds for each $j$, where  $[\Sf_i \ot \VV: \Sf_j]$ is the multiplicity of $[\Sf_j]$ in $[\Sf_i \ot \VV]$.   However,
$[\Sf_i \ot \VV: \Sf_j] = \Mf_{ij}$, $(i,j)$ entry of $\McV$.      Therefore  $\sum_{i=1}^m c_i \Mf_{ij} =\lambda c_j$ for all $j$, which says
$$[c_1\,\;c_2 \,\;\dots\;\; c_m] \McV = \lambda[c_1\,\;c_2 \,\;\dots\;\; c_m].$$ 
Part (b) follows by induction, with part (a) providing the $\ell = 1$ case. 
  \end{proof}

\begin{remark}{\rm  Grothendieck algebras in general do not have to be commutative, 
so we do need to specify the ``right" multiplication above.}\end{remark}

\subsection{Eigenvectors for McKay matrices from $\Kf_0^{\CC}(\Af)$}\label{S2.6}
To determine additional information about the McKay matrix $\McV$, we consider the operator $\mathsf{R}_{[\VV^*]}$
of right multiplication by $[\VV^*]$ on the finite-dimensional complex vector space $\Kf_0^\CC(\Af) =
\CC \ot_{\ZZ} \Kf_0(\Af)$.       Suppose $[\mathsf{Y}]$ is an eigenvector for $\mathsf{R}_{[\VV^*]}$ with eigenvalue $\xi$.    Then $[\mathsf{Y}] [\VV^*] = \xi [\mathsf{Y}]$, where
  $[\mathsf{Y}] = \sum_{i=1}^m y_i [\Pf_i]$, a $\CC$-linear combination of the projective covers.
The matrix $\Qf_{\VV^*}$ has $(i,j)$ entry equal to the multiplicity of $[\Pf_j]$ in $[\Pf_i \ot \VV^*] = [\Pf_i][\VV^*]$, which implies
\begin{equation}\label{eq:Qeigen} [y_1\,\;y_2\,\; \dots\;\; y_m] \Qf_{\VV^*} = \xi  [y_1\,\;y_2\,\; \dots\;\; y_m].\end{equation}
 
Using the fact that
$\Qf_{\VV^*}^{\tt T} = \Mf_{(\VV^*)^*}$ from Theorem \ref{thm:QV} and \eqref{eq:Qeigen}, we have

\begin{proposition} Assume that the $\Af$-module $\VV$ satisfies  $(\VV^*)^* \cong \VV$.    Let
$[\mathsf{Y}]$ be a left eigenvector for $\mathsf{R}_{[\VV^*]}$ with eigenvalue $\xi$ in  $\Kf_0^\CC(\Af)$, and let 
$ [y_1\;y_2\,\; \dots\;\; y_m]$ 
be its coordinate vector relative to the basis $\{[\Pf_i]\}_{i=1}^m$ of $\Kf_0^\CC(\Af)$ of projective covers.     Then 
$$\Mf_{\VV} [y_1\,\;y_2\,\; \dots\;\; y_m]^{\tt T} = \xi [y_1\;y_2\,\; \dots\;\; y_m]^{\tt T},$$
so that $ [y_1\,\;y_2\,\; \dots\;\; y_m]^{\tt T}$ is a right eigenvector for $\Mf_{\VV}$ with eigenvalue $\xi.$ \end{proposition}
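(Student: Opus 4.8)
The plan is to recognize the statement as nothing more than the transpose of the eigenvector relation \eqref{eq:Qeigen} that immediately precedes it, combined with the duality identity of Theorem \ref{thm:QV}. First I would observe that the hypothesis that $[\mathsf{Y}] = \sum_{i=1}^m y_i [\Pf_i]$ is a left eigenvector of $\mathsf{R}_{[\VV^*]}$ with eigenvalue $\xi$ has already been translated into the matrix form \eqref{eq:Qeigen}, namely $[y_1\,\; y_2\,\; \dots\;\; y_m]\,\Qf_{\VV^*} = \xi\,[y_1\,\; y_2\,\; \dots\;\; y_m]$, which says precisely that the row vector $[y_1\,\dots\,y_m]$ is a left eigenvector of the projective McKay matrix $\Qf_{\VV^*}$ for the eigenvalue $\xi$.

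Next I would transpose this identity. Taking the transpose of \eqref{eq:Qeigen} converts the left-eigenvector relation for $\Qf_{\VV^*}$ into the right-eigenvector relation $\Qf_{\VV^*}^{\tt T}\,[y_1\,\; \dots\;\; y_m]^{\tt T} = \xi\,[y_1\,\; \dots\;\; y_m]^{\tt T}$ for the transposed matrix $\Qf_{\VV^*}^{\tt T}$.

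The key step is then to identify $\Qf_{\VV^*}^{\tt T}$ with $\Mf_{\VV}$. Applying Theorem \ref{thm:QV} with $\VV^*$ in place of $\VV$ gives $\Qf_{\VV^*} = \Mf_{(\VV^*)^*}^{\tt T}$, and hence $\Qf_{\VV^*}^{\tt T} = \Mf_{(\VV^*)^*}$. The self-duality hypothesis $(\VV^*)^* \cong \VV$ then yields $\Mf_{(\VV^*)^*} = \Mf_{\VV}$, so $\Qf_{\VV^*}^{\tt T} = \Mf_{\VV}$. Substituting this into the transposed relation from the previous step produces exactly $\Mf_{\VV}\,[y_1\,\; \dots\;\; y_m]^{\tt T} = \xi\,[y_1\,\; \dots\;\; y_m]^{\tt T}$, as desired.

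Since every step is either a transpose or a direct substitution, there is no substantive obstacle here; the only point requiring care is the bookkeeping of duals and transposes, and in particular invoking the hypothesis $(\VV^*)^* \cong \VV$ at precisely the moment when $\Mf_{(\VV^*)^*}$ is replaced by $\Mf_{\VV}$. Without self-duality the argument would only deliver a right eigenvector of $\Mf_{(\VV^*)^*}$ rather than of $\Mf_{\VV}$, so this hypothesis is exactly what makes the conclusion land on the McKay matrix $\Mf_{\VV}$ itself.
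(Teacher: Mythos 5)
Your proposal is correct and is essentially identical to the paper's own argument: the paper likewise derives \eqref{eq:Qeigen} from the eigenvector hypothesis, invokes Theorem \ref{thm:QV} to identify $\Qf_{\VV^*}^{\tt T} = \Mf_{(\VV^*)^*}$, and uses $(\VV^*)^* \cong \VV$ to replace this with $\Mf_{\VV}$ after transposing. No gaps; your bookkeeping of duals and transposes matches the paper's exactly.
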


\begin{remark} {\rm When the antipode $S$ of $\Af$ has the property that $S^2$ is an inner automorphism of $\Af$, then
$(\VV^*)^* \cong \VV$ holds for all $\Af$-modules $\VV$ (see \cite[Lem.~10.2(a)]{LM}).     Any semisimple Hopf algebra will have $(\VV^*)^* \cong \VV$  for all $\VV$, as will any symmetric algebra \cite{OS}.  Drinfeld doubles are always symmetric (i.e. have
a nondegenerate, symmetric, associative bilinear form).
In particular, the Drinfeld double $\Df_n$,  which we investigate in detail in Section 3, has the property $(\VV^*)^* \cong \VV$  for all $\VV$, and $\Df_n$ is not semisimple.    Also, since the quantum 
group $\mathfrak{u}_q(\mathfrak{sl}_2)$ for $q$ a root of unity has a unique simple module of each dimension,  
$(\VV^*)^* \cong \VV$ holds  for all  $\mathfrak{u}_q(\mathfrak{sl}_2)$-modules $\VV$, and $\mathfrak{u}_q(\mathfrak{sl}_2)$ is
not semisimple.} \end{remark}

\subsection{Eigenvectors for McKay matrices of semisimple Hopf algebras}\label{S2.7}
In this section, we assume that the Hopf algebra $\Af$ is semisimple.
The assumption of semisimplicity enables us to say more about the eigenvectors and characters of $\Af$.  

Following \cite{L} and \cite[p. 886]{W},   we define    
$$\langle \XX, \YY \rangle = \mathsf{dim}_{\mathbb k} \mathsf{Hom}_\Af(\XX^*, \YY),$$
for any two $\Af$-modules $\XX$ and $\YY$ in $\GG_0^\CC(\Af)$, where $\XX^*$ denotes the dual module of $\XX$. 
This generates a nondegenerate, symmetric associative $\CC$-bilinear form on
$\GG_0^\CC(\Af)$.  Therefore, $\GG_0^\CC(\Af)$ is a symmetric algebra with dual
bases $\{\Sf_1,\Sf_2, \dots, \Sf_m\}$ and $\{\Sf_1^*,\Sf_2^*, \dots, \Sf_m^*\}$, where the $\Sf_i$ are representatives of the isomorphism
classes of simple $\Af$-modules.    

Let $\zeta_1,\zeta_2,\dots, \zeta_s$ be the $\GG_0^\CC(\Af)$-characters afforded by the simple $\GG_0^\CC(\Af)$-modules.   
Since $\Af$ is semisimple, so is the Grothendieck algebra $\GG_0^\CC(\Af)$ (see, for example, \cite{Z} and \cite{W}).
Therefore, by  \cite[Sec.~9B]{CR}  we know that the characters of $\GG_0^\CC(\Af)$ are linearly independent
over $\CC$.   Moreover, if $\ec_1,\ec_2, \dots, \ec_s$ are the primitive central idempotents of $\GG_0^\CC(\Af)$,  then $\zeta_i(\ec_i) = \zeta_i(1)$,
and $\zeta_i(\ec_j) = 0$ for $j \ne i$.     The idempotents $\ec_1,\ec_2,\dots, \ec_s$ form a $\CC$-basis for the center of $\GG_0^\CC(\Af)$.  

By Proposition 9.17 (i) of \cite{CR}, the duality of the bases $\{\Sf_j^*\}$, $\{\Sf_j\}$ relative to the symmetric bilinear form above can be used to define 
$$\mathcal{D}_i = \sum_{j=1}^m \zeta_i(\Sf_j^*) \Sf_j = \sum_{j=1}^m \zeta_i(\Sf_j) \Sf_j^*, \qquad \text{for} \; 1 \le i \le s,$$
and to prove that $\zeta_i(\mathcal{D}_i) \ne 0$,    Then applying\cite [Prop.~9.17 (ii)]{CR}, we have
\begin{proposition} \label{prop:cidems} Assume that the Hopf algebra $\Af$ is semisimple, and let $\zeta_1,\zeta_2,\dots, \zeta_s$ be the characters of the simple $\GG_0^\CC(\Af)$-modules.  Then for all $1 \le i \le s$,  the primitive central idempotent $\ec_i$ of $\GG_0^\CC(\Af)$ corresponding to the character $\zeta_i$  is given by
$$\ec_i = \zeta_i(1) \zeta_i(\mathcal{D}_i)^{-1} \mathcal{D}_i, \quad \text{where} \quad
\zeta_i(\mathcal{D}_i) = \sum_{j=1}^m \zeta_i(\Sf_j^*) \zeta_i(\Sf_j) = \zeta_i \left(\bigoplus_{j=1}^m \Sf_j^* \ot \Sf_j\right),$$ 
and $\Sf_j$, $1 \le j \le m$,  are the simple $\Af$-modules. 
\end{proposition}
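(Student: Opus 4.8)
The plan is to recognize the statement as the specialization of the standard theory of symmetric algebras, in the form of \cite[Prop.~9.17]{CR}, to the split semisimple algebra $\GG_0^\CC(\Af)$. Essentially all the structural input needed to invoke that result has already been assembled in the paragraphs preceding the statement: because $\Af$ is semisimple we have $(\Sf_j^*)^*\cong \Sf_j$, the pairing $\langle \XX,\YY\rangle = \dimm_{\mathbb{k}}\mathsf{Hom}_{\Af}(\XX^*,\YY)$ is a nondegenerate symmetric associative $\CC$-bilinear form making $\GG_0^\CC(\Af)$ a symmetric algebra, and $\{\Sf_1,\dots,\Sf_m\}$, $\{\Sf_1^*,\dots,\Sf_m^*\}$ are dual bases since $\langle \Sf_j,\Sf_k^*\rangle = \dimm_{\mathbb{k}}\mathsf{Hom}_{\Af}(\Sf_j^*,\Sf_k^*)=\delta_{jk}$. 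As $\CC$ is algebraically closed and $\GG_0^\CC(\Af)$ is semisimple, it is split semisimple, so \cite{CR} applies with splitting field $\CC$.

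First I would match $\mathcal{D}_i$ with the central (Gasch\"utz--Ikeda--Higman) element attached to the irreducible character $\zeta_i$ in \cite[Prop.~9.17(i)]{CR}: for dual bases $\{a_j\},\{b_j\}$ of a symmetric algebra and an irreducible character $\zeta$, that element is $\sum_j \zeta(a_j)\,b_j$, and taking $a_j=\Sf_j$, $b_j=\Sf_j^*$ gives precisely $\mathcal{D}_i=\sum_j \zeta_i(\Sf_j)\Sf_j^*$. The two displayed forms of $\mathcal{D}_i$ coincide because the symmetry of the form means the associated copairing is symmetric, that is $\sum_j \Sf_j\ot\Sf_j^* = \sum_j \Sf_j^*\ot\Sf_j$ in $\GG_0^\CC(\Af)^{\ot 2}$; applying $\zeta_i$ to the first tensor factor then yields $\sum_j \zeta_i(\Sf_j)\Sf_j^* = \sum_j \zeta_i(\Sf_j^*)\Sf_j$. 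Part (i) of the cited proposition delivers both the centrality of $\mathcal{D}_i$ and the nonvanishing $\zeta_i(\mathcal{D}_i)\neq 0$, the latter being exactly what legitimizes division by $\zeta_i(\mathcal{D}_i)$ in the idempotent formula.

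With $\mathcal{D}_i$ so identified, \cite[Prop.~9.17(ii)]{CR} produces the primitive central idempotent corresponding to $\zeta_i$ as $\ec_i=\zeta_i(1)\,\zeta_i(\mathcal{D}_i)^{-1}\mathcal{D}_i$, which is the asserted formula. The evaluation $\zeta_i(\mathcal{D}_i)=\sum_j \zeta_i(\Sf_j^*)\zeta_i(\Sf_j)$ is then immediate from the linearity of the character $\zeta_i$ applied to $\mathcal{D}_i=\sum_j \zeta_i(\Sf_j^*)\Sf_j$. The final rewriting as $\zeta_i\big(\bigoplus_j \Sf_j^*\ot\Sf_j\big)$ uses that multiplication in $\GG_0^\CC(\Af)$ is given by $\ot$, so that $[\Sf_j^*\ot\Sf_j]=[\Sf_j^*][\Sf_j]$, together with the multiplicativity $\zeta_i([\Sf_j^*][\Sf_j])=\zeta_i(\Sf_j^*)\zeta_i(\Sf_j)$; this holds in the almost cocommutative setting relevant to our applications, where $\GG_0^\CC(\Af)$ is commutative and each $\zeta_i$ is an algebra homomorphism into $\CC$ (equivalently each simple $\GG_0^\CC(\Af)$-module is one-dimensional, so $\zeta_i(1)=1$).

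The one genuinely delicate point is this last multiplicative step, since $\zeta_i$ is only a priori linear; I would therefore carry it out under the hypothesis that the simple $\GG_0^\CC(\Af)$-modules afford one-dimensional characters, where it is transparent. Apart from that, the substance of the argument is entirely contained in \cite[Prop.~9.17]{CR}, so the main obstacle is simply verifying its hypotheses verbatim for $\GG_0^\CC(\Af)$ --- that the pairing is symmetric, associative, and nondegenerate, that $\{\Sf_j\},\{\Sf_j^*\}$ are honest dual bases, and that the algebra is split semisimple --- all of which reduce to the semisimplicity of $\Af$ and the isomorphism $(\Sf_j^*)^*\cong\Sf_j$. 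Once these are in place, the centrality of $\mathcal{D}_i$, the nonvanishing of $\zeta_i(\mathcal{D}_i)$, and the idempotent formula follow directly from the cited result.
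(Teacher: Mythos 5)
Your proposal is correct and follows essentially the same route as the paper's own proof: the paper's argument consists precisely of the setup you reproduce (the form $\langle \XX,\YY\rangle = \dimm_{\mathbb{k}}\mathsf{Hom}_{\Af}(\XX^*,\YY)$ making $\GG_0^\CC(\Af)$ a symmetric algebra with dual bases $\{\Sf_j\}$, $\{\Sf_j^*\}$, plus semisimplicity of $\GG_0^\CC(\Af)$) followed by an appeal to \cite[Prop.~9.17\,(i),(ii)]{CR}.

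The ``delicate point'' you isolate at the end deserves emphasis, because it is not merely a step you cannot verify without multiplicativity --- it genuinely fails whenever $\zeta_i(1)>1$. Write $\GG_0^\CC(\Af)\cong \prod_{i'} M_{n_{i'}}(\CC)$; on the block with identity $\ec_i$ the symmetrizing form restricts to $c_i\tr$ for some $c_i\ne 0$. Since $\mathcal{D}_i=\sum_j\zeta_i(\Sf_j)\Sf_j^*$ does not depend on the choice of dual bases, computing with the matrix units $\{e_{jl}\}$ and their duals $\{c_i^{-1}e_{lj}\}$ gives $\mathcal{D}_i=c_i^{-1}\ec_i$, hence $\zeta_i(\mathcal{D}_i)=c_i^{-1}\zeta_i(1)$. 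On the other hand, $\sum_j[\Sf_j^*][\Sf_j]$ is the Gasch\"utz--Ikeda element, which the same block computation identifies as $\sum_{i'}c_{i'}^{-1}\zeta_{i'}(1)\ec_{i'}$, so that
\begin{equation*}
\zeta_i\Big(\bigoplus_{j=1}^m \Sf_j^*\ot\Sf_j\Big)\;=\;c_i^{-1}\zeta_i(1)^2\;=\;\zeta_i(1)\,\zeta_i(\mathcal{D}_i).
\end{equation*}
Thus the final equality in the proposition's display holds if and only if $\zeta_i(1)=1$, and the paper's own remark following the proposition (Kashina's 16-dimensional example \cite{K}, whose Grothendieck algebra has a $2\times 2$ block) shows that $\zeta_i(1)>1$ does occur. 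So your restriction of that step to one-dimensional characters is not a defect of your argument but a correction that the statement itself needs; the substantive conclusion, $\ec_i=\zeta_i(1)\zeta_i(\mathcal{D}_i)^{-1}\mathcal{D}_i$ with $\zeta_i(\mathcal{D}_i)=\sum_j\zeta_i(\Sf_j^*)\zeta_i(\Sf_j)\ne 0$, which you derive from \cite[Prop.~9.17]{CR} exactly as the paper does, is correct in full generality.
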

 
 \begin{remarks} $\bullet$  {\rm In \cite{W}, Witherspoon investigated $\GG_0^\CC(\Af)$ when $\Af$ is a semisimple, almost cocommutative Hopf 
algebra.   Under these assumptions, $\GG_0^\CC(\Af)$ is both semisimple and commutative, so the simple modules
for $\GG_0^\CC(\Af)$ are one-dimensional.   The expression for the primitive central idempotents coming from  
\cite[(3.4)]{W} is basically the same as the one given in Proposition \ref{prop:cidems}; however,  since $\zeta_i(1) = 1$ for all $i$ when
$\Af$ is semisimple and almost cocommutative, 
that factor does not appear in \cite{W}.   

 \noindent $\bullet$ The primitive central idempotents  $\ec_i$ form a basis for $\GG_0^\CC(\Af)$ when $\Af$ is semisimple and almost cocommutative, and in fact,  the simple $\GG_0^\CC(\Af)$-modules are exactly the $\CC \ec_i$.  Therefore, $s = m$ in the situation considered in \cite{W}.}     This is not true when
$\Af$ is an \emph{arbitrary} semisimple Hopf algebra.  Each $\ec_i$ is the identity element of a matrix block of the semisimple
algebra $\GG_0^\CC(\Af)$, but these blocks do not have to be one-dimensional.  The Hopf algebra 
 that is (14) in Kashina's classification  \cite{K} of 16-dimensional semisimple Hopf algebras  is an example of this phenomenon.  The Grothendieck algebra $\GG_0^\CC(\Af)$ has a $2 \times 2$ matrix block in that case.  
\end{remarks}  

Assume $\Af$ is semisimple and 
$\VV$ is an $\Af$-module that is central in $\GG_0^\CC(\Af)$.    Then $\VV = \sum_{i=1}^s \lambda_i \ec_i$, which implies that
$ \ec_i \VV = \lambda_i \ec_i$, that is,  $\ec_i$ is an eigenvector for right multiplication by $\VV$ in $\GG_0^\CC(\Af)$.  Therefore, by
Propositions \ref{prop:rightmult} and \ref{prop:cidems}, we have for the McKay matrix $\McV = (\Mf_{ij})$, 
\begin{align*}\lambda_i \ec_i &= \lambda_i  \zeta_i(1)  \zeta_i(\mathcal{D}_i)^{-1}  \sum_{j=1}^m  \zeta_i(\Sf_j^*) \Sf_j  = \ec_i \VV =
 \zeta_i(1)  \zeta_i(\mathcal{D}_i)^{-1}  \sum_{j=1}^m \zeta_i(\Sf_j^*) \Sf_j \VV \\ 
& =  \zeta_i(1)  \zeta_i(\mathcal{D}_i)^{-1}  \sum_{j=1}^m \zeta_i(\Sf_j^*)\left( \sum_{\ell=1}^m [\Sf_j \ot \VV: \Sf_\ell] \Sf_\ell\right)  
 =  \zeta_i(1)  \zeta_i(\mathcal{D}_i)^{-1} \sum_{\ell=1}^m  \left( \sum_{j=1}^m  \zeta_i(\Sf_j^*) \Mf_{j\ell}\right)\Sf_\ell.
\end{align*}
In other words,  $\lambda_i  \zeta_i (\Sf_\ell^*) =  \sum_{j=1}^m  \zeta_i(\Sf_j^*) \Mf_{j\ell}$.
This says that $[\zeta_i(\Sf_1^*)\;\, \zeta_i(\Sf_2^*)\;\, \dots\;\, \zeta_i(\Sf_m^*)]$ for $1\le i \le s$,
is a left eigenvector for $\McV$ with eigenvalue $\lambda_i$ when 
$\VV = \sum_{i=1}^s \lambda_i \ec_i$ is a central element of $\GG_0^\CC(\Af)$.  Hence, we have shown the following

\begin{proposition} Assume that the Hopf algebra $\Af$ is semisimple, and $\VV$ is an $\Af$-module that is central in 
the Grothendieck algebra $\GG_0^\CC(\Af).$   Let $\zeta_1,\zeta_2, \dots, \zeta_s$  be the simple characters
of  $\GG_0^\CC(\Af).$ Then $[\zeta_i(\Sf_1^*)\;\, \zeta_i(\Sf_2^*)\;\, \dots\;\, \zeta_i(\Sf_m^*)]$ is a left eigenvector of $\McV$ for $1 \le i \le s$,
where the characters $\zeta_i$ are evaluated on the dual modules $\Sf_1^*,\Sf_2^*,
\dots, \Sf_m^*$  of  the nonisomorphic simple $\Af$-modules $\Sf_1,\Sf_2,\dots, \Sf_m$. \end{proposition}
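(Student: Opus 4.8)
The plan is to exploit the semisimplicity of the Grothendieck algebra $\GG_0^\CC(\Af)$ together with Propositions \ref{prop:rightmult} and \ref{prop:cidems}. First I would observe that, because $[\VV]$ is assumed central in the semisimple algebra $\GG_0^\CC(\Af)$, it lies in the span of the primitive central idempotents $\ec_1,\dots,\ec_s$; hence $[\VV] = \sum_{i=1}^s \lambda_i \ec_i$ for some scalars $\lambda_i \in \CC$. Multiplying on the left by $\ec_i$ and using $\ec_i \ec_j = \delta_{ij}\ec_i$ gives $\ec_i [\VV] = \lambda_i \ec_i$, so each $\ec_i$ is an eigenvector of the right-multiplication operator $\mathsf{R}_{[\VV]}$ on $\GG_0^\CC(\Af)$ with eigenvalue $\lambda_i$.

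Next I would pass from this ``abstract'' eigenvector statement to the ``concrete'' matrix statement via Proposition \ref{prop:rightmult}(a): writing $\ec_i = \sum_{j=1}^m c_{ij}[\Sf_j]$ in the simple basis, the coordinate row $[c_{i1}\;\,c_{i2}\;\,\dots\;\,c_{im}]$ is a left eigenvector for $\McV$ with eigenvalue $\lambda_i$. It then remains only to identify these coordinates. For that I would invoke Proposition \ref{prop:cidems}, which expresses $\ec_i = \zeta_i(1)\,\zeta_i(\mathcal{D}_i)^{-1}\,\mathcal{D}_i$ with $\mathcal{D}_i = \sum_{j=1}^m \zeta_i(\Sf_j^*)\,\Sf_j$. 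Reading off the coefficient of $[\Sf_j]$ yields $c_{ij} = \zeta_i(1)\,\zeta_i(\mathcal{D}_i)^{-1}\,\zeta_i(\Sf_j^*)$, so the coordinate row of $\ec_i$ equals the scalar $\zeta_i(1)\,\zeta_i(\mathcal{D}_i)^{-1}$ times $[\zeta_i(\Sf_1^*)\;\,\dots\;\,\zeta_i(\Sf_m^*)]$.

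Finally, since this scalar is nonzero — the nonvanishing $\zeta_i(\mathcal{D}_i) \neq 0$ is precisely what the construction in Proposition \ref{prop:cidems} guarantees, and $\zeta_i(1) \neq 0$ as the dimension of a nonzero simple $\GG_0^\CC(\Af)$-module — rescaling a left eigenvector by a nonzero constant preserves both the eigenvector property and the eigenvalue. Hence $[\zeta_i(\Sf_1^*)\;\,\dots\;\,\zeta_i(\Sf_m^*)]$ is itself a left eigenvector of $\McV$ with eigenvalue $\lambda_i$, and it is automatically nonzero because $\ec_i \neq 0$ forces at least one $\zeta_i(\Sf_j^*)$ to be nonzero.

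I expect the only genuinely delicate point to be the transition between right multiplication by $[\VV]$ in $\GG_0^\CC(\Af)$ and the matrix $\McV$, which is exactly the bridge supplied by Proposition \ref{prop:rightmult}; the hypothesis of centrality is what makes $[\VV]$ diagonalizable against the idempotents $\ec_i$, and without it one could only obtain generalized eigenvectors as in Proposition \ref{prop:rightmult}(b). The remaining work — expanding the idempotent in the simple basis and rescaling — is routine bookkeeping once Proposition \ref{prop:cidems} is in hand.
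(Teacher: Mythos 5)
Your proof is correct and follows essentially the same route as the paper: both use centrality to write $[\VV]=\sum_i\lambda_i\ec_i$, deduce $\ec_i[\VV]=\lambda_i\ec_i$, and then combine Propositions \ref{prop:rightmult} and \ref{prop:cidems} to read off $[\zeta_i(\Sf_1^*)\;\,\dots\;\,\zeta_i(\Sf_m^*)]$ as a left eigenvector. The only cosmetic difference is that you invoke Proposition \ref{prop:rightmult}(a) as a black box and rescale by the nonzero constant $\zeta_i(1)\,\zeta_i(\mathcal{D}_i)^{-1}$, whereas the paper carries out the equivalent coordinate computation inline.
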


\begin{remark} {\rm When $\Af$ is semisimple and almost cocommutative (as in \cite{W}), then  all the left eigenvectors for $\McV$
for any choice of $\VV$  are obtained in this fashion, 
since the $\ec_i$ form a $\CC$-basis of $\GG_0^\CC(\Af)$  in this case, and every $\Af$-module $\VV$ can be
expressed as $\VV = \sum_{i=1}^{s=m} \lambda_i \ec_i$ for some $\lambda_i \in \CC$.  Consequently,
the following result holds.} \end{remark}

\begin{corollary} When  $\Af$ is a semisimple, almost cocommutative Hopf algebra, the left eigenvectors of $\McV$ are the same for the McKay matrix of \emph{any} finite-dimensional $\Af$-module $\VV$,
and they can be gotten by evaluating the simple characters $\zeta_i$, $1 \le i \le m$,  of  $\GG_0^\CC(\Af)$, 
\begin{equation}[\zeta_i(\Sf_1^*)\;\, \zeta_i(\Sf_2^*)\;\, \dots\;\, \zeta_i(\Sf_m^*)], \end{equation}
on the dual modules $\Sf_j^*$ of the nonisomorphic simple $\Af$-modules $\Sf_j$, $1 \le j \le m$. 
\end{corollary}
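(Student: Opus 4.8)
The plan is to assemble this Corollary from two facts already established in the excerpt: the immediately preceding Proposition, which asserts that the vectors $[\zeta_i(\Sf_1^*)\;\, \zeta_i(\Sf_2^*)\;\, \dots\;\, \zeta_i(\Sf_m^*)]$ are left eigenvectors of $\McV$ whenever $\VV$ is \emph{central} in $\GG_0^\CC(\Af)$, and the structural fact, recorded in the Remarks following Proposition \ref{prop:cidems}, that for a semisimple, almost cocommutative Hopf algebra the Grothendieck algebra $\GG_0^\CC(\Af)$ is both semisimple and \emph{commutative}, with $s = m$ and with the primitive central idempotents $\ec_1, \dots, \ec_m$ forming a $\CC$-basis. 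The whole content of the Corollary is that in the almost cocommutative case the centrality hypothesis of the Proposition is automatically satisfied for \emph{every} module $\VV$, so the eigenvector description applies universally and, crucially, independently of $\VV$.

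First I would invoke commutativity of $\GG_0^\CC(\Af)$ to conclude that every $\Af$-module $\VV$ lies in the center of $\GG_0^\CC(\Af)$; there is nothing to check, since in a commutative ring every element is central. This immediately entitles me to apply the preceding Proposition to an arbitrary $\VV$, yielding that $[\zeta_i(\Sf_1^*)\;\, \zeta_i(\Sf_2^*)\;\, \dots\;\, \zeta_i(\Sf_m^*)]$ is a left eigenvector of $\McV$ for each $1 \le i \le m$. Second, I would record that because the $\ec_i$ form a $\CC$-basis of $\GG_0^\CC(\Af)$, one may write $\VV = \sum_{i=1}^{m} \lambda_i \ec_i$ for suitable scalars $\lambda_i$, so the eigenvalues attached to these eigenvectors are the coefficients $\lambda_i$. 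The last point to make is the ``same for any $\VV$'' assertion: the eigenvectors themselves depend only on the characters $\zeta_i$ of $\GG_0^\CC(\Af)$ and the dual simple modules $\Sf_j^*$, neither of which references $\VV$, so the list of $m$ left eigenvectors is identical for every choice of $\VV$; only the eigenvalues $\lambda_i$ vary with $\VV$.

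Since all three ingredients are already proved in the excerpt, this is essentially a bookkeeping argument and I do not expect a genuine obstacle. The one point that merits a sentence of care is the count $s = m$: I would make sure to cite the Remarks after Proposition \ref{prop:cidems}, where it is noted that in the almost cocommutative case the simple $\GG_0^\CC(\Af)$-modules are exactly the one-dimensional modules $\CC\ec_i$, so that the number $s$ of simple characters equals the number $m$ of primitive central idempotents, which in turn equals the number of simple $\Af$-modules. Establishing $s = m$ is what makes the indexing in the displayed formula consistent and guarantees that the $m$ eigenvectors one obtains are enough to span, matching the size of $\McV$. The proof is therefore short: combine commutativity (every $\VV$ is central), apply the previous Proposition, and note that the resulting eigenvectors are manifestly $\VV$-independent.
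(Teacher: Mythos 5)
Your proposal is correct and takes essentially the same approach as the paper: the paper's own ``proof'' is the remark immediately preceding the corollary, which observes that since the primitive central idempotents $\ec_i$ form a $\CC$-basis of $\GG_0^\CC(\Af)$ in the semisimple, almost cocommutative case (so $s=m$), every module $\VV$ can be written as $\sum_{i=1}^{m}\lambda_i\ec_i$ and is therefore central, whence the preceding proposition applies to every $\VV$ and the eigenvectors $[\zeta_i(\Sf_1^*)\;\zeta_i(\Sf_2^*)\;\dots\;\zeta_i(\Sf_m^*)]$ are manifestly $\VV$-independent. Your invoking commutativity of $\GG_0^\CC(\Af)$ rather than the idempotent-basis decomposition to get centrality is an immaterial variation, as both facts come from the same remarks following Proposition \ref{prop:cidems}.
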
 

\noindent \section{\bf The Drinfeld Double of the Taft Algebra and its Modules}

\emph{Throughout Section 3, we assume  $n$ is an odd integer $\ge 3$,  $\mathbb{k}$ is an algebraically closed field of characteristic $0$,
and $q$ is a primitive $n$th root of unity in $\mathbb{k}$. When $n$ is even, the Drinfeld double and its 
modules are defined similarly,   but different behavior is exhibited, and so this case will not be considered in this work.}   
\noindent \subsection{Preliminaries} \label{S3.1}   
The Drinfeld double $\Df_n$ of the Taft algebra $\Af_n$  has
a presentation as  the Hopf algebra over $\mathbb{k}$ with generators
$a,b,c,d$ that satisfy the following relations:
\begin{align} 
\begin{split} ba = q ab, & \qquad  db = qbd, \\
ca = q ac, & \qquad  dc = qcd, \\
bc = cb, & \qquad  da-qad = 1-bc, \\
a^n = 0 = d^n, & \qquad b^n = 1 = c^n. \end{split}\end{align}
The coproduct, counit, and antipode of $\Df_n$ are given by
\begin{align}
\begin{split}\label{eq:hopfstructure}  \Delta(a) = a \otimes b + 1 \otimes a, & \quad  \Delta(d) = d \otimes c + 1 \otimes d,  \\
\Delta(b) = b \otimes b, & \quad  \Delta(c) = c \otimes c, \\
\varepsilon(a) = 0 = \varepsilon(d), & \quad  \varepsilon(b) = 1 = \varepsilon(c),\\
S(a) = -ab^{-1}, \  \ S(b) = b^{-1}, & \quad
S(c) = c^{-1}, \  \ S(d) = -dc^{-1}.  
\end{split}\end{align} 
The Taft algebra $\Af_n$ is the Hopf subalgebra generated by $a$ and $b$.   It follows from \eqref{eq:hopfstructure} and the
fact that $\Delta$ is an algebra homomorphism 
that the elements $b^ic^k$ for $0 \le i,k \le n-1$ are grouplike.

 \subsubsection{The simple and projective $\Df_n$-modules}\label{S:3.11} \, 
 The simple $\Df_n$-modules $\VV(\ell,r)$  are indexed by a pair $(\ell,r)$ where
$1 \le \ell \le n$ and $r \in \mathbb{Z}_n =  \mathbb{Z}/n\mathbb{Z}$  (the integers modulo $n$).   Then  $\VV(\ell,r)$ is a $\mathbb k$-vector space of dimension $\ell$ with basis
$v_1,v_2, \dots, v_\ell$  and with $\Df_n$-action given  by
\begin{align}
\label{eq:action} 
\begin{split} a. v_j &= v_{j+1}, \  1 \leq j < \ell, \hspace{2.2cm} a.v_\ell = 0, \\
b.v_j &= q^{r+j-1}v_j,  \hspace{3.3cm}  c.v_j = q^{j-(r+\ell)}v_j, \ \ 1 \le j \le \ell, \\
d.v_j &= \alpha_{j-1}(\ell) v_{j-1},  \ 1 < j \le \ell,  \hspace{0.9cm}  d.v_1 = 0,\end{split}  \end{align}
where 
\begin{equation}\alpha_i(\ell) = \frac{ \left(q^i - 1\right)\left(1-q^{i-\ell}\right)}{q-1} \qquad \text{for}\; 1 \le i \le n-1.
\end{equation}

From Chen et al. \cite{Chen2.5, Chen},  the following hold:  
\begin{enumerate} 
\item $\VV(1,0)$ is the trivial $\Df_n$-module with action given by the counit $\varepsilon$.
\item $\VV(\ell, r) \otimes \VV(1,s)  \cong \VV(\ell, r+s)$.    
\item $\VV(\ell,r) \otimes \VV(\ell',s)$ is completely reducible if and only if $\ell+\ell' \le n+1$. In this case,  if $m = \mathsf{min}(\ell,\ell')$, 
then 
\begin{equation}\label{eq:tensdecomp}\VV(\ell,r) \otimes \VV(\ell',s) \cong \bigoplus_{j=1}^m \VV(\ell+\ell' + 1-2j, r+s + j-1).\end{equation}
\end{enumerate}

 Let $\Pf(\ell,r)$ be the projective cover of the simple $\Df_n$-module $\VV(\ell,r)$. 
Chen \cite{Chen2.75}  has shown that any indecomposable projective left $\Df_n$-module is isomorphic to one of the 
modules $\Pf(\ell,r)$ for $1\le \ell < n$ or to $\VV(n,r)$ for some $r \in \mathbb{Z}_n$, and the module $\Pf(\ell,r)$  
for $1 \le \ell < n$ has the following structure.  There is a chain of submodules $\Pf(\ell,r)  \supset  \mathsf{soc}^2(\Pf(\ell,r))
\supset  \mathsf{soc}(\Pf(\ell,r)) \supset (0)$ such that 
\begin{enumerate}
\item  $\mathsf{soc}(\Pf(\ell,r))$ is the socle of $\Pf(\ell,r)$ (the sum of all the simple submodules), and  $\mathsf{soc}(\Pf(\ell,r))\cong \VV(\ell,r)$;
\item $\mathsf{soc}^2(\Pf(\ell,r))/\mathsf{soc}(\Pf(\ell,r)) \cong  \VV(n-\ell, r+\ell) \oplus \VV(n-\ell, r+\ell)$;
\item $\Pf(\ell,r) /\mathsf{soc}^2(\Pf(\ell,r)) \cong \VV(\ell,r)$.
\end{enumerate} 
Therefore,  $[\Pf(\ell,r)] = 2 [\VV(\ell,r)] + 2 [\VV(n-\ell, r+\ell)]$ in the Grothendieck group $\GG_0(\Df_n)$,
and the dimension of the indecomposable module $\Pf(\ell,r)$ is $2n$ for $1 \le \ell <n$.  Hence, it follows that
$[\Pf(\ell,r)]=[\Pf(n-\ell, r+\ell)]$ holds in $\GG_0(\Df_n)$ for all $1\le \ell < n$ and all $r \in \ZZ_n$.  
The modules $\VV(n,r)$  for all $r \in \mathbb{Z}_n$  are the only $\Df_n$-modules  that are both simple and projective.    
 \subsubsection{The Cartan map for $\Df_n$}\label{S:3.12} \, We consider an extension of the Cartan map of $\Df_n$  to a 
 $\CC$-linear map (also denoted  $\mathsf{c}$),   $\mathsf{c}: \Kf_0^{\CC}(\Df_n) \rightarrow    
 \GG_0^{\CC}(\Df_n)$, $[\Pf] \mapsto [\Pf]$.   Then $\mathsf{c}\big([\Pf(\ell,r]\big) = [\Pf(\ell,r)] = 2 [\VV(\ell,r)] + 2 [\VV(n-\ell,\ell+r)] = \mathsf{c}\big([\Pf(n-\ell,\ell+r)]\big)$
 for all $1 \le \ell \le \frac{n-1}{2}$, and $r \in \ZZ_n$,
 and  $\mathsf{c}\big([\VV(n,r)]\big) = [\VV(n,r)]$ for all $r \in \ZZ_n$.   Therefore,  the images under $\mathsf{c}$ of the basis elements of $\Kf_0^{\CC}(\Df_n)$  are $\CC$-linearly independent elements
 of  $\GG_0^{\CC}(\Df_n)$, and it follows that
 $\dimm\big( \mathsf{im}(\mathsf{c})\big) = \frac{n(n+1)}{2}$.  Since the elements $[\Pf(\ell,r)] - [\Pf(n-\ell,\ell+r)]$, $1 \le \ell \le \frac{n-1}{2}$, $r \in \ZZ_n$, are
 linearly independent elements of the kernel of $\mathsf{c}$ and $\dimm\big(\mathsf{ker}(\mathsf{c})\big) = n^2-\frac{n(n+1)}{2} = \frac{n(n-1)}{2}$, they form a basis for the kernel.   To summarize, we have the following result.
 \begin{proposition} \label{prop:CartanDn} \begin{itemize} \item[{\rm(a)}]  The elements $2 [\VV(\ell,r)] + 2 [\VV(n-\ell,\ell+r)]$
 for all $1 \le \ell \le \frac{n-1}{2}$, $r \in \ZZ_n$, and $[\VV(n,r)], r \in \ZZ_n$,  form a basis for the image 
 $\mathsf{im}(\mathsf{c})$  of the Cartan map $\mathsf{c}: \Kf_0^{\CC}(\Df_n) \rightarrow    
 \GG_0^{\CC}(\Df_n)$.  Therefore, $\dimm\big(\mathsf{im}(\mathsf{c})\big) = \frac{n(n+1)}{2}$. 
  \item[{\rm(b)}]  The rank of the Cartan matrix $\mathsf{C}$ of $\Df_n$  is $\frac{n(n+1)}{2}$.
   \item[{\rm(c)}] The elements $[\Pf(\ell,r)] - [\Pf(n-\ell,\ell+r)]$, $1 \le \ell \le \frac{n-1}{2}$, $r\in \ZZ_n$,  form a basis for the kernel of $\mathsf{c}$,
and  $\dimm\big(\mathsf{ker}(\mathsf{c})\big) = \frac{n(n-1)}{2}$.
   \end{itemize}
   \end{proposition}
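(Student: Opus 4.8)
The plan is to verify the three claims of Proposition \ref{prop:CartanDn} by directly analyzing the Cartan map on the chosen bases, since the structural facts needed are already assembled in Section \ref{S:3.11} and the paragraph preceding the statement. The key input is the composition series information $[\Pf(\ell,r)] = 2[\VV(\ell,r)] + 2[\VV(n-\ell, r+\ell)]$ for $1 \le \ell < n$, together with the fact that $\VV(n,r)$ is both simple and projective, so $[\Pf(n,r)] = [\VV(n,r)]$. The $\CC$-basis of $\Kf_0^{\CC}(\Df_n)$ consists of the classes $[\Pf(\ell,r)]$ with $1 \le \ell \le n$ and $r \in \ZZ_n$, and the $\CC$-basis of $\GG_0^{\CC}(\Df_n)$ consists of the $[\VV(\ell,r)]$ over the same index set, so each space has dimension $n^2$.

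For part (a), I would first observe that the involution $(\ell, r) \mapsto (n-\ell, r+\ell)$ on pairs with $1 \le \ell < n$ pairs up the indices, with $\ell$ and $n-\ell$ swapped and no fixed points because $n$ is odd (so $\ell = n-\ell$ is impossible). Choosing the representatives $1 \le \ell \le \frac{n-1}{2}$ picks exactly one element from each orbit. The images $\mathsf{c}([\Pf(\ell,r)]) = 2[\VV(\ell,r)] + 2[\VV(n-\ell, r+\ell)]$ for these representatives, together with the $[\VV(n,r)]$, involve pairwise disjoint sets of simple classes: for distinct orbit representatives the supporting pairs $\{(\ell,r), (n-\ell,r+\ell)\}$ are disjoint, and none of them involves $\ell = n$. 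Hence these images are $\CC$-linearly independent, giving $\dimm(\mathsf{im}(\mathsf{c})) \ge \frac{n-1}{2}\cdot n + n = \frac{n(n-1)}{2} + n = \frac{n(n+1)}{2}$; and since they span the image by construction, equality holds. Part (b) is then immediate, since the rank of $\Cf$ equals $\dimm(\mathsf{im}(\mathsf{c}))$.

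For part (c), I would first check that each $[\Pf(\ell,r)] - [\Pf(n-\ell, r+\ell)]$ lies in $\ker(\mathsf{c})$: applying $\mathsf{c}$ sends both terms to the same element $2[\VV(\ell,r)] + 2[\VV(n-\ell, r+\ell)]$ (using the symmetry of the composition series formula under the involution), so the difference maps to $0$. These $\frac{n-1}{2}\cdot n = \frac{n(n-1)}{2}$ elements are linearly independent in $\Kf_0^{\CC}(\Df_n)$ because they are differences of distinct basis vectors indexed by the two halves of the orbits, and the involution is fixed-point-free. By the rank-nullity theorem, $\dimm(\ker(\mathsf{c})) = n^2 - \frac{n(n+1)}{2} = \frac{n(n-1)}{2}$, which matches the count of these independent kernel elements, so they form a basis.

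The computation here is essentially bookkeeping; the only point requiring genuine care is confirming that the orbit structure of the involution $(\ell,r) \mapsto (n-\ell, r+\ell)$ on the index set is exactly as claimed, namely that it is fixed-point-free on $1 \le \ell < n$ and that the representatives $1 \le \ell \le \frac{n-1}{2}$ are a transversal. The fixed-point-freeness uses $n$ odd in an essential way (for $n$ even the value $\ell = n/2$ would be fixed, changing the counts), which is consistent with the paper's standing hypothesis. Once this combinatorial fact is pinned down, the dimension counts and linear independence assertions follow mechanically, so I do not anticipate any substantive obstacle.
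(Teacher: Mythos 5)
Your proposal is correct and follows essentially the same route as the paper: compute $\mathsf{c}$ on the basis of projective covers using $[\Pf(\ell,r)] = 2[\VV(\ell,r)] + 2[\VV(n-\ell,r+\ell)]$, observe that images coincide in pairs so the distinct images (with disjoint simple-module supports) are linearly independent, and then get the kernel basis from the differences $[\Pf(\ell,r)] - [\Pf(n-\ell,\ell+r)]$ together with rank--nullity. Your explicit verification that the involution $(\ell,r)\mapsto(n-\ell,r+\ell)$ is fixed-point-free for $n$ odd is a point the paper leaves implicit, but it is the same argument.
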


\subsection{The McKay matrix for tensoring with the $\Df_n$-module $\VV(2,0)$}\label{S3.2}
Throughout this section, we assume that $\VV$ is the two-dimensional simple $\Df_n$-module $\VV(2,0)$ with basis $\{v_1, v_2\}$.   Now it follows from \eqref{eq:tensdecomp} and  
\cite[Prop.~3.1, Thms.~3.3 and 3.5]{Chen}  that  for $\VV = \VV(2,0)$, 
\begin{enumerate}\label{eq:tensrules} 
\item $\VV(1,r) \ot \VV = \VV(2,r)$;
\item  $\VV(\ell,r) \ot \VV =  \VV(\ell+1,r) \oplus \VV(\ell-1,r+1)$ for $2 \le \ell < n$;   
\item $ \VV(n,r) \ot \VV   \cong \Pf(n-1, r+1)$;   
\item $ \Pf(1,r) \ot \VV  \cong  \Pf(2,r) \oplus 2 \VV(n,r+1)$;
\item $\Pf(\ell,r ) \ot \VV  \cong  \Pf(\ell+1,r) \oplus \Pf(\ell-1,r+1)$ for $2 \le \ell < n-1$;   
\item $\Pf(n-1,r) \ot \VV  \cong  \Pf(n-2,r+1) \oplus 2 \VV(n,r)$.   
\end{enumerate}

The McKay matrix for tensoring  the simple $\Df_n$-modules $\VV(\ell,r)$ with $\VV:=\VV(2,0)$ is the $n^2 \times n^2$ matrix $\McV = \left( \Mf_{(\ell,r),(\ell',s)} \right)$, whose entry $\Mf_{(\ell,r),(\ell',s)}$ is given by the composition series multiplicity   
$$\Mf_{(\ell,r),(\ell',s)}= [\VV(\ell,r) \otimes \VV : \VV(\ell',s)].$$ 
We assume that the numbering of the rows and columns of $\McV$ is  first by $\ell = 1$, then by $\ell =2,$ etc.,  and for each $\ell$ 
the numbering is $r = 0,1,\dots, n-1$;  that is,  we are numbering by lexicographic order, and we will often simply write $\Mf$
for $\McV$ when the choice of $\VV$ is unambiguous.
Using the decomposition formulas above,  we observe that
 the McKay matrix $\Mf$ then can be displayed using $n\times n$ blocks, 
 where $\mathrm{I} = \Ir_n$ is the $n\times n$ identity matrix, and $\mathrm{Z}$ is the $n \times n$ cyclic permutation matrix
 as presented below   
 
  \begin{equation}
\label{eq:McZ}  
\Mf =\left( \begin{matrix} 0 & \mathrm{I} & 0  & & \cdots & 0 & 0 \\ 
 \mathrm{Z} & 0 & \mathrm{I}  & & \cdots & 0 & 0 \\
 0 &  \mathrm{Z} & 0 & \mathrm{I} & \cdots & 0 & 0 \\
\vdots  & \vdots  & \mathrm{Z}   & \ddots  & \ddots & 0 & 0 \\ 
 \vdots & \vdots & \vdots & &  \ddots &  \mathrm{I} & 0 \\
0 & 0 & 0 & 0  \cdots &  \mathrm{Z} & 0 &  \mathrm{I} \\ 
2\mathrm{I}  & 0 & 0 & 0 & \cdots &  2\mathrm{Z} & 0 
  \end{matrix} \right) \qquad \qquad   \mathrm{Z} =\left( \begin{matrix} 0 & 1 & 0  & & \cdots & 0 & 0 \\ 
 0 & 0 & 1  & & \cdots & 0 & 0 \\
 0 & 0 & 0 & 1 & \cdots & 0 & 0 \\
\vdots  & \vdots  &   & \ddots  & \ddots & 0 & 0 \\ 
 \vdots & \vdots & \vdots & &  \ddots & 1 & 0 \\
0 & 0 & 0 & 0 & \cdots & 0 & 1 \\ 
1 & 0 & 0 & 0 & \cdots & 0 & 0 
  \end{matrix} \right).
\end{equation} 
 There are identity matrices on the superdiagonal of $\Mf$,  and the matrix $\mathrm{Z}$ is on the subdiagonal
 of $\Mf$ except for the last row (corresponding to the modules $\VV(n,r)$ for $r\in \ZZ_n$), 
where the nonzero entries are $2 \mathrm{I}$ and $2\mathrm{Z}$, due to the fact that
$\VV(n,r) \ot \VV(2,0) = \Pf(n-1,r+1)$, which has composition factors $\VV(n-1,r+1)$ (twice) and $\VV(1,r)$ (twice).  
 
Assume  $\YY = \diag\{\XX,\XX, \dots, \XX\}$,  where the $n \times n$ matrix $\XX$ diagonalizes $\mathrm{Z}$,
$$\XX\Zr \XX^{-1} = \Dr:= \diag\{1,q,\ldots,q^{n-1}\},$$  and  $q$ is as before, a primitive $n$th root of unity in $\mathbb{k}$ for $n$ odd and
$\ge 3$.   Then
\begin{equation}\label{eq:Mprime}\Mf' = \YY \Mf \YY^{-1}
 =\left( \begin{matrix} 0 & \mathrm{I} & 0  & & \cdots & 0 & 0 \\ 
 \Dr& 0 & \mathrm{I}  & & \cdots & 0 & 0 \\
 0 &  \Dr & 0 & \mathrm{I} & \cdots & 0 & 0 \\, \vdots & \vdots & \vdots & &  \ddots &  \mathrm{I} & 0 \\
0 & 0 & 0 &  \cdots &  \Dr & 0 &  \mathrm{I} \\ 
2\Ir  & 0 & 0 & \cdots & 0 &  2\Dr & 0 
  \end{matrix} \right).\end{equation}

\subsection{Characteristic polynomial and characteristic roots of $\McV$, $\VV= \VV(2,0)$}\label{S3.3}

We want to determine the characteristic roots of $\Mf = \McV$, which we can do by computing the characteristic
roots of $\Mf'$, as they are the same.     The advantage to working with $\Mf'$ is that its entries 
lie in the commutative ring of diagonal matrices, and so usual matrix operations apply.

Consider the characteristic polynomial of $\Mf'$, which can be found by computing  
$$\mathsf{det}\left(t \Ir_{n^2} - \Mf'\right) =  \mathsf{det}\left( \begin{matrix} t\Ir & -\Ir & 0  & & \cdots & 0 & 0 \\ 
- \Dr& t\Ir & -\mathrm{I}  & & \cdots & 0 & 0 \\
 0 & - \Dr & t\Ir &- \mathrm{I} & \cdots & 0 & 0 \\
\vdots  & \vdots  & -\Dr & \ddots  & \ddots & 0 & 0 \\ 
 \vdots & \vdots & \vdots & &  \ddots & - \mathrm{I} & 0 \\
0 & 0 & 0 & 0  \cdots &  -\Dr & t\Ir & - \mathrm{I} \\ 
-2\Ir  & 0 & 0 & 0 & \cdots &  -2\Dr & 0 
  \end{matrix} \right).$$

Define polynomials $\Uc_k(t,\Dr)$ recursively by
\begin{equation}\label{eq:QktD1} \Uc_0(t,\Dr) = \Ir,\; \; \Uc_1(t,\Dr) = t\Ir, \;\; \Uc_k(t,\Dr) = t \Uc_{k-1}(t,\Dr) -\Dr 
\Uc_{k-2}(t,\Dr), \,\; k \ge 2.\end{equation}
These polynomials are related to Chebyshev polynomials of the second kind, as we explain 
in the next section.
In computing the determinant of $t\Ir_{n^2} - \Mf'$, we will abbreviate $\Uc_k(t,\Dr)$ as $\Uc_k$.     
We perform row operations on $t\Ir_{n^2}- \Mf'$ using the matrices $-\Ir$ on the superdiagonal to eliminate the
entries beneath them.  Therefore, after using the $-\Ir$ in the first row and then the $-\Ir$ in the second row,    we have
$$ \left( \begin{matrix} \Uc_1 & -\Ir & 0  & & \cdots & 0 & 0 \\ 
t\Uc_1-\Dr \Uc_0 & 0 & -\mathrm{I}  & & \cdots & 0 & 0 \\
-\Dr \Uc_1 & 0 & t\Ir &- \mathrm{I} & \cdots & 0 & 0 \\
\vdots  & \vdots  & -\Dr & \ddots  & \ddots & 0 & 0 \\ 
 \vdots & \vdots & \vdots & &  \ddots & - \mathrm{I} & 0 \\
0 & 0 & 0 & 0  \cdots &  -\Dr & t\Ir & - \mathrm{I} \\ 
-2\Ir  & 0 & 0 & 0 & \cdots &  -2\Dr & t\Ir 
  \end{matrix} \right), \;\;  \left( \begin{matrix} \Uc_1 & -\Ir & 0  & & \cdots & 0 & 0 \\ 
\Uc_2 & 0 & -\mathrm{I}  & & \cdots & 0 & 0 \\
t\Uc_2-\Dr \Uc_1 & 0 &0 &- \mathrm{I} & \cdots & 0 & 0 \\
\vdots  & \vdots  &  & \ddots  & \ddots & 0 & 0 \\ 
 \vdots & \vdots & \vdots & &  \ddots & - \mathrm{I} & 0 \\
0 & 0 & 0 & 0  \cdots &  -\Dr & t\Ir & - \mathrm{I} \\ 
-2\Ir  & 0 & 0 & 0 & \cdots &  -2\Dr &t\Ir 
  \end{matrix} \right),$$  
  respectively.    Continuing, we obtain
$$\left( \begin{matrix} \Uc_1 & -\Ir & 0  & & \cdots & 0 & 0 \\ 
\Uc_2 & 0 & -\mathrm{I}  & & \cdots & 0 & 0 \\
\Uc_3 & 0 &0 &- \mathrm{I} & \cdots & 0 & 0 \\
\vdots  & \vdots  &  & \ddots  & \ddots & 0 & 0 \\ 
\Uc_{n-2} & \vdots & \vdots & &  \ddots & - \mathrm{I} & 0 \\
\Uc_{n-1} & 0 & 0 & 0  \cdots & 0 & 0 & - \mathrm{I} \\ 
-2\Dr \Uc_{n-2}-2\Ir  & 0 & 0 & 0 & \cdots & 0 & t\Ir 
  \end{matrix} \right),$$
  so that after the final step, the result of using the bottommost $-\Ir$ on the superdiagonal  is
$$\left( \begin{matrix} \Uc_1 & -\Ir & 0  & & \cdots & 0 & 0 \\ 
\Uc_2 & 0 & -\mathrm{I}  & & \cdots & 0 & 0 \\
\Uc_3 & 0 &0 &- \mathrm{I} & \cdots & 0 & 0 \\
\vdots  & \vdots  &  & \ddots  & \ddots & 0 & 0 \\ 
 \vdots & \vdots & \vdots & &  \ddots & - \mathrm{I} & 0 \\
\Uc_{n-1} & 0 & 0 & 0 &  \cdots & 0 &  - \mathrm{I} \\ 
t\Uc_{n-1}-2\Dr \Uc_{n-2} - 2\Ir  & 0 &0 &0  & \cdots & 0 & 0
  \end{matrix} \right).$$
Therefore, setting 
\begin{equation}\label{eq:pndef} \mathsf{p}_n(t,\Dr) := t\Uc_{n-1}(t,\Dr)-2\Dr \Uc_{n-2}(t,\Dr) -2\Ir  = \Uc_n(t,\Dr) - \Dr \Uc_{n-2}(t,\Dr) - 2\Ir, \end{equation} we have 
$\mathsf{det}(t\Ir_{n^2} - \Mf') = \mathsf{p}_n(t,\Dr)$, where $\Mf'$ is as in \eqref{eq:Mprime},  and the characteristic
roots of $\Mf'$, hence also of $\Mf$,  are the roots of $\mathsf{p}_n(t,\Dr)$.    
   
Here are the first few polynomials $\mathsf{p}_n(t,\Dr)$:
\begin{align} \begin{split}\label{eq:pntD}
  n=3: \quad & t^3-3\Dr t -2\Ir  \\
  n=5: \quad &t^5-5\Dr t^3 +5\Dr^2t -2\Ir  \\ 
  n=7:  \quad &t^7 - 7\Dr t^5+14\Dr^2 t^3 -7\Dr^3 t - 2\Ir  \\  
  n=9:  \quad &t^9-9\Dr t^7 + 27\Dr^2 t^5-30\Dr^3 t^3+9\Dr^4 t - 2\Ir   \\  
  n=11: \quad &t^{11}-11\Dr t^9+44\Dr^2 t^7 - 77\Dr^3 t^5+55\Dr^4 t^3 -11\Dr^5 t - 2\Ir  \\  
  n=13:  \quad &t^{13} - 13\Dr t^{11}+65\Dr^2 t^9 -156\Dr^3 t^7+182\Dr^4 t^5 -91\Dr^5 t^3
  +13\Dr^6t - 2\Ir.  \end{split}
 \end{align}
 These are polynomials with coefficients that are $n \times n$ diagonal matrices.  Each diagonal entry
 $q^k$ of $\Dr$ for $k\in \ZZ_n$ 
determines a polynomial $\mathsf{p}_n(t,q^k)$ in $q$ and $t$ with coefficients in $\ZZ$.  The characteristic roots of $\Mf$ are obtained by setting those $n$ polynomials equal to 0.  For example, when 
 $n = 7$, the polynomials are $\mathsf{p}_n(t,q^k) =t^7 - 7q^k t^5+14 q^{2k} t^3 -7q^{3k} t - 2$ for $k \in \ZZ_7$, and the 
characteristic roots of $\Mf$ are the roots of those 7 polynomials.

\subsection{Right eigenvectors for $\McV$, $\VV = \VV(2,0),$  and Chebyshev polynomials}\label{S3.4}
The polynomials $\mathsf{p}_n(t,\Dr)$ are related to Chebyshev polynomials $U_k(t)$ of the second
kind, which are defined recursively by the formulas
\begin{equation}\label{eq:Qt1}
U_0(t)=1,  \; \; \;
U_1(t) =2t, \; \; \;
U_k(t) =2tU_{k-1}(t) - U_{k-2}(t)\; \text{ for } k \geq 2.
\end{equation}
Setting $\Uc_k(t) = U_k(\frac{t}{2})$,  we have
\begin{equation}\label{eq:Qt}
\Uc_0(t)=1,  \; \; \;
\Uc_1(t) =t, \; \; \;
\Uc_k(t) =t \Uc_{k-1}(t) - \Uc_{k-2}(t)\; \text{ for } k \geq 2.
\end{equation}
There are a number of closed-form formulas for Chebyshev polynomials of the second kind.
Replacing $t$ by $\frac{t}{2}$ in one such formula (see for example, \cite[18.5.10 with $\lambda = 1$]{NIST} or \cite[(23), p.~185]{EMOT}) gives the following expression for $\Uc_k(t)$:
\begin{equation}\label{eqQkt}\Uc_k(t)  = \sum_{j=0}^{\lfloor \frac{k}{2}\rfloor} (-1)^j {k-j \choose j} t^{k-2j}.
\end{equation}

The polynomials $\Uc_k(t,\Dr)$, which were defined in the previous section, satisfy a similar recursion \eqref{eq:QktD1}, 
and as a result, 
\begin{equation}
\label{eqQkDt}
\Uc_k(t,\Dr)  = \sum_{j=0}^{\lfloor \frac{k}{2}\rfloor} (-1)^j {k-j \choose j} t^{k-2j}\Dr^j.\end{equation}
The polynomial $\Uc_k(t)$ is $\Uc_k(t,\Dr)$ with $\Dr$ and the $n \times n$ identity matrix $\Ir$ replaced by 1. 
Thus, when $\Dr$ and $\mathrm{I}$ are replaced by $1$ in $\mathsf{p}_n(t,\Dr)$, we obtain
\begin{equation}
\label{eq:pn} 
\mathsf{p}_n(t) := \mathsf{p}_n(t,1) =  t\Uc_{n-1}(t) - 2\Uc_{n-2}(t) - 2 = \Uc_n(t) - \Uc_{n-2}(t) - 2.
\end{equation}
 
Assume now that  $n = 2h+1$ for $h \ge 1$.  We aim to show 
\begin{equation}\label{eq:pn2}\mathsf{p}_n(t) = \mathsf{p}_{2h+1}(t) = (t-2)\Wc_h^2(t),\end{equation}
where $\Wc_h(t) = \Uc_h(t) + \Uc_{h-1}(t)$ for all $h \ge 1$, by appealing to results on Chebyshev polynomials of the fourth kind.

The sum of two consecutive Chebyshev polynomials of the second kind is a \emph{Chebyshev polynomial of the fourth kind}.
These polynomials are defined recursively by  the following formulas (see \cite[Secs. 1.2.3, 1.2.4]{MH}):
\begin{equation}\label{eq:Wt}
W_0(t)=1,  \; \; \;
W_1(t) =2t+1, \; \; \;
W_k(t) =2tW_{k-1}(t)-W_{k-2}(t)\; \text{ for } k \geq 2.
\end{equation}
Thus, they satisfy the same recursion as the polynomials $U_k(t)$, except $W_1(t) = 2t+ 1$,  while
$U_1(t) = 2t$.   
In particular,    $W_k(t) = U_k(t)+U_{k-1}(t)$ for all $k \ge 1$ by \cite[(1.54)]{MH}.

Chebyshev polynomials have integer coefficients and complex roots.   
Suppose $x = e^{i \theta}\in \CC$.   Then $z := \frac{x+x^{-1}}{2} = \cos(\theta) \in \CC$,  and it follows from \cite[(1.54)]{MH} that  the relation
\begin{equation}\label{eq:Uhz} U_{k}(z) = x^{k} + x^{k-2} + \cdots + x^{-(k-2)} + x^{-k} =\frac{x^{k+1}-x^{-(k+1)}}{x-x^{-1}} \end{equation}
holds.   Moreover, by \cite[(1.57)]{MH}, 
\begin{equation}\label{eq:Whz} W_k(z) = \frac{x^{\frac{2k+1}{2}}-x^{-\frac{2k+1}{2}}}{x^{\half}-x^{-\half}}
= x^k + x^{k-1} + \cdots + 
x^{-(k-1)} + x^{-k} = x^{-k}\,\frac{x^{2k+1}-1}{x-1}. 
 \end{equation}  
 Hence, for $\Uc_k(x+x^{-1}) = U_k(\frac{x+x^{-1}}{2})$ and  $\Wc_k(x+x^{-1}) := W_k(\frac{x+x^{-1}}{2})$, we can conclude the following: 
 
\begin{proposition}\label{prop:qrels}  Assume $\Uc_k(t)$ is defined as in \eqref{eq:Qt}.   Set $\Wc_0(t) = 1 = \Uc_0(t)$,  and let $\Wc_k(t) =
\Uc_k(t) + \Uc_{k-1}(t)$ for $k \ge 1$. Let $x = e^{i\theta} \in \CC$ be chosen so that  $t = x + x^{-1}$.  Then 
for all $k \ge 1$, the following hold:
\begin{itemize}
\item[{\rm (a)}]  $\Uc_{k}(t) = x^{k} + x^{k-2}+ \cdots  + x^{-(k-2)}+ x^{-k} = \frac{x^{k+1}-x^{-(k+1)}}{x-x^{-1}}$;
\item[{\rm (b)}]   $\Wc_{k}(t) = x^{k} + x^{k-1} + x^{k-2}+\cdots + x^{k-2}+x^{-(k-1)}+x^{-k} = x^{-k}\,\frac{x^{2k+1}-1}{x-1}$;
\item[{\rm (c)}]  $\mathsf{p}_n(t) =  \Uc_n(t) - \Uc_{n-2}(t) - 2 =  (t-2) \Wc_h^2(t)$  for $n = 2h+1$, $h \ge 1$. 
\end{itemize}
\end{proposition}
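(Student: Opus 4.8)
The plan is to reduce everything to the substitution $t = x + x^{-1}$ (equivalently $\tfrac t2 = \tfrac{x+x^{-1}}2 = \cos\theta$) and to exploit the Laurent-polynomial factorizations that this substitution makes visible. Parts (a) and (b) require essentially no new work: by construction $\Uc_k(t) = U_k(\tfrac t2)$ and $\Wc_k(t) = W_k(\tfrac t2)$, so with $t = x+x^{-1}$ one has $\tfrac t2 = \tfrac{x+x^{-1}}2$, and (a), (b) are precisely the closed forms \eqref{eq:Uhz} and \eqref{eq:Whz} quoted from \cite{MH}, read off after this change of variable. (If one prefers a self-contained argument, both identities follow by a one-line induction: the claimed right-hand sides satisfy the defining recursions \eqref{eq:Qt} and \eqref{eq:Wt} and match the initial values.)

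For part (c), I would first rewrite the middle expression $\Uc_n(t) - \Uc_{n-2}(t)$ using (a). Placing the two fractions over the common denominator $x - x^{-1}$, the numerator factors as $(x^2-1)\bigl(x^{n-1} + x^{-(n+1)}\bigr)$, and after cancelling against $x - x^{-1} = x^{-1}(x^2-1)$ one obtains the clean identity
$$\Uc_n(t) - \Uc_{n-2}(t) = x^n + x^{-n}.$$
Hence, by \eqref{eq:pn}, $\mathsf{p}_n(t) = x^n + x^{-n} - 2$. (This is just the statement that $\Uc_n - \Uc_{n-2}$ is twice the Chebyshev polynomial of the first kind.)

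Next I would compute the right-hand side of (c). Writing $n = 2h+1$, formula (b) gives $\Wc_h(t) = x^{-h}\,\dfrac{x^{n} - 1}{x - 1}$, while $t - 2 = x - 2 + x^{-1} = \dfrac{(x-1)^2}{x}$. Multiplying,
$$(t-2)\,\Wc_h^2(t) = \frac{(x-1)^2}{x}\cdot x^{-2h}\,\frac{(x^n-1)^2}{(x-1)^2} = x^{-(2h+1)}(x^n-1)^2 = x^{-n}(x^n-1)^2 = x^n + x^{-n} - 2,$$
which matches the value of $\mathsf{p}_n(t)$ just computed. Since both $\mathsf{p}_n(t)$ and $(t-2)\Wc_h^2(t)$ are genuine polynomials in $t$ and the equality holds for every $t = x + x^{-1}$ with $x = e^{i\theta}$ — that is, for all $t$ in $[-2,2]$, an infinite set — the two polynomials agree identically, proving (c).

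There is no serious obstacle here; the entire content lies in choosing the variable $x$ so that the telescoping cancellations become transparent, after which (c) is a short algebraic identity. The only point requiring a little care is the final closure step: the $x = e^{i\theta}$ parametrization reaches only $t \in [-2,2]$, so I would invoke the fact that a polynomial identity valid on an infinite set holds everywhere (equivalently, one may read the displayed equalities as identities of Laurent polynomials in a formal indeterminate $x$, which sidesteps any restriction on the range of $\theta$).
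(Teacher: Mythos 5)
Your proof is correct, and for part (c) it takes a genuinely different route from the paper's. The paper proves (c) by two inductions: it first shows, by induction on $h$ (expanding everything in powers of $x$), that
\begin{equation*}
\Wc_h^2(t) \;=\; \Uc_{2h}(t) + 2\Uc_{2h-1}(t) + \cdots + 2\Uc_1(t) + 2\Uc_0(t),
\end{equation*}
then shows, by induction on $n$ using the recursion $\mathsf{p}_{n+1}(t) = t\,\Uc_n(t) - 2\Uc_{n-1}(t) - 2$, that
\begin{equation*}
\mathsf{p}_n(t) \;=\; (t-2)\bigl(\Uc_{n-1}(t) + 2\Uc_{n-2}(t) + \cdots + 2\Uc_0(t)\bigr),
\end{equation*}
and concludes by comparing the two expansions. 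You instead reduce both sides of (c) to the single Laurent polynomial $x^n + x^{-n} - 2$: the telescoping identity $\Uc_n(t) - \Uc_{n-2}(t) = x^n + x^{-n}$ (twice the first-kind Chebyshev polynomial) handles the left side, and the factorizations $t - 2 = (x-1)^2/x$ and $\Wc_h(t) = x^{-h}(x^n-1)/(x-1)$ handle the right side. This is shorter, avoids induction entirely beyond what (a) and (b) already require, and makes the appearance of the factor $(t-2)$ conceptually transparent; its only cost is the closure step you correctly flag --- your computation is verified only for $t = x + x^{-1}$ with $x \neq \pm 1$, so one must invoke that a polynomial identity holding on an infinite set (equivalently, an identity in the integral domain $\CC[x,x^{-1}]$, where cancellation of $(x-1)^2$ is licit) holds identically. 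The paper's route buys the explicit Chebyshev expansions of $\Wc_h^2$ and $\mathsf{p}_n/(t-2)$ as by-products, though these are not used elsewhere; note also that the paper's first induction manipulates the same $x$-expansions you do, so your argument is not less rigorous in kind, just more economical.
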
   
\begin{proof} Only the last equality in (c) needs to be verified, and we proceed to show that the two sides of (c)
are equal by computing both by induction on $h$ and comparing them.
When $h = 1$, 
 $$\Wc_1^2(t) = x^{2} + 2x + 3 + 2x^{-1} + x^{-2} = (x^{2} + 1 + x^{-2}) + 2(x + x^{-1}) + 2 = \Uc_{2}(t) + 2 \Uc_1(t) + 2\Uc_0(t).$$
 Assuming the statement 
 \begin{equation}\label{eq:qexpresiona} \Uc_{2h}(t) +2\Uc_{2h-1}(t)+ \cdots + 2 \Uc_1(t) + 2\Uc_0(t) =  \Wc_h^2(t)\end{equation}
for $h \ge 1$,  we have 
\begin{align*}
& \Uc_{2h+2}(t) +2\Uc_{2h+1}(t) + 2\Uc_{2h}(t) + \cdots + 2 \Uc_1(t) + 2\Uc_0(t)  \\
& \quad = \left(x^{2h+2} + x^{2h}+ \cdots + x^{-2h}+x^{-2h-2}\right) + \left(2x^{2h+1} +2x^{2h-1} + 
\cdots + 2x^{-2h+1}+2 x^{-2h-1} \right)\\
& \qquad + \left(x^{2h} + x^{2h-2} + \cdots + x^{-2h+2} + x^{-2h}\right)+
\Uc_{2h}(t) + 2 \Uc_{2h-1}(t) + \cdots + 2\Uc_1(t) + 2\Uc_0(t)  \\
& \quad =  \left(x^{2h+2} + 2x^{2h+1} + 2x^{2h}+ \cdots + 2 + \cdots +2 x^{-2h}+2x^{-2h-1}+x^{-2h-2}\right)\\
& \qquad \qquad \qquad  \quad + \left(x^{2h} + 2x^{2h-1} +3x^{2h-2} + \cdots + (2h+1) + \cdots + 3x^{-2h+2} +2x^{-2h+1}+ x^{-2h}\right) \\
& \quad = x^{2h+2} + 2x^{2h+1} + 3x^{2h}+ \cdots + (2h+3) + \cdots + 3x^{-2h} +  2x^{-2h-1}+
x^{-2h-2} \;  = \; \Wc_{h+1}^2(t).
\end{align*} 
This completes the induction step and proves \eqref{eq:qexpresiona}. 

Now on the other hand, we claim that when $n = 2h+1 \geq 3$,
\begin{align}
\mathsf{p}_n(t)=(t-2)\Big( \Uc_{n-1}(t)+2\Uc_{n-2}(t)+2\Uc_{n-3}(t)+\cdots+2\Uc_0(t) \Big). \label{eq:pexpresionb}
\end{align}
We argue this by induction on $n$. The base case when $n=3$ follows from a direct calculation. Suppose the statement is true for $n$.  Then by \eqref{eq:pn}, for $n+1$ we have
\begin{align*}
\mathsf{p}_{n+1}(t)=&t\Uc_n(t)-2\Uc_{n-1}(t)-2\\
=&(t-2)\Uc_n(t)+2\Uc_n(t)-2\Uc_{n-1}(t)-2\\
=&(t-2)\Uc_n(t)+2\big[t\Uc_{n-1}(t)-\Uc_{n-2}(t)\big]-2\Uc_{n-1}(t)-2\\
=&(t-2)\big[\Uc_n(t)+\Uc_{n-1}(t)\big] + \mathsf{p}_n(t)\\
=&(t-2)\big[\Uc_n(t)+\Uc_{n-1}(t)+\Uc_{n-1}(t)+2\Uc_{n-2}(t)+\cdots+2\Uc_0(t)\big]\\
=&(t-2)\big[\Uc_{n}(t)+2\Uc_{n-1}(t)+2\Uc_{n-2}(t)+\cdots+2\Uc_0(t)\big].
\end{align*} 
Therefore, the assertion $\mathsf{p}_n(t) = (t-2) \Wc_h^2(t)$ follows by comparing (\ref{eq:qexpresiona}) and (\ref{eq:pexpresionb}).  
 \end{proof}

Applying \eqref{eq:Whz} with $k = h$, we deduce  
\begin{corollary}
\label{cor:qroots} 
For $h \geq 1$, the roots of $\Wc_h(t)$ as a polynomial in $x$ are all $x \ne 1$ which are roots of unity in $\CC$ of order $2h+1$.  As a polynomial in $t$, the roots of 
 $\Wc_h(t)$ are all $t = x+x^{-1}$, where $x$ is a root of unity of order $2h+1$ in $\CC$
 and $x \neq 1$.   
   \end{corollary}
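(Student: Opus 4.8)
The plan is to read off the roots directly from the closed-form expression for $\Wc_h$ established in Proposition \ref{prop:qrels}(b) (equivalently \eqref{eq:Whz}). First I would substitute $k = h$ there, so that with $n = 2h+1$ and under the parametrization $t = x + x^{-1}$ we have
$$\Wc_h(t) = x^{-h}\,\frac{x^{2h+1}-1}{x-1} = x^{-h}\bigl(1 + x + x^2 + \cdots + x^{2h}\bigr).$$
Since any root of unity $x$ is nonzero, the factor $x^{-h}$ never vanishes, so $\Wc_h(t) = 0$ if and only if the geometric sum $\frac{x^{2h+1}-1}{x-1}$ vanishes. This happens precisely when $x^{2h+1} = 1$ and $x \neq 1$, which identifies the roots in the variable $x$ as exactly the $(2h+1)$-th roots of unity other than $1$ and establishes the first assertion.

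Next I would translate this into a statement about the roots in the variable $t$. The substitution $t = x + x^{-1}$ is two-to-one away from $x = \pm 1$, so I would check that the admissible values of $x$ pair off under $x \mapsto x^{-1}$. If $x \neq 1$ satisfies $x^{2h+1} = 1$, then so does $x^{-1}$, and both produce the same value $t = x + x^{-1}$; moreover $x = x^{-1}$ would force $x^2 = 1$, i.e.\ $x = \pm 1$. Because $n = 2h+1$ is odd, $-1$ is not a $(2h+1)$-th root of unity, and $x = 1$ is already excluded, so each of the $2h$ admissible values of $x$ has its reciprocal strictly distinct from it. Hence these values split into exactly $h$ genuine pairs $\{x, x^{-1}\}$, and each pair contributes a single root $t = x + x^{-1}$ of $\Wc_h$.

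Finally I would confirm completeness by a degree count: from $\Wc_h(t) = \Uc_h(t) + \Uc_{h-1}(t)$ together with \eqref{eqQkt}, $\Wc_h$ is a monic polynomial of degree $h$ in $t$, hence has exactly $h$ roots counted with multiplicity. The $h$ values $t = x + x^{-1}$ produced above are pairwise distinct, since $x + x^{-1} = y + y^{-1}$ forces $x,y$ to be the two roots of the same quadratic $z^2 - tz + 1 = 0$ and thus $\{x,x^{-1}\} = \{y,y^{-1}\}$; therefore they account for all $h$ roots of $\Wc_h$, each simple. The only genuine bookkeeping — and the one mild obstacle — is this pairing argument, namely verifying that the parametrization $t = x + x^{-1}$ collapses the $2h$ roots in $x$ to exactly $h$ distinct roots in $t$; this step relies crucially on $n$ being odd, so that $x = -1$ can never occur.
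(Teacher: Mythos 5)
Your proof is correct and follows essentially the same route as the paper, which simply deduces the corollary by applying \eqref{eq:Whz} (Proposition \ref{prop:qrels}(b)) with $k=h$ to read off the vanishing of $x^{-h}\frac{x^{2h+1}-1}{x-1}$. Your additional bookkeeping --- the pairing $\{x,x^{-1}\}$ using that $n=2h+1$ is odd, and the degree count showing the $h$ resulting values of $t$ are distinct and exhaust all roots --- is exactly the detail the paper leaves implicit.
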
  

\begin{remark} We are assuming that $\Df_n$ is defined over an algebraically closed field $\mathbb k$ of characteristic 0, and $q$ is a primitive $n$th root of unity in $\mathbb{k}$ for $n=2h+1$, $h \ge 1$.   Since the subfield of $\mathbb k$ generated by 1 and $q$ is isomorphic to the subfield of $\CC$ generated by $1$ and $x$, where $x \ne 1$ is a root of
unity of order $2h+1$ as in Corollary \ref{cor:qroots}, we will identify $x$ with $q$ in what follows. \end{remark}

\begin{corollary}
\label{cor:evals} 
Assume $n \geq 3$, $n$ odd. The characteristic roots of the McKay matrix $\Mf$ in \eqref{eq:McZ} are $\lambda_{j,r}=q^r(q^j+q^{-j})
= q^r\Uc_1(q^j+q^{-j})$, where $r\in \ZZ_n$, $0 \leq j \leq \frac{n-1}{2}$, and $q$ is a primitive $n$th root of unity.  Each root  $\lambda_{0,r}=2q^r$ has multiplicity 1, and each root $\lambda_{j,r}$ for $j \ne 0$  has multiplicity 2.  
\end{corollary}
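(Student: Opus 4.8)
The plan is to read off the characteristic roots of $\Mf$ directly from the determinant computation of Section~\ref{S3.3}, which shows that the characteristic polynomial of $\Mf$ (equivalently of $\Mf'$) is the product $\prod_{k=0}^{n-1}\mathsf{p}_n(t,q^k)$ of the $n$ scalar polynomials obtained by specializing the diagonal matrix $\Dr$ to its entries $q^k$. Thus it suffices to locate the roots, with multiplicity, of each factor $\mathsf{p}_n(t,q^k)$ and then take the union over $k\in\ZZ_n$. The first move is to exploit that $n$ is odd: since $2$ is then invertible modulo $n$, the map $r\mapsto 2r$ is a bijection of $\ZZ_n$, so $\{q^{2r}:r\in\ZZ_n\}=\{q^k:k\in\ZZ_n\}$, and I may reindex the product as $\prod_{r\in\ZZ_n}\mathsf{p}_n(t,q^{2r})$.

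Next I would reduce each factor $\mathsf{p}_n(t,q^{2r})$ to the already-understood polynomial $\mathsf{p}_n(t)=\mathsf{p}_n(t,1)$ by a homogeneity argument. From the closed form \eqref{eqQkDt} one checks the scaling identity $\Uc_k(t,q^{2r})=q^{rk}\,\Uc_k(t/q^r)$ by comparing coefficients: replacing $t$ by $t/q^r$ multiplies the degree-$(k-2j)$ term by $q^{-r(k-2j)}$, and multiplying $\Uc_k(t/q^r)$ by $q^{rk}$ restores the coefficient of that term to $q^{2rj}$, which is exactly $\Dr^j$ evaluated at $q^{2r}$. Applying this to the two terms of $\mathsf{p}_n(t,\Dr)=\Uc_n(t,\Dr)-\Dr\,\Uc_{n-2}(t,\Dr)-2\Ir$ and using $q^{rn}=(q^n)^r=1$, both leading powers $q^{rn}$ collapse to $1$, giving $\mathsf{p}_n(t,q^{2r})=\Uc_n(t/q^r)-\Uc_{n-2}(t/q^r)-2=\mathsf{p}_n(t/q^r)$. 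This is the step where oddness is essential: it is precisely what lets every $q^k$ be written as $q^{2r}$ with $q^r$ an $n$th root of unity (so that $(q^r)^n=1$ and $\mathsf{p}_n(t,q^{2r})$ is $\mathsf{p}_n$ evaluated at the rescaled variable $t/q^r$, with no affine distortion); for $n$ even this fails for the odd powers $q^k$ and produces the different eigenvalue pattern noted in the Introduction.

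Finally I would import the factorization already proved for $\mathsf{p}_n(t)$. By Proposition~\ref{prop:qrels}(c), $\mathsf{p}_n(u)=(u-2)\Wc_h^2(u)$ with $h=\frac{n-1}{2}$, and by Corollary~\ref{cor:qroots} the degree-$h$ monic polynomial $\Wc_h$ has the $h$ simple roots $u=q^j+q^{-j}$, $1\le j\le h$ (these are distinct, being $2\cos(2\pi j/n)$, and none equals $2$, since $\Wc_h(2)=\Uc_h(2)+\Uc_{h-1}(2)=(h+1)+h=n\neq0$). Substituting $u=t/q^r$ and clearing the scalar factors $q^{-r}$ by means of $q^{-rn}=1$ yields the linear factorization $\mathsf{p}_n(t/q^r)=(t-2q^r)\prod_{j=1}^{h}\bigl(t-\lambda_{j,r}\bigr)^2$ with $\lambda_{j,r}=q^r(q^j+q^{-j})=q^r\Uc_1(q^j+q^{-j})$. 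Taking the product over $r\in\ZZ_n$ gives the full characteristic polynomial; the root $\lambda_{0,r}=2q^r$ (the case $j=0$) occurs to the first power and each $\lambda_{j,r}$ with $j\neq0$ to the second, and the degree count $n(1+2h)=n^2$ confirms that no roots have been missed.

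The main obstacle is the scaling identity of the second paragraph together with the bookkeeping that makes the constant term behave: one must see both that $q^{2r}$ exhausts all the specializations $q^k$ and that the homogeneity of $\Uc_k(t,\Dr)$ in $(t,\Dr)$ collapses the $q^{rn}$ factors to $1$, so that $\mathsf{p}_n(t,q^{2r})$ equals $\mathsf{p}_n(t/q^r)$ exactly rather than an affine distortion of it. Once that reduction is in hand, the eigenvalue statement is merely a transcription of the known root structure of $\mathsf{p}_n(t)$ through the substitution $t\mapsto t/q^r$.
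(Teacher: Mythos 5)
Your proposal is correct and takes essentially the same route as the paper: the characteristic polynomial is read off from the determinant computation of Section \ref{S3.3} as $\prod_{k\in\ZZ_n}\mathsf{p}_n(t,q^k)$, and the roots then come from the factorization $\mathsf{p}_n(t)=(t-2)\Wc_h^2(t)$ of Proposition \ref{prop:qrels}(c) together with Corollary \ref{cor:qroots}. The only difference is that you spell out the rescaling step $\mathsf{p}_n(t,q^{2r})=\mathsf{p}_n(t/q^r)$ (using that $2$ is invertible modulo odd $n$ and that $q^{rn}=1$), a step the paper leaves implicit when passing from $\mathsf{p}_n(t,1)$ to the general specializations $\mathsf{p}_n(t,q^k)$.
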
  

The next result gives an expression for right eigenvectors of $\Mf = \McV$, $\VV = \VV(2,0)$, in terms of the Chebyshev polynomials.
\begin{proposition}
\label{prop:rteigenvector}
Assume $r \in \ZZ_n$ and $0 \leq j \leq \frac{n-1}{2}$,  and let $\mathbf{v}_0$ be the right eigenvector of the
matrix $\mathrm{Z}$ in \eqref{eq:McZ} corresponding to the eigenvalue $q^{2r}$ given by
$\mathbf{v}_0 =
\left [1\;\; q^{2r} \;\;  \cdots  \;\; q^{(n-1)2r} \right]^{\tt T}$.  For $1 \leq \ell \leq n-1$, set $\mathbf{v}_\ell:= q^{\ell r}\Uc_\ell(q^j+q^{-j}) \mathbf{v}_0$, where $\Uc_\ell$ is as in \eqref{eq:Qt}. Then $\mathbf{v}_{j,r}=[\mathbf{v}_0 \; \mathbf{v}_1 \; \dots \; \mathbf{v}_{n-1}]^{\tt T}$ is a right eigenvector of $\Mf$ corresponding to the eigenvalue $\lambda_{j,r}=q^r(q^j+q^{-j})$. 
\end{proposition}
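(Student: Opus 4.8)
The plan is to verify the eigenvector equation $\Mf\,\mathbf{v}_{j,r} = \lambda_{j,r}\,\mathbf{v}_{j,r}$ directly, exploiting the block structure of $\Mf$ displayed in \eqref{eq:McZ}. Writing $u = q^j+q^{-j}$, so that $\lambda_{j,r}=q^r u$, the candidate eigenvector has block components $\mathbf{v}_\ell = q^{\ell r}\Uc_\ell(u)\mathbf{v}_0$ for $0 \le \ell \le n-1$ (taking $\Uc_0 = 1$, this is consistent with $\mathbf{v}_0$), sitting in the $n$ successive $n$-dimensional block slots. The single fact about $\mathbf{v}_0$ I would use repeatedly is that it is a right eigenvector of the cyclic permutation matrix $\mathrm{Z}$ with eigenvalue $q^{2r}$; this is immediate from the explicit form of $\mathbf{v}_0$ together with $q^n = 1$ (which is exactly what makes the wrap-around entry of $\mathrm{Z}$ consistent). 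Because every nonzero block of $\Mf$ is a scalar multiple of $\mathrm{I}$ or of $\mathrm{Z}$, each block of $\Mf\,\mathbf{v}_{j,r}$ is a scalar multiple of $\mathbf{v}_0$, and the whole computation reduces to a scalar identity in each of the $n$ block rows.

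I would then treat the three types of block rows of \eqref{eq:McZ} separately. The top block row contributes $\mathrm{I}\,\mathbf{v}_1 = q^r\Uc_1(u)\mathbf{v}_0$, and since $\Uc_1(u)=u$ by \eqref{eq:Qt}, this equals $q^r u\,\mathbf{v}_0 = \lambda_{j,r}\mathbf{v}_0$, as required. For a generic interior block row (indexed by $2 \le \ell \le n-1$), the nonzero blocks are a $\mathrm{Z}$ on the subdiagonal and an $\mathrm{I}$ on the superdiagonal, so the corresponding block of $\Mf\,\mathbf{v}_{j,r}$ is $\mathrm{Z}\,\mathbf{v}_{\ell-2}+\mathbf{v}_\ell$. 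Using $\mathrm{Z}\,\mathbf{v}_0 = q^{2r}\mathbf{v}_0$ this becomes $q^{\ell r}\bigl(\Uc_{\ell-2}(u)+\Uc_\ell(u)\bigr)\mathbf{v}_0$, and the desired identity for this row is exactly the Chebyshev recursion $\Uc_\ell(u)=u\,\Uc_{\ell-1}(u)-\Uc_{\ell-2}(u)$ of \eqref{eq:Qt}, which rewrites the bracket as $u\,\Uc_{\ell-1}(u)$ and so yields $q^{\ell r}u\,\Uc_{\ell-1}(u)\mathbf{v}_0 = \lambda_{j,r}\,\mathbf{v}_{\ell-1}$.

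The only substantive step is the bottom block row, the one encoding $\VV(n,r)\otimes\VV \cong \Pf(n-1,r+1)$, whose nonzero blocks are $2\mathrm{I}$ and $2\mathrm{Z}$. Here the relevant block of $\Mf\,\mathbf{v}_{j,r}$ is $2\mathbf{v}_0 + 2\,\mathrm{Z}\,\mathbf{v}_{n-2} = 2\bigl(1+\Uc_{n-2}(u)\bigr)\mathbf{v}_0$, again using $\mathrm{Z}\mathbf{v}_0 = q^{2r}\mathbf{v}_0$ and $q^{nr}=1$. Matching this against $\lambda_{j,r}\mathbf{v}_{n-1} = u\,\Uc_{n-1}(u)\mathbf{v}_0$ (once more using $q^{nr}=1$) reduces the claim precisely to $u\,\Uc_{n-1}(u) - 2\Uc_{n-2}(u) - 2 = 0$, that is, to $\mathsf{p}_n(u)=0$ by \eqref{eq:pn}. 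I expect this to be the main obstacle, since it is not a formal recursion but the genuine assertion that $u=q^j+q^{-j}$ is a root of the characteristic polynomial. I would discharge it by invoking Proposition \ref{prop:qrels}(c), namely $\mathsf{p}_n(t)=(t-2)\Wc_h^2(t)$ with $n=2h+1$: for $j=0$ the value $u=2$ kills the factor $t-2$, while for $1 \le j \le h = \frac{n-1}{2}$ the value $u=q^j+q^{-j}$ is a root of $\Wc_h$ by Corollary \ref{cor:qroots} (equivalently, these are exactly the characteristic roots listed in Corollary \ref{cor:evals}). Combining the three families of block rows gives $\Mf\,\mathbf{v}_{j,r}=\lambda_{j,r}\mathbf{v}_{j,r}$, and since $\mathbf{v}_0\neq\mathbf{0}$ the vector $\mathbf{v}_{j,r}$ is nonzero, completing the argument.
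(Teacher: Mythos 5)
Your proposal is correct and follows essentially the same route as the paper's proof: a block-by-block verification of $\Mf\,\mathbf{v}_{j,r}=\lambda_{j,r}\mathbf{v}_{j,r}$, where the first and interior block rows reduce to the Chebyshev recursion \eqref{eq:Qt} via $\mathrm{Z}\mathbf{v}_0=q^{2r}\mathbf{v}_0$, and the last block row reduces to $\mathsf{p}_n(q^j+q^{-j})=0$, discharged exactly as the paper does by the factorization $\mathsf{p}_n(t)=(t-2)\Wc_h^2(t)$ of Proposition \ref{prop:qrels}\,(c) together with Corollary \ref{cor:qroots}.
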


\begin{proof}We check directly that the given vector $\mathbf{v}_{j,r}$ is a right eigenvector of $\Mf$, that is, we verify 
\begin{equation}\label{eq:Mv}\Mf\mathbf{v}_{j,r} = \Mf\begin{bmatrix} \mathbf{v}_0 \\ \mathbf{v}_1 \\ \vdots \\ \mathbf{v}_{n-1} \end{bmatrix} = \lambda_{j,r} \begin{bmatrix} \mathbf{v}_0 \\ \mathbf{v}_1 \\ \vdots \\ \mathbf{v}_{n-1} \end{bmatrix} \end{equation}
holds by comparing both sides of \eqref{eq:Mv}.  We assume that $\mathbf{v}_0$ is as in the statement of the proposition,  and argue this forces $\mathbf{v}_\ell := q^{\ell r}\Uc_\ell(q^j+q^{-j}) \mathbf{v}_0$ to hold for $1 \le \ell \le n-1$, where $\Uc_\ell$ is as in \eqref{eq:Qt}.  
The comparison involves checking
\begin{align*}
\mathbf{v}_1 &\isit \lambda_{j,r} \mathbf{v}_0 = q^{r}(q^j+q^{-j}) \mathbf{v}_0 =  q^{r}\Uc_1(q^j+q^{-j}) \mathbf{v}_0, & \hspace{-.65cm} \text{(Row 0)}\\
\mathbf{v}_\ell&\isit \lambda_{j,r} \mathbf{v}_{\ell-1} - \Zr \mathbf{v}_{\ell-2} &\hspace{-.65cm} \text{(Row $1\le \ell < n-1$)} \\ &= \lambda_{j,r} q^{(\ell-1)r}\Uc_{\ell-1}(q^j+q^{-j})\mathbf{v}_0 - \Zr q^{(\ell-2)r}\Uc_{\ell-2}(u^j+u^{-j})\mathbf{v}_0, & \\
&= q^{\ell r}[(q^j+q^{-j})\Uc_{\ell-1}(q^j+q^{-j}) - \Uc_{\ell-2}(q^j+q^{-j})] \mathbf{v}_0 = q^{\ell r} \Uc_\ell(q^j+q^{-j}) \mathbf{v}_0.  &
\end{align*}
For the final row, we compare $2\mathbf{v}_0+2\Zr\mathbf{v}_{n-2}$ with $\lambda_{j,r} \mathbf{v}_{n-1}$, by showing
 $2\mathbf{v}_0+2\Zr\mathbf{v}_{n-2} -  \lambda_{j,r} \mathbf{v}_{n-1} = \mathbf{0}$:
\begin{align*}
2\mathbf{v}_0+2\Zr\mathbf{v}_{n-2} -  \lambda_{j,r} \mathbf{v}_{n-1} &= 2\mathbf{v}_0 + 2q^{2r} q^{(n-2)r} \Uc_{n-2}(q^j+q^{-j}) \mathbf{v}_0 - \lambda_{j,r} \, q^{(n-1)r} \Uc_{n-1}(q^j+q^{-j})\mathbf{v}_0 \\
&= \left( 2 + 2q^{nr} \Uc_{n-2}(q^j+q^{-j}) - q^{nr} (q^j+q^{-j})\Uc_{n-1}(q^j+q^{-j}) \right) \mathbf{v}_0 \\
&= -\mathsf{p}_n(\lambda_{j,0})\mathbf{v}_0 = \mathbf{0},
\end{align*}
because $\lambda_{j,0}=q^j+q^{-j}$ is a root of $\mathsf{p}_n(t)$  by Proposition 3.4.6 (b). \end{proof}

\begin{remark}\label{rem:lastcomp}  In the expression for the right eigenvector $\mathbf{v}_{j,r}$ of $\Mf$ in \eqref{eq:Mv} corresponding to the eigenvalue $\lambda_{j,r} =q^r(q^j+q^{-j})$, the last vector component is
\begin{equation}\label{eq:lastcoord} \mathbf{v}_{n-1}= q^{(n-1)r}\Uc_{n-1}(q^j+q^{-j})\mathbf{v}_0=q^{(n-1)r} \frac{q^{jn}-q^{-jn}}{q^j-q^{-j}}
\mathbf{v}_0 = \mathbf{0}\end{equation}
when $j \ne 0$  by Proposition \ref{prop:qrels} (a).
\end{remark}

\subsection{Generalized right eigenvectors for $\McV$} \label{S3.5}  
Using the Chebyshev polynomials $\Uc_k(q^j + q^{-j})$, we now describe generalized right eigenvectors 
for $\Mf$.
\begin{theorem}\label{thm:genright}  Assume $r \in \ZZ_n$ and fix a choice of $j \in \{1,2,\ldots, \frac{n-1}{2}\}$. 
Let $\mathbf{v} = \left [1\;\; q^{2r} \;\;  \cdots  \;\; q^{(n-1)2r} \right]^{\tt T}$ be the right eigenvector of the matrix $\Zr$ corresponding to the eigenvalue $q^{2r}$ as in Proposition \ref{prop:rteigenvector}, and set $\xb_0 = \mathbf{v}$. For any $1 \leq k \leq n-1$, assume
\begin{equation}\label{eq:xkrecur} \mathbf{x}_{k} := q^{kr} \Uc_k  \mathbf{v} + q^{(k-1)r} \sum_{s=0}^{\lfloor \frac{k-1}{2}\rfloor} (k-2s) \Uc_{k-1-2s} \mathbf{v},
\end{equation}
where $\Uc_k$ is shorthand for $\Uc_k(q^j + q^{-j})$. Then $\xb_{j,r} =\left[ \xb_0 \; \, \xb_1 \; \, \ldots \; \, \xb_{n-2}\; \, \xb_{n-1}\right ]^{\tt T}$  is a generalized right eigenvector of $\Mf_\VV$ corresponding to the eigenvalue $\lambda_{j,r} = q^r(q^j + q^{-j})$, 
and $\McV \xb_{j,r} = \lambda_{j,r} \xb_{j,r} + \vb_{j,r}$, where $\vb_{j,r}$ is the right eigenvector for $\McV$ in Proposition
\ref{prop:rteigenvector}.
\end{theorem}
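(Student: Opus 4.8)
The plan is to verify the asserted identity $\McV\xb_{j,r}=\lambda_{j,r}\xb_{j,r}+\vb_{j,r}$ block-row by block-row, in the same spirit as the proof of Proposition \ref{prop:rteigenvector}. The decisive simplification is that every block component of $\xb_{j,r}$ is a scalar multiple of the single vector $\mathbf{v}=[1\;\;q^{2r}\;\;\cdots\;\;q^{(n-1)2r}]^{\tt T}$, which satisfies $\Zr\mathbf{v}=q^{2r}\mathbf{v}$. Writing $\xb_k=d_k\mathbf{v}$ with $d_k=q^{kr}\Uc_k+q^{(k-1)r}f_k$, where $\Uc_k=\Uc_k(q^j+q^{-j})$ and $f_k=\sum_{s=0}^{\lfloor(k-1)/2\rfloor}(k-2s)\Uc_{k-1-2s}$, the action of $\Zr$ on each component becomes multiplication by $q^{2r}$, so the matrix equation collapses to three families of scalar identities dictated by the block rows of $\Mf$ in \eqref{eq:McZ}: the top row demands $d_1=\lambda_{j,r}d_0+1$; each interior row $1\le i\le n-2$ demands $d_{i+1}=\lambda_{j,r}d_i-q^{2r}d_{i-1}+q^{ir}\Uc_i$; and the bottom row demands $\lambda_{j,r}d_{n-1}=2+2q^{2r}d_{n-2}$, where I have already used $\Uc_{n-1}(q^j+q^{-j})=0$ from Remark \ref{rem:lastcomp} to discard the inhomogeneous term in the last coordinate of $\vb_{j,r}$.

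First I would isolate the homogeneous part. The scalars $q^{kr}\Uc_k$ are exactly the coordinates of the eigenvector $\vb_{j,r}$ of Proposition \ref{prop:rteigenvector}, and they satisfy the homogeneous recursion $c_{i+1}=\lambda_{j,r}c_i-q^{2r}c_{i-1}$ by the Chebyshev recurrence \eqref{eq:Qt}. Substituting $d_k=q^{kr}\Uc_k+q^{(k-1)r}f_k$ into the interior equation and cancelling this homogeneous contribution, the entire interior family reduces, after dividing by $q^{ir}$, to the single inhomogeneous Chebyshev identity
\[
f_{i+1}=\Uc_1 f_i-f_{i-1}+\Uc_i,\qquad 1\le i\le n-2,
\]
which I would establish by induction on $i$ using the product rule $\Uc_1\Uc_m=\Uc_{m+1}+\Uc_{m-1}$ term by term, the base cases $f_0=0$ and $f_1=\Uc_0=1$ being immediate. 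The top-row identity $d_1=\lambda_{j,r}d_0+1$ is then a direct check, since $d_0=1$, $\Uc_1=q^j+q^{-j}$, and $\Uc_0=1$.

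The hard part will be the bottom-row identity, since it is the closing condition that cannot follow from the recursion alone; it is precisely where the multiplicity-two (double-root) nature of $\lambda_{j,r}$ from Corollary \ref{cor:evals} enters. Using $q^{nr}=1$ together with the special evaluations $\Uc_{n-1}(q^j+q^{-j})=0$ and $\Uc_{n-2}(q^j+q^{-j})=-1$, both read off from Proposition \ref{prop:qrels}(a) via $q^{jn}=1$, I would reduce $\lambda_{j,r}d_{n-1}=2+2q^{2r}d_{n-2}$ to the purely polynomial claim $\Uc_1 f_{n-1}-2f_{n-2}=(n-1)\Uc_{n-1}$, which holds as an identity in $t$ (by the same product rule and a reindexing of the sums) and whose right-hand side vanishes at $t=q^j+q^{-j}$. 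This closes the verification.

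Finally, I would record the conceptual reason behind these formulas, which doubles as an alternative proof. One has the classical identity $f_k=\Uc_k'$ (the derivative of the $k$th Chebyshev polynomial of the second kind), so up to adding the eigenvector $\vb_{j,r}$, the vector $\xb_{j,r}$ is the $t$-derivative of the eigenvector family $\mathbf{V}(t)$ whose $k$th block is $q^{kr}\Uc_k(t)\mathbf{v}$. A direct block computation gives the parametrized identity $(\Mf-q^r t\,\mathrm{I})\mathbf{V}(t)=-\mathsf{p}_n(t)\,(\mathbf{0},\dots,\mathbf{0},\mathbf{v})^{\tt T}$, with $\mathsf{p}_n(t)=t\Uc_{n-1}(t)-2\Uc_{n-2}(t)-2$ as in \eqref{eq:pn}. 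Differentiating in $t$ and evaluating at $t=q^j+q^{-j}$, the eigenvector reappears from the $-q^r\mathbf{V}$ term while the inhomogeneity drops out because $t=q^j+q^{-j}$ is a \emph{double} root of $\mathsf{p}_n(t)=(t-2)\Wc_h^2(t)$ (Proposition \ref{prop:qrels}(c) and Corollary \ref{cor:qroots}), so $\mathsf{p}_n(t)=\mathsf{p}_n'(t)=0$ there. This makes transparent why $\xb_{j,r}$ is a generalized right eigenvector with $(\Mf-\lambda_{j,r}\mathrm{I})\xb_{j,r}=\vb_{j,r}$, and it agrees coordinatewise with \eqref{eq:xkrecur}.
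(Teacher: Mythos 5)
Your proof is correct, and its core --- verifying $\McV\xb_{j,r}=\lambda_{j,r}\xb_{j,r}+\vb_{j,r}$ block row by block row and reducing the interior rows to the inhomogeneous Chebyshev recursion $f_{i+1}=\Uc_1 f_i-f_{i-1}+\Uc_i$ --- is the same as the paper's: the paper's $\Sigma(k)$ in \eqref{eq:Sig} is exactly your $f_k\,\mathbf{v}$, and it proves this identity by the same product-rule-plus-reindexing manipulation (split into parity cases). Your scalar bookkeeping (factoring out $\mathbf{v}$ so everything becomes an identity in the coefficients $d_k$) is tidier than the paper's vector computations, but not a different idea. You genuinely depart in two places. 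For the closing bottom row, the paper massages $2\xb_0+2\Zr\xb_{n-2}-\lambda_{j,r}\xb_{n-1}$ into $-\frac{1}{2}\mathsf{p}_n(q^j+q^{-j})\,\mathbf{v}=\mathbf{0}$, while you prove the polynomial identity $\Uc_1 f_{n-1}-2f_{n-2}=(n-1)\Uc_{n-1}$ and then evaluate; your identity is correct (the reindexing argument works, and it also follows from $f_k=\Uc_k'$ together with $\mathsf{p}_n'=n\,\Uc_{n-1}$), and both routes hinge on the same vanishing of $\Uc_{n-1}$ at $t_0=q^j+q^{-j}$. The more substantive difference is your final paragraph: the observation $f_k=\Uc_k'$ and the parametrized relation $(\Mf-q^rt\,\Ir)\mathbf{V}(t)=-\mathsf{p}_n(t)\,[\mathbf{0}\;\cdots\;\mathbf{0}\;\mathbf{v}]^{\tt T}$, which is immediate from the Chebyshev recursion, give a differentiation proof in which the generalized eigenvector exists precisely because $t_0$ is a \emph{double} root of $\mathsf{p}_n=(t-2)\Wc_h^2$ (and none exists at $\lambda_{0,r}$, a simple root). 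The paper has no analogue of this; it eliminates all sum manipulations and makes the multiplicity structure of Corollary \ref{cor:evals} do the work. One small slip there: $\xb_{j,r}=\vb_{j,r}+q^{-r}\mathbf{V}'(t_0)$, not $\vb_{j,r}+\mathbf{V}'(t_0)$; the factor $q^{-r}$ is what converts $(\Mf-\lambda_{j,r}\Ir)\mathbf{V}'(t_0)=q^r\vb_{j,r}$ into the stated normalization $(\Mf-\lambda_{j,r}\Ir)\xb_{j,r}=\vb_{j,r}$ and makes the coordinates match \eqref{eq:xkrecur}.
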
 

\begin{proof} The proof amounts to showing that the matrix equation below holds 
 \begin{equation}
\label{eq:McK}  
\Mf_\VV \mathbf{x}_{j,r}  =\left( \begin{matrix} 0 & \mathrm{I} & 0  & & \cdots & 0 & 0 \\ 
 \mathrm{Z} & 0 & \mathrm{I}  & & \cdots & 0 & 0 \\
 0 &  \mathrm{Z} & 0 & \mathrm{I} & \cdots & 0 & 0 \\
\vdots  & \vdots  & \mathrm{Z}   & \ddots  & \ddots & 0 & 0 \\ 
 \vdots & \vdots & \vdots & &  \ddots &  \mathrm{I} & 0 \\
0 & 0 & 0 & 0  \cdots &  \mathrm{Z} & 0 &  \mathrm{I} \\ 
2\mathrm{I}  & 0 & 0 & 0 & \cdots &  2\mathrm{Z} & 0 
  \end{matrix} \right) \left [ \begin{matrix}  \mathbf{x}_0 \\ \mathbf{x}_1 \\  \mathbf{x}_2  \\ \vdots \\
 \mathbf{x}_{n-3} \\ \mathbf{x}_{n-2}\\ \mathbf{x}_{n-1}\end{matrix} \right]  = \lambda_{j,r} \left [ \begin{matrix}  \mathbf{x}_0 
  \\ \mathbf{x}_1 \\  \mathbf{x}_2  \\ \vdots \\
 \mathbf{x}_{n-3} \\  \mathbf{x}_{n-2}\\ \mathbf{x}_{n-1}\end{matrix} \right] + \left [ \begin{matrix}  \mathbf{v} 
  \\ q^r\Uc_1\mathbf{v}  \\  q^{2r}\Uc_2\mathbf{v}  \\ \vdots \\
q^{(n-3)r} \Uc_{n-3}\mathbf{v} \\  q^{(n-2)r}\Uc_{n-2}\mathbf{v}\\ q^{(n-1)r}\Uc_{n-1}\mathbf{v}\end{matrix} \right], 
\end{equation}
when $\xb_{j,r}$ has vector components given by \eqref{eq:xkrecur}.   

Row 0 of $\McV \xb_{j,r}$ says that $\xb_1 = \lambda_{j,r}\xb_0 + \vb
=  q^r\Uc_1 \xb_0 + \vb$, which is true for $\xb_1$ in \eqref{eq:xkrecur}.   Next we compute rows $k=2,\dots, n-1$ of  $\McV \xb_{j,r}$
proceeding  by induction to show  that $\xb_k = \lambda_{j,r}\xb_{k-1}
- \Zr x_{k-2} + q^{(k-1)r} \Uc_{k-1} \vb$ must hold for $2 \le k \le  n-1$. 
To facilitate this, we write \begin{equation}\label{eq:xb}\xb_k = q^{kr} \Uc_k \vb + q^{(k-1)r} \sum_{s=0}^{\lfloor \frac{k-1}{2}\rfloor} (k-2s) \Uc_{k-1-2s} \vb =
 q^{kr} \Uc_k \vb + q^{(k-1)r}\Sigma(k), \quad \text{where} \end{equation}
\begin{equation}\label{eq:Sig}\Sigma(k) = \sum_{s=0}^{\lfloor \frac{k-1}{2}\rfloor} (k-2s) \Uc_{k-1-2s} \vb.\end{equation}
Now for row $k$ of  $\McV \xb_{j,r}$ verifying that \eqref{eq:xb} holds involves using the Chebyshev recursion 
$\Uc_{k+1} = (q^j + q^{-j})\Uc_k - \Uc_{k-1}$ for $1\le k \le n-1$ (which we write 
$\Uc_{k+1} = \Uc_1\Uc_k - \Uc_{k-1}$ here for the sake of brevity) and showing that
\begin{align}\begin{split}\label{eq:xk1}  \xb_{k+1} & = \lambda_{j,r} \xb_k - \Zr \xb_{k-1} + q^{kr} \Uc_k \vb \\
  & = q^{(k+1)r}\Uc_1 \Uc_k \vb + q^{kr}\Uc_1\Sigma(k) - q^{2r} \xb_{k-1} + q^{kr} \Uc_k \vb,\\
& =  q^{(k+1)r}\left( \Uc_{k+1}\vb + \Uc_{k-1}\vb\right) + q^{kr}\left( \Uc_1\Sigma(k) + \Uc_k\vb\right)  
 -q^{2r} \left(q^{(k-1)r} \Uc_{k-1}\vb + q^{(k-2)r} \Sigma(k-1)\right) \\
 & = q^{(k+1)r} \Uc_{k+1}\vb + q^{kr}\left( \Uc_1\Sigma(k) -\Sigma(k-1) + \Uc_k\vb\right).\end{split} \end{align}
We see that \eqref{eq:xb} will hold for $k+1$,  if we can show that    
\begin{equation}\label{eq:sigma} \Sigma(k+1) =  \Uc_1\Sigma(k) - \Sigma(k-1) + \Uc_k \vb, \qquad (\Uc_1 = q^{j}+q^{-j}).\end{equation}
\subsubsection*{\bf Case $k$ odd, $k = 2t+1$ for $t \ge 1$}  We start from the right-hand side and use the Chebyshev
recursion relation.  When we encounter a term $\Uc_\ell\vb$ with $\ell < 0$, we assume it is 0 and drop it from the equation.
We use the fact that $\lfloor \frac{k-1}{2}\rfloor = t = \lfloor \frac{k}{2}\rfloor$ and $\lfloor \frac{k-2}{2}\rfloor = t-1$.  Then
\begin{align*}   \Uc_1\Sigma(k) - \Sigma(k-1) + \Uc_k \vb\\
 & \hspace{-2.7cm} = \Uc_1\sum_{s=0}^t (k-2s)\Uc_{k-1-2s}\vb - \sum_{s=0}^{t-1} (k-1-2s)
\Uc_{k-2-2s}\vb + \Uc_k \vb \\
& \hspace{-2.7cm} = \sum_{s=0}^t (k-2s)\Uc_{k-2s}\vb + \sum_{s=0}^{t-1}(k-2s)\Uc_{k-2-2s}\vb - \sum_{s=0}^{t-1}(k-1-2s)\Uc_{k-2-2s}\vb + \Uc_k \vb \\
& \hspace{-2.7cm}  = \sum_{s=0}^t (k-2s)\Uc_{k-2s}\vb + \sum_{s=0}^{t-1} \Uc_{k-2-2s} + \Uc_k \vb \\
& \hspace{-2.7cm}  = \sum_{s=0}^t (k+1-2s)\Uc_{k-2s}\vb - \sum_{s=0}^t \Uc_{k-2s}\vb +  \sum_{s=0}^{t-1} \Uc_{k-2-2s}\vb + \Uc_k \vb\\
& \hspace{-2.7cm}  = \sum_{s=0}^t (k+1-2s)\Uc_{k-2s}\vb = \sum_{s=0}^{\lfloor \frac{k}{2}\rfloor} (k+1-2s)\Uc_{k-2s}\vb =\Sigma(k+1).\end{align*}
\subsubsection*{\bf Case $k$ even.} Since the argument just requires minor
adjustments  to the one above when $k$ is even, we omit the proof.

What remains to be done is to compute the last row of $\McV \xb_{j,r}$, and to show that
$2 \xb_0 + 2 \Zr \xb_{n-2} = \lambda_{j,r} \xb_{n-1} + q^{(n-1)r}\Uc_{n-1}\vb$.
We will use  the relations $\Zr \mathbf{x}_k = q^{2r}\mathbf{x}_k$ for all $0 \le k \le n-1$  and 
$\Uc_{n-1}\vb = \mathbf{0}$ and $\Uc_n \vb = \vb$
which come from  $\Uc_k \mathbf{v} = \frac{q^{(k+1)j} - q^{-(k+1)j}}{q^j-q^{-j}}\mathbf{v}$ when $k=n-1, n$.   Since 
$\Uc_{n-1}\vb =  \mathbf{0}$, what we end up showing is that $2 \xb_0 + 2 \Zr \xb_{n-2} -\lambda_{j,r} \xb_{n-1} =  \mathbf{0}$. 
Now the computation in \eqref{eq:xk1} for $k = n-1$ with a little rearranging and   
with   $\Uc_1\Sigma(n-1) -\Sigma(n-2) + \Uc_{n-1}\vb$  replaced with $\Sigma(n) = \sum_{s=0}^{\frac{n-1}{2}}
(n-2s)\Uc_{n-1-2s} \mathbf{v}$   implies
\begin{align*} \Zr \mathbf{x}_{n-2} - \lambda_{j,r} \mathbf{x}_{n-1} &= q^{(n-1)r} \Uc_{n-1}\mathbf{v} - q^{nr}\Uc_n\mathbf{v} - q^{(n-1)r}\sum_{s=0}^{\frac{n-1}{2}}
(n-2s)\Uc_{n-1-2s} \mathbf{v} \\
&= -\mathbf{v} - q^{(n-1)r}\sum_{s=1}^{\frac{n-1}{2}}
(n-2s)\Uc_{n-1-2s} \mathbf{v}  \qquad \text{(using $\Uc_{n-1} \mathbf{v} = \mathbf{0}$ and $\Uc_n \mathbf{v} = \mathbf{v}$)}.
\end{align*}
Therefore, for the last row we have
\begin{align*} 2 \mathbf{x}_0 + 2 \Zr  \mathbf{x}_{n-2} - \lambda_{j,r}  \mathbf{x}_{n-1} & = 
2 \mathbf{v} + q^{2r} \mathbf{x}_{n-2} -\mathbf{v} - q^{(n-1)r}\sum_{s=1}^{\frac{n-1}{2}}
(n-2s)\Uc_{n-1-2s} \mathbf{v}  \\
&\hspace{-3.7cm}= \mathbf{v} + q^{2r}\Bigg(q^{(n-2)r}\Uc_{n-2}\mathbf{v} + q^{(n-3)r} 
\sum_{s=0}^{\frac{n-3}{2}}
(n-2-2s)\Uc_{n-3-2s} \mathbf{v} \Bigg)    - q^{(n-1)r}\sum_{s=1}^{\frac{n-1}{2}}
(n-2s)\Uc_{n-1-2s} \mathbf{v}  \\
&\hspace{-3.7cm}= \mathbf{v} + \Uc_{n-2} \mathbf{v} 
= \mathbf{v} + \Uc_{n-2} \mathbf{v}  - \frac{1}{2}\Uc_1\Uc_{n-1} \mathbf{v} 
= -\frac{1}{2}\mathsf{p}_n(q^j + q^{-j})\mathbf{v} =  \mathbf{0}, 
\end{align*}
since $\lambda_{j,0} = q^j + q^{-j}$ is a root of $\mathsf{p}_n(t)$.
\end{proof}

\subsection{Right eigenvectors from grouplike elements of $\Df_n$}\label{S3.6}
In this section, we focus on the grouplike elements $b^ic^k$, $0 \le i,k \le n-1$,  of $\Df_n$ and compute the trace vectors
$\Trs(b^ic^k)$ explicitly.  Relation
\eqref{eq:grouplike} with $g = b^ic^k$ says  $\Mf_\VV \Trs(b^ic^k) =  \tr_{\VV}(b^ic^k) \Trs(b^ic^k)$,
so that $\Trs(b^ic^k)$ is a right eigenvector of eigenvalue $ \tr_{\VV}(b^ic^k)$ 
for the McKay matrix $\McV$, $\VV = \VV(2,0)$.  We identify the eigenvalue $\tr_{\VV}(b^ic^k)$ with $\lambda_{j,r} = q^r(q^j + q^{-j})$
for certain values of $j$ and $r$ that depend on $i$ and $k$.  Since there is a unique right eigenvector of $\McV$ corresponding to the
eigenvalue $\lambda_{j,r}$  up to scalar multiples by Theorem \ref{thm:genright}, by comparing the vectors $\Trs(b^ic^k)$ with the vectors $\mathbf{v}_{j,r}$
in Section \ref{S3.4}, we obtain an expression for the characters $\eta_{\ell,s}$ of the simple $\Df_n$-modules $\VV(\ell,s)$ evaluated on the grouplike elements $b^ic^k$ of $\Df_n$  in terms of
Chebyshev polynomials of the second kind.   The trace vectors  $\Trs(b^ic^k) = \Trs(b^{-k}c^{-i})$ for
$i,k \in \ZZ_n$  are shown to give  a complete set of right eigenvectors for $\McV$.  

The simple $\Df_n$-module $\VV(\ell,s)$, $1 \leq \ell \leq n$ and $s \in \ZZ_n$,    
has a basis $\{v_1, v_2, \ldots, v_\ell\}$ with $\Df_n$-action  prescribed by  \eqref{eq:action}, which implies
that  $b^ic^k$ has the following character value on $\VV(\ell,s)$:
\begin{equation}\label{eq:trace}\eta_{\ell,s}(b^ic^k) := \tr_{\VV(\ell,s)}(b^ic^k) = \sum_{t=1}^\ell q^{(s+t-1)i+(t-(s+\ell))k} = q^{(s-1)i-(s+\ell)k} \sum_{t=1}^\ell q^{t(i+k)}.\end{equation} 
We assume that the simple modules of $\Df_n$ are ordered so that for a fixed grouplike element $g$, 
\begin{align}\begin{split}
\label{eq:etag}
&\Trs(g) =\left[\mathbf{u}_1\;\, \mathbf{u}_2 \;\, \ldots \;\, \mathbf{u}_{n-1}  \;\, \mathbf{u}_n \right]^{\tt T}, \quad 
\text{where} \\
&\mathbf{u}_\ell = \left[\eta_{\ell,0}(g)\;\, \eta_{\ell,1}(g) \;\, \ldots \;\, \eta_{\ell,n-1}(g) \right] \quad
\text{for $1 \le \ell \le n$}.
\end{split}\end{align}

\begin{theorem}\label{thm:rightevs}
Assume $i,k \in \ZZ_n$, and $\VV = \VV(2,0)$. Then 
\begin{itemize}
 \item[{\rm (a)}] $\Trs(b^ic^k)$ is a right eigenvector for $\Mf_\VV$  of
eigenvalue $\lambda_{j,r}=\eta_{2,0}(b^ic^k) = q^{i}+ q^{-k}$ if
\begin{align}\label{eq:rj}
 j=\pm \frac{i+k}{2} \hspace{.1 in} (\modd n) \qquad \quad  r = \frac{i-k}{2} \hspace{.1 in} (\modd n),
\end{align}
or equivalently,  if  for $0 \le j \le \frac{n-1}{2}$ and $r \in \ZZ_n$, 
\begin{equation}\label{eq:ik} \begin{array}{c} i = j+r \\
k = j-r \end{array} \hspace{.1 in} (\modd n),\qquad \text{or} \qquad  \begin{array}{c} i =-j+r \\
k = -j-r \end{array} \hspace{.1 in} (\modd n).
\end{equation}
\item[{\rm (b)}] $\Trs(b^ic^k) = \mathbf{v}_{j,r}$,  and  for all
$1\le \ell \le n$  and $s \in \ZZ_n$
\begin{equation}\label{eq:charCheby} \eta_{\ell,s}(b^ic^k) = q^{(\ell +s -1)r}\Uc_{\ell-1}(q^j + q^{-j}),\end{equation}
where $j$ and $r$ are as in \eqref{eq:rj}, and $\mathbf{v}_{j,r}$ is as in Proposition 
\ref{prop:rteigenvector}.
\item[{\rm (c)}]  $\Trs(b^ic^k)= \Trs(b^{-k}c^{-i})$ for all $i,k \in \ZZ_n$.  Therefore, the character $\eta_{\ell,s}$ of $\VV(\ell,s)$ satisfies
\begin{equation}\eta_{\ell,s}(b^{i}c^{k}) = \eta_{\ell,s}(b^{-k}c^{-i})\end{equation}
for 
all $1\le \ell \le n$ and $s \in \ZZ_n$.
\item[{\rm (d)}] The vectors $\Trs(b^ic^k)= \Trs(b^{-k}c^{-i})$ for $i,k \in \ZZ_n$ give a complete set of right eigenvectors for the
McKay matrix $\McV$ for tensoring with the $\Df_n$-module $\VV=\VV(2,0)$.
\end{itemize}
\end{theorem}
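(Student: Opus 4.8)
The plan is to treat the four parts in sequence, leveraging that $\Trs(g)$ is already a right eigenvector of $\McV$ for every grouplike $g$ and then matching the resulting vectors against the explicit Chebyshev eigenvectors of Proposition \ref{prop:rteigenvector}. For part (a), I would invoke Corollary \ref{rem:coprod}(a): since $b^ic^k$ is grouplike, \eqref{eq:grouplike} shows immediately that $\Trs(b^ic^k)$ is a right eigenvector of $\McV$ with eigenvalue $\tr_\VV(b^ic^k)=\eta_{2,0}(b^ic^k)$. The only computation needed is the eigenvalue itself: setting $\ell=2$, $s=0$ in \eqref{eq:trace} gives $\eta_{2,0}(b^ic^k)=q^i+q^{-k}$. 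Equating $q^i+q^{-k}$ with $\lambda_{j,r}=q^{r+j}+q^{r-j}$ reduces to matching unordered pairs $\{i,-k\}=\{r+j,\,r-j\}\pmod n$; since $n$ is odd, $2$ is invertible modulo $n$, and the two ways of pairing yield exactly $r=\frac{i-k}{2}$ together with $j=\pm\frac{i+k}{2}$, i.e. the two systems in \eqref{eq:ik}.

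For part (b) the most transparent route is a direct component-by-component comparison. I would evaluate $\eta_{\ell,s}(b^ic^k)$ from \eqref{eq:trace} after substituting $i=j+r,\ k=j-r$ (and likewise for the second family). In each summand the $q$-exponent separates into a term $j(2t-1-\ell)$ plus a term independent of the summation index $t$, so $\eta_{\ell,s}(b^ic^k)$ is a power of $q$ times $\sum_{t=1}^\ell q^{j(2t-1-\ell)}$, and this sum is exactly $\Uc_{\ell-1}(q^j+q^{-j})$ by the closed form in Proposition \ref{prop:qrels}(a). The resulting expression is precisely the $(\ell,s)$-entry of the vector $\mathbf{v}_{j,r}$ of Proposition \ref{prop:rteigenvector} (and the $\ell=1$ block reproduces $\mathbf{v}_0$), which proves both $\Trs(b^ic^k)=\mathbf{v}_{j,r}$ and the Chebyshev formula \eqref{eq:charCheby} at once. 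As a conceptual cross-check, Theorem \ref{thm:genright} and Corollary \ref{cor:evals} force each eigenspace of $\lambda_{j,r}$ to be one-dimensional (the generalized eigenvector $\xb_{j,r}$ creates a size-two Jordan block), so $\Trs(b^ic^k)$ must be a scalar multiple of $\mathbf{v}_{j,r}$, and comparing first blocks fixes the scalar as $1$.

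For part (c), I would argue directly inside \eqref{eq:trace}: replacing $(i,k)$ by $(-k,-i)$ and reindexing the summation variable by $t\mapsto \ell+1-t$ transports the exponent $(s+t-1)i+(t-s-\ell)k$ of $\eta_{\ell,s}(b^ic^k)$ termwise onto that of $\eta_{\ell,s}(b^{-k}c^{-i})$, so the two traces coincide. Equivalently, $(i,k)\mapsto(-k,-i)$ interchanges the two families in \eqref{eq:ik} attached to the same pair $(j,r)$, and since both families produce the eigenvalue $\lambda_{j,r}$ and the eigenvector $\mathbf{v}_{j,r}$ by (a)--(b), we get $\Trs(b^{-k}c^{-i})=\mathbf{v}_{j,r}=\Trs(b^ic^k)$.

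Finally, part (d) is a counting and linear-independence argument, and this is where I expect the real work to lie. The involution $(i,k)\mapsto(-k,-i)$ on $\ZZ_n\times\ZZ_n$ has the $n$ fixed points $(i,-i)$ and $\tfrac{n^2-n}{2}$ two-element orbits, so by (c) there are at most $\tfrac{n(n+1)}{2}$ distinct vectors $\Trs(b^ic^k)$. Conversely, the eigenvalue $q^i+q^{-k}$ determines the unordered pair $\{q^i,q^{-k}\}$, hence $(i,k)$ up to the same involution; the one subtlety to verify is that no accidental coincidences occur, which reduces to the fact that a sum of two $n$th roots of unity with $n$ odd is never $0$ (ruling out the degenerate case $q^{-k}=-q^i$). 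Comparing with Corollary \ref{cor:evals}, the $n$ fixed orbits give the simple eigenvalues $2q^r$ and the remaining $\tfrac{n(n-1)}{2}$ orbits give the multiplicity-two eigenvalues $\lambda_{j,r}$, $1\le j\le\frac{n-1}{2}$, so there are exactly $\tfrac{n(n+1)}{2}$ distinct eigenvalues, matching the number of distinct trace vectors. Each $\Trs(b^ic^k)$ is nonzero (its first block is $\mathbf{v}_0$), and eigenvectors belonging to distinct eigenvalues are linearly independent, so these $\tfrac{n(n+1)}{2}$ vectors are independent and realize every eigenvalue once. The main obstacle is reconciling this with the non-diagonalizability of $\McV$: the phrase ``complete set'' is justified only because each eigenspace is one-dimensional, exactly the Jordan-block information supplied by Theorem \ref{thm:genright}, so the trace vectors span the full sum of eigenspaces even though they do not span $\mathbb{k}^{n^2}$.
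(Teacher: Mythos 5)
Your proposal is correct, and its skeleton — grouplike element $\Rightarrow$ eigenvector via Corollary \ref{rem:coprod}(a), compute $\eta_{2,0}(b^ic^k)=q^i+q^{-k}$, identify $\Trs(b^ic^k)$ with $\mathbf{v}_{j,r}$, then count eigenvalues — matches the paper's, but the mechanism in parts (b) and (c) is genuinely different. The paper never evaluates the full trace sums: it computes only the first block $\eta_{1,s}(b^ic^k)=q^{s(i-k)}$, notes that it equals $\mathbf{v}_0$, and then lets the block recursion $\mathbf{u}_{\ell+1}=\lambda_{j,r}\mathbf{u}_{\ell}-\Zr\mathbf{u}_{\ell-1}$ forced by the eigenvector equation propagate the identification upward; part (c) is the same first-block comparison applied to $b^{-k}c^{-i}$. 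You instead compute every entry: substituting $i=j+r$, $k=j-r$ into \eqref{eq:trace} splits each exponent as $j(2t-1-\ell)+r(2s+\ell-1)$, and the $t$-sum is exactly $\Uc_{\ell-1}(q^j+q^{-j})$ by Proposition \ref{prop:qrels}(a); for (c) the reindexing $t\mapsto\ell+1-t$ gives termwise equality of the two traces. Both routes are sound; yours is self-contained (it needs neither Theorem \ref{thm:genright} nor uniqueness of eigenvectors, which you correctly relegate to a cross-check), while the paper's avoids the exponent bookkeeping. One point worth recording: your direct computation, like the paper's own proof, yields the exponent $q^{(\ell+2s-1)r}$, not the $q^{(\ell+s-1)r}$ printed in \eqref{eq:charCheby} (test $\ell=1$, $s=1$: $\eta_{1,1}(b^ic^k)=q^{i-k}=q^{2r}$), so the printed formula contains a typo and your formulation via the $(\ell,s)$-entry of $\mathbf{v}_{j,r}$ is the correct target. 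Finally, your part (d) refines the paper's argument: the paper simply exhibits one pair $(i,k)$ per eigenvalue $\lambda_{j,r}$, whereas you also prove the converse injectivity — that $q^i+q^{-k}$ determines the unordered pair $\{q^i,q^{-k}\}$ because a sum of two $n$th roots of unity is nonzero for odd $n$ — which pins down exactly which trace vectors coincide and ties the count to the orbits of $(i,k)\mapsto(-k,-i)$.
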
 

\begin{proof}  We already know that  $\Trs(b^ic^k)$ is a right eigenvector of $\Mf_\VV$  of
eigenvalue $\eta_{2,0}(b^ic^k)$ for $i,k \in \ZZ_n$,  (see (1) of Corollary \ref{rem:coprod}).  

(a)  By \eqref{eq:trace} with $\ell = 2$ and $s=0$,  
\begin{align}\label{eq:eta20}
\eta_{2,0}(b^ic^k) =q^{i}+q^{-k}=q^{\frac{i-k}{2}}(q^{\frac{i+k}{2}}+q^{-\frac{i+k}{2}}) = q^r(q^j + q^{-j}) = \lambda_{j,r},
\end{align}
when $j$ and $r$ are as in \eqref{eq:rj}.  Conversely, given $r \in \ZZ_n$ and $0\le j \le \frac{n-1}{2}$,    
it is easy to verify that $\eta_{2,0}(b^i c^k) = \lambda_{j,r}$
 for the specified values of
$i$ and $k$ in \eqref{eq:ik}.

(b) Let  $\Trs(g)$ and  $\mathbf{u}_\ell$ be as in   \eqref{eq:etag}  for $g = b^ic^k$, and assume $2r = i-k$ and $2j = i+k\, (\modd n)$.
We  argue first  that $\mathbf{u}_1$ is the right eigenvector  $\left[1\;\,q^{2r}\;\, q^{4r}\;\, \dots\;\, q^{2(n-1)r}\right]^{\tt T}$
for the cyclic $n \times n$ matrix $\Zr$ with corresponding eigenvalue $q^{2r}$.
For this, observe that by \eqref{eq:trace},  
\begin{equation} \label{eq:eta1}  
\eta_{1,s}(b^ic^k)=q^{(s-1)i-(s+1)k}q^{i+k} = q^{si-sk}.  
\end{equation}
Thus, $\displaystyle \frac{\eta_{1,s}(b^ic^k)}{\eta_{1,s-1}(b^ic^k)} =q^{i-k}=q^{2r}$ for all $s\in \ZZ_n$. Since $\eta_{1,0}(b^ic^k) = 1$,  we
have  
\begin{equation*}\label{eq:u}\mathbf{u}_1 =\left[1\;\,q^{2r}\;\, q^{4r}\;\, \dots\;\, q^{2(n-1)r}\right]^{\tt T},\end{equation*}
which is an eigenvector for $\Zr$ of eigenvalue $q^{2r}$, and  $\eta_{1,s}(b^ic^k) = q^{2sr}$, for all $s\in \ZZ_n$. 

Now the first vector components $\mathbf{u}_1$ and $\mathbf{v}_0$ of $\Trs(g)$ and  $\mathbf{v}_{j,r}$
 are identical, and the subsequent vector components $\mathbf{u}_\ell$ of $\Trs(g)$ must satisfy the same
 relations as the vector components $\mathbf{v}_{\ell-1}$ in the proof of Proposition \ref{prop:rteigenvector} for $\ell \ge 1$. Thus,   
 $\Trs(b^ic^k)=\mathbf{v}_{j,r}$, where $j$ and $r$ are as in
\eqref{eq:rj} and   
\begin{equation}\mathbf{u}_\ell = \mathbf{v}_{\ell-1} = q^{(\ell-1)r}\Uc_{\ell-1}(q^j + q^{-j})\mathbf{v}_0 = 
q^{(\ell-1)r}\Uc_{\ell-1}(q^j + q^{-j})\mathbf{u}_1.\end{equation}
Equating component $s$ on both sides for $0 \le s \le n-1$ gives the assertion in \eqref{eq:charCheby} --  the character
value $\eta_{\ell,s}(b^ic^k)$ of $b^ic^k$ on $\VV(\ell,s)$  is given by
$\eta_{\ell,s}(b^ic^k) = q^{(\ell +s -1)r}\Uc_{\ell-1}(q^j + q^{-j})$,
where $j$ and $r$ are as in \eqref{eq:rj}.

(c)  We have seen in the proof of (a) that 
$\eta_{2,0}(b^i c^k) = q^i +q^{-k}$ for any $i,k \in \ZZ_n$.   Therefore,  $\eta_{2,0}(b^{-k}c^{-i}) = q^{-k} + q^i = \eta_{2,0}(b^i c^k)$. 
Since $\Trs(b^ic^k)$ and $\Trs(b^{-k}c^{-i})$ are two right eigenvectors of $\Mf_\VV$ with the same eigenvalue,  and since  
 $\eta_{1,s}(b^ic^k) = q^{si-sk} = \eta_{1,s}(b^{-k}c^{-i})$ for all $s\in \ZZ_n$  follows from \eqref{eq:eta1},  we obtain as in the proof of (b)
that  $\Trs(b^ic^k)= \Trs(b^{-k}c^{-i})$.    As a result, 
 $b^{i}c^{k}$ and $b^{-k}c^{-i}$ have the same
character value on any simple $\Df_n$-module $\VV(\ell,s)$,  that is $\eta_{\ell,s}(b^ic^k)= \eta_{\ell,s}(b^{-k}c^{-i})$ for all $1\le \ell 
\le n$, and $s \in \ZZ_n$. 

 (d) We know that the McKay matrix $\McV$ has $\frac{n(n+1)}{2}$ distinct eigenvalues $\lambda_{j,r}$, and there are
 $\frac{n(n+1)}{2}$ right eigenvectors $\Trs(b^ic^k), i,k \in \ZZ_n$, giving all these distinct eigenvalues, since for a given
 pair $j,r$ we can take $i = j+r\, (\modd n)$ and $k = j-r \, (\modd n)$ as in (a).    Consequently, the vectors $\Trs(b^ic^k)$
 give a complete set of right eigenvectors for $\McV$.  
 \end{proof}

\subsection{Generalized right eigenvectors as trace vectors } \label{S3.7} The goal of this section is to show that generalized right eigenvectors for the McKay  matrix $\mathsf{M} =\McV$ for $\VV = \VV(2,0)$ can also be realized  as trace vectors on simple modules,
but for traces of non-grouplike elements. Generalized eigenvectors occur only for
the eigenvalues $\lambda_{j,r}$ with $j \ne 0$, and the corresponding (generalized) eigenspace is
two-dimensional in that case.  We will use the coproduct expression for the trace in \eqref{eq:matprodtrace} and will 
require quantum integers $[\ell]=1+q+\cdots+q^{\ell-1}$,
the quantum factorial $[\ell]!=[\ell][\ell-1]\cdots[1]$,  and the quantum binomial coefficient
$${\ell \brack i}=\frac{[\ell]!}{[i]!\,[\ell-i]!}$$ 
for $\ell,i \in \ZZ_{\ge 0}$, $\ell \ge i$, where $[0] = [0]! = 1$ is understood.  

Chen has studied a family of Hopf algebras $\Hf(p,q)$, where $p$ and $q$ are arbitrary scalars,
and Lemma 2.7 of \cite{Chen99} gives an expression for  coproduct for the algebra $\Hf(p,1)$.  
When $q$ is an $n$th root of unity, $\Hf(p,q)$ modulo a certain Hopf ideal is 
a finite-dimensional quasi-triangular Hopf algebra $\Hf_n(p,q)$, and the algebra $\Hf_n(1,q)$ is isomorphic to $\Df_n$.
Chen's coproduct formula can be modified by replacing binomial
coefficients with their $q$ analogues to give a coproduct formula for specific
powers of the generators of $\Df_n$.  We will use these specializations in the next result.

\begin{lemma} For $\ell \in \ZZ_{\ge 0}$,
\begin{align*}
\Delta(d^\ell a^\ell)=&\sum_{t=0}^\ell {\ell\brack t}^2d^ta^t \otimes b^tc^td^{\ell-t}a^{\ell-t} +\text{nilpotent terms}\\
=& \; d^\ell a^\ell\otimes b^\ell c^\ell + [\ell]^2d^{\ell-1}a^{\ell-1} \otimes b^{\ell-1}c^{\ell-1}da+\text{nilpotent terms}.
\end{align*}
By nilpotent terms, we mean terms $x\otimes y$ such that $y$ acts as a nilpotent endomorphism on $\VV$.
\end{lemma}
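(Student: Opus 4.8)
The plan is to use that $\Delta$ is an algebra homomorphism, so that $\Delta(d^\ell a^\ell)=\Delta(d)^\ell\,\Delta(a)^\ell$, and to expand each factor by a $q$-binomial theorem. First I would record the relevant $q$-commutations inside $\Df_n\otimes\Df_n$: from $ba=qab$ one gets $(a\otimes b)(1\otimes a)=q\,(1\otimes a)(a\otimes b)$, and from $dc=qcd$ one gets $(d\otimes c)(1\otimes d)=q^{-1}(1\otimes d)(d\otimes c)$. Since in each case the two summands of $\Delta(a)=a\otimes b+1\otimes a$ and of $\Delta(d)=d\otimes c+1\otimes d$ $q$-commute, the $q$-binomial theorem applies. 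Using $(a\otimes b)^k=a^k\otimes b^k$ and $(d\otimes c)^m=d^m\otimes c^m$, this yields
\[
\Delta(a)^\ell=\sum_{k=0}^{\ell}{\ell\brack k}\,a^{\ell-k}\otimes a^k b^{\ell-k},
\qquad
\Delta(d)^\ell=\sum_{m=0}^{\ell}{\ell\brack m}\,d^m\otimes c^m d^{\ell-m}.
\]

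Next I would multiply these two expansions, obtaining the double sum $\sum_{m,k}{\ell\brack m}{\ell\brack k}\,d^m a^{\ell-k}\otimes c^m d^{\ell-m}a^k b^{\ell-k}$, and then isolate the terms whose second tensor factor is not nilpotent on $\VV$. The key observation is that $b$ acts invertibly while $a$ and $d$ shift its eigenvalues: on a $b$-eigenvector of eigenvalue $\mu$, the relation $ba=qab$ forces $a$ to raise the eigenvalue to $q\mu$, and $db=qbd$ forces $d$ to lower it to $q^{-1}\mu$. Hence a monomial $c^\ast d^p a^{q'}$ scales $b$-eigenvalues by $q^{q'-p}$ and is nilpotent unless $p=q'$. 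In the double sum the operator part of the second factor is $d^{\ell-m}a^{k}$, so non-nilpotent contributions require $k=\ell-m$ (using $1\le\ell\le n-1$, as $a^\ell=0$ otherwise, to rule out a wraparound $k+m=\ell+n$). Setting $k=\ell-m$ collapses the coefficient to ${\ell\brack m}{\ell\brack \ell-m}={\ell\brack m}^2$ and the first factor to $d^m a^m$.

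Then I would put the surviving second factor $c^m d^{\ell-m}a^{\ell-m}b^m$ into the canonical order $b^m c^m d^{\ell-m}a^{\ell-m}$: passing $b^m$ leftward through $a^{\ell-m}$ contributes $q^{-m(\ell-m)}$ via $ba=qab$, passing it through $d^{\ell-m}$ contributes $q^{+m(\ell-m)}$ via $db=qbd$, these cancel, and $bc=cb$ finishes the reordering with net scalar $1$. Writing $t=m$ gives the first displayed identity of the lemma. For the second identity I would specialize to $\VV=\VV(2,0)$: since $a$ acts nilpotently with $a^2=0$ on this two-dimensional module, every term with $\ell-t\ge 2$ has $a^{\ell-t}$, hence its whole second factor, acting as $0$; only $t=\ell$, contributing $d^\ell a^\ell\otimes b^\ell c^\ell$ with coefficient ${\ell\brack \ell}^2=1$, and $t=\ell-1$, contributing $d^{\ell-1}a^{\ell-1}\otimes b^{\ell-1}c^{\ell-1}da$ with coefficient ${\ell\brack \ell-1}^2=[\ell]^2$, remain.

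The main obstacle is the $q$-power bookkeeping: one must apply the $q$-binomial theorem with a consistent convention so that both expansions carry the same Gaussian coefficients (which is what produces the square ${\ell\brack t}^2$ rather than a skewed product), and one must verify that the exponents generated in the final reordering cancel exactly, leaving no residual power of $q$. The conceptual crux is the nilpotency criterion—arguing that only weight-preserving monomials ($p=q'$) can fail to be nilpotent and, using $a^n=d^n=0$, that no spurious weight-preserving terms with unequal $a$- and $d$-powers survive.
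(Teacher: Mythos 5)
Your proposal is correct and takes essentially the same route as the paper: the paper likewise expands $\Delta(d^\ell a^\ell)$ as the product $\Delta(d^\ell)\Delta(a^\ell)=\left(\sum_j {\ell\brack j}d^j\otimes c^jd^{\ell-j}\right)\left(\sum_i {\ell\brack i}a^i\otimes a^{\ell-i}b^i\right)$, keeps only the weight-preserving terms $i=j=t$ (the cross terms being nilpotent on $\VV$), reorders the second factor to $b^tc^td^{\ell-t}a^{\ell-t}$, and then uses that on $\VV=\VV(2,0)$ only $\ell-t\in\{0,1\}$ gives a non-nilpotent second factor. The only cosmetic difference is that the paper quotes the $q$-binomial expansions of $\Delta(a^\ell)$ and $\Delta(d^\ell)$ from Chen's Lemma 2.7 rather than rederiving them, and leaves the $q$-power cancellation in the reordering implicit, whereas you spell both out.
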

\noindent {\it Proof.}   From \cite[Lemma 2.7]{Chen99}, 
we deduce \begin{align*}
\Delta(a^\ell)=\sum_{i=0}^\ell  {\ell \brack i} a^i \otimes a^{\ell-i}b^i, \hspace{.5 in}
\Delta(d^\ell)=\sum_{i=0}^{\ell}{\ell\brack i}d^i\otimes c^id^{\ell-i}.
\end{align*}
Recall that $ba=qab$, $db=qbd$ in $\Df_n$. Therefore,
\begin{align}\begin{split}\label{eq:dellaell}
\Delta(d^\ell a^\ell)=&  \left(\sum_{j=0}^{\ell} {\ell\brack j}d^j\otimes c^jd^{\ell-j}\right)\left(\sum_{i=0}^\ell {\ell \brack i} a^i \otimes a^{\ell-i}b^i  \right)\\
=& \sum_{t=0}^\ell {\ell \brack t }^2 d^ta^t \otimes c^td^{\ell-t} a^{\ell-t}b^t  +\text{nilpotent terms}\\
=&\sum_{t=0}^\ell {\ell \brack t}^2 d^ta^t \otimes b^tc^td^{\ell-t}a^{\ell-t} +\text{nilpotent terms}.
\end{split}
\end{align}
The term $b^tc^td^{\ell-t}a^{\ell-t}$ is nilpotent on $\VV$ only if $\ell-t=0$ or $\ell-t=1$, i.e. $t=\ell$ or $t=\ell-1$, hence
\begin{equation*}
 \hspace{.95truein} \Delta(d^\ell a^\ell) = d^\ell a^\ell\otimes b^\ell c^\ell + [\ell]^2 d^{\ell-1}a^{\ell-1} \otimes b^{\ell-1}c^{\ell-1}da+\text{nilpotent terms}.  \qquad \qquad \qquad  \square
\end{equation*} 

The following result gives a formulation of the generalized right eigenvectors for $\Mf_\VV$, $\VV = \VV(2,0)$,  as trace vectors on the simple modules.
Recall that $\{\Trs(b^ic^k)\}$ gives a list of right eigenvectors for $\Mf_\VV$, with repetitions described by Theorem~\ref{thm:rightevs} (c).
 
\begin{theorem}\label{thm:genrt-traces}
Given $0 \leq i,k \leq n-1$ with $k \ne -i\, (\modd n)$,  choose $1\leq s \leq n-1$ such that $s=-(i+k) \, (\modd n)$. Let $\gamma_1,\dots,\gamma_s$ be defined recursively by $\gamma_s=1$ and 
\begin{align*}
\gamma_\ell=\frac{[\ell+1]^2q^{-1-\ell+s}}{[\ell][s-\ell](q-1)}\gamma_{\ell+1}
\end{align*}
for $\ell=s-1,s-2, \ldots,1$.  
Then $\sum_{\ell=1}^s \gamma_\ell \Trs(b^ic^kd^\ell a^\ell)$ is in the generalized eigenspace of $\Mf = \McV$, $\VV=\VV(2,0)$,  containing $\Trs(b^ic^k)$.
\end{theorem}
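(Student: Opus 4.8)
The plan is to exploit the coproduct formula of the preceding Lemma together with the trace identity \eqref{eq:matprodtrace} to reduce the action of $\McV$ on the vectors $T_\ell := \Trs(b^ic^kd^\ell a^\ell)$ to a two-term recursion, and then to choose the $\gamma_\ell$ so as to telescope all intermediate components away. First I would write $g=b^ic^k$, which is grouplike, so $\Delta(gd^\ell a^\ell)=(g\ot g)\Delta(d^\ell a^\ell)$. Multiplying the Lemma's expansion by $g\ot g$ and applying \eqref{eq:matprodtrace}, each summand $gx\ot gy$ contributes $\tr_\VV(gy)\,\Trs(gx)$ to $\McV T_\ell$. On the two-dimensional module $\VV=\VV(2,0)$ one has $a^2=0=d^2$, so every summand whose second factor involves $a^{\ge 2}$ or $d^{\ge 2}$ acts as $0$; the only remaining non-main summands have second factor proportional to a single $a$ or a single $d$, each strictly off-diagonal in the weight basis $\{v_1,v_2\}$ and hence of trace $0$. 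Since $g$ acts diagonally in this basis, left multiplication by $g$ preserves both the vanishing and the off-diagonal (trace-zero) property, so only the two main terms survive, giving
\[ \McV T_\ell = \mu_\ell\, T_\ell + [\ell]^2\,\nu_\ell\, T_{\ell-1},\qquad \mu_\ell:=\tr_\VV(gb^\ell c^\ell),\ \ \nu_\ell:=\tr_\VV(gb^{\ell-1}c^{\ell-1}da), \]
with $T_0=\Trs(g)$.

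Next I would compute the two traces directly from the action \eqref{eq:action} on $\VV(2,0)$. As $g$, $b$, $c$, and $da$ all act diagonally on $\{v_1,v_2\}$, one finds $\mu_\ell=q^{i+\ell}+q^{-k-\ell}$ and $\nu_\ell=q^{-k-\ell}(q-1)$. In particular $\mu_0=q^i+q^{-k}=\eta_{2,0}(g)=\lambda$, the eigenvalue attached to $\Trs(g)$ in Theorem~\ref{thm:rightevs}. The crucial observation is that the hypothesis $s\equiv-(i+k)\ (\modd n)$ forces $q^{i+s}=q^{-k}$ and $q^{-k-s}=q^i$, whence also $\mu_s=\lambda$; this is exactly what makes the top component drop out below. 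Because $k\not\equiv-i$, the associated index $j=\pm(i+k)/2$ is nonzero, so by Corollary~\ref{cor:evals} the $\lambda$-generalized eigenspace of $\McV$ is two-dimensional, spanned by $\Trs(g)$ together with one generalized eigenvector.

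Finally I would set $W=\sum_{\ell=1}^s\gamma_\ell T_\ell$ and compute
\[ (\McV-\lambda\Ir)W=\gamma_1\nu_1\,T_0+\sum_{m=1}^{s-1}\Big(\gamma_m(\mu_m-\lambda)+\gamma_{m+1}[m+1]^2\nu_{m+1}\Big)T_m+\gamma_s(\mu_s-\lambda)T_s, \]
where the last term vanishes since $\mu_s=\lambda$. Demanding that the coefficient of each $T_m$ with $1\le m\le s-1$ be zero yields $\gamma_m=-[m+1]^2\nu_{m+1}(\mu_m-\lambda)^{-1}\gamma_{m+1}$. Using $q^m-1=(q-1)[m]$ and the identity $q^i=q^{-k-s}$ to factor $\mu_m-\lambda=(q^m-1)(q^i-q^{-k-m})=-q^{-k-s}(q-1)^2[m][s-m]$, together with $\nu_{m+1}=q^{-k-m-1}(q-1)$, this recursion simplifies to exactly $\gamma_m=\frac{[m+1]^2q^{-1-m+s}}{[m][s-m](q-1)}\gamma_{m+1}$, matching the statement with $\gamma_s=1$. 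With these $\gamma_\ell$ one has $(\McV-\lambda\Ir)W=\gamma_1\nu_1T_0$, a scalar multiple of the eigenvector $T_0=\Trs(b^ic^k)$, and this scalar is nonzero because $\nu_1\neq0$ and each $[m],[s-m],[m+1]$ is a nonzero scalar for $1\le m\le s-1$ (all arguments lying in $\{1,\dots,n-1\}$). Hence $(\McV-\lambda\Ir)^2W=0$, so $W$ lies in the generalized eigenspace containing $\Trs(b^ic^k)$.

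I expect the main obstacles to be twofold: the bookkeeping in the first paragraph, namely verifying that every summand other than the two main ones has second tensor factor of trace $0$ on $\VV(2,0)$ even after multiplication by the grouplike $g$; and the algebraic simplification of $\mu_m-\lambda$ into quantum integers via $q^i=q^{-k-s}$, which is precisely what makes the derived recursion coincide with the stated one.
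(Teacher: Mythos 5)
Your proposal is correct and follows essentially the same route as the paper's proof: both derive the two-term recursion $\McV\Trs(b^ic^kd^\ell a^\ell)=(q^{\ell+i}+q^{-\ell-k})\Trs(b^ic^kd^\ell a^\ell)+[\ell]^2(q-1)q^{-\ell-k}\Trs(b^ic^kd^{\ell-1}a^{\ell-1})$ from the coproduct lemma (discarding trace-zero terms), use $s\equiv-(i+k)\,(\modd n)$ to make the top coefficient equal $\lambda=q^i+q^{-k}$, and force the intermediate coefficients to vanish, which reproduces the stated recursion for the $\gamma_\ell$ via the same quantum-integer factorization. Your explicit verification that the resulting multiple of $\Trs(b^ic^k)$ is nonzero is a small addition beyond what the paper records, but the argument is otherwise the same.
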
  
\noindent {\it Proof.}  From the coproduct expression in \eqref{eq:dellaell} and Lemma \ref{lem:hopftrace}\,(c), we have 
\begin{align*}
\Mf\Trs(d^\ell a^\ell)=& \tr_\VV(b^\ell c^\ell)\Trs(d^\ell a^\ell)+[\ell]^2\tr_\VV(b^{\ell-1}c^{\ell-1}da)\Trs(d^{\ell-1}a^{\ell-1}).
\end{align*}
And similarly
\begin{align*}
\Mf \Trs(b^ic^k d^\ell a^\ell)=& \tr_\VV(b^{\ell+i}c^{\ell+k})\Trs(b^ic^kd^\ell a^\ell)+\tr_\VV(b^{\ell+i-1}c^{\ell+k-1}da)[\ell]^2\Trs(b^ic^kd^{\ell-1}a^{\ell-1}).
\end{align*} 
Now from \eqref{eq:eta20} we know that $\tr_\VV(b^i c^k) = q^{i}+q^{-k}$ for any $i,k \in \ZZ_n$.   Using that fact and the action of
the generators of $\Df_n$ on the basis $\{v_1,v_2\}$ of $\VV$ in \eqref{eq:action}, we have 
\begin{align*}
&\tr_\VV(b^{\ell+i}c^{\ell+k})=q^{\ell+i}+q^{-\ell-k},\\
&da.v_1=\alpha_1(2)v_1=(1-q^{-1})v_1, \hspace{.3 in} b^{\ell+i-1}c^{\ell+k-1}.v_1=q^{-\ell-k+1}v_1,\\
&\tr_\VV(b^{\ell+i-1}c^{\ell+k-1}da)=(1-q^{-1})q^{-\ell-k+1}.
\end{align*}
Therefore,
\begin{equation}\label{eq:bcda}
\mathsf{M}\Trs(b^ic^kd^\ell a^\ell)=(q^{\ell+i}+q^{-\ell-k})\Trs(b^ic^k d^\ell a^\ell)+[\ell^2](1-q^{-1})q^{-\ell-k+1}\Trs(b^ic^kd^{\ell-1}a^{\ell-1}).
\end{equation}
By a change of the index of summation on the second summand, 
\begin{align*}
\mathsf{M}\left(\sum_{\ell=1}^s\gamma_\ell\Trs(b^ic^kd^\ell a^\ell)\right)& \\
& \hspace{-1.7cm} = \sum_{\ell=1}^s (q^{\ell+i}+q^{-\ell-k})\Trs(b^ic^kd^\ell a^\ell)\gamma_\ell+\sum_{\ell=0}^{s-1}[\ell+1]^2(1-q^{-1})q^{-\ell-k}\Trs(b^ic^kd^\ell a^\ell)\gamma_{\ell+1}\\
& \hspace{-1.7cm} = (1-q^{-1})q^{-k}\Trs(b^ic^k)\gamma_1+(q^{s+i}+q^{-s-k})\gamma_s \Trs(b^ic^kd^sa^s) \\
& \hspace{-1.1cm} +\sum_{\ell=1}^{s-1}\left( (q^{\ell+i}+q^{-\ell-k})\gamma_\ell+[\ell+1]^2(1-q^{-1})q^{-\ell-k}\gamma_{\ell+1} \right)\Trs(b^ic^kd^\ell a^\ell).
\end{align*}
We claim this is equal to 
\begin{align*}
\lambda\left(\sum_{\ell=1}^s \gamma_\ell \Trs(b^ic^kd^\ell a^\ell)\right)+\text{constant}\cdot \Trs(b^ic^k),
\end{align*}
for $\lambda=q^{i}+q^{-k}=\tr_\VV(b^ic^k) = \lambda_{j,r}$,  where $j$  and $r$ 
are as in \eqref{eq:rj}. 

First, the terms agree at $\ell=s$, where we recall that $s = -(i+k)\, (\modd n)$ so that
\begin{align*}
\lambda=q^{s+i}+q^{-s-k}=q^{-k}+q^{i}. 
\end{align*}
When $1\leq \ell\leq s-1$, the following is true
\begin{align*}
&(q^{\ell+i}+q^{-\ell-k})\gamma_\ell+[\ell+1]^2(1-q^{-1})q^{-\ell-k}\gamma_{\ell+1}=\lambda\gamma_\ell=\gamma_\ell(q^i+q^{-k}),
\end{align*}
if and only if the following is true: 
\begin{align*}\hspace{.8truein}
\gamma_\ell=&\frac{[\ell+1]^2(1-q^{-1})q^{-\ell-k}\gamma_{\ell+1}}{q^i+q^{-k}-q^{\ell+i}-q^{-\ell-k}}=\frac{[\ell+1]^2(q-1)q^{-1-\ell-k}\gamma_{\ell+1}}{q^i(1-q^\ell)-q^{-\ell-k}(1-q^\ell)}\\
=&\frac{[\ell+1]^2q^{-1-\ell-k}}{[\ell](q^{-\ell-k}-q^{-s-k})}\gamma_{\ell+1}=\frac{[\ell+1]^2q^{-1-\ell-k}}{[\ell]q^{-s-k}(q^{s-\ell}-1)}\gamma_{\ell+1} =\frac{[\ell+1]^2q^{-1-\ell+s}}{[\ell][s-\ell](q-1)}\gamma_{\ell+1}. \qquad \quad \; \square \end{align*} 
  
\subsection{Left eigenvectors and generalized left eigenvectors of $\McV$, $\VV= \VV(2,0)$}\label{S3.8}
In this section, we compute left (generalized) eigenvectors of the McKay matrix $\Mf = \McV$ for $\VV = \VV(2,0)$ using modified  Chebyshev polynomials $\Lc_j(t)$ that are defined recursively by 
\begin{equation}\label{eq:Ldef}
\Lc_0(t)=2,  \quad 
\Lc_1(t) =t, \quad  
\Lc_k(t) =t\Lc_{k-1}(t) - \Lc_{k-2}(t) \;  \text{ for } k \geq 2.
\end{equation}

\begin{proposition}
\label{prop:Li}
Assume $x \neq 1$,  and let $t=x+x^{-1}$ as in Proposition \ref{prop:qrels}. Then 
\begin{itemize}
\item[{\rm (a)}]   For $k \ge 2$, \; $\Lc_k(t) = \Uc_{k}(t) - \Uc_{k-2}(t)$.
\item[{\rm (b)}]   For all $k \geq 0$, \; $\Lc_k(t) = x^k + x^{-k}$.
\end{itemize}
\end{proposition}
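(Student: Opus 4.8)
The plan is to prove part (b) first by a direct induction on $k$, and then to obtain part (a) as an immediate consequence of (b) together with the closed form for $\Uc_k$ already established in Proposition \ref{prop:qrels}(a). Proving (b) first is convenient because it converts the purely recursive definition of $\Lc_k$ into an explicit Laurent expression in $x$, after which (a) is just a bookkeeping exercise in canceling powers of $x$.

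For part (b), I would first check the two base cases dictated by \eqref{eq:Ldef}: $\Lc_0(t)=2=x^0+x^{-0}$ and $\Lc_1(t)=t=x+x^{-1}$. For the inductive step, assume $\Lc_{k-1}(t)=x^{k-1}+x^{-(k-1)}$ and $\Lc_{k-2}(t)=x^{k-2}+x^{-(k-2)}$. Substituting into the three-term recursion $\Lc_k(t)=t\Lc_{k-1}(t)-\Lc_{k-2}(t)$ and expanding $(x+x^{-1})(x^{k-1}+x^{-(k-1)})=x^k+x^{k-2}+x^{-(k-2)}+x^{-k}$, the subtraction of $x^{k-2}+x^{-(k-2)}$ leaves exactly $x^k+x^{-k}$, which closes the induction for all $k\ge 0$.

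For part (a), I would invoke Proposition \ref{prop:qrels}(a), which gives $\Uc_k(t)=x^k+x^{k-2}+\cdots+x^{-(k-2)}+x^{-k}$ and $\Uc_{k-2}(t)=x^{k-2}+x^{k-4}+\cdots+x^{-(k-2)}$. Every intermediate power from $x^{k-2}$ down to $x^{-(k-2)}$ appears in both sums and cancels in the difference, so $\Uc_k(t)-\Uc_{k-2}(t)=x^k+x^{-k}$, which equals $\Lc_k(t)$ by part (b). This establishes (a) for all $k\ge 2$.

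There is no serious obstacle here; the only points demanding care are tracking the exponent ranges so that the telescoping cancellation in (a) is exact, and verifying both base cases in (b) before applying the recursion. As an alternative route for (a) that avoids the closed form, one could observe that $D_k:=\Uc_k(t)-\Uc_{k-2}(t)$ satisfies the same recursion $D_k=tD_{k-1}-D_{k-2}$ as $\Lc_k$ for $k\ge 4$ (since $\Uc$ does), reducing the task to checking the base cases $\Lc_2(t)=\Uc_2(t)-\Uc_0(t)$ and $\Lc_3(t)=\Uc_3(t)-\Uc_1(t)$ directly; but deducing (a) from (b) is cleaner and is the approach I would adopt.
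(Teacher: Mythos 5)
Your proof is correct, and it organizes the argument in the opposite order from the paper's. The paper proves (a) first: it verifies the base case $\Lc_2(t)=t\Lc_1(t)-\Lc_0(t)=t^2-2=\Uc_2(t)-\Uc_0(t)$ and then runs an induction showing that $\Uc_k(t)-\Uc_{k-2}(t)$ satisfies the same three-term recursion as $\Lc_k(t)$ — precisely the ``alternative route'' you sketch at the end — and only afterwards deduces (b) from (a) together with the Laurent expansion $\Uc_k(t)=x^k+x^{k-2}+\cdots+x^{-(k-2)}+x^{-k}$ of Proposition \ref{prop:qrels}(a). You instead prove (b) by a self-contained induction on the recursion \eqref{eq:Ldef}, never touching $\Uc_k$ at all, and then obtain (a) from the telescoping cancellation $\Uc_k(t)-\Uc_{k-2}(t)=x^k+x^{-k}$. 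Both proofs use the same two ingredients (an induction on a three-term recursion plus the closed form of $\Uc_k$); they differ only in which part carries the induction and which is deduced as a corollary. Your ordering is slightly tidier on one technical point: your induction for (b) looks back two steps from the base cases $k=0,1$, both trivially true, whereas the induction the paper gestures at (``easy inductive argument'' from the single base case $k=2$) strictly also needs the case $k=3$ checked separately, or the convention $\Uc_{-1}=0$, since the two-step recursion for $\Lc_3$ calls on $\Lc_1$. Either way the mathematics is sound, and the two proofs are of essentially equal length.
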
 

\begin{proof} (a)  We have $\Lc_2(t) = t\Lc_1(t) - \Lc_0(t) = t^2-2 = t^2-1-1 = \Uc_2(t)-\Uc_0(t)$, and for $k >2$ the proof of (a) is an
easy inductive argument starting from this base case.
(b) The relation $\Lc_k(t) = x^k + x^{-k}$ clearly holds for $k=0,1$.      
Proposition \ref{prop:qrels} (a)  says that  $\Uc_k(t) = x^k + x^{k-2} + \cdots + x^{-(k-2)} + x^{-k}$ for all $k \ge 2$. Part (b) follows readily from that and part (a).
\end{proof}
  
\begin{proposition}
\label{left eigenvector} For $r \in \ZZ_n$, let
$\mathbf{w}_0 \ne \mathbf{0}$ be a left eigenvector of $\Zr$ corresponding to the eigenvalue $q^{2r}$. 
Assume $0 \le j \le \frac{n-1}{2}$,  and set $\mathbf{w}_k = q^{kr}\Lc_k(q^j+q^{-j}) \mathbf{w}_0 = q^{kr}(q^{kj}+q^{-kj}) \mathbf{w}_0$ for  
$1\le k \le n-1$, where $\Lc_k$ is the modified Chebyshev polynomial defined in
\eqref{eq:Ldef}.  
Then $\mathbf{w}_{j,r}=\left[\mathbf{w}_{n-1}\;\, \mathbf{w}_{n-2} \;\,\ldots \;\,\mathbf{w}_1\, \; \mathbf{w}_0\right]$ is a left eigenvector of $\Mf$ corresponding to eigenvalue $\lambda_{j,r}=q^r(q^j+q^{-j})$. 
\end{proposition}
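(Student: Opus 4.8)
The plan is to verify directly that $\mathbf{w}_{j,r}\Mf = \lambda_{j,r}\mathbf{w}_{j,r}$, comparing the two sides one block column at a time, in the same spirit as the right-eigenvector computation in Proposition \ref{prop:rteigenvector} but now reading off the block \emph{columns} of $\Mf$ in \eqref{eq:McZ}. Write $\mathbf{w}_{j,r} = [W_0\ W_1\ \cdots\ W_{n-1}]$, so that the reversal built into the statement means $W_\ell = \mathbf{w}_{n-1-\ell}$ for $0 \le \ell \le n-1$. The key preliminary observation is that each $\mathbf{w}_k$ is a scalar multiple of $\mathbf{w}_0$, and $\mathbf{w}_0$ is a left eigenvector of $\Zr$ with eigenvalue $q^{2r}$; hence $\mathbf{w}_k\Zr = q^{2r}\mathbf{w}_k$ for every $k$. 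I will also use the closed form $\Lc_k(q^j+q^{-j}) = q^{kj}+q^{-kj}$ from Proposition \ref{prop:Li}(b).

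First I would record which blocks of $\Mf$ land in each block column. From \eqref{eq:McZ}, block column $\ell'$ receives $\Ir$ from block row $\ell'-1$ (the superdiagonal) and $\Zr$ from block row $\ell'+1$ (the subdiagonal), while the last block row contributes $2\Ir$ to column $0$ and $2\Zr$ to column $n-2$. Computing $(\mathbf{w}_{j,r}\Mf)_{\ell'} = \sum_\ell W_\ell\,\Mf_{\ell,\ell'}$ and substituting $W_\ell = \mathbf{w}_{n-1-\ell}$ together with $\mathbf{w}_k\Zr = q^{2r}\mathbf{w}_k$ turns every interior equation (block columns $1 \le \ell' \le n-3$) into the single recurrence
\begin{equation*}\mathbf{w}_{k+1} + q^{2r}\mathbf{w}_{k-1} = \lambda_{j,r}\mathbf{w}_k, \qquad k = n-1-\ell' \in \{2,\dots,n-2\}.\end{equation*}
This is immediate from $\Lc_k(q^j+q^{-j}) = q^{kj}+q^{-kj}$ and the expansion $(q^j+q^{-j})(q^{kj}+q^{-kj}) = (q^{(k+1)j}+q^{-(k+1)j}) + (q^{(k-1)j}+q^{-(k-1)j})$, which is precisely the defining recursion \eqref{eq:Ldef} of $\Lc_k$.

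The three boundary columns $\ell' = n-1,\ n-2,\ 0$ are where the argument needs care, and I expect these to be the main obstacle. Column $n-1$ reduces to $\mathbf{w}_1 = \lambda_{j,r}\mathbf{w}_0$, which is just $\Lc_1(t) = t$. Columns $n-2$ and $0$ are exactly the ones that meet the doubled entries $2\Zr$ and $2\Ir$ of $\Mf$, and here the normalization $\Lc_0 = 2$ is what makes the identities close up: column $n-2$ gives $\mathbf{w}_2 + 2q^{2r}\mathbf{w}_0 = \lambda_{j,r}\mathbf{w}_1$, which is the $k=1$ instance of the recurrence once $2\mathbf{w}_0$ is read as $q^{0\cdot r}\Lc_0(q^j+q^{-j})\mathbf{w}_0$; and column $0$ gives $q^{2r}\mathbf{w}_{n-2} + 2\mathbf{w}_0 = \lambda_{j,r}\mathbf{w}_{n-1}$, which is the $k=n-1$ instance after using $q^n = 1$ to rewrite $q^{2r}\mathbf{w}_{n-2} = (q^{2j}+q^{-2j})\mathbf{w}_0$ and $2\mathbf{w}_0 = \Lc_n(q^j+q^{-j})\mathbf{w}_0$ via the periodicity $\Lc_n(q^j+q^{-j}) = q^{nj}+q^{-nj} = 2 = \Lc_0(q^j+q^{-j})$; both sides then collapse to $(q^j+q^{-j})^2\mathbf{w}_0$. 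In short, the factor-$2$ entries in the last block row of $\Mf$ are compensated exactly by the conventions $\Lc_0 = 2$ and $\Lc_n = \Lc_0$, so the boundary equations become the same recurrence as the interior ones. Since $n = 3$ has no interior columns, the three boundary identities already cover that case, and the verification of $\mathbf{w}_{j,r}\Mf = \lambda_{j,r}\mathbf{w}_{j,r}$ is complete.
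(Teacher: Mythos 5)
Your proof is correct and follows essentially the same route as the paper's: a direct block-column verification of $\mathbf{w}_{j,r}\Mf = \lambda_{j,r}\mathbf{w}_{j,r}$ using the closed form $\Lc_k(q^j+q^{-j}) = q^{kj}+q^{-kj}$ from Proposition \ref{prop:Li}(b), the three-term Chebyshev recursion for the interior columns, and $q^n=1$ to close up the columns meeting the entries $2\Ir$ and $2\Zr$. Your observation that the conventions $\Lc_0 = \Lc_n = 2$ make the boundary equations instances of the same recurrence is a tidy repackaging of the paper's separate boundary computations, but not a different argument.
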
 

\noindent{\it Proof.} The proof amounts to comparing the left and right sides of $\mathbf{w}_{j,r} \Mf = \lambda_{j,r} \mathbf{w}_{j,r}$
and showing that equality holds when 
$\mathbf{w}_k = q^{kr}\Lc_k(q^j+q^{-j}) \mathbf{w}_0 = q^{kr}(q^{jk}+q^{-jk}) \mathbf{w}_0$, where the
last ``=''  results from  
$\Lc_k(q^j+q^{-j}) =q^{jk}+q^{-jk}$, which is a direct consequence of Proposition \ref{prop:Li} (b).    

We expand the left-hand side of $\mathbf{w}_{j,r} \Mf$ starting from the rightmost column and ask if the two sides
of $\mathbf{w}_{j,r} \Mf = \lambda_{j,r} \mathbf{w}_{j,r}$ are equal:
\begin{align*}
\mathbf{w}_1 &\isit \lambda_{j,r} \mathbf{w}_0 = q^{r}(q^j+q^{-j}) \mathbf{w}_0 =  q^{r}\Lc_1(q^j+q^{-j}) \mathbf{w}_0, \\
\mathbf{w}_2 &\isit \lambda_{j,r}\mathbf{w}_1 - \mathbf{w}_0(2\Zr) = \mathbf{w}_0 (\lambda_{j,r}^2-2\Zr)  = \mathbf{w}_0 (\lambda_{j,r}^2-2q^{2r})\\ &= \mathbf{w}_0 q^{2r}\Big((q^j+q^{-j})^2 - 2\Big)= q^{2r}\Lc_2(q^j+q^{-j})\mathbf{w}_0,  \\
\mathbf{w}_k &\isit \lambda_{j,r} \mathbf{w}_{k-1} - \mathbf{w}_{k-2}\Zr = \lambda_{j,r} q^{(k-1)r}\Lc_{k-1}(q^j+q^{-j})\mathbf{w}_0 - q^{(k-2)r}\Lc_{k-2}(q^j+q^{-j})\mathbf{w}_0\Zr \\
&=q^{kr}\bigg((q^j+q^{-j})\Lc_{k-1}(q^j+q^{-j}) - \Lc_{k-2}(q^j+q^{-j})\bigg) \mathbf{w}_0 = q^{kr} \Lc_k(q^j+q^{-j}) \mathbf{w}_0, \; (3 \le k \le n-1).
\end{align*} 
The leftmost column involves comparing $\mathbf{w}_{n-2}\Zr + 2\mathbf{w}_0$ with $\lambda_{j,r}\mathbf{w}_{n-1}$, which we
do by showing that 
\begin{align*}
\mathbf{w}_{n-2}\Zr + 2\mathbf{w}_0 - \lambda_{j,r}\mathbf{w}_{n-1} &= 
q^{(n-2)r} \Lc_{n-2}(q^j+q^{-j}) \mathbf{w}_0\Zr + 2\mathbf{w}_0 - \lambda_{j,r} \, q^{(n-1)r} \Lc_{n-1}(q^j+q^{-j}) \mathbf{w}_0 \\
&= q^{nr} \Lc_{n-2}(q^j+q^{-j}) \mathbf{w}_0 + 2\mathbf{w}_0 - q^{nr}(q^j+q^{-j})\Lc_{n-1}(q^j+q^{-j}) \mathbf{w}_0 \\
&= \big(q^{j(n-2)} + q^{-j(n-2)}\big) \mathbf{w}_0 + 2\mathbf{w}_0 - \big(q^j+q^{-j})(q^{j(n-1)} + q^{-j(n-1)}\big)  \mathbf{w}_0 \\
&= (q^{j(n-2)} + q^{-j(n-2)} + 2 - q^{jn} - q^{-j(n-2)} - q^{j(n-2)} - q^{-jn}) \mathbf{w}_0 = \mathbf{0}. \quad \;
\;  \square
\end{align*} 
 
The next corollary relates the above eigenvector results  to the dimension vectors.
\begin{corollary} \label{dim vector}
The dimension vector $\mathbf{s} = [\dimm(\Sf_1)\;\,\dimm(\Sf_2)\;\,  \dots\;\,\dimm(\Sf_{n^2})]^{\tt T}$   of  the  simple $\Df_n$-modules  is a right eigenvector corresponding to eigenvalue $\lambda_{0,0} = 2= \dimm \left(\VV(2,0)\right)$. The dimension vector $\mathbf{p}^{\tt T} = [\dimm(\Pf_1)\;\,\dimm(\Pf_2)\;\,\dots\;\,\dimm(\Pf_{n^2})]$
of the projective indecomposable $\Df_n$-modules 
  is a left eigenvector corresponding to the eigenvalue $\lambda_{0,0}$.   
\end{corollary}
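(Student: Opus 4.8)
The plan is to recognize both assertions as the degenerate $j=r=0$ cases of eigenvector formulas already proved, confirmed against the general-theory statements of Section~2. The first observation is that $\lambda_{0,0}=q^0(q^0+q^{-0})=2=\dimm(\VV(2,0))$, so the common eigenvalue in the two claims is exactly the dimension of $\VV=\VV(2,0)$.

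For the right eigenvector I would simply invoke the general fact recorded in the Introduction and in Corollary~\ref{rem:coprod}(a) with $g=1$: the dimension vector $\mathbf{s}=\Trs(1)$ is a right eigenvector of $\McV$ with eigenvalue $\tr_\VV(1)=\dimm(\VV)=2$, which is $\lambda_{0,0}$. As an internal cross-check one specializes Proposition~\ref{prop:rteigenvector} to $j=r=0$, where $\mathbf{v}_0=[1\;1\;\dots\;1]^{\tt T}$ and the block of $\mathbf{v}_{0,0}$ indexed by $\ell$ equals $\Uc_{\ell-1}(q^0+q^{-0})\mathbf{v}_0=\Uc_{\ell-1}(2)\,\mathbf{v}_0$; since $\Uc_{\ell-1}(2)=U_{\ell-1}(1)=\ell$, this block is $\ell\,[1\;\dots\;1]^{\tt T}$, and because $\dimm(\VV(\ell,r))=\ell$ one gets $\mathbf{v}_{0,0}=\mathbf{s}$.

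For the left eigenvector the essential input is that $\Df_n$ is a Drinfeld double, hence symmetric, so $(\VV^*)^*\cong\VV$ for every $\Df_n$-module $\VV$ (the Remark at the end of Section~\ref{S2.6}). Under that hypothesis Corollary~\ref{cor:V=V**}(a) applied to the grouplike element $g=1$ yields $\Trp(1)\McV=\tr_{\VV^*}(1)\Trp(1)$, where $\Trp(1)=\mathbf{p}^{\tt T}$ is the dimension vector of the projective indecomposables and $\tr_{\VV^*}(1)=\dimm(\VV^*)=\dimm(\VV)=2=\lambda_{0,0}$. Thus $\mathbf{p}^{\tt T}$ is a left eigenvector of $\McV$ for $\lambda_{0,0}$, as claimed.

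I do not expect a substantive obstacle here, since the statement is essentially an assembly of Corollaries~\ref{rem:coprod} and \ref{cor:V=V**} with the identification $\lambda_{0,0}=2$; the only delicacy lies in the two explicit cross-checks. Specializing Proposition~\ref{left eigenvector} to $j=r=0$ requires evaluating the modified Chebyshev polynomials at the degenerate argument $t=2$ (equivalently $x=1$), where the closed forms of Proposition~\ref{prop:qrels} and Proposition~\ref{prop:Li}(b) formally assume $x\neq1$ and must be replaced by the recursion \eqref{eq:Ldef}: it gives $\Lc_0(2)=2$ and $\Lc_k(2)=2$ for all $k\ge1$, while $\mathbf{w}_0=[1\;\dots\;1]$, so the leftmost $n-1$ blocks of $\mathbf{w}_{0,0}$ equal $2[1\;\dots\;1]$ and the last block equals $[1\;\dots\;1]$. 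Matching these against $\dimm(\Pf(\ell,r))=2n$ for $1\le\ell<n$ and $\dimm(\VV(n,r))=n$ from Section~\ref{S:3.11} yields $\mathbf{p}^{\tt T}=n\,\mathbf{w}_{0,0}$, a scalar multiple, confirming the result. The main care is thus combinatorial: matching the block index $\ell$ to the correct simple or projective module under the lexicographic ordering, and handling the Chebyshev evaluations at the degenerate point.
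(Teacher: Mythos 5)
Your proposal is correct, but its primary argument takes a genuinely different route from the paper's. The paper proves the corollary entirely by specializing its explicit Chebyshev formulas: in Propositions \ref{prop:rteigenvector} and \ref{left eigenvector} it sets $j=r=0$, notes that $\mathbf{v}_0=[1\;1\;\cdots\;1]^{\tt T}$ and $\mathbf{w}_0=\mathbf{v}_0^{\tt T}$ are right and left eigenvectors of $\Zr$ for the eigenvalue $1$, uses $\Uc_k(2)=k+1$ and $\Lc_k(2)=2$, and concludes $\mathbf{v}_{0,0}=\mathbf{s}$ and $\mathbf{w}_{0,0}=\tfrac{1}{n}\mathbf{p}^{\tt T}$ --- precisely the computation you relegate to ``cross-checks.'' Your main argument instead assembles the general Section 2 machinery: Corollary \ref{rem:coprod}(a) with $g=1$ for $\mathbf{s}$, and Corollary \ref{cor:V=V**}(a) with $g=1$ for $\mathbf{p}^{\tt T}$, the latter legitimately applied because $\Df_n$, being a Drinfeld double, is symmetric, so $(\VV^*)^*\cong\VV$ (the Remark at the end of Section \ref{S2.6}; also verified directly in Section \ref{S3.9} via $\VV(\ell,r)^*\cong\VV(\ell,1-r-\ell)$). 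What each buys: your route is shorter, needs no Chebyshev evaluation, and exhibits the statement as an instance of a fact valid for any finite-dimensional Hopf algebra satisfying $(\VV^*)^*\cong\VV$; the paper's route is what actually identifies the dimension vectors with the Chebyshev-parametrized eigenvectors $\mathbf{v}_{0,0}$ and $\mathbf{w}_{0,0}$ of Sections \ref{S3.4} and \ref{S3.8} --- the announced purpose of the corollary (``relates the above eigenvector results to the dimension vectors''), and what lets the subsequent remark present it as a confirmation of \cite{GHR} in this example rather than a citation of general theory. Since your cross-checks carry out both specializations as well --- including the correct handling of the degenerate evaluation $\Lc_k(2)=2$ via the recursion \eqref{eq:Ldef} rather than the $x\neq 1$ closed forms, and the block matching $\mathbf{p}^{\tt T}=n\,\mathbf{w}_{0,0}$ against $\dimm(\Pf(\ell,r))=2n$ for $\ell<n$ and $\dimm(\VV(n,r))=n$ --- your write-up in fact contains the paper's proof as a subset.
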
 
\begin{proof}   Observe that $\mathbf{v}_0 = \left [1\;1\;\, \cdots \;\,1\;1\right]^{\tt T}$ and $\mathbf{w}_0 = 
\mathbf{v}_0^{\tt T}$ are right and left (resp.) eigenvectors of the matrix $\Zr$ corresponding to the eigenvalue 1,  and  $\Uc_k(2) = k+1$ and  $\Lc_k(2) = 2$ for all values of $k \ge 0$.   
Therefore,  by Propositions \ref{prop:rteigenvector} and
\ref{left eigenvector},   $\mathbf{v}_k = (k+1)\mathbf{v}_0$  and $\mathbf{w}_k = 2 \mathbf{w}_0$ for all 
$1\le k\le n-1$.   Consequently,  $\mathbf{v}_{0,0} = \mathbf{s}$ and $\mathbf{w}_{0,0} =\frac{1}{n}\mathbf{p}^{\tt T}$, and the corresponding
eigenvalue is   $\lambda_{0,0} = 2= \dimm(\VV(2,0))$.    
\end{proof}

\begin{remark}{\rm
Corollary~\ref{dim vector} confirms the result in \cite[Sec.~3]{GHR}  mentioned in the Introduction in the
specific case of the Drinfeld double $\Df_n$ of the Taft algebra and the McKay matrix for tensoring with $\VV(2,0)$.}
\end{remark}
 
 \begin{proposition}\label{prop:genleft} Let $r \in\ZZ_n$, and 
 assume $\mathbf{w}= \mathbf{w}_0$, where $\mathbf{w}_0\Zr = q^{2r} \mathbf{w}_0$ 
 as in Proposition \ref{left eigenvector}.
Fix a choice of $j\in \{1,\dots, \frac{n-1}{2}\}$ and set  $\Uc_k =\Uc_k(q^j + q^{-j})$ for all $k \ge 0$.
Assume  $\mathbf{y}_0 = \mathbf{w}$,\; $\mathbf{y}_1 = 	q^r \Uc_1 \mathbf{w} + \mathbf{w}$, 
 and   $\mathbf{y}_{k} 
= q^{kr}( \Uc_k - \Uc_{k-2}\big)\mathbf{w}+ kq^{(k-1)r}\Uc_{k-1}\mathbf{w}$  for $2 \le k \le n-1$,  Then  
$\mathbf{y}_{j,r}:=  \left [\mathbf{y}_{n-1}\; \mathbf{y}_{n-2} \;\,\ldots \;\,\mathbf{y}_1 \; \mathbf{y}_0\right]$
is a generalized left eigenvector for $\McV$  corresponding to the eigenvalue $\lambda_{j,r}=q^r(q^{j}+q^{-j})$,
and  $\mathbf{y}_{j,r}$ satisfies

\begin{align}\begin{split} \label{eq:genleft} \mathbf{y}_{j,r}\Mf &=
 \left[\mathbf{y}_{n-1}\; \mathbf{y}_{n-2} \;\,\ldots \;\,\mathbf{y}_1 \; \mathbf{y}_0\right]
\left( \begin{matrix} 0 & \mathrm{I} & 0  & & \cdots & 0 & 0 \\ 
 \mathrm{Z} & 0 & \mathrm{I}  & & \cdots & 0 & 0 \\
 0 &  \mathrm{Z} & 0 & \mathrm{I} & \cdots & 0 & 0 \\
\vdots  & \vdots  & \mathrm{Z}   & \ddots  & \ddots & 0 & 0 \\ 
 \vdots & \vdots & \vdots & &  \ddots &  \mathrm{I} & 0 \\
0 & 0 & 0 & 0  \cdots &  \mathrm{Z} & 0 &  \mathrm{I} \\ 
2\mathrm{I}  & 0 & 0 & 0 & \cdots &  2\mathrm{Z} & 0 
  \end{matrix} \right) \\
&= \lambda_{j,r}\left[\mathbf{y}_{n-1}\; \mathbf{y}_{n-2} \;\,\ldots \;\,\mathbf{y}_1 \; \mathbf{y}_0\right]
 + \left[q^{(n-1)r}\Lc_{n-1}\mathbf{w}\; q^{(n-2)r}\Lc_{n-2}\mathbf{w}\;\ldots \; q^{r}\Lc_1\mathbf{w} \; \mathbf{w}\right] = \lambda_{j,r}\mathbf{y}_{j,r} + \mathbf{w}_{j,r},\end{split} \end{align}
where  $\mathbf{w}_{j,r}$ is as in Proposition \ref{left eigenvector},  and $\Lc_k = \Lc_k(q^j + q^{-j})$
for $k\ge 1$.
\end{proposition}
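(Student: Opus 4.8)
The plan is to verify the matrix identity \eqref{eq:genleft} directly, computing the left product $\mathbf{y}_{j,r}\Mf$ one block-column at a time and matching it against $\lambda_{j,r}\mathbf{y}_{j,r}+\mathbf{w}_{j,r}$, exactly as in the proofs of Proposition \ref{left eigenvector} and Theorem \ref{thm:genright}. Reading the block entries of $\Mf$ off \eqref{eq:McZ} and writing the block in position $p$ of $\mathbf{y}_{j,r}$ as $\mathbf{y}_{k}$ with $k=n-1-p$, the superdiagonal $\Ir$'s and subdiagonal $\Zr$'s produce the interior relations
\[ \mathbf{y}_{k+1}+\mathbf{y}_{k-1}\Zr=\lambda_{j,r}\mathbf{y}_k+q^{kr}\Lc_k\mathbf{w}\qquad(2\le k\le n-2),\]
while the columns $k=0$, $k=1$, and $k=n-1$ give three boundary relations; the last of these inherits the corner blocks $2\Ir$ and $2\Zr$ of $\Mf$. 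Throughout I will use that each $\mathbf{y}_k$ is a scalar multiple of $\mathbf{w}=\mathbf{w}_0$, hence $\mathbf{y}_k\Zr=q^{2r}\mathbf{y}_k$, where $\mathbf{w}_0\Zr=q^{2r}\mathbf{w}_0$.

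The key simplification is to split $\mathbf{y}_k=\mathbf{w}_k+\mathbf{a}_k$, where $\mathbf{w}_k=q^{kr}\Lc_k\mathbf{w}$ is the $k$th block of the genuine left eigenvector $\mathbf{w}_{j,r}$ from Proposition \ref{left eigenvector}, and $\mathbf{a}_k:=k\,q^{(k-1)r}\Uc_{k-1}\mathbf{w}$ (with $\Uc_m$ abbreviating $\Uc_m(q^j+q^{-j})$). Using $\Lc_k=\Uc_k-\Uc_{k-2}$ from Proposition \ref{prop:Li}(a), together with $\Lc_1=\Uc_1$ and $\mathbf{y}_0=\mathbf{w}$, one checks that this decomposition holds uniformly for $0\le k\le n-1$ with $\mathbf{a}_0=\mathbf{0}$. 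Since $\mathbf{w}_{j,r}\Mf=\lambda_{j,r}\mathbf{w}_{j,r}$ already, the $\mathbf{w}_k$-parts cancel in every relation and each collapses to its $\mathbf{a}$-analogue; for the interior relations this is $\mathbf{a}_{k+1}+\mathbf{a}_{k-1}\Zr=\lambda_{j,r}\mathbf{a}_k+q^{kr}\Lc_k\mathbf{w}$, which after dividing by $q^{kr}\mathbf{w}$ becomes the scalar identity
\[ (k+1)\Uc_k+(k-1)\Uc_{k-2}=k\,\Uc_1\Uc_{k-1}+\Lc_k .\]
This is immediate from the recursion $\Uc_1\Uc_{k-1}=\Uc_k+\Uc_{k-2}$ and $\Lc_k=\Uc_k-\Uc_{k-2}$. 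The two low boundary columns $k=0,1$ reduce to the same computation, the factor-of-two corner terms causing no difficulty because $\mathbf{a}_0=\mathbf{0}$.

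The hard part is the top boundary column $k=n-1$: here the cyclic structure of $\Mf$ wraps around, there is no block $\mathbf{y}_n$, and the required relation is $\mathbf{a}_{n-2}\Zr=\lambda_{j,r}\mathbf{a}_{n-1}+\mathbf{w}_{n-1}$. Evaluating the scalar identity above at $k=n-1$ shows this holds precisely when the would-be missing term $\mathbf{a}_n=n\,q^{(n-1)r}\Uc_{n-1}\mathbf{w}$ vanishes. This is exactly where the choice of $q$ enters: since $q$ is a primitive $n$th root of unity and $1\le j\le\frac{n-1}{2}$, Proposition \ref{prop:qrels}(a) gives $\Uc_{n-1}(q^j+q^{-j})=\frac{q^{nj}-q^{-nj}}{q^j-q^{-j}}=0$ (compare Remark \ref{rem:lastcomp}), so $\mathbf{a}_n=\mathbf{0}$ and the top relation closes. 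Assembling the interior and all three boundary relations yields \eqref{eq:genleft}; since the correction term $\mathbf{w}_{j,r}$ is a nonzero left eigenvector of $\McV$ for $\lambda_{j,r}$ by Proposition \ref{left eigenvector}, the identity $\mathbf{y}_{j,r}\Mf-\lambda_{j,r}\mathbf{y}_{j,r}=\mathbf{w}_{j,r}$ exhibits $\mathbf{y}_{j,r}$ as a generalized left eigenvector of $\McV$ for the eigenvalue $\lambda_{j,r}$.
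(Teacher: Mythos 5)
Your proof is correct, and while it follows the same column-by-column verification scheme as the paper, it reorganizes the computation in a genuinely different and cleaner way. The paper works with the blocks $\mathbf{y}_k$ directly: it checks $\mathbf{y}_1$ and $\mathbf{y}_2$ by hand, runs an induction establishing $\mathbf{y}_{k+1} = \lambda_{j,r}\mathbf{y}_k - \mathbf{y}_{k-1}\Zr + q^{kr}\Lc_k \mathbf{w}$ in which both the $\Lc$-part and the coefficient-$k$ part of each block must be carried through every step, and then performs a separate calculation for the wrap-around column using $\Uc_{n-1}\mathbf{w} = \mathbf{0}$ and $\Lc_n\mathbf{w} = 2\mathbf{w}$. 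Your decomposition $\mathbf{y}_k = \mathbf{w}_k + \mathbf{a}_k$ with $\mathbf{a}_k = k\,q^{(k-1)r}\Uc_{k-1}\mathbf{w}$, combined with $\mathbf{w}_{j,r}\Mf = \lambda_{j,r}\mathbf{w}_{j,r}$ from Proposition \ref{left eigenvector}, strips out the eigenvector part once and for all: the identity to be proved becomes $\mathbf{a}_{j,r}\Mf = \lambda_{j,r}\mathbf{a}_{j,r} + \mathbf{w}_{j,r}$ for the correction vector $\mathbf{a}_{j,r} = [\mathbf{a}_{n-1}\;\ldots\;\mathbf{a}_1\;\mathbf{a}_0]$, every interior column collapses to the single scalar identity $(k+1)\Uc_k + (k-1)\Uc_{k-2} = k\,\Uc_1\Uc_{k-1} + \Lc_k$ (immediate from $\Uc_1\Uc_{k-1} = \Uc_k + \Uc_{k-2}$ and $\Lc_k = \Uc_k - \Uc_{k-2}$), and the wrap-around column becomes precisely the vanishing $\mathbf{a}_n = n\,q^{(n-1)r}\Uc_{n-1}\mathbf{w} = \mathbf{0}$, i.e.\ $\Uc_{n-1}(q^j+q^{-j}) = 0$. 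This cleanly separates the two genuinely distinct ingredients---one polynomial identity valid for any Chebyshev argument, and one root-of-unity fact---that are interleaved throughout the paper's longer calculation. One bookkeeping correction: the corner blocks of $\Mf$ do not both land in the $k=n-1$ column as your second sentence asserts; the block $2\Zr$ enters the $k=1$ relation (as $2\mathbf{a}_0\Zr$) and the block $2\Ir$ enters the $k=n-1$ relation (as $2\mathbf{a}_0$). Since both contributions are multiples of $\mathbf{a}_0 = \mathbf{0}$ after your reduction, this slip has no effect on your subsequent computations, which handle both boundary columns correctly.
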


\begin{proof}  The proof entails comparing the entries on both sides of \eqref{eq:genleft} starting with column 0 (the rightmost)
and proceeding to column $n-1$ (the leftmost).  This involves noting that
\begin{align*} \mathbf{y}_1 &= \lambda_{j,r}\mathbf{y}_0 + \mathbf{w} = q^r \Uc_1 \mathbf{w} + \mathbf{w}, \\  
\mathbf{y}_2 &= \lambda_{j,r}\mathbf{y}_1-2\mathbf{y}_0\Zr  + q^r \Lc_1 \mathbf{w} \\ 
&= q^r (q^j + q^{-j})\big( q^r \Uc_1 \mathbf{w} + \mathbf{w}\big) - 2q^{2r}\mathbf{w} + q^r\Lc_1 \mathbf{w} 
= q^{2r}\left(\Uc_2-\Uc_0\right) \mathbf{w} + 2 \Uc_1\mathbf{w},\end{align*} 
and then arguing inductively for $k \ge 3$ using the relation $\Lc_k = \Uc_k-\Uc_{k-2}$ to show that
\begin{align*} \mathbf{y}_{k+1} &= \lambda_{j,r}\mathbf{y}_k - \mathbf{y}_{k-1} \Zr + q^{kr}\Lc_k \mathbf{w}\\
&= q^r (q^j + q^{-j})\Big( q^{kr}\big( \Uc_k-\Uc_{k-2}\big) + kq^{(k-1)r} \Uc_{k-1}\Big)  \mathbf{w}   \\
& \qquad -  q^{(k+1)r}\big( \Uc_{k-1}-\Uc_{k-3}\big) \mathbf{w} - (k-1)q^{kr} \Uc_{k-2}  \mathbf{w} 
+ q^{kr}\big(\Uc_k - \Uc_{k-2}\big) \mathbf{w}\\ &= q^{(k+1)r}\big(\Uc_{k+1} - \Uc_{k-1}\big)\mathbf{w} + kq^{kr} (q^j+q^{-j})\Uc_{k-1} \mathbf{w} 
 - (k-1)q^{kr} \Uc_{k-2}  \mathbf{w} + q^{kr}\big(\Uc_k - \Uc_{k-2}\big) \mathbf{w}\\
& = q^{(k+1)r}\big(\Uc_{k+1} - \Uc_{k-1}\big)\mathbf{w} + kq^{kr}\big(\Uc_k + \Uc_{k-2}\big) \mathbf{w}- (k-1)q^{kr} \Uc_{k-2}  \mathbf{w} + q^{kr}\big(\Uc_k - \Uc_{k-2}\big) \mathbf{w}\\
& = q^{(k+1)r}\big(\Uc_{k+1} - \Uc_{k-1}\big) \mathbf{w} +  (k+1)q^{kr}\Uc_k \mathbf{w}.
\end{align*}
It remains to check the leftmost column,  which involves verifying that
\begin{align*} \mathbf{y}_{n-2}\Zr +2 \mathbf{y}_0 -\lambda_{j,r}\mathbf{y}_{n-1} - q^{(n-1)r}\Lc_{n-1}\mathbf{w} = \mathbf{0}.
\end{align*}
Now by the right-hand side of the above calculation for $\mathbf{y}_{k+1}$ with $k = n-1$,  we have
\begin{align*} \mathbf{y}_{n-2}\Zr +2 \mathbf{y}_0 - \lambda_{j,r}\mathbf{y}_{n-1} - q^{(n-1)r}\Lc_{n-1}\mathbf{w}\\
& \hspace{-2.1cm} = - q^{nr} (\Uc_n - \Uc_{n-2})\mathbf{w} -nq^{(n-1)r}\Uc_{n-1}\mathbf w  + 2 \mathbf{w}\\
&  \hspace{-2.1cm} = - \Lc_n \mathbf{w}- \mathbf{0} + 2 \mathbf{w} = -(q^{nj}+q^{-nj})\mathbf{w} + 2 \mathbf{w} = \mathbf{0},
\end{align*}
since $\Uc_{n-1} \mathbf{w} = \frac{q^{nj}-q^{-nj}}{q^j-q^{-j}}\mathbf{w} = \mathbf{0}$ by Proposition \ref{prop:qrels} (a). 
\end{proof}

\subsection{Left eigenvectors from grouplike elements of $\Df_n$}\label{S3.9}
Next we determine the vector $\Trp(g)$ explicitly for $g= b^ic^k$, a 
grouplike element of $\Df_n$.  In contrast to the situation for right eigenvectors, only the  $n$ vectors 
$\Trp(b^{i}c^{-i})$, $i \in \ZZ_n$, are nonzero.  

We assume an ordering $\Pf_1,\Pf_2, \dots, \Pf_{n^2}$ of the projective indecomposable $\Df_n$-modules
$\Pf(\ell,r)$, $1\le \ell \le n-1$, $\VV(n,r)$, $r\in \ZZ_n$,  
first by $\ell$ and then by $r$.   Since the dual of the simple module $\VV(\ell,r)$ is $\VV(\ell, 1-r-\ell)$
by \cite[Thm.~4.3]{Jedwab}, we see  $\big(\VV(\ell,r)^*\big)^*
\cong \VV(\ell,r)$,  so that  $\left(\VV^*\right)^* \cong \VV$ holds for any finite-dimensional $\Df_n$-module.  
Using that fact,  we have from \eqref{eq:pgrouplike} that for every grouplike element of $\Df_n$
and every $\Df_n$-module $\VV$,  $\Trp(g)\McV  =  \tr_{\VV^*}(g)  \Trp(g)$, where
 $\Trp(g)=\left [
\tr_{\Pf_1}(g)\, \; \tr_{\Pf_2}(g) \,\; \dots \; \,\tr_{\Pf_{n^2}}(g) 
\right ]$.
Hence for every grouplike element of $\Df_n$,  $\Trp(g)$ is a left eigenvector of $\McV$ of eigenvalue
$\tr_{\VV^*}(g)$.    

To compute the vector $\Trp(g)$ for $g= b^ic^k$, we use the explicit description of the projective modules $\Pf(\ell,r)$ given in \cite[Lem.~2.1]{Chen2}, showing 
for $1 \le \ell < n$ that $\Pf(\ell,r)$
has a basis $\mathbf{p}_1,  \mathbf{p}_2, \ldots, \mathbf{p}_{2n}$  such that the actions of $b$ and $c$ are diagonal:  
$$(b^ic^k).\mathbf{p}_t = \begin{cases}
q^{(r+t-1)i + (t-r-\ell)k}\mathbf{p}_t & \text{ for } 1 \leq t \leq n, \\
q^{(r+t-1+\ell)i + (t-r)k} \mathbf{p}_t& \text{ for } n+1 \leq t \leq 2n.
\end{cases}$$
Thus, for $\ell < n$,  
$$\tr_{\Pf(\ell, r)}(b^ic^k) =
 q^{(r-1)i + (-r-\ell)k} \sum_{t=1}^{n}q^{t(i+k)}+ \,q^{(r-1+\ell)i -rk} \sum_{t=n+1}^{2n}q^{t(i + k)}.$$ 

As shown in \eqref{eq:trace}, we have for $\VV(n,r)$, 
 $$\tr_{\VV(n, r)}(b^ic^k) = q^{(r-1)i-rk} \sum_{t=1}^{n} q^{t(i+k)}.$$
 Observe that when $i+k \neq 0 \; (\modd n)$, the trace in these expressions is 0;
 consequently,  $\Trp(b^ic^k) = 0$, when $i+k \neq 0\; (\modd n)$.   Hence, we may assume that $k = -i\ (\modd n)$ and obtain
\begin{align}
\begin{split} \label{eq:ellnotn} 
\tr_{\Pf(\ell, r)}(b^ic^{-i}) &=
 \left(q^{(r-1)i + (r+\ell)i} + q^{(r-1+\ell)i +ri}\right)n = 2n q^{(2r+\ell-1)i}  \quad \qquad \text{for $\ell < n$,} \\
 \tr_{\VV(n, r)}(b^ic^{-i}) &= q^{(r-1)i+ri} \sum_{t=1}^{n} q^{t(i+-i)} = n q^{(2r-1)i}.
 \end{split}
 \end{align} 
 
 Except for the extra factor of 2 that occurs when $\ell < n$, the second equation in  \eqref{eq:ellnotn}  is the same as the first one
  with $\ell = n$. This motivates us to define for a fixed value of $i$, a vector with the following $n$ components:  
\begin{equation}\label{eq:pblocks} \mathsf{t}_{\ell,i} = \begin{cases}  \left[2n q^{(\ell -1)i}\,\;2n ^{(\ell+1)i}\;2n ^{(\ell+3)i}\,\;\dots\,\;2nq^{(\ell+n-3)i}\right ]  & \qquad \ell \ne n, \\
 \left[nq^{(n-1)i} \;\;\; \;nq^{i} \; \qquad nq^{3i}\, \quad \,\; \dots \quad\, \;  nq^{(n-3)i} \right]  & \qquad \ell = n. \end{cases}\end{equation}
 From the calculations above, we can conclude that the following holds.
\begin{proposition}  Assume $i\in \ZZ_n$, and let  $\mathsf{t}_{\ell,i}$ be as in \eqref{eq:pblocks} for $1\le \ell\le n$.    Then 
$$\Trp(b^ic^{-i}) = \left[\begin{matrix}\mathsf{t}_{1,i}&\mathsf{t}_{2,i}&\dots& \mathsf{t}_{n,i}\end{matrix}\right]$$
is a left eigenvector of the McKay matrix $\Mf_\VV$ corresponding to the eigenvalue  $\tr_{\VV^*}(b^ic^{-i})$  for any finite-dimensional 
$\Df_n$-module $\VV$.
\end{proposition}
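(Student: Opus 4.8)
The plan is to split the statement into two essentially independent parts: a purely formal eigenvector assertion, and the identification of $\Trp(b^ic^{-i})$ with the displayed block vector, where the actual computation lives (and has in fact already been carried out in \eqref{eq:ellnotn}).

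First I would dispose of the eigenvector claim. Since $b^ic^{-i} = b^ic^{n-i}$ is one of the grouplike elements $b^ic^k$ of $\Df_n$, and since $(\VV^*)^*\cong\VV$ holds for every finite-dimensional $\Df_n$-module by the self-duality formula $\VV(\ell,r)^*\cong\VV(\ell,1-r-\ell)$ recorded above, equation \eqref{eq:pgrouplike} of Corollary \ref{cor:V=V**}(a) applies verbatim with $g = b^ic^{-i}$. It yields $\Trp(b^ic^{-i})\McV = \tr_{\VV^*}(b^ic^{-i})\,\Trp(b^ic^{-i})$, so whatever vector $\Trp(b^ic^{-i})$ turns out to be, it is a left eigenvector of $\McV$ with eigenvalue $\tr_{\VV^*}(b^ic^{-i})$. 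This holds for an arbitrary module $\VV$, since Corollary \ref{cor:V=V**} imposes no condition on $\VV$ beyond $(\VV^*)^*\cong\VV$, which is automatic here.

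Next I would assemble the vector itself from the trace values in \eqref{eq:ellnotn}. Recall that the indecomposable projectives are the $\Pf(\ell,r)$ with $1\le\ell\le n-1$ together with the $\VV(n,r)$, $r\in\ZZ_n$, and that $\Trp$ lists their traces in the fixed order: first by $\ell$, then by $r=0,1,\dots,n-1$. For fixed $\ell<n$, reading $\tr_{\Pf(\ell,r)}(b^ic^{-i})=2nq^{(2r+\ell-1)i}$ across $r=0,\dots,n-1$ produces the row $[\,2nq^{(\ell-1)i}\ \ 2nq^{(\ell+1)i}\ \cdots\ 2nq^{(\ell+2n-3)i}\,]$, whose exponents increase by $2$; and for $\ell=n$, reading $\tr_{\VV(n,r)}(b^ic^{-i})=nq^{(2r-1)i}$ produces $[\,nq^{-i}\ \ nq^{i}\ \cdots\ nq^{(2n-3)i}\,]$. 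Reducing each exponent modulo $n$, which is legitimate because $q^n=1$, rewrites these rows in exactly the forms recorded as $\mathsf{t}_{\ell,i}$ in \eqref{eq:pblocks}; in particular the last entries become $2nq^{(\ell+n-3)i}$ for $\ell<n$ and $nq^{(n-3)i}$ for $\ell=n$. Concatenating the $n$ blocks in order of $\ell$ gives $\Trp(b^ic^{-i}) = [\,\mathsf{t}_{1,i}\ \mathsf{t}_{2,i}\ \cdots\ \mathsf{t}_{n,i}\,]$; since every entry is a nonzero scalar multiple of a power of $q$, this vector is genuinely nonzero, and combining with the first paragraph finishes the argument.

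The only delicate point is the bookkeeping of exponents modulo $n$: one must check, term by term, that the arithmetic progressions $2r+\ell-1$ (and $2r-1$ when $\ell=n$) reduce to the representatives written in \eqref{eq:pblocks}, and that the scalar $2n$ for $\ell<n$ versus $n$ for $\ell=n$ is tracked correctly. It is worth observing here that, because $n$ is odd, $2$ is invertible modulo $n$, so the exponents $2r+\ell-1$ run through a complete residue system as $r$ varies; this is what makes the entries of each block $\mathsf{t}_{\ell,i}$ a genuine permutation of the powers $q^{0},q^{i},\dots,q^{(n-1)i}$ up to the global scalar. No substantive difficulty arises beyond careful indexing, as the trace computations themselves are already in place.
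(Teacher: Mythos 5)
Your proposal is correct and follows essentially the same route as the paper: the eigenvector relation comes from Corollary \ref{cor:V=V**}(a) applied to the grouplike element $b^ic^{-i}$ (using $(\VV^*)^*\cong\VV$ for $\Df_n$-modules), and the identification of $\Trp(b^ic^{-i})$ with the block vector $[\mathsf{t}_{1,i}\;\dots\;\mathsf{t}_{n,i}]$ is exactly the trace computation \eqref{eq:ellnotn} together with the modulo-$n$ exponent bookkeeping. Your added remark that invertibility of $2$ modulo odd $n$ makes each block a permutation of the powers $q^{0},\dots,q^{(n-1)i}$ up to scalar is a correct refinement, but the argument is otherwise the paper's own.
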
 
\begin{remark} 
In comparison to the right eigenvector situation in Theorem \ref{thm:rightevs} (d),   no nonzero left eigenvectors corresponding to
the eigenvalues $q^r(q^j + q^{-j})$ for $j \ne 0$ arise from evaluating projective characters on grouplike 
elements of $\Df_n$. \end{remark}

\begin{example}
When $\VV = \VV(2,0)$, we have $\VV^*= \VV(2,-1)$,  and \eqref{eq:action} tells us that the matrix of $b^i c^{-i}$ on $\VV^*$
relative to the basis $\{v_1, v_2\}$  is
$$\begin{pmatrix} q^{-1} & 0 \\ 0 & 1 \end{pmatrix}^i \begin{pmatrix} 1 & 0 \\ 0 & q \end{pmatrix}^{-i}\; =\;
\begin{pmatrix} q^{-i} & 0 \\ 0 & q^{-i} \end{pmatrix}.$$
Hence, $\tr_{\VV^*}(b^i c^{-i}) = 2q^{-i} =\lambda_{0,(n-1)i}.$  
Since these eigenvalues are distinct for $i \in \ZZ_n$, the left eigenvectors $\Trp(b^ic^{-i})$ for $\Mf_\VV$ are linearly independent. 
When $n = 3$, the vectors $\Trp(b^ic^{-i})$ and their corresponding eigenvalues are
\begin{equation}  \label{eq:n=3} \begin{array}{ccc}
&\Trp(1) = [6\;\; 6\;\; 6\;\; 6\;\;6\;\;6\;\;3\;\; 3\;\; 3]  &\quad \lambda_{0,0} = 2, \\
&\Trp(bc^{-1}) = [6\;\, 6q\;\, 6q^2\;\,6q^2\;\,6\;\,6q\;\,3q\;\,3q^2\;\, 3] &\quad \ \lambda_{0,1} = \lambda_{0,-2} = 2q, \\
&\Trp(b^2c^{-2} )=[6\;\,6q^2\;\,6q\;\,6q\;\, 6\;\,6q^2\;\,3q^2\;\,3q\;\,3] &\quad\ \ \lambda_{0,2} = \lambda_{0,-1} = 2q^2. 
\end{array}\end{equation}
\end{example}

 \subsection{Tensoring with $\VV(\ell,s)$}\label{S3.10} 
 Let $g = [\VV(1,1)]$  and $x = [\VV(2,0)]$ in the Grothendieck ring $\GG_0(\Df_n)$.
 Then $g^n = 1 = [\VV(1,0)]$ and $x^k = [\VV(2,0)]^k = [\VV^{\ot k}] = [\VV]^k$ in $\GG_0(\Df_n)$.    The following results are consequences of
 Lemmas 3.1-3.3 of \cite{ZWLC} and the tensor rules in Section \ref{S3.2}:
\begin{equation}\label{eq:Ggen}  [\VV(\ell,0)] = x [\VV(\ell-1,0)] - g [\VV(\ell-2,0)] \quad \text{for all} \; 3 \le \ell \le n,  \end{equation}
and since $[\VV(\ell,s)] = g^s[\VV(\ell,0)]$ for all $s \in \ZZ_n$,  it follows that $\GG_0(\Df_n)$ is generated by $g$ and $x$.   Moreover, 
\begin{equation}\label{eq:Vell}  [\VV(\ell,0)] = \sum_{i=0}^{\lfloor \frac{\ell-1}{2}\rfloor}  (-1)^i {{\ell-1-i} \choose {i}} g^i x^{\ell-1-2i} \quad \text{for all} \; 1 \le \ell \le n. \end{equation}
Thus, this defines a sequence of elements of $\GG_0(\Df_n)$ given by
\begin{align}\begin{split}\label{eq:fkxg} & f_0(x,g) =1 =[\VV(1,0)], \quad f_1(x,g) =x = [\VV(2,0)], \\ & f_{\ell-1}(x,g)\, =\, [\VV(\ell,0)]\,=\, \sum_{i=0}^{\lfloor\frac{\ell-1}{2}\rfloor}
(-1)^i {{\ell-1-i} \choose {i}} g^i x^{\ell-1-2i}, \qquad  2 \le \ell \le n,\end{split}\end{align}
and satisfying 
$$f_\ell(x,g) = x f_{\ell-1}(x,g)-g f_{\ell-2}(x,g) = \sum_{i=0}^{\lfloor\frac{\ell}{2}\rfloor}
(-1)^i {{\ell-i} \choose {i}} g^i x^{\ell-2i}, \qquad  \,2 \le \ell \le n.$$
In addition, if $f(x,g): = f_n(x,g)-gf_{n-2}(x,g)-2$, then by Lemma 3.3 of \cite{ZWLC}  we have
\begin{equation}\label{eq:minpoly}  f(x,g) = \sum_{i=0}^{\lfloor \frac{n}{2}\rfloor}  (-1)^i \frac{n}{n-i} {{n-i} \choose {i}} g^i x^{n-2i} \;  -2  = 0. \end{equation} 

If $R$ is the group algebra $\ZZ\GG$, where $\GG$ is the cyclic group generated by $g$,   then 
$\GG_0(\Df_n)  \cong R[x]/\langle f(x,g) \rangle$ is a commutative $\ZZ$-algebra of dimension $n^2$ with a basis given by $\{g^i x^k \mid i\in \ZZ_n,  0 \le k \le n-1\}$.  
Under the correspondence $1 \leftrightarrow \Ir = \Ir_n, \; t\Ir \leftrightarrow x$, \;$\Dr \leftrightarrow g$, we have that $\Uc_k(t,\Dr) \leftrightarrow f_k(x,g)$ for $0 \le k \le n$,
since the recursion relations are the same.   Moreover, $\mathsf{p}_n(t,\Dr) \leftrightarrow f(x,g)$ implies that 
\begin{equation}\label{eq:pntD2} \mathsf{p}_n(t,\Dr) = \sum_{i=0}^{\lfloor \frac{n}{2}\rfloor}  (-1)^i \frac{n}{n-i} {{n-i} \choose {i}} \Dr^i t^{n-2i} \;  -2  = 0. 
\end{equation} 
These considerations give the following result (compare \cite[Thm.~3.4]{ZWLC}). 

\begin{proposition} The Grothendieck ring  $\GG_0(\Df_n) \cong \ZZ[g,x]/ \langle g^n-1, f(x,g)\rangle.$
\end{proposition}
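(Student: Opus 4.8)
The plan is to exhibit an explicit surjective ring homomorphism from $A := \ZZ[g,x]/\langle g^n-1,\, f(x,g)\rangle$ onto $\GG_0(\Df_n)$ and then promote it to an isomorphism by a rank count. Since $\GG_0(\Df_n)$ is commutative (the decomposition \eqref{eq:tensdecomp} is symmetric in its two factors), I would first define the ring homomorphism $\phi\colon \ZZ[g,x]\to \GG_0(\Df_n)$ by $g\mapsto [\VV(1,1)]$ and $x\mapsto [\VV(2,0)]$. Because $g^n=[\VV(1,0)]=1$ and $f(x,g)=0$ in $\GG_0(\Df_n)$ by \eqref{eq:minpoly}, the map $\phi$ annihilates the ideal $\langle g^n-1,\, f(x,g)\rangle$ and hence descends to $\bar\phi\colon A\to\GG_0(\Df_n)$. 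Surjectivity of $\bar\phi$ is immediate from generation: by \eqref{eq:Vell}--\eqref{eq:fkxg} each $[\VV(\ell,0)]$ equals the polynomial $f_{\ell-1}(x,g)$, and $[\VV(\ell,s)]=g^s[\VV(\ell,0)]$, so the image of $\bar\phi$ contains the entire $\ZZ$-basis $\{[\VV(\ell,s)]\}$ of $\GG_0(\Df_n)$.

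Next I would compute the $\ZZ$-rank of $A$. The key observation is that, by \eqref{eq:minpoly}, the element $f(x,g)$ is monic of degree $n$ in $x$: its $i=0$ term is $\frac{n}{n}\binom{n}{0}x^n=x^n$, while every other term has $x$-degree $n-2i<n$. Writing $R:=\ZZ[g]/\langle g^n-1\rangle=\ZZ\GG$, which is free over $\ZZ$ of rank $n$ with basis $1,g,\dots,g^{n-1}$, we have $A\cong R[x]/\langle f(x,g)\rangle$; monicity of $f$ in $x$ guarantees that every class reduces uniquely to an $R$-combination of $1,x,\dots,x^{n-1}$, so $A$ is free over $R$ of rank $n$, hence free over $\ZZ$ of rank $n^2$ with basis $\{g^i x^k : 0\le i,k\le n-1\}$. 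On the other side, $\GG_0(\Df_n)$ is free over $\ZZ$ of rank $n^2$, its basis being the classes $[\VV(\ell,s)]$ for $1\le \ell\le n$ and $s\in\ZZ_n$.

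Finally, to upgrade the surjection $\bar\phi$ between two free $\ZZ$-modules of equal finite rank $n^2$ to an isomorphism, I would tensor with $\mathbb{Q}$. The induced map $\bar\phi\otimes\mathbb{Q}$ is a surjective $\mathbb{Q}$-linear map between $\mathbb{Q}$-vector spaces of the same dimension $n^2$, hence an isomorphism, in particular injective. Since $A$ and $\GG_0(\Df_n)$ are torsion-free, both inject into their rationalizations, and a short diagram chase then forces $\bar\phi$ itself to be injective: if $\bar\phi(a)=0$ then $(\bar\phi\otimes\mathbb{Q})(a\otimes 1)=0$, whence $a\otimes 1=0$ and so $a=0$. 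Together with surjectivity this yields that $\bar\phi$ is an isomorphism, which is exactly the asserted identification $\GG_0(\Df_n)\cong\ZZ[g,x]/\langle g^n-1,\, f(x,g)\rangle$.

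The genuinely substantive inputs are already supplied earlier in the excerpt, namely the generation of $\GG_0(\Df_n)$ by $g$ and $x$ and, above all, the vanishing relation $f(x,g)=0$ from \eqref{eq:minpoly} (Lemma 3.3 of \cite{ZWLC}). Given these, the only point demanding real care is the rank bookkeeping: one must verify that $f$ is honestly monic in $x$ so that $A$ is genuinely free of rank $n^2$ and not some proper quotient, and one must avoid having to identify the full relation ideal by hand. The rationalization step is precisely what sidesteps the latter difficulty, so I expect the monicity and rank verification to be the main obstacle, even though it is routine.
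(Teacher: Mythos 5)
Your proof is correct and follows essentially the same route as the paper: the paper likewise obtains the result from generation of $\GG_0(\Df_n)$ by $g=[\VV(1,1)]$ and $x=[\VV(2,0)]$ (via \eqref{eq:Ggen}--\eqref{eq:Vell}), the vanishing relation $f(x,g)=0$ from \eqref{eq:minpoly} (Lemma 3.3 of \cite{ZWLC}), and the observation that both sides are free $\ZZ$-modules of rank $n^2$ with basis $\{g^i x^k\}$. The only difference is that you spell out the bookkeeping the paper leaves implicit --- monicity of $f$ in $x$, and the rationalization argument showing a surjection between free $\ZZ$-modules of equal rank is injective --- which is a welcome but not substantively different elaboration.
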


\begin{example} When $n = 11$,   \eqref{eq:minpoly} says
\begin{align*} f(x,g) & = x^{11} - \frac{11}{10}{{10} \choose 1} g x^9 + \frac{11}{9}{{9} \choose 2} g^2 x^7 - \frac{11}{8}{{8} \choose 3} g^3 x^5
+\frac{11}{7}{{7} \choose 4} g^4 x^3 - \frac{11}{6}{{6} \choose 5} g^5 x -2 \\
& = x^{11}-11gx^9 + 44 g^2 x^7 -77 g^3 x^5 + 55 g^4 x^3 - 11g^5x -2, \; \; \text{and \eqref{eq:pntD2} says} \\
\mathsf{p}_n(t,\Dr) &=  t^{11}-11\Dr t^9+44\Dr^2 t^7 - 77\Dr^3 t^5+55\Dr^4 t^3 -11\Dr^5 t - 2\Ir = 0 \;\; \text{(compare  \eqref{eq:pntD})}. \end{align*} 
\end{example} 

Now suppose that $\Mf_{(\ell,s)}$ is the McKay matrix for tensoring with $\VV(\ell,s)$.  
In computing matrices here, we order the rows and columns as usual,  first by $\ell$ and then by $s$, so that the order is $(1,0),(1,1), \dots, \allowbreak (1,n-1), (2,1), \dots, (2,n-1), \dots, (n,0), \dots, (n,n-1)$ as before.  
Let $\mathrm{Z}^{(s)} = \diag\{\mathrm{Z}^s, \mathrm{Z}^s, \dots, \mathrm{Z}^s\}$ ($n$ copies), where $\mathrm Z$ is the cyclic permutation  
matrix in \eqref{eq:McZ},  and let $\Ir_{n^2}$ be the $n^2 \times n^2$ identity matrix.  Assume  $\Mf = \Mf_{(2,0)}$, the McKay matrix for
tensoring with $\VV = \VV(2,0)$.    Then \eqref{eq:Vell} implies that
\begin{equation}\label{eq:Mellr} \Mf_{(\ell,s)} =\sum_{i=0}^{\lfloor \frac{\ell-1}{2}\rfloor}  (-1)^i {{\ell-1-i} \choose {i}} \Zr^{(i+s)} \Mf^{\ell-1-2i} \quad \text{for all} \; 1 \le \ell \le n, s \in \ZZ_n. \end{equation}
Here are a few special cases:
\begin{gather*} \Mf_{(1,0)} = \Ir_{n^2}, \qquad \Mf_{(2,0)} = \Mf, \qquad 
\Mf_{(3,0)} = \Mf^2 - \mathrm{Z}^{(1)}, \\
\Mf_{(4,0)}   = \Mf^3 - 2\mathrm{Z}^{(1)}\Mf, \qquad 
\Mf_{(5,0)} 
= \Mf^4 - 3  \mathrm{Z}^{(1)}\Mf^2+\mathrm{Z}^{(2)}. \end{gather*} 
 
\begin{corollary}\label{cor:sameev}  The  left and right (generalized) eigenvectors are the same for all the matrices $\Mf_{(\ell,s)}$, $1 \le\ell \le  n$, $s\in \ZZ_n$.
\end{corollary}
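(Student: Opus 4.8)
The plan is to lean on the two facts recorded just above the statement: by \eqref{eq:Mellr} every McKay matrix $\Mf_{(\ell,s)}$ is a polynomial in the single matrix $\Mf=\Mf_{(2,0)}$ and the block matrix $\Zr^{(1)}=\diag\{\Zr,\dots,\Zr\}$ (since $\Zr^{(i+s)}=(\Zr^{(1)})^{i+s}$), and these two matrices commute because the assignment $[\VV(\ell,s)]\mapsto\Mf_{(\ell,s)}$ realizes the commutative ring $\GG_0(\Df_n)=\ZZ[g,x]/\langle g^n-1,f(x,g)\rangle$ with $g\leftrightarrow\Zr^{(1)}$ and $x\leftrightarrow\Mf$. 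Thus it suffices to show that the explicit right eigenvectors and generalized eigenvectors $\vb_{j,r}$, $\xb_{j,r}$ of $\Mf$ from Proposition \ref{prop:rteigenvector} and Theorem \ref{thm:genright}, together with the left ones $\mathbf{w}_{j,r}$, $\mathbf{y}_{j,r}$ from Proposition \ref{left eigenvector} and Proposition \ref{prop:genleft}, are \emph{simultaneously} (generalized) eigenvectors for $\Zr^{(1)}$; any polynomial in two commuting matrices will then treat them the same way.

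The first step is to record the key structural observation: each of these vectors is an \emph{honest} eigenvector of $\Zr^{(1)}$ with eigenvalue $q^{2r}$. Indeed, inspecting the formulas, every $n$-dimensional block of $\vb_{j,r}$ (and of $\xb_{j,r}$) is a scalar multiple of $\vb_0=[1\;\,q^{2r}\;\,\cdots\;\,q^{(n-1)2r}]^{\tt T}$, which is precisely the right eigenvector of a single block $\Zr$ for eigenvalue $q^{2r}$; hence $\Zr^{(1)}\vb_{j,r}=q^{2r}\vb_{j,r}$ and $\Zr^{(1)}\xb_{j,r}=q^{2r}\xb_{j,r}$. The same remark applies on the left, where the blocks of $\mathbf{w}_{j,r}$ and $\mathbf{y}_{j,r}$ are scalar multiples of the left $\Zr$-eigenvector $\mathbf{w}_0$, so $\mathbf{w}_{j,r}\Zr^{(1)}=q^{2r}\mathbf{w}_{j,r}$ and $\mathbf{y}_{j,r}\Zr^{(1)}=q^{2r}\mathbf{y}_{j,r}$.

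Next I would restrict to the generalized eigenspace $E_{j,r}=\mathrm{span}\{\vb_{j,r},\xb_{j,r}\}$ of $\Mf$ (for $j\neq0$; the $j=0$ spaces are one-dimensional and are handled by the same, but simpler, diagonal argument). On $E_{j,r}$ the matrix $\Zr^{(1)}$ acts as the scalar $q^{2r}$, while $\Mf$ acts as a single $2\times2$ Jordan block with eigenvalue $\lambda_{j,r}$, since $\Mf\vb_{j,r}=\lambda_{j,r}\vb_{j,r}$ and $\Mf\xb_{j,r}=\lambda_{j,r}\xb_{j,r}+\vb_{j,r}$. Writing $\Mf_{(\ell,s)}=P(\Mf,\Zr^{(1)})$ and substituting the scalar $q^{2r}$ into its second argument, $\Mf_{(\ell,s)}$ acts on $E_{j,r}$ as a one-variable polynomial $\tilde P(\Mf)$ evaluated on that Jordan block, i.e. as an upper-triangular matrix with $\tilde P(\lambda_{j,r})$ on the diagonal and $\tilde P'(\lambda_{j,r})$ above it. Consequently $\vb_{j,r}$ stays an eigenvector of $\Mf_{(\ell,s)}$ and $\xb_{j,r}$ stays a (generalized) eigenvector, with the same vectors reappearing for every $(\ell,s)$. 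Since $\{\vb_{j,r}\}\cup\{\xb_{j,r}:j\neq0\}$ is a Jordan basis for $\Mf$, it is simultaneously a Jordan basis for all $\Mf_{(\ell,s)}$; the left statement follows verbatim from the dual computation with $\mathbf{w}_{j,r}$, $\mathbf{y}_{j,r}$.

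The only point requiring care — the closest thing to an obstacle — is that the argument genuinely needs each eigenvector to be a \emph{common} eigenvector of the shift matrix $\Zr^{(1)}$ and not merely of $\Mf$; this is exactly what collapses the two-variable polynomial into a one-variable polynomial of a single Jordan block and forces the Jordan structure to be shared. I expect this to be immediate once the block-scalar form of the explicit eigenvectors is pointed out, so no substantive computation is needed beyond quoting the formulas already established in the preceding subsections.
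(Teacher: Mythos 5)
Your proof is correct, and it rests on the same two pillars as the paper's own argument: the polynomial expression \eqref{eq:Mellr} for $\Mf_{(\ell,s)}$ in terms of $\Mf$ and $\Zr^{(1)}$, and the commutativity of these two matrices. The execution, however, is genuinely different. The paper's proof is purely abstract: it simultaneously triangularizes the commuting pair $(\Mf,\Zr^{(1)})$ to produce \emph{some} basis of common right (generalized) eigenvectors, dually for left ones, and observes that such a basis serves every polynomial in the pair. You instead verify that the \emph{specific} Jordan basis already constructed --- $\vb_{j,r}$, $\xb_{j,r}$ on the right and $\mathbf{w}_{j,r}$, $\mathbf{y}_{j,r}$ on the left --- consists of honest $\Zr^{(1)}$-eigenvectors (each $n$-block being a multiple of $\vb_0$, resp.\ $\mathbf{w}_0$), so that $\Zr^{(1)}$ acts as the scalar $q^{2r}$ on each one- or two-dimensional generalized eigenspace of $\Mf$, and you then apply one-variable functional calculus to the resulting $2\times 2$ Jordan block. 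This buys something the paper's terse proof leaves implicit: it pins down that the common (generalized) eigenvectors are exactly the explicit vectors of Propositions \ref{prop:rteigenvector} and \ref{left eigenvector} and of Theorem \ref{thm:genright} and Proposition \ref{prop:genleft}, which is the form of the corollary actually invoked later (e.g.\ in Theorem \ref{thm:Ml0evals} and Proposition \ref{prop:Ql0evals}); the abstract route only guarantees existence of a common basis without identifying it. The price is reliance on the explicit formulas, whereas the paper's argument would survive any commuting pair. One caveat, shared equally by the paper's formulation: if the one-variable polynomial $\tilde{P}(t)=P(t,q^{2r})$ has $\tilde{P}'(\lambda_{j,r})=0$, then $\xb_{j,r}$ becomes an honest eigenvector of $\Mf_{(\ell,s)}$ rather than a generalized eigenvector in the strict sense of Section \ref{S2.1}; this looseness is already present in the statement of the corollary and does not affect how it is used.
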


\begin{proof} Since $\Mf$ and $\mathrm{Z}^{(1)}$ commute, we can simultaneously upper-triangularize them and find a basis of common
right  (generalized) eigenvectors for them.   Similarly, we can simultaneously lower-triangularize  $\Mf$ and $\mathrm{Z}^{(1)}$ and find a basis of common
left  (generalized) eigenvectors.  These vectors will be common  left and right  (generalized) eigenvectors 
for all powers of $\Mf$ and $\mathrm{Z}^{(1)}$,  hence for all the matrices $\Mf_{(\ell,s)} = \mathrm{Z}^{(s)}\Mf_{(\ell,0)}$.  \end{proof}

\subsection{Eigenvalues for the McKay matrix of any simple $\Df_n$-module.} \label{S3.11}  
In this section, we use the results above to determine
the eigenvalues of the McKay matrix $\Mf_{(\ell,s)}$ for all $1 \le \ell \le n$ and $s \in \ZZ_n$.   
Recall from Proposition \ref{prop:rteigenvector} that $\vb_{j,r}=[\mathbf{v}_0\;\mathbf{v}_1\;\, \dots\;\,\mathbf{v}_{n-1}]^{\tt T}$
is a right eigenvector for $\Mf = \Mf_{(2,0)}$ with eigenvalue $\lambda_{j,r} =q^r(q^j + q^{-j})$ when $\mathbf{v}_0$ is a right eigenvector for $\Zr$ with  
eigenvalue $q^{2r}$,  and $\mathbf{v}_k = q^{kr}\Uc_k(q^j + q^{-j})\mathbf{v}_0$ for $1\le k\le n-1$.  It follows that $\Zr^{(s)}\vb_{j,r} = q^{2sr}
\vb_{j,r}$ for all $s \in \ZZ_n$.    As a consequence, we have the next result by combining \eqref{eq:Mellr} with \eqref{eqQkt}.

\begin{theorem}\label{thm:Ml0evals}\begin{itemize} \item[{\rm (a)}]  Assume $\vb_{j,r}=\left[\begin{matrix} \mathbf{v}_0&\mathbf{v}_1&\dots &\mathbf{v}_{n-1}\end{matrix}\right]^{\tt T}$ 
is a right eigenvector for $\Mf = \Mf_{(2,0)}$ with eigenvalue $\lambda_{j,r} =q^r(q^j + q^{-j})$ as in Proposition \ref{prop:rteigenvector}. Then 
\begin{align}\begin{split}\label{Ml0evals}  \Mf_{(\ell,0)}\vb_{j,r} &= 
\sum_{i=0}^{\lfloor \frac{\ell-1}{2}\rfloor}  (-1)^i {{\ell-1-i} \choose {i}} q^{2ir} q^{(\ell-1-2i)r}(q^j+q^{-j})^{\ell-1-2i}\vb_{j,r} \\
&=q^{(\ell-1)r} \sum_{i=0}^{\lfloor \frac{\ell-1}{2}\rfloor}  (-1)^i {{\ell-1-i} \choose {i}}(q^j + q^{-j})^{\ell-1-2i}\vb_{j,r} = q^{(\ell-1)r} \Uc_{\ell-1}(q^j + q^{-j}) \vb_{j,r}.
\end{split}\end{align} 
Hence, for all $1 \le \ell \le n$ and for all $s \in \ZZ_n$,  $\vb_{j,r}$ is a right eigenvector for $\Mf_{(\ell,s)} = \Zr^{(s)}\Mf_{(\ell,0)}$ with eigenvalue 
\begin{equation*}\label{Mlrevals} q^{(\ell-1+2s)r} \Uc_{\ell-1}(q^j + q^{-j})
= q^{(\ell-1+2s)r}\frac{q^{j\ell}-q^{-j\ell}}{q^j - q^{-j}}\; \; \text{when $j \ne 0$,}\end{equation*}
and with eigenvalue $q^{(\ell-1+2s)r}\ell$ when $j = 0$.

 \item[{\rm (b)}]  Assume
$\mathbf{w}_{j,r} =\left[\mathbf{w}_{n-1}\;\, \mathbf{w}_{n-2}\;\,\ldots \;\,\mathbf{w}_1\;\,\mathbf{w}_0\right]$  is a left eigenvector for $\Mf$ with eigenvalue $q^r(q^j + q^{-j})$ as in Proposition \ref{left eigenvector}.    Then $\mathbf{w}_{j,r}$
 is a left eigenvector for $\Mf_{(\ell,s)} = \Zr^{(s)}\Mf_{(\ell,0)}$ with eigenvalue $q^{(\ell-1+2s)r}(q^j + q^{-j})$
 for all $1 \le \ell \le n$ and for all $s \in \ZZ_n$.
\end{itemize}
\end{theorem}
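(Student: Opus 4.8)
The plan is to reduce both parts to a single observation: $\vb_{j,r}$ (respectively $\mathbf{w}_{j,r}$) is a simultaneous right (respectively left) eigenvector of the two commuting matrices $\Mf = \Mf_{(2,0)}$ and $\Zr^{(1)}$, so it is automatically a right (respectively left) eigenvector of every polynomial in them, in particular of $\Mf_{(\ell,s)}$ via the expansion \eqref{eq:Mellr}. The only genuine work is to evaluate that polynomial on the eigenvector and recognize the result as a single Chebyshev polynomial through the closed form \eqref{eqQkt}.

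For part (a) I would apply \eqref{eq:Mellr} with $s=0$ to $\vb_{j,r}$. By Proposition \ref{prop:rteigenvector}, $\Mf^{\ell-1-2i}\vb_{j,r} = \lambda_{j,r}^{\ell-1-2i}\vb_{j,r}$ with $\lambda_{j,r}^{\ell-1-2i} = q^{(\ell-1)r}q^{-2ir}(q^j+q^{-j})^{\ell-1-2i}$, while $\Zr^{(i)}\vb_{j,r} = q^{2ir}\vb_{j,r}$ from the relation $\Zr^{(s)}\vb_{j,r} = q^{2sr}\vb_{j,r}$ recorded just before the theorem. The decisive point is that the factor $q^{2ir}$ coming from $\Zr^{(i)}$ cancels the $q^{-2ir}$ hidden in $\lambda_{j,r}^{\ell-1-2i}$, so each summand acquires the common prefactor $q^{(\ell-1)r}$ and the remaining sum $\sum_i (-1)^i {\ell-1-i \choose i}(q^j+q^{-j})^{\ell-1-2i}$ is exactly $\Uc_{\ell-1}(q^j+q^{-j})$ by \eqref{eqQkt}. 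This is the displayed identity \eqref{Ml0evals}. Applying $\Zr^{(s)}$ on the left and using $\Mf_{(\ell,s)} = \Zr^{(s)}\Mf_{(\ell,0)}$ together with $\Zr^{(s)}\vb_{j,r} = q^{2sr}\vb_{j,r}$ gives the eigenvalue $q^{(\ell-1+2s)r}\Uc_{\ell-1}(q^j+q^{-j})$. Finally I would rewrite $\Uc_{\ell-1}(q^j+q^{-j})$ as $(q^{j\ell}-q^{-j\ell})/(q^j-q^{-j})$ via Proposition \ref{prop:qrels}(a) when $j\ne 0$, and use $\Uc_{\ell-1}(2)=\ell$ (as in the proof of Corollary \ref{dim vector}) when $j=0$.

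For part (b) the computation is the mirror image performed on the left. Since $\mathbf{w}_{j,r}$ is a left eigenvector of $\Mf$ for $\lambda_{j,r}$ (Proposition \ref{left eigenvector}) and each of its $n$ blocks is a scalar multiple of $\mathbf{w}_0$ with $\mathbf{w}_0\Zr = q^{2r}\mathbf{w}_0$, one has $\mathbf{w}_{j,r}\Zr^{(i)} = q^{2ir}\mathbf{w}_{j,r}$ and $\mathbf{w}_{j,r}\Mf^{\ell-1-2i} = \lambda_{j,r}^{\ell-1-2i}\mathbf{w}_{j,r}$. Substituting these into \eqref{eq:Mellr} produces the identical telescoping, so $\mathbf{w}_{j,r}\Mf_{(\ell,s)} = q^{(\ell-1+2s)r}\Uc_{\ell-1}(q^j+q^{-j})\mathbf{w}_{j,r}$, i.e. the same eigenvalue obtained in (a)---as it must be, since $\Mf_{(\ell,s)}$ is one fixed polynomial in the commuting pair $\Mf$, $\Zr^{(1)}$, and both $\vb_{j,r}$ and $\mathbf{w}_{j,r}$ carry $\Mf$-eigenvalue $\lambda_{j,r}$ and $\Zr^{(1)}$-eigenvalue $q^{2r}$.

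I do not expect a genuine obstacle here: the argument is a direct eigenvalue substitution, and the only subtlety is careful bookkeeping of the powers of $q$---keeping separate the contribution $q^{2ir}$ of $\Zr^{(i)}$ and the contribution $q^{-2ir}$ buried in $\lambda_{j,r}^{\ell-1-2i}$---so that their cancellation, and hence the collapse of the whole sum to the single Chebyshev polynomial $\Uc_{\ell-1}$, is visibly exact rather than coincidental.
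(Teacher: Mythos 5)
Your proof is correct and takes essentially the same route as the paper: the paper likewise applies the expansion \eqref{eq:Mellr} to $\vb_{j,r}$, using that it is a simultaneous eigenvector of $\Mf$ (eigenvalue $\lambda_{j,r}$) and of $\Zr^{(s)}$ (eigenvalue $q^{2sr}$), collapses the resulting sum to $\Uc_{\ell-1}(q^j+q^{-j})$ via the closed form \eqref{eqQkt}, converts with Proposition \ref{prop:qrels}(a), and dismisses part (b) as ``similar.'' Worth noting: your computation for (b) gives the eigenvalue $q^{(\ell-1+2s)r}\,\Uc_{\ell-1}(q^j+q^{-j})$, which is the correct value and exposes an apparent typo in the theorem's printed statement of (b), where the factor $\Uc_{\ell-1}$ is missing and the displayed eigenvalue $q^{(\ell-1+2s)r}(q^j+q^{-j})$ is right only when $\ell=2$.
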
  
 \begin{proof}  Relation \eqref{Mlrevals} follows directly from \eqref{Ml0evals} and the fact that $\vb_{j,r}$ is a right eigenvector for
 $\Zr^{(s)}$ with eigenvalue $q^{2sr}$.  The last equality in \eqref{Ml0evals} comes from  Proposition \ref{prop:qrels} (a).
 Part (b) is similar.
 \end{proof}

\subsection{Eigenvectors for the projective McKay matrices of $\Df_n$.} \label{S3.12}  

Recall from Section \ref{S2.2} that the projective McKay matrix for tensoring with
a finite-dimensional module $\VV$ is given by $\Qf_\VV = (\Qf_{ij})$, where $\Qf_{ij} =
[\Pf_i \ot \VV:\Pf_j]$.   We have shown in  Theorem \ref{thm:QV}  that $\Qf_\VV = \Mf_{\VV^*}^{\tt T}$, where  $\Mf_{\VV^*}$
is the McKay matrix for tensoring with the dual module $\VV^*$.   In particular, since for  the simple $\Df_n$-module $\VV(\ell,s)$, we have $\VV(\ell,s)^* = \VV(\ell, 1-\ell-s)$,
we can conclude the following about $\Qf_{(\ell,s)}$ using  Theorem \ref{thm:Ml0evals}. 

\begin{proposition}\label{prop:Ql0evals} Assume $\vb_{j,r}$ and $\mathbf{w}_{j,r}$ are 
as in  Theorem \ref{thm:Ml0evals}. Then for all $1 \le \ell \le n$ and for all $s \in \ZZ_n$,  \begin{itemize} \item[{\rm (a)}]  
 $\vb_{j,r}^{\tt T} = \left[\mathbf{v}_0\;\,\mathbf{v}_1\;\,\dots \;\,\mathbf{v}_{n-1}\right]$ is a left eigenvector for $\Qf_{(\ell,s)}$ with eigenvalue 
\begin{equation}\label{Qlrevals} q^{(1-\ell-2s)r} \Uc_{\ell-1}(q^j + q^{-j})= q^{(1-\ell-2s)r}\frac{q^{j\ell}-q^{-j\ell}}{q^j - q^{-j}}\; \;  \text{when $j \ne 0$,} \end{equation}
and with eigenvalue $q^{(1-\ell-2s)r}\ell$  when $j = 0$.
 \item[{\rm (b)}] 
$\mathbf{w}_{j,r}^{\tt T} =\left[\mathbf{w}_{n-1}\; \mathbf{w}_{n-2} \;\,\ldots\;\,\mathbf{w}_1\; \mathbf{w}_0\right]^{\tt T}$  is a right eigenvector for $\Qf_{(\ell,s)}$  with eigenvalue as in (a)
 for all $1 \le \ell \le n$ and for all $s \in \ZZ_n$.
\end{itemize}
\end{proposition}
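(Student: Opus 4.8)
The plan is to deduce everything from the identity $\Qf_\VV = \Mf_{\VV^*}^{\tt T}$ of Theorem~\ref{thm:QV} together with the duality formula for simple modules recalled in Section~\ref{S3.9}. Since $\VV(\ell,s)^* = \VV(\ell,1-\ell-s)$, the projective McKay matrix factors as $\Qf_{(\ell,s)} = \Mf_{(\ell,1-\ell-s)}^{\tt T}$, so $\Qf_{(\ell,s)}$ is literally the transpose of a McKay matrix whose spectral data were already computed in Theorem~\ref{thm:Ml0evals}. Transposition interchanges left and right eigenvectors while leaving eigenvalues unchanged: if $\Mf_{(\ell,1-\ell-s)}\,u = \mu\,u$ then $u^{\tt T}\Qf_{(\ell,s)} = \mu\,u^{\tt T}$, and if $w\,\Mf_{(\ell,1-\ell-s)} = \mu\,w$ then $\Qf_{(\ell,s)}\,w^{\tt T} = \mu\,w^{\tt T}$. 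Thus both parts reduce to reading off the eigenvalue of $\Mf_{(\ell,1-\ell-s)}$ on $\vb_{j,r}$ and on $\mathbf{w}_{j,r}$ and then applying the appropriate transpose.

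For part (a), I would apply Theorem~\ref{thm:Ml0evals}(a) to $\Mf_{(\ell,s')}$ with the shifted index $s' = 1-\ell-s$. That theorem gives $\Mf_{(\ell,s')}\vb_{j,r} = q^{(\ell-1+2s')r}\,\Uc_{\ell-1}(q^j+q^{-j})\,\vb_{j,r}$, and the only computation needed is the exponent bookkeeping $\ell-1+2(1-\ell-s) = 1-\ell-2s$. Transposing, $\vb_{j,r}^{\tt T}$ becomes a left eigenvector of $\Qf_{(\ell,s)}$ with eigenvalue $q^{(1-\ell-2s)r}\Uc_{\ell-1}(q^j+q^{-j})$, which is exactly \eqref{Qlrevals}. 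The $j=0$ specialization follows from $\Uc_{\ell-1}(2)=\ell$ (as used in the proof of Corollary~\ref{dim vector}), giving the stated eigenvalue $q^{(1-\ell-2s)r}\ell$.

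Part (b) is identical after swapping the roles of left and right: Theorem~\ref{thm:Ml0evals}(b) provides the eigenvalue of $\Mf_{(\ell,s')}$ on the left eigenvector $\mathbf{w}_{j,r}$, and transposing turns $\mathbf{w}_{j,r}^{\tt T}$ into a right eigenvector of $\Qf_{(\ell,s)}$ with the same eigenvalue after the substitution $s' = 1-\ell-s$. The one place demanding care is ensuring the eigenvalue for $\mathbf{w}_{j,r}$ agrees with the one in part (a); to be safe I would re-derive it directly from the decomposition \eqref{eq:Mellr}, using that $\mathbf{w}_{j,r}$ is simultaneously a left eigenvector of $\Mf$ (eigenvalue $\lambda_{j,r}$) and of $\Zr^{(1)}$ (eigenvalue $q^{2r}$). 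Expanding $\Mf_{(\ell,0)}$ as a polynomial in $\Mf$ and $\Zr^{(1)}$ collapses the $q$-powers exactly as in the proof of Theorem~\ref{thm:Ml0evals}(a) and yields the factor $q^{(\ell-1)r}\Uc_{\ell-1}(q^j+q^{-j})$, matching part (a). The main (and essentially only) obstacle is this consistency check and the index arithmetic; everything else is formal transposition.
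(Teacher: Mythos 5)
Your proposal is correct and is essentially the paper's own argument: the paper derives the proposition exactly by combining $\Qf_\VV = \Mf_{\VV^*}^{\tt T}$ (Theorem \ref{thm:QV}) with the duality $\VV(\ell,s)^* = \VV(\ell,1-\ell-s)$ and then reading off eigendata from Theorem \ref{thm:Ml0evals}, with the same exponent computation $\ell-1+2(1-\ell-s)=1-\ell-2s$. Your extra consistency check for part (b), re-deriving the left eigenvalue from \eqref{eq:Mellr} to recover the factor $\Uc_{\ell-1}(q^j+q^{-j})$, is a sound way to resolve what is evidently a typographical omission in the statement of Theorem \ref{thm:Ml0evals}(b).
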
   
 
 \subsection{Multiplication operators in Grothendieck algebras and idempotents.} \label{S3.13}  
 In Section \ref{S2.7}, we have described a method for constructing a left eigenvector for the McKay matrix $\McV$ using an eigenvector for the right multiplication operator $\mathsf{R}_\VV$ in the Grothendieck algebra of an arbitrary Hopf algebra.   The next result has a similar
 flavor but is stated in greater generality for an arbitrary finite-dimensional algebra $\mathrm A$.
 
 \begin{proposition}\label{prop:eigenidem} 
 Suppose $\mathrm A$ is an algebra with basis $\{b_1,b_2,\dots, b_k\}$, $k\in\ZZ_{\ge 1}$,  such that $b_i b_j = \sum_{t=1}^k a_{i,t}^{(j)} b_t$ 
for all $1 \le i,j \le k$.    Let $\Mf_j = \big (a_{i,t}^{(j)}\big )$ be the McKay matrix for multiplying by $b_j$ (on the right).  Assume
$\mathbf u = [u_1\;\,u_2 \;\, \dots\;\,u_k]$ is a common left eigenvector for the $\Mf_j$ with  $\mathbf u \Mf_j = \beta_j \mathbf u$ 
for all $j$.    Then $e_{\mathbf u} = \sum_{i=1}^k u_i b_i \in \mathrm A$ satisfies $e_{\mathbf u} ^2 = c_{\mathbf u} e_{\mathbf u} $,  where $c_{\mathbf u} = \sum_{j=1}^k \beta_ju_j$. 
\end{proposition}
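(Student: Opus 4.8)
The plan is to expand $e_{\mathbf{u}}^2$ directly in terms of the structure constants and then collapse the result using the common left-eigenvector hypothesis. First I would write
$$e_{\mathbf{u}}^2 = \left(\sum_{i=1}^k u_i b_i\right)\left(\sum_{j=1}^k u_j b_j\right) = \sum_{i=1}^k \sum_{j=1}^k u_i u_j\, b_i b_j,$$
and substitute $b_i b_j = \sum_{t=1}^k a_{i,t}^{(j)} b_t$ to obtain, after interchanging the finite sums,
$$e_{\mathbf{u}}^2 = \sum_{t=1}^k \left(\sum_{i=1}^k \sum_{j=1}^k u_i u_j\, a_{i,t}^{(j)}\right) b_t.$$

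Next I would unpack the left-eigenvector hypothesis to identify the inner coefficient. Since the $(i,t)$ entry of $\Mf_j$ is $a_{i,t}^{(j)}$, the $t$th coordinate of the equation $\mathbf{u}\Mf_j = \beta_j \mathbf{u}$ reads $\sum_{i=1}^k u_i\, a_{i,t}^{(j)} = \beta_j u_t$ for all $1 \le t \le k$ and all $1 \le j \le k$. Summing first over $i$ for each fixed $j$ and $t$, I would then compute
$$\sum_{j=1}^k u_j \left(\sum_{i=1}^k u_i\, a_{i,t}^{(j)}\right) = \sum_{j=1}^k u_j\, \beta_j u_t = \left(\sum_{j=1}^k \beta_j u_j\right) u_t = c_{\mathbf{u}}\, u_t.$$
Substituting back gives $e_{\mathbf{u}}^2 = \sum_{t=1}^k c_{\mathbf{u}} u_t b_t = c_{\mathbf{u}}\, e_{\mathbf{u}}$, which is the claim.

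The argument is essentially bookkeeping, so I do not expect a serious obstacle; the only point requiring care is matching the right-multiplication convention to the left-eigenvalue equation, i.e.\ confirming that the condition $\sum_i u_i a_{i,t}^{(j)} = \beta_j u_t$ is precisely the contraction needed to factor one copy of $\mathbf{u}$ out of $e_{\mathbf{u}}^2$. It is worth emphasizing that the hypothesis that $\mathbf{u}$ is a \emph{common} left eigenvector for all the $\Mf_j$ simultaneously (with eigenvalues $\beta_j$ that may depend on $j$) is exactly what allows this single factorization to go through for every index $j$ at once, so that the resulting scalar is the uniform constant $c_{\mathbf{u}} = \sum_{j=1}^k \beta_j u_j$.
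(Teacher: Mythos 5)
Your proof is correct and follows essentially the same route as the paper's: expand $e_{\mathbf u}^2$ via the structure constants, use the coordinate form $\sum_i u_i a_{i,t}^{(j)} = \beta_j u_t$ of the left-eigenvector hypothesis to contract one copy of $\mathbf u$, and factor the result as $c_{\mathbf u} e_{\mathbf u}$. There is nothing to add.
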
 

\noindent{\it Proof.} It follows from the relations $\mathbf u \Mf_j = \beta_j \mathbf u$ that $\sum_{i=1}^k  u_i a_{i,t}^{(j)} = \beta_j u_t$ for all
$1 \le j,t \le k$.       Then for $e_{\mathbf u} = \sum_{i=1}^k u_i b_i$, we have  
\begin{align*}\hspace{1.1truein} e_{\mathbf u}^2 = \left(\sum_{i=1}^k u_i b_i \right)^2  &= \left(\sum_{i=1}^k u_i b_i \right)\left(\sum_{j=1}^k u_j b_j \right) = \sum_{i,j=1}^k u_i u_j b_i b_j \\
& = \sum_{i,j,t=1}^k  u_i u_j  a_{i,t}^{(j)} b_t   
= \sum_{j,t = 1}^k u_j \left (\sum_{i=1}^k  u_i a_{i,t}^{(j)} \right) b_t  = \sum_{j,t = 1}^k u_j \beta_j u_t b_t \\
& = \left( \sum_{j=1}^k \beta_j u_j \right)\left( \sum_{t=1}^k u_t b_t \right)  =  c_{\mathbf u} e_{\mathbf u}.  
\hspace{1.85 truein} \square
\end{align*}  
\begin{corollary} 
Assume  $\mathrm{A} = \GG^{\CC}_0(\Df_n) = \CC \ot_{\ZZ} \GG_0(\Df_n)$, and let $b_1,b_2, \dots, b_{n^2}$ be an ordering of the simple modules
for the Drinfeld double $\Df_n$, first by $\ell = 1,\dots,n$, and then for a given value of $\ell$ by $r=0,1,\dots,n-1$, so that  $b_1, b_2, \dots, b_{n^2}$  is a basis for $\mathrm{A}$, and $b_1 = \VV(1,0)$ is the unit element of $\mathrm{A}$.   Let $\Mf_j$ be the McKay matrix for tensoring with $b_j$, and let $\mathbf{u}
= [u_1\;\, u_2\;\,\dots\;\,u_{n^2}]$ be
a nonzero common left eigenvector for matrices $\Mf_j$  (such exist
by Corollary \ref{cor:sameev}). 
Assume  $\mathbf{u}$ has eigenvalue $\beta_j$ relative $\Mf_j$ for all $j$.   Then $e_{\mathbf u} = \sum_{i=1}^{n^2} u_i b_i \in \mathrm A$ satisfies $e_{\mathbf u}^2 = c_{\mathbf u} e_{\mathbf u}$,  where $c_{\mathbf u} = \sum_{j=1}^{n^2} \beta_ju_j$, and 
when $c_{\mathbf u} = \sum_{j=1}^{n^2} \beta_j u_j \ne 0$, then $e_{\mathbf u} = c_{\mathbf u}^{-1}e_{\mathbf u}$
is an idempotent in $\mathrm{A}$. 
\end{corollary}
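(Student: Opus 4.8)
The plan is to recognize this statement as a direct application of Proposition \ref{prop:eigenidem} to the specific algebra $\mathrm{A} = \GG_0^{\CC}(\Df_n)$, so the first task is simply to verify that the hypotheses of that proposition are satisfied. As recorded in Section \ref{S3.10}, the Grothendieck algebra $\GG_0^{\CC}(\Df_n)$ is a commutative $\CC$-algebra of dimension $n^2$ whose multiplication is induced by the tensor product, and the classes $b_1, b_2, \dots, b_{n^2}$ of the simple $\Df_n$-modules (ordered first by $\ell$ and then by $r$) form a basis with unit $b_1 = [\VV(1,0)]$. First I would expand $b_i b_j = [\Sf_i \ot \Sf_j] = \sum_{t=1}^{n^2} a_{i,t}^{(j)} b_t$, where $a_{i,t}^{(j)} = [\Sf_i \ot \Sf_j : \Sf_t]$ is the multiplicity of $\Sf_t$ as a composition factor, and observe that the matrix $\Mf_j = \big(a_{i,t}^{(j)}\big)$ is by definition exactly the McKay matrix $\McV$ for tensoring with $\VV = b_j$. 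This identifies the abstract structure constants of Proposition \ref{prop:eigenidem} with the entries of the McKay matrices $\Mf_{(\ell,s)}$ studied throughout Section 3.

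Next I would supply a nonzero common left eigenvector $\mathbf{u}$ for all the $\Mf_j$. The subtle point is that Proposition \ref{prop:eigenidem} requires a \emph{genuine} common left eigenvector, $\mathbf{u}\Mf_j = \beta_j \mathbf{u}$, rather than a generalized one; since $\McV = \Mf_{(2,0)}$ fails to be diagonalizable (Section \ref{S3.5}), one cannot simply take an arbitrary vector. I would instead take $\mathbf{u} = \mathbf{w}_{j,r}$, the genuine left eigenvector of $\Mf_{(2,0)}$ produced in Proposition \ref{left eigenvector}. By Corollary \ref{cor:sameev} the left eigenvectors are common to all the matrices $\Mf_{(\ell,s)}$, and by Theorem \ref{thm:Ml0evals}(b) each $\mathbf{w}_{j,r}$ is a genuine (not merely generalized) left eigenvector of every $\Mf_{(\ell,s)}$, with eigenvalue $q^{(\ell-1+2s)r}(q^j+q^{-j})$. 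This guarantees the existence of the common eigenvector required, and supplies the scalars $\beta_j$.

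With these two points in place, the conclusion follows immediately from Proposition \ref{prop:eigenidem}: the element $e_{\mathbf u} = \sum_{i=1}^{n^2} u_i b_i$ satisfies $e_{\mathbf u}^2 = c_{\mathbf u} e_{\mathbf u}$ with $c_{\mathbf u} = \sum_{j=1}^{n^2} \beta_j u_j$. The final clause is the standard normalization of a near-idempotent: if $c_{\mathbf u} \ne 0$, then setting $\tilde e = c_{\mathbf u}^{-1} e_{\mathbf u}$ gives $\tilde e^2 = c_{\mathbf u}^{-2} e_{\mathbf u}^2 = c_{\mathbf u}^{-2} c_{\mathbf u} e_{\mathbf u} = c_{\mathbf u}^{-1} e_{\mathbf u} = \tilde e$, so $\tilde e$ is an idempotent in $\mathrm{A}$. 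I do not expect any genuine obstacle: the only place requiring care is confirming that the chosen $\mathbf{u}$ is a genuine common left eigenvector of \emph{all} the $\Mf_j$ simultaneously, which is exactly what Corollary \ref{cor:sameev} together with Theorem \ref{thm:Ml0evals}(b) deliver.
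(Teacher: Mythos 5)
Your proof is correct and takes essentially the same route as the paper: the paper offers no separate argument for this corollary, treating it exactly as you do---as a direct application of Proposition~\ref{prop:eigenidem} to $\mathrm{A} = \GG_0^{\CC}(\Df_n)$ with the basis of simple modules, followed by the standard normalization $c_{\mathbf u}^{-1}e_{\mathbf u}$ when $c_{\mathbf u}\ne 0$. Your observation that the citation of Corollary~\ref{cor:sameev} should be read together with Proposition~\ref{left eigenvector} and Theorem~\ref{thm:Ml0evals}(b) to guarantee \emph{genuine} (not merely generalized) common left eigenvectors is a sensible sharpening of the paper's wording, but it does not constitute a different approach.
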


\begin{remark} 
In Proposition \ref{left eigenvector}, we have described $\frac{n(n+1)}{2}$ left eigenvectors corresponding to distinct eigenvalues of the McKay matrix $\Mf$ 
for tensoring with $\VV(2,0)$.  They correspond to eigenvectors for the right multiplication operator $\mathsf{R}_x$, $x = [\VV(2,0)]$, of
$\GG_0^{\CC}(\Df_n)$ as in Section \ref{S3.10}. They are common left eigenvectors for the $n^2$ McKay
matrices that result from 
tensoring with \emph{any} simple module $\VV(\ell,s)$. Each such left eigenvector $\mathbf u = [u_1\;\, u_2\;\,\dots\;\,u_{n^2}]$ with $c_{\mathbf u} \ne 0$  gives an idempotent 
$e_{\mathbf u} = c_{\mathbf u}^{-1} \sum_{i=1}^{n^2} u_i b_i$  in  $\GG_0^{\CC}(\Df_n)$.  Moreover, such idempotents
are distinct, $\mathbf u \neq \mathbf v \implies e_{\mathbf u} \neq e_{\mathbf v}$, because the $b_i$ determine a basis for $\GG_0^{\CC}(\Df_n)$. 
This suggests that we should be able to locate $\frac{n(n+1)}{2}$ linearly independent idempotents in $\GG_0^{\CC}(\Df_n)$.  We accomplish this
in Theorem \ref{thm:Groth} (b) below.  Moreover, we show in part (a) of that theorem, that there are $\frac{n(n-1)}{2}$ linearly independent
elements that square to 0 and  form a basis for the Jacobson radical of  $\GG_0^{\CC}(\Df_n)$. 
\end{remark} 

Our aim here is to identify a
$\CC$-basis of $\GG_0^{\CC}(\Df_n)$ consisting of elements that square to 0 and idempotents.
Our calculations will be based on the following well-known result  (see for example, \cite[Sec.~2.2]{FH}).
if $\CC\GG$ is the group algebra of a finite group and $\Sf$  is a simple $\GG$-module, then  
$$\varepsilon_{\Sf}:=  
\frac{\dimm(\Sf)}{|\GG|} \sum_{g \in \GG} \chi_{_{\Sf}}(g^{-1})  \, g$$  is a central idempotent in $\CC \GG$, and $\varepsilon_{\Sf}$ projects
any finite-dimensional $\GG$-module onto the $\Sf$-isotypic component.    Moreover, $\varepsilon_{\Sf} \varepsilon_{\mathsf{T}} = 0$ whenever
$\Sf$ and $\mathsf{T}$ are simple, nonisomorphic modules.  

Recall that  $\GG_0^{\CC}(\Df_n) \cong \CC \GG[x]/\langle f(x,g)\rangle$,  where 
$\GG = \langle g \rangle$,  
$$f(x,g) =  \sum_{i=0}^{\lfloor \frac{n}{2}\rfloor}  (-1)^i \frac{n}{n-i} {{n-i} \choose {i}} g^i x^{n-2i} \;  -2,$$
and $g = [\VV(1,1)]$, $x = [\VV(2,0)]$.    Therefore, it follows for $u \in \ZZ_n$ that $\ec_u= \frac{1}{n}\sum_{v=0}^{n-1} q^{-uv} g^v$ is an idempotent in $\CC\GG \subset \GG_0^{\CC}(\Df_n)$ corresponding to the
character $\chi_u(g^v) = q^{uv}$ of $\GG = \langle g \rangle$, and these idempotents are orthogonal $\ec_u \ec_{u'} = \delta_{u,u'}\ec_u$.  
Note that 
\begin{equation}\label{eq:get} g \ec_u= \frac{1}{n}\sum_{v=0}^{n-1} q^{-uv} g^{v+1} =  \frac{q^u}{n}\sum_{v=0}^{n-1} q^{-u(v+1)} g^{v+1}  = q^u \ec_u,\end{equation}
so that $\CC \GG = \bigoplus_{u=0}^{n-1} \CC \ec_u$ is a decomposition of the group algebra  $\CC \GG$ into simple $\GG$-modules,
where $\CC \ec_u$ is the one-dimensional $\GG$-module with corresponding character $\chi_u$.   

As a consequence of \eqref{eq:get}, we know that
\begin{equation}\label{eq:fxqt}  f(x,g) \ec_u = f(x, q^u) \ec_u = \mathsf{p}_n(x,q^u)\ec_u \end{equation}
in the notation of Corollary \ref{cor:evals}.   We can write
$u = 2r$ for some $r$, since $n$ is odd and $2$ is invertible modulo $n$, and then
\begin{equation}\label{eq:f=p}f(x,q^{2r}) = \mathsf{p}_n(x, q^{2r}) = (x-\lambda_{0,r}) \prod_{j=1}^{\frac{n-1}{2}}  (x-\lambda_{j,r})^2.\end{equation}

\begin{definition}\label{def:FG} For $r \in \ZZ_n$, let
\begin{align}\begin{split} \label{eq:fjrdef} \fc_{j,r} &:= \frac{f(x,q^{2r})}{x - \lambda_{j,r}}\ec_{2r}, \; \; \, \quad 0 \le j \le \frac{n-1}{2}, \\
 \gc_{j,r} &:= \frac{f(x,q^{2r})}{(x - \lambda_{j,r})^2}\ec_{2r}, \; \quad 1 \le j \le \frac{n-1}{2}.\end{split} \end{align}
\end{definition}

\begin{proposition}\label{prop:evalxg} The elements defined in \eqref{eq:fjrdef}  satisfy the 
relations
\begin{itemize} \item[{\rm (a)}]  
$x \mathcal{F}_{j,r} = \lambda_{j,r} \mathcal{F}_{j,r} \quad \text{and}\quad  g \mathcal{F}_{j,r} = q^{2r} \mathcal{F}_{j,r};$
\item[{\rm (b)}] $x \gc_{j,r} = \lambda_{j,r} \gc_{j,r}+\fc_{j,r} \quad \text{and}\quad  g \mathcal{G}_{j,r} = q^{2r} \mathcal{G}_{j,r}.$
\end{itemize}
\end{proposition}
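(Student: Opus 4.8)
The plan is to carry out all computations inside the commutative algebra $\GG_0^{\CC}(\Df_n) \cong \CC\GG[x]/\langle f(x,g)\rangle$ and to lean on three facts already in hand: that $g$ and $x$ commute; that $g\ec_{2r}=q^{2r}\ec_{2r}$ by \eqref{eq:get}; and that $f(x,g)=0$ in the quotient by \eqref{eq:minpoly}, so that $f(x,q^{2r})\ec_{2r}=f(x,g)\ec_{2r}=0$ via \eqref{eq:fxqt}. Before anything else, I would record that the factorization \eqref{eq:f=p}, $f(x,q^{2r})=(x-\lambda_{0,r})\prod_{j=1}^{\frac{n-1}{2}}(x-\lambda_{j,r})^2$, makes the quotients $f(x,q^{2r})/(x-\lambda_{j,r})$ and $f(x,q^{2r})/(x-\lambda_{j,r})^2$ in Definition \ref{def:FG} into genuine polynomials in $x$; this uses that $\lambda_{0,r},\dots,\lambda_{\frac{n-1}{2},r}$ are pairwise distinct, which holds because $q^j+q^{-j}=2\cos(2\pi j/n)$ is injective for $0\le j\le\frac{n-1}{2}$. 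Hence each $\fc_{j,r}$ and $\gc_{j,r}$ has the form (polynomial in $x$) times $\ec_{2r}$.

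The $g$-relations in (a) and (b) are then immediate: since $g$ commutes with every polynomial in $x$ and $g\ec_{2r}=q^{2r}\ec_{2r}$, sliding $g$ past the polynomial factor and onto $\ec_{2r}$ gives $g\fc_{j,r}=q^{2r}\fc_{j,r}$ and $g\gc_{j,r}=q^{2r}\gc_{j,r}$ at once.

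For the $x$-relations I would use the telescoping identity $x=(x-\lambda_{j,r})+\lambda_{j,r}$. Applied to $\fc_{j,r}$, one factor of $(x-\lambda_{j,r})$ cancels the denominator:
\begin{equation*}
x\fc_{j,r} = (x-\lambda_{j,r})\frac{f(x,q^{2r})}{x-\lambda_{j,r}}\ec_{2r} + \lambda_{j,r}\fc_{j,r} = f(x,q^{2r})\ec_{2r} + \lambda_{j,r}\fc_{j,r},
\end{equation*}
and the first summand vanishes, yielding $x\fc_{j,r}=\lambda_{j,r}\fc_{j,r}$. The same splitting applied to $\gc_{j,r}$ reduces the squared denominator to a single factor, producing $x\gc_{j,r}=\fc_{j,r}+\lambda_{j,r}\gc_{j,r}$, which is exactly the assertion in (b).

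There is no serious obstacle; the only point demanding care is the justification that $f(x,q^{2r})\ec_{2r}=0$. This holds not because $f(x,q^{2r})$ is zero (it is a nonzero polynomial in $x$) but because acting on the idempotent $\ec_{2r}$ replaces every $g$ by the scalar $q^{2r}$ through \eqref{eq:fxqt}, after which the relation $f(x,g)=0$ in $\GG_0^{\CC}(\Df_n)$ forces the product to vanish. Once this is in place, the remaining steps are one-line manipulations of polynomials times $\ec_{2r}$.
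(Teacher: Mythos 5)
Your proof is correct and follows essentially the same route as the paper's: both rest on the observation that $(x-\lambda_{j,r})\fc_{j,r}=f(x,q^{2r})\ec_{2r}=f(x,g)\ec_{2r}=0$ and $(x-\lambda_{j,r})\gc_{j,r}=\fc_{j,r}$, with the $g$-relations following from $g\ec_{2r}=q^{2r}\ec_{2r}$. The extra care you take in checking that the quotients in Definition \ref{def:FG} are genuine polynomials (via the distinctness of the $\lambda_{j,r}$) is a detail the paper leaves implicit, but it does not change the argument.
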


\noindent{\it Proof.}  The fact that $g\fc_{j,r} = q^{2r}\fc_{j,r}$ and $g\gc_{j,r} = q^{2r}\gc_{j,r}$ hold is a consequence of $g \ec_{2r} = q^{2r}\ec_{2r}$.  
The relations involving $x$ follow easily from  
\begin{align*}\hspace{1.1 truein}(x -\lambda_{j,r})\mathcal{F}_{j,r} &= (x-\lambda_{j,r})\frac{f(x,q^{2r})}{x - \lambda_{j,r}}\ec_{2r} = f(x,q^{2r})\ec_{2r} = f(x,g)\ec_{2r} = 0,\\
(x -\lambda_{j,r})\mathcal{G}_{j,r} &= (x-\lambda_{j,r})\frac{f(x,q^{2r})}{(x - \lambda_{j,r})^2}\ec_{2r} = \fc_{j,r}. 
 \hspace{2 truein} \square \end{align*}  
\begin{proposition}\label{prop:zero} For the elements defined in \eqref{eq:fjrdef},  the following hold:
 \begin{itemize}
 \item[{\rm(a)}] $\fc_{j,r} \fc_{k,s} = \fc_{j,r} \gc_{k,s} = \gc_{j,r}\gc_{k,s} = 0$  when $ r \ne s$;
 \item[{\rm(b)}] The following products are 0 whenever $j \ne k$,
$$\fc_{j,r}\fc_{k,r}, \qquad  \fc_{j,r}\gc_{k,r}, \qquad \gc_{j,r}\gc_{k,r};$$
 \item[{\rm(c)}]  $\fc_{j,r}^2 = 0$ when $1 \le j \le \frac{n-1}{2}$;  
 \item[{\rm(d)}] The ideal $\mathcal{J} = \mathsf{span}_{\mathbb k}\{\fc_{j,r} \mid 1 \le j \le \frac{n-1}{2}, \, r \in \ZZ_n\}$ satisfies $\mathcal J^2 = (0)$.  
 \end{itemize}
 \end{proposition}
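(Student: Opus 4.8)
The plan is to exploit the commutativity of $\GG_0^{\CC}(\Df_n)$ together with the single relation $f(x,q^{2r})\ec_{2r}=0$, which was recorded in the proof of Proposition~\ref{prop:evalxg} (it follows from $f(x,g)\ec_{2r}=f(x,q^{2r})\ec_{2r}$ in \eqref{eq:fxqt} and $f(x,g)=0$ in \eqref{eq:minpoly}). Every $\fc_{j,r}$ and $\gc_{j,r}$ has the form $(\text{polynomial in }x)\cdot\ec_{2r}$, so in each product all factors may be rearranged freely, and the whole task reduces to deciding when a polynomial numerator is divisible by $f(x,q^{2r})$ as a polynomial in $x$.

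For part (a), I would write each of the three products as $(\text{poly in }x)\,\ec_{2r}\ec_{2s}$. Since $n$ is odd, $2$ is invertible modulo $n$, so $r\neq s$ forces $2r\neq 2s\ (\modd n)$; the orthogonality $\ec_{2r}\ec_{2s}=0$ then annihilates all three products simultaneously.

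For parts (b) and (c), the engine is a root-multiplicity count based on the factorization \eqref{eq:f=p}, namely $f(x,q^{2r})=(x-\lambda_{0,r})\prod_{i=1}^{(n-1)/2}(x-\lambda_{i,r})^2$, whose linear factors are pairwise distinct. For $\fc_{j,r}\fc_{k,r}$ with $j\neq k$ (and likewise $\fc_{j,r}\gc_{k,r}$ and $\gc_{j,r}\gc_{k,r}$), each numerator still carries every linear factor attached to the \emph{other} index, so the factors $(x-\lambda_{j,r})$ and $(x-\lambda_{k,r})$ accumulate to at least their full multiplicities in $f(x,q^{2r})$; hence the product of the two numerators is divisible by $f(x,q^{2r})$, and pulling that factor out and applying $f(x,q^{2r})\ec_{2r}=0$ gives $0$. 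For $\fc_{j,r}^2$ with $j\ge 1$, the crucial point is that $\lambda_{j,r}$ is a \emph{double} root, so $\frac{f(x,q^{2r})}{x-\lambda_{j,r}}$ retains one factor $(x-\lambda_{j,r})$; its square is therefore $f(x,q^{2r})\cdot\frac{f(x,q^{2r})}{(x-\lambda_{j,r})^2}$, again killed by $\ec_{2r}$. This explains the restriction $j\ge 1$: for $j=0$ the root $\lambda_{0,r}$ is only simple, and $\fc_{0,r}^2$ need not vanish.

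For part (d), I would first observe that $\mathcal J$ is indeed an ideal: the algebra $\GG_0^{\CC}(\Df_n)$ is generated by $x$ and $g$, and Proposition~\ref{prop:evalxg}(a) gives $x\fc_{j,r}=\lambda_{j,r}\fc_{j,r}$ and $g\fc_{j,r}=q^{2r}\fc_{j,r}$, both in $\mathcal J$. Then $\mathcal J^2=(0)$ follows by checking the products of the spanning elements $\fc_{j,r}$ with $1\le j\le\frac{n-1}{2}$: distinct $r$ is covered by (a), equal $r$ with $j\neq k$ by (b), and $j=k$ by (c). The step I expect to require the most care is the root-multiplicity bookkeeping in (b) and (c)---verifying in each product that enough copies of the factors $(x-\lambda_{\cdot,r})$ accumulate to reconstitute a full factor of $f(x,q^{2r})$, which relies essentially on the distinctness of the $\lambda_{j,r}$ and on the double-root structure at indices $j\ge 1$.
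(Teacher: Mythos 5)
Your proposal is correct and takes essentially the same route as the paper's proof: both arguments reduce each product to a polynomial multiple of $f(x,q^{2r})\,\ec_{2r}=f(x,g)\,\ec_{2r}=0$, using the distinctness of the roots $\lambda_{j,r}$ and the double-root structure at indices $j\ge 1$ exactly as in the paper's computation \eqref{eq:prod}. Your two additional observations---that $2$ is invertible modulo the odd $n$, so $r\neq s$ forces $\ec_{2r}\ec_{2s}=0$, and that $\mathcal J$ is genuinely an ideal by Proposition \ref{prop:evalxg}(a)---simply make explicit points the paper leaves implicit.
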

 \begin{proof} 
(a) Since $\ec_{2r} \ec_{2s} = 0$ whenever $r \ne s$,   it is apparent that   
 $\fc_{j,r} \fc_{k,s} = \fc_{j,r} \gc_{k,s} = \gc_{j,r}\gc_{k,s} = 0$ for $r \ne s$ 
and any choice of $j$ and $k$. 
(b) Suppose $m_j = 1$ when $j = 0$,  and $m_j = 1$ or $2$ when $1 \le j \le \frac{n-1}{2}$.       Then when $j \ne k$, we have
\begin{align}\begin{split}\label{eq:prod}  \frac{f(x,q^{2r})}{(x-\lambda_{j,r})^{m_j}}\ec_{2r} \cdot  \frac{f(x,q^{2r})}{(x-\lambda_{k,r})^{m_k}} \ec_{2r} & = 
 \frac{f(x,q^{2r})}{(x-\lambda_{j,r})^{m_j}(x-\lambda_{k,r})^{m_k}}\cdot  f(x,q^{2r}) \ec_{2r},   \\
 &=  \frac{f(x,q^{2r})}{(x-\lambda_{j,r})^{m_j}(x-\lambda_{k,r})^{m_k}}\cdot  f(x,g) \ec_{2r} = 0. \end{split} \end{align}
 Part (b) is now clear from the calculation in \eqref{eq:prod}.  For part (c),  observe that equation \eqref{eq:prod} holds when $k = j \ne 0$,
 and $m_j = m_k = 1$.     Part (d) follows from (b) and (c).  \end{proof} 

 \begin{proposition} \label{prop:idems} The elements $\{\xi_r^{-1}\mathcal{F}_{0,r} =\xi_r^{-1} \frac{f(x,q^{2r})}{x-\lambda_{0,r}}\ec_{2r} \mid r \in \ZZ_n \}$ are (nonzero) orthogonal idempotents,
 where $\xi_r = \prod_{j=1}^{\frac{n-1}{2}}\bigg(2q^{r} - q^r(q^j + q^{-j})\bigg)^2 \ne 0$. \end{proposition}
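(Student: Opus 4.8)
The plan is to reduce everything to two facts already in hand: the orthogonality of the group-algebra idempotents $\ec_{2r}$ (via $\ec_{2r}\ec_{2s}=0$ for $r\ne s$) and the eigenvalue relation $x\fc_{0,r}=\lambda_{0,r}\fc_{0,r}$ from Proposition \ref{prop:evalxg}(a). Orthogonality of the proposed idempotents is then immediate: Proposition \ref{prop:zero}(a) already gives $\fc_{0,r}\fc_{0,s}=0$ whenever $r\ne s$, so $(\xi_r^{-1}\fc_{0,r})(\xi_s^{-1}\fc_{0,s})=0$ for $r\ne s$, and it only remains to treat the single index $r$.

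For the idempotent property I would compute $\fc_{0,r}^2$ directly. By Definition \ref{def:FG} together with \eqref{eq:f=p}, $\fc_{0,r}=\frac{f(x,q^{2r})}{x-\lambda_{0,r}}\ec_{2r}=\prod_{j=1}^{\frac{n-1}{2}}(x-\lambda_{j,r})^2\,\ec_{2r}$, a polynomial of degree $n-1$ in $x$ times $\ec_{2r}$. Since $x$ commutes with $\ec_{2r}$ and $\ec_{2r}^2=\ec_{2r}$, this yields $\fc_{0,r}^2=\bigl(\prod_{j=1}^{\frac{n-1}{2}}(x-\lambda_{j,r})^2\bigr)\fc_{0,r}$. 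Now I invoke $x\fc_{0,r}=\lambda_{0,r}\fc_{0,r}$ (Proposition \ref{prop:evalxg}(a) with $j=0$): iterating gives $p(x)\fc_{0,r}=p(\lambda_{0,r})\fc_{0,r}$ for every polynomial $p$, and applying this to $p(x)=\prod_{j=1}^{\frac{n-1}{2}}(x-\lambda_{j,r})^2$ collapses the leftover factor to the scalar $\prod_{j=1}^{\frac{n-1}{2}}(\lambda_{0,r}-\lambda_{j,r})^2=\xi_r$. Hence $\fc_{0,r}^2=\xi_r\fc_{0,r}$, and therefore $(\xi_r^{-1}\fc_{0,r})^2=\xi_r^{-1}\fc_{0,r}$.

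It remains to verify $\xi_r\ne 0$ and $\fc_{0,r}\ne 0$. Factoring $2q^r-q^r(q^j+q^{-j})=q^r(2-q^j-q^{-j})$ and using that $t+t^{-1}=2$ forces $t=1$, each factor vanishes only if $q^j=1$; but $q$ is a primitive $n$th root of unity and $1\le j\le\frac{n-1}{2}<n$, so $q^j\ne 1$ and thus $\xi_r\ne 0$, matching $\lambda_{0,r}-\lambda_{j,r}=2q^r-q^r(q^j+q^{-j})$. Nonvanishing of $\fc_{0,r}$ follows from the decomposition $\GG_0^{\CC}(\Df_n)=\bigoplus_u \ec_u\,\GG_0^{\CC}(\Df_n)$ with $\ec_{2r}\,\GG_0^{\CC}(\Df_n)\cong\CC[x]/\langle f(x,q^{2r})\rangle$, where $\fc_{0,r}$ is the class of $\prod_{j=1}^{\frac{n-1}{2}}(x-\lambda_{j,r})^2$, a nonzero element since its degree $n-1$ is strictly less than $\deg f(x,q^{2r})=n$.

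The computations are short, and the only step requiring care is making sure the quadratic factors of $f(x,q^{2r})$ survive the squaring correctly, so that the scalar produced is $\prod_{j}(\lambda_{0,r}-\lambda_{j,r})^2$ rather than $\prod_{j}(\lambda_{0,r}-\lambda_{j,r})$; this squared form is precisely why $\xi_r$ is defined with the exponent $2$. I do not anticipate a genuine obstacle here, as all the heavy lifting (the eigenvalue relation and the orthogonality) is supplied by Propositions \ref{prop:evalxg} and \ref{prop:zero}.
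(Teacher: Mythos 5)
Your proof is correct and follows essentially the same route as the paper's: orthogonality comes from Proposition \ref{prop:zero}(a), and $\fc_{0,r}^2=\xi_r\,\fc_{0,r}$ is obtained by using $\ec_{2r}^2=\ec_{2r}$, factoring $f(x,q^{2r})/(x-\lambda_{0,r})=\prod_{j=1}^{\frac{n-1}{2}}(x-\lambda_{j,r})^2$ via \eqref{eq:f=p}, and collapsing that factor to the scalar $\xi_r$ using $x\,\fc_{0,r}=\lambda_{0,r}\fc_{0,r}$ from Proposition \ref{prop:evalxg}(a). Your explicit verifications that $\xi_r\ne 0$ (since $q^j\ne 1$ for $1\le j\le\frac{n-1}{2}$) and that $\fc_{0,r}\ne 0$ (via the degree argument in $\ec_{2r}\,\GG_0^{\CC}(\Df_n)\cong\CC[x]/\langle f(x,q^{2r})\rangle$) are details the paper leaves implicit, but they do not change the approach.
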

 
 \begin{proof}  Orthogonality is a consequence of Proposition \ref{prop:zero} (a), and the remaining assertions follow from
 \eqref{eq:f=p} and the calculation
 \begin{align*} \fc_{0,r}^2 &= \frac{f(x,q^{2r})}{x-\lambda_{0,r}}\ec_{2r} \cdot \frac{f(x,q^{2r})}{x-\lambda_{0,r}}\,\ec_{2r}  
 = \frac{f(x,q^{2r})}{x-\lambda_{0,r}} \fc_{0,r} \\ 
 &= \Bigg(\prod_{ j=1}^{\frac{n-1}{2}} (x - \lambda_{j,r})^2 \Bigg)  \fc_{0,r}  
=\Bigg(\prod_{j=1}^{ \frac{n-1}{2}} (\lambda_{0,r} - \lambda_{j,r})^2 \Bigg)   \fc_{0,r} \\
 &=\Bigg(\prod_{ j=1}^{\frac{n-1}{2}} \big(2q^r - q^r(q^j + q^{-j})\big)^2 \Bigg) \fc_{0,r}  = \xi_r \,\fc_{0,r},  \end{align*}
 where $\xi_r = \bigg(\prod_{ j=1}^{\frac{n-1}{2}} \big(2q^r - q^r(q^j + q^{-j})\big)^2 \bigg) \ne 0$,
 which implies $\xi_r^{-1} \fc_{0,r}$ is an idempotent. 
 \end{proof}

So far we have identified $n$ idempotents $\xi_{r}^{-1}\fc_{0,r}, r\in \ZZ_n$,   in $\GG_0^\CC(\Df_n)$ and
$\frac{n(n-1)}{2}$ elements $\fc_{j,r}$, $1 \le j \le \frac{n-1}{2}$, $r\in \ZZ_n$,  that square to 0.     
Next we will find some more idempotents using the elements $\gc_{j,r}$.    In Sec. \ref{S:3.13.2}, 
we will examine the elements $\fc_{j,r}$ and $\gc_{j,r}$ explicitly for $n=3$.

\subsubsection{Idempotents from the elements $\gc_{j,r}$}

We begin by computing $\gc_{j,r} \fc_{j,r}$ and $\gc_{j,r}^2$ for $\le j \le\frac{n-1}{2}, \, r\in \ZZ_n$.   Now
\begin{align*} \gc_{j,r} \fc_{j,r} &= \Bigg( \frac{f(x,q^{2r})}{(x-\lambda_{j,r})^2}\Bigg)\fc_{j,r}   
= (x-\lambda_{0,r})\Bigg(\prod_{ k=1, k\ne j}^{\frac{n-1}{2}} (x - \lambda_{k,r})^2 \Bigg)  \fc_{j,r}  \\
& = (\lambda_{j,r}-\lambda_{0,r})\Bigg(\prod_{ k=1, k\ne j}^{\frac{n-1}{2}} (\lambda_{j,r} - \lambda_{k,r})^2 \Bigg)  \fc_{j,r} =
\vartheta_{j,r} \fc_{j,r},  \qquad \text{where}
\end{align*}
\begin{equation}\label{eq:vtheta} \vartheta_{j,r} =  (\lambda_{j,r}-\lambda_{0,r})\Bigg(\prod_{ k=1, k\ne j}^{\frac{n-1}{2}} (\lambda_{j,r} - \lambda_{k,r})^2 \Bigg).\end{equation}
This shows that  $\gc_{j,r} \fc_{j,r}$ is a nonzero multiple $\vartheta_{j,r}$ of $\fc_{j,r}$. 
We also have
\begin{align}\label{eq:gsq} \gc_{j,r}^2 & =\Bigg( \frac{f(x,q^{2r})}{(x-\lambda_{j,r})^2}\ec_{2r}\Bigg)^2 
=  \frac{f(x,q^{2r})}{(x-\lambda_{j,r})^2} \gc_{jr} =\Bigg((x-\lambda_{0,r}) \prod_{k =1, k\ne j}^{\frac{n-1}{2}}  (x-\lambda_{k,r})^2\Bigg)\gc_{j,r}.
\end{align} 
For $1\le k \le \frac{n-1}{2}$, $k \ne j$, 
\begin{align*} (x-\lambda_{k,r})\gc_{j,r}  &= (\lambda_{j,r}-\lambda_{k,r})\gc_{j,r} + \fc_{j,r} \\ 
(x-\lambda_{k,r})^2\gc_{j,r}  &=(\lambda_{j,r}-\lambda_{k,r})(x-\lambda_{k,r}) \gc_{j,r} + (x-\lambda_{k,r})\fc_{j,r} \\
 &=(\lambda_{j,r}-\lambda_{k,r})^2  \gc_{j,r} + 2(\lambda_{j,r}-\lambda_{k,r})\fc_{j,r} \\
 (x-\lambda_{0,r}) \gc_{j,r} & = (\lambda_{j,r}-\lambda_{0,r})\gc_{j,r} + \fc_{j,r}.
\end{align*}
These computations and \eqref{eq:gsq} imply that 
$$\gc_{j,r}^2 = \vartheta_{j,r} \gc_{j,r} + \nu_{j,r} \fc_{j,r},$$
for some scalar $\nu_{j,r}$, where $ \vartheta_{j,r}\ne 0$ is as in \eqref{eq:vtheta}.   
Then 
\begin{align*}\big(\gc_{j,r}-\frac{\nu_{j,r}}{\vartheta_{j,r}} \fc_{j,r}\big)^2 &= \gc_{j,r}^2 - 2 \frac{\nu_{j,r}}{\vartheta_{j,r}}\gc_{j,r} \fc_{j,r}\\
&= \vartheta_{j,r} \gc_{j,r} + \nu_{j,r} \fc_{j,r} - 2 \frac{\nu_{j,r}}{\vartheta_{j,r}} \vartheta_{j,r} \fc_{j,r}  \\
&= \vartheta_{j,r} \gc_{j,r} - \nu_{j,r} \fc_{j,r} = \vartheta_{j,r} \bigg( \gc_{j,r}-\frac{\nu_{j,r}}{\vartheta_{j,r}} \fc_{j,r}\bigg),\end{align*}
which shows that 
\begin{equation}\label{eq:gcprime}\gc_{j,r}' : = \vartheta_{j,r}^{-1}\bigg(\gc_{j,r}-\frac{\nu_{j,r}}{\vartheta_{j,r}} \fc_{j,r}\bigg)\end{equation}
is an idempotent in $\GG_0^\CC(\Df_n)$ for $ \vartheta_{j,r}$ as in \eqref{eq:vtheta} and some $\nu_{j,r} \in \CC$ for each $1 \le j \le \frac{n-1}{2}$, $r \in \ZZ_n$.  

In summary, we have the following  
\begin{theorem}\label{thm:Groth} \begin{itemize} \item[{\rm (a)}]  The $\frac{n(n-1)}{2}$  elements $\fc_{j,r}$, $1 \le j \le \frac{n-1}{2}$, $r \in \ZZ_n$,
determine a $\CC$-basis for the Jacobson radical $\mathcal{J}$ of the Grothendieck algebra $\GG_0^{\CC}(\Df_n)$  
and $\mathcal{J}^2 = (0)$.  
\item[{\rm(b)}] The elements   $\xi_{0,r}^{-1}\fc_{0,r}$ and $\gc_{j,r}'$  for $r \in \ZZ_n$ and $1 \le j \le \frac{n-1}{2}$ 
are orthogonal idempotents, and they form a basis for $\GG_0^{\CC}(\Df_n)$ modulo $\mathcal{J}$.  
 \item[{\rm(c)}]  Suppose $\Sf_1,\Sf_2, \dots, \Sf_{n^2}$ is an ordering of the nonisomorphic simple $\Df_n$-modules, first by $\ell=1,\dots,n$
 and then by $r=0,1,\dots, n-1$.    If $\fc_{j,r} = c^{j,r}_1 \Sf_1 + c^{j,r}_2 \Sf_2+ \cdots +  c^{j,r}_{n^2} \Sf_{n^2}$, then
 $\mathsf{f}_{j,r}:= [c^{j,r}_1\;\,c^{j,r}_2\;\, \ldots\;\,c^{j,r}_{n^2}]$ is a left eigenvector for the McKay matrix $\McV$, $\VV = \VV(2,0)$, 
 for all $0 \le j \le \frac{n-1}{2}$ and $r \in \ZZ_n$ corresponding to the eigenvalue $\lambda_{j,r} = q^r(q^j + q^{-j})$.
 \item[{\rm(d)}]  If $\gc_{j,r} = d^{j,r}_1 \Sf_1 +d^{j,r}_2 \Sf_2 + \cdots +  d^{j,r}_{n^2} \Sf_{n^2}$, then
 $\mathsf{g}_{j,r}:= [d^{j,r}_1\;\,d^{j,r}_2\;\, \ldots\;\,d^{j,r}_{n^2}]$ is a generalized left eigenvector for the McKay matrix $\Mf_{\VV}$
 such that  $\mathsf{g}_{j,r} \McV = \lambda_{j,r}\mathsf{g}_{j,r} + \mathsf{f}_{j,r}$ 
 for all $1 \le j \le \frac{n-1}{2}$ and $r \in \ZZ_n$.
 \item[{\rm(e)}] The vectors $\mathsf{f}_{j,r}$, $0 \le j \le \frac{n-1}{2}, r \in \ZZ_n$,  give 
 a complete set of left eigenvectors,  and the 
 vectors $\mathsf{g}_{j,r}, 1 \le j \le \frac{n-1}{2}, r \in \ZZ_n$,   give a complete set of generalized left eigenvectors
for the McKay matrix $\McV$, $\VV = \VV(2,0)$, hence, for any McKay matrix $\Mf_{(\ell,s)}$ by Corollary \ref{cor:sameev}. 
 \end{itemize}
  \end{theorem}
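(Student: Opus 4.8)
The plan is to dispatch parts (c) and (d) immediately from the relations already recorded in Proposition \ref{prop:evalxg}, and then to analyze the algebra structure in parts (a), (b), (e) via the block decomposition of $\GG_0^{\CC}(\Df_n)$ coming from the idempotents $\ec_{2r}$. First I would treat (c) and (d). Since $\GG_0^{\CC}(\Df_n)$ is commutative and $x = [\VV]$, the right-multiplication operator $\mathsf{R}_x$ is simply multiplication by $x$. Proposition \ref{prop:evalxg}(a) gives $\mathsf{R}_x(\fc_{j,r}) = \lambda_{j,r}\fc_{j,r}$, so writing $\fc_{j,r} = \sum_i c^{j,r}_i \Sf_i$ and invoking Proposition \ref{prop:rightmult}(a), the coordinate row vector $\mathsf{f}_{j,r}$ is a left eigenvector of $\McV$ with eigenvalue $\lambda_{j,r}$, which is (c). For (d), Proposition \ref{prop:evalxg}(b) gives $\mathsf{R}_x(\gc_{j,r}) = \lambda_{j,r}\gc_{j,r} + \fc_{j,r}$; expanding both sides in the basis $\{\Sf_i\}$ exactly as in the proof of Proposition \ref{prop:rightmult} produces the coordinate relation $\mathsf{g}_{j,r}\McV = \lambda_{j,r}\mathsf{g}_{j,r} + \mathsf{f}_{j,r}$, so $\mathsf{g}_{j,r}$ is a generalized left eigenvector. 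These steps are essentially bookkeeping.

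Next I would establish (a) and (b). The $\ec_{2r}$ are orthogonal and sum to $1$, so $\GG_0^{\CC}(\Df_n) = \bigoplus_{r\in\ZZ_n}\ec_{2r}\GG_0^{\CC}(\Df_n)$ with each block isomorphic to $\CC[x]/\langle f(x,q^{2r})\rangle$, a commutative algebra of dimension $n$. By \eqref{eq:f=p} the polynomial $f(x,q^{2r})$ factors into the pairwise coprime pieces $(x-\lambda_{0,r})$ and $(x-\lambda_{j,r})^2$, so the Chinese Remainder Theorem gives
$$\ec_{2r}\GG_0^{\CC}(\Df_n) \cong \CC \times \prod_{j=1}^{\frac{n-1}{2}} \CC[x]/\langle (x-\lambda_{j,r})^2\rangle.$$
Each factor $\CC[x]/\langle(x-\lambda_{j,r})^2\rangle$ is a local ring of dual-number type, with a one-dimensional nilpotent maximal ideal and a single idempotent. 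Tracking the definitions in \eqref{eq:fjrdef}, I would check that $\fc_{j,r}$ (for $j\ge1$) maps to a nonzero scalar multiple of $(x-\lambda_{j,r})$ in the $j$th factor and to $0$ in all others, so it generates that factor's radical, while $\fc_{0,r}$ and the corrected elements $\gc_{j,r}'$ of \eqref{eq:gcprime} project to the block idempotents. Since $\mathcal{J} = \mathsf{span}_{\mathbb k}\{\fc_{j,r} : 1\le j\le\frac{n-1}{2},\ r\in\ZZ_n\}$ is a two-sided ideal with $\mathcal J^2 = (0)$ by Proposition \ref{prop:zero}(d), it is nilpotent and hence contained in the Jacobson radical; the quotient $\GG_0^{\CC}(\Df_n)/\mathcal J$ is spanned by the orthogonal idempotents $\xi_r^{-1}\fc_{0,r}$ and $\gc_{j,r}'$, so it is a product of copies of $\CC$ and is semisimple. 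Therefore $\mathcal J$ equals the Jacobson radical, giving (a), and the dimension count $\frac{n(n-1)}{2} + \frac{n(n+1)}{2} = n^2 = \dimm\GG_0^{\CC}(\Df_n)$ shows that these idempotents form a basis modulo $\mathcal J$, giving (b). I expect the main obstacle to lie precisely here: the identification of $\mathcal J$ as the full Jacobson radical rests on the Chinese Remainder decomposition together with verifying that $\fc_{j,r}$ and $\gc_{j,r}'$ land in the correct dual-number factors, and the linear independence within a single $n$-dimensional block (equivalently, that $f(x,q^{2r})$ has exactly the claimed root multiplicities) is the point that makes the dimension count bind.

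Finally, for (e) I would first confirm orthogonality of the complete set of idempotents blockwise: products across distinct $r$ vanish because $\ec_{2r}\ec_{2s}=0$, while products of $\gc_{j,r}'$ with $\gc_{k,r}'$ for $j\ne k$ and of $\gc_{j,r}'$ with $\fc_{0,r}$ vanish by Proposition \ref{prop:zero}(b) together with \eqref{eq:gcprime}. For completeness, I would note that passing from the basis $\{\fc_{0,r},\fc_{j,r},\gc_{j,r}'\}$ to $\{\fc_{0,r},\fc_{j,r},\gc_{j,r}\}$ is an invertible change of basis (rescale $\fc_{0,r}$ and undo the unitriangular correction in \eqref{eq:gcprime}), so $\{\fc_{j,r}\}_{0\le j}\cup\{\gc_{j,r}\}_{1\le j}$ is again a $\CC$-basis of $\GG_0^{\CC}(\Df_n)$. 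Hence the $n^2$ coordinate vectors $\mathsf{f}_{j,r}$ and $\mathsf{g}_{j,r}$ are linearly independent; since by Corollary \ref{cor:evals} the eigenvalue $\lambda_{0,r}$ is simple while each $\lambda_{j,r}$ with $j\ne0$ has algebraic multiplicity $2$, these vectors exhaust the (generalized) left eigenspaces of $\McV$. Corollary \ref{cor:sameev} then transfers the conclusion to every $\Mf_{(\ell,s)}$.
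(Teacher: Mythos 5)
Your proposal is correct. For parts (c) and (d) you take exactly the paper's route: the paper's proof of Theorem \ref{thm:Groth} consists precisely of citing Proposition \ref{prop:evalxg} (the relations $x\fc_{j,r}=\lambda_{j,r}\fc_{j,r}$ and $x\gc_{j,r}=\lambda_{j,r}\gc_{j,r}+\fc_{j,r}$) and transferring them to coordinate vectors via Proposition \ref{prop:rightmult}. Where you genuinely diverge is in (a), (b), (e): the paper's proof says nothing about these parts --- they are stated as a ``summary'' of the preceding computational propositions (Propositions \ref{prop:zero} and \ref{prop:idems}, and the construction of $\gc_{j,r}'$ in \eqref{eq:gcprime}), with the identification of $\mathcal{J}$ as the \emph{full} Jacobson radical, the linear independence of the $\fc_{j,r}$, and the completeness claim in (e) all resting on an implicit dimension count. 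You supply these missing steps structurally: decomposing $\GG_0^{\CC}(\Df_n)=\bigoplus_{r}\ec_{2r}\GG_0^{\CC}(\Df_n)\cong\bigoplus_r \CC[x]/\langle f(x,q^{2r})\rangle$ and applying the Chinese Remainder Theorem to the factorization \eqref{eq:f=p} identifies each block as $\CC\times\prod_{j}\CC[x]/\langle (x-\lambda_{j,r})^2\rangle$, after which the independence of the $n^2$ elements $\fc_{j,r},\gc_{j,r}$ (they sit in distinct CRT factors), the equality $\mathcal{J}=\mathrm{rad}$, and the exhaustion of the generalized eigenspaces against the multiplicities of Corollary \ref{cor:evals} all become automatic; your verification that $\fc_{j,r}$ ($j\ge 1$) maps to a generator of the radical of the $j$th local factor amounts to evaluating the cofactor polynomial at $\lambda_{j,r}$, giving the nonzero scalar $\vartheta_{j,r}$, and is sound. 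In short: the paper buys self-containedness through explicit products of the elements \eqref{eq:fjrdef} but leaves the counting and maximality of $\mathcal{J}$ to the reader, while your CRT argument makes those points transparent at the modest cost of one standard structural tool; the two are fully compatible, and your treatment is, if anything, the more complete proof of parts (a), (b), and (e).
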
 
  \begin{proof} From Proposition \ref{prop:evalxg} (a) we know that the elements $\fc_{j,r}$ are eigenvectors for the multiplication operator $\mathsf{R}_x$, $x = [\VV(2,0)]$,
  of $\GG_0^{\CC}(\Df_n)$, so the corresponding coordinate vectors $\mathsf{f}_{j,r}$ relative to the basis of nonisomorphic simple modules will be left eigenvectors
  for $\Mf = \Mf_\VV$ by Proposition \ref{prop:rightmult}.  Part (b) of Proposition \ref{prop:evalxg} shows that $x \gc_{j,r} = \lambda_{j,r} \gc_{j,r} + \fc_{j,r}$.  Therefore,
  the coordinate vector $\mathsf{g}_{j,r}$ of $\gc_{j,r}$  will be a generalized left eigenvector for $\McV$ corresponding to the eigenvalue $\lambda_{j,r}$ by Proposition \ref{prop:rightmult}\,(b).  \end{proof}

\subsubsection{Computations for ${\Df_3}$} \label{S:3.13.2} \;\;
When $n = 3$,  we have $f(x,q^{2r}) = x^3 - 3 q^{2r}x -2$, and  
\begin{align*} \frac{f(x,q^{2r})}{(x-2q^r)} &= x^2 + 2q^r x + q^{2r} = (x+q^{r})^2, \quad \text{and} \quad 
\xi_r =   \big(2q^{r} - q^r(q + q^{-1})\big)^2 = (3q^r)^2 =9q^{2r}. \end{align*}
Thus,  $\xi_r^{-1}\fc_{0,r} =(9q^{2r})^{-1} \big(x^2 +2q^r x + q^{2r}\big) \ec_{2r} = \frac{1}{9}\big(q^r x^2 + 2q^{2r}x + 1\big) \ec_{2r}$
is an idempotent for $r=0,1,2$.    Now
\begin{equation}\label{fc1}\fc_{1,r} = \frac{f(x,q^{2r})}{x-\lambda_{1,r}}\ec_{2r} = (x-\lambda_{0,r})(x-\lambda_{1,r})\ec_{2r},\end{equation}  so that $\fc_{1,r}^2 = (x-\lambda_{0.r})^2 (x-\lambda_{1,r})^2 \ec_{2r} = (x-\lambda_{0,r})f(x,q^{2r})\ec_{2r} =  (x-\lambda_{0,r})f(x,g)\ec_{2r}= 0$. Consequently, the elements $\fc_{1,r}$, $r \in \ZZ_3$,  square to 0 in agreement with Proposition \ref{prop:zero} (b).    

Finally, 
$$\gc_{1,r} = \frac{f(x,q^{2r})}{(x-\lambda_{1,r})^2}\ec_{2r} = (x-\lambda_{0,r})\ec_{2r} \quad \text{and} \quad
\gc_{1,r}^2 = (x-\lambda_{0,r})\gc_{1,r} = (\lambda_{1,r}-\lambda_{0,r}) \gc_{1,r} + \fc_{1,r}.$$
This tells us that by taking $\nu_{1,r} = 1$ and $\vartheta_{1,r} = \lambda_{1,r}-\lambda_{0,r} = q^r(q+q^{-1})-2q^r = -3q^r$,
$$\bigg(\gc_{1,r} + \frac{1}{3q^r} \fc_{1,r}\bigg)^2 = -3q^r \gc_{1,r} + \fc_{1,r} -2 \fc_{1,r} = -3q^r \bigg(\gc_{1,r} + \frac{1}{3q^r} \fc_{1,r}\bigg),$$
and therefore by setting $\gc_{1,r}' =   -\frac{1}{3q^r}\big(\gc_{1,r} + \frac{1}{3q^r} \fc_{1,r}\big)$, we get an idempotent for $r \in \ZZ_3$.

Writing $(\ell,r)$ for $\VV(\ell,r)$ and recalling that $x = [\VV(2,0)]$ and $g = [\VV(1,1)]$ gives
\begin{align*}\xi_r^{-1}\fc_{0,r} & =(9q^{2r})^{-1} \big(x^2 +2q^r x + q^{2r}\big) \ec_{2r}  
=  \frac{1}{27}\big(q^r x^2 + 2q^{2r}x + 1\big)\big(q^{-4r}g^2 + q^{-2r}g + 1\big)\\
&\hspace{-.9cm}=  \frac{1}{27}\Big(q^r (3,0) + q^r(1,1) + 2q^{2r}(2,0) + (1,0)\Big)\Big(q^{2r} (1,2) + q^r (1,1) + (1,0) \Big)\\
&\hspace{-.9cm}=  \frac{1}{27}\Big((3,2) + q^{2r}(3,1) + q^r(3,0) + 2q^{r}(2,2) + 2(2,1) + 2q^{2r}(2,0)+2q^{2r}(1,2) + 2q^r(1,1) + 2(1,0)\Big).
\end{align*}
Ordering the summands from $(1,0)$ to $(3,2)$, ignoring the factor of $\frac{1}{27}$,  and recording the coefficients,  we have
$$\mathsf{f}_{0,r} = [2\;\,2q^r\;\, 2q^{2r}\;\, 2q^{2r}\;\, 2\;\, 2q^{r}\;\, q^r\;\, q^{2r}\;\,1].$$ 
Multiplying $\mathsf{f}_{0,r}$ by $3$ and then setting $r=0,1,2$, we obtain the left eigenvectors of $\McV$ for $\VV = \VV(2,0)$  in  \eqref{eq:n=3}
exactly:
\begin{equation*} \begin{array}{ccc}
&\Trp(1)\; = \,\;[6\;\; 6\;\; 6\;\; 6\;\;6\;\;6\;\;3\;\; 3\;\; 3]  &\quad \lambda_{0,0} = 2, \\
&\Trp(bc^{-1}) \,=\, [6\;\, 6q\;\, 6q^2\;\,6q^2\;\,6\;\,6q\;\,3q\;\,3q^2\;\, 3] &\quad \ \lambda_{0,1}= 2q, \\
&\Trp(b^2c^{-2} )=[6\;\,6q^2\;\,6q\;\,6q\;\, 6\;\,6q^2\;\,3q^2\;\,3q\;\,3] &\quad\ \ \lambda_{0,2} = 2q^2. 
\end{array}\end{equation*}
Now 
\begin{align*}\gc_{1,r} &= \frac{f(x,q^{2r})}{(x-\lambda_{1,r})^2}\ec_{2r} = (x-\lambda_{0,r})\ec_{2r} = (x-2q^r)\big(\frac {1}{3}(q^{2r} +q^rg + 1)\big) \\
 &=\frac{1}{3}\big(-2q^r(1,0) -2q^{2r}(1,1) -2(1,2) + (2,0) + q^r (2,1) + q^{2r}(2,2)\big). 
 \end{align*}
 Therefore, $\mathsf{g}_{1,r} = \frac{1}{3}[-2q^r, -2q^{2r}, -2, 1, q^r, q^{2r}, 0,0,0]$,   and $\lambda_{1,r} = q^r(q+q^{-1}) = -q^r$.  From
 \eqref{fc1}, we can deduce that $\mathsf{f}_{1,r} = \frac{1}{3}[-q^{2r},-1,-q^{r},-q^{r},-q^{2r},-1,1,q^r,q^{2r}].$  Then
 \begin{align*}\mathsf{g}_{1,r}\McV &=  \frac{1}{3}[-2q^r, -2q^{2r}, -2, 1, q^r, q^{2r}, 0,0,0] \left(\begin{matrix} 0 & \Ir & 0 \\
 \Zr & 0 & \Ir \\ 2 \Ir & 2 \Zr & 0 \end{matrix}\right)\\
 &= \frac{1}{3}[q^{2r},1,q^r, -2q^r, -2q^{2r}, -2, 1, q^r, q^{2r}]\\ &= -q^r \frac{1}{3}[-2q^r, -2q^{2r}, -2, 1, q^r, q^{2r}, 0,0,0] + 
 \frac{1}{3}[-q^{2r},-1,-q^{r},-q^{r},-q^{2r},-1,1,q^r,q^{2r}] \\
&= \lambda_{1,r}\mathsf{g}_{1,r}+\mathsf{f}_{1,r},
  \end{align*}
  so that $\mathsf{g}_{1,r}$ is a generalized left eigenvector for $\McV$ with eigenvalue $\lambda_{1,r}$ for $r \in \ZZ_3$.

\subsection{Fusion rules for tensoring a maximal set of independent projective modules in $\GG_0(\Df_n)$ 
with $\VV$} \label{S3.14}  We have seen in Proposition \ref{prop:CartanDn} that the Cartan map $\mathsf{c}$ for $\Df_n$ has rank
$\frac{n(n+1)}{2}$, and that the modules $\Pf(\ell,r) - \Pf(n-\ell, \ell+r)$ lie in the kernel of $\mathsf{c}$ for $1 \le \ell \le \frac{n(n-1)}{2}$.   Following \cite{CW1}, we let
 $\Nf_\VV$ be the matrix that records tensoring a projective module $\Pf$ with $\VV = \VV(2,0)$ and writing
the answer $[\Pf \ot \VV]$  as a $\ZZ$-combination of isomorphism classes of projectives whose images form a $\ZZ$-basis for
 $\mathsf{c}\big(\Kf_0(\Df_n)\big) \subseteq \GG_0(\Df_n)$.  Since the Cartan map has rank $\frac{n(n+1)}{2}$, we use only the modules 
 $\VV(n,r), \Pf(1,r), \dots, \Pf(\frac{n-1}{2},r)$ in forming $\Nf_\VV$.  
We assume that ordering and take all values of $r$ for each type, first for $\VV(n,r)$, then for $\Pf(1,r)$ etc..  From the tensor rules
\eqref{eq:tensrules}, we have that the resulting matrix $\Nf_\VV$ is $\frac{n(n+1)}{2} \times \frac{n(n+1)}{2}$ for any $n=2h+1$ with $h\ge 1$, and  
\begin{equation} \label{eq:NcKay}  
\Nf_\VV =\left( \begin{matrix} 0 & \mathrm{I} & 0  & & \cdots & 0 & 0 \\ 
 2\mathrm{Z} & 0 & \mathrm{I}  & & \cdots & 0 & 0 \\
 0 &  \mathrm{Z} & 0 & \mathrm{I} & \cdots & 0 & 0 \\
\vdots  & \vdots  & \mathrm{Z}   & \ddots  & \ddots & 0 & 0 \\ 
 \vdots & \vdots & \vdots & &  \ddots &  \mathrm{I} & 0 \\
0 & 0 & 0 &   \cdots &  \mathrm{Z} & 0 &  \mathrm{I} \\ 
0  & 0 & 0 & 0 & \cdots &  \mathrm{Z} & \Zr^{h+1}
  \end{matrix} \right), 
\end{equation}  
  where $\Ir$ is the $n \times n$ identity matrix, and $\Zr$ is the $n\times n$ cyclic matrix in \eqref{eq:McZ}.  
In this section, we show that the matrix $\Nf_\VV$ has right eigenvectors whose components involve 
the modified Chebyshev polynomials $\Lc_k(t)$ of Section \ref{S3.8}, and left eigenvectors whose
components involve the Chebyshev polynomials $\Vc_k(t)$ of the third kind \cite[Sec.~1.2.3]{MH},  which are defined by 
\begin{equation}\label{eq:VCheby} \Vc_0(t) = 1, \; \Vc_1(t) = t-1,  \; \, \Vc_k(t) = t \Vc_{k-1}(t) + \Vc_{k-2}(t), \; \; k \ge 2.
\end{equation} 
They are expressible in terms of other Chebyshev polynomials via the relations
\begin{equation}\label{eq:LV} \Vc_k(t) = \Uc_k(t) - \Uc_{k-1}(t), \;\, \text{and}\;\,   \Lc_k(t) = \Vc_k(t) + \Vc_{k-1}(t) \; \text {for all} \; k \ge 1. \end{equation}
The first identity can be found in \cite[1.17]{MH}, and the second comes from
$\Lc_k(t) = \Uc_{k}(t) - \Uc_{k-2}(t)$ and the first.   More specifically, we show the following for all $n=2h+1 \ge 3$:

\begin{theorem}\begin{itemize} \item [{\rm (a)}] The matrix $\Nf_\VV$ in \eqref{eq:NcKay}  has eigenvalues $\lambda_{j,r}
= q^r(q^j + q^{-j})$ for $0 \le j \le \frac{n-1}{2}, r \in \ZZ_n$, (each with
multiplicity one), so the matrix $\Nf_\VV$ is diagonalizable (as expected from \cite{CW1}).
 \item [{\rm (b)}]  Let $\vb \ne \mathbf{0}$ satisfy 
$\Zr  \vb = q^{2r } \vb$, and assume  $\Lc_k$ stands for  $\Lc_{k}(q^j + q^{-j})$.
Then $$[\vb\; \;  q^r \Lc_{1} \vb\; \;  \ldots \; \;  q^{hr} \Lc_{h} \vb]^{\tt T}$$
is a right eigenvector for $\Nf_\VV$ of eigenvalue $\lambda_{j,r}$   for $0 \le j \le \frac{n-1}{2}, r \in \ZZ_n$.  
\item [{\rm (c)}]  Let $\mathbf{w}\ne \mathbf{0}$ satisfy $\mathbf{w}\Zr = q^{2r}\mathbf{w}$, and assume $\Vc_k$ stands for $\Vc_k(q^j + q^{-j})$.  Then
$$[q^{hr} \Vc_{h} \mathbf{w}\; \; \ldots \: \; q^{r} \Vc_{1} \mathbf{w} \; \;  \mathbf{w}]$$
is a left eigenvector for $\Nf_\VV$ with eigenvalue $\lambda_{j,r}$ for $0 \le j \le \frac{n-1}{2}, r \in \ZZ_n$.  
\end{itemize}
\end{theorem}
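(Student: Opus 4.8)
The plan is to prove (b) and (c) by substituting the proposed vectors into the block-tridiagonal matrix $\Nf_\VV$ of \eqref{eq:NcKay} and checking the eigenvector equation block by block, and then to obtain (a) from a dimension count. For (b), write the candidate as $\mathbf{y} = [\mathbf{y}_0\;\mathbf{y}_1\;\cdots\;\mathbf{y}_h]^{\tt T}$ with $\mathbf{y}_0 = \vb$ and $\mathbf{y}_k = q^{kr}\Lc_k\vb$ for $1\le k\le h$, where $\Zr\vb = q^{2r}\vb$ and $\Lc_k = \Lc_k(q^j+q^{-j})$, and set $\lambda = \lambda_{j,r} = q^r(q^j+q^{-j})$. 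Using $\Zr\vb = q^{2r}\vb$ throughout, block row $0$ reduces to $\Lc_1 = q^j+q^{-j}$, each interior block row $2\le k\le h-1$ reduces to the three-term recursion $\Lc_{k+1}+\Lc_{k-1} = (q^j+q^{-j})\Lc_k$ of \eqref{eq:Ldef}, and block row $1$, which carries the doubled block $2\Zr$, reduces to $2+\Lc_2 = (q^j+q^{-j})^2$, true because $\Lc_2 = (q^j+q^{-j})^2-2$.

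The crux is the last block row, which involves the corner block $\Zr^{h+1}$. Applying $\Zr^{h+1}\vb = q^{2(h+1)r}\vb$ and using $q^{nr}=1$ with $n=2h+1$, that row collapses to the single requirement $\Lc_h = \Lc_{h+1}$. By Proposition \ref{prop:Li}\,(b), $\Lc_k = q^{kj}+q^{-kj}$, and since $q^n=1$ gives $q^{(h+1)j} = q^{-hj}$ and $q^{-(h+1)j}=q^{hj}$, we obtain $\Lc_{h+1}=q^{-hj}+q^{hj}=\Lc_h$, completing (b). (When $h=1$ the doubled block and the corner block occupy the same row; that case is checked directly and reduces to the same identity $\Lc_1=\Lc_2$.)

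For (c), write the candidate as $\mathbf{z} = [\mathbf{z}_0\;\cdots\;\mathbf{z}_h]$ with $\mathbf{z}_k = q^{(h-k)r}\Vc_{h-k}\mathbf{w}$, so that $\mathbf{z}_h = \mathbf{w}$ and $\mathbf{z}_0 = q^{hr}\Vc_h\mathbf{w}$, where $\mathbf{w}\Zr = q^{2r}\mathbf{w}$ and $\Vc_k = \Vc_k(q^j+q^{-j})$. Computing $\mathbf{z}\,\Nf_\VV$ column by column and using $\mathbf{w}\Zr = q^{2r}\mathbf{w}$, the interior columns reduce to the recursion $\Vc_{m+1}+\Vc_{m-1} = (q^j+q^{-j})\Vc_m$ (equivalently $\Vc_k = \Uc_k-\Uc_{k-1}$ of \eqref{eq:LV}); the rightmost column reduces to $\Vc_1+1 = q^j+q^{-j}$, which holds since $\Vc_1 = (q^j+q^{-j})-1$, together with $q^{2(h+1)r} = q^{(n+1)r} = q^r$ coming from the corner block and $q^{nr}=1$; and the leftmost column, carrying the doubled block $2\Zr$, reduces to $2\Vc_{h-1} = (q^j+q^{-j})\Vc_h$. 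Writing $\Vc_k = \Uc_k-\Uc_{k-1}$ and applying Proposition \ref{prop:Li}\,(a), one finds $(q^j+q^{-j})\Vc_h - 2\Vc_{h-1} = \Lc_{h+1}-\Lc_h$, so this last relation again reduces to the identity $\Lc_h = \Lc_{h+1}$ already established for (b), which proves (c).

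Finally, (a) follows by counting. By Corollary \ref{cor:evals} the $\frac{n(n+1)}{2}$ scalars $\lambda_{j,r}$ with $0\le j\le\frac{n-1}{2}$ and $r\in\ZZ_n$ are pairwise distinct, and this number equals the size of $\Nf_\VV$. For each such pair, part (b) produces an eigenvector whose $0$th block is $\vb\ne\mathbf{0}$, hence is nonzero; eigenvectors attached to distinct eigenvalues are linearly independent, so these vectors form a basis of $\CC^{n(n+1)/2}$. Therefore $\Nf_\VV$ is diagonalizable and each $\lambda_{j,r}$ occurs with multiplicity exactly one. The main obstacle throughout is the two modified corner blocks, the $2\Zr$ in position $(1,0)$ and the $\Zr^{h+1}$ in position $(h,h)$, which are precisely where $\Nf_\VV$ departs from a clean tridiagonal pattern; both are controlled by the single root-of-unity identity $\Lc_h = \Lc_{h+1}$, i.e. $q^{(h+1)j}=q^{-hj}$.
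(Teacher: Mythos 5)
Your proof is correct and follows essentially the same route as the paper: verify (b) and (c) by block-by-block substitution into $\Nf_\VV$, reducing the interior rows and columns to the three-term Chebyshev recursions and both corner blocks to the single root-of-unity identity $\Lc_h = \Lc_{h+1}$ (the paper phrases the left-eigenvector corner as $\Vc_{h+1} = \Vc_{h-1}$, which is the same identity since $\Vc_{h+1}-\Vc_{h-1} = \Lc_{h+1}-\Lc_h$), and then deduce (a) by counting the $\frac{n(n+1)}{2}$ distinct eigenvalues against the size of the matrix. Your explicit treatment of the $h=1$ case, where the doubled block $2\Zr$ and the corner block $\Zr^{h+1}$ occupy the same block row, is a small point of care that the paper's argument glosses over.
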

\begin{proof} We will argue that (b) and (c) hold,  and part (a) will follow.  

(b) We compare both sides of this equation and verify they are indeed equal:
 \begin{equation*}
\label{eq:NcK}  
\left( \begin{matrix} 0 & \mathrm{I} & 0  & & \cdots & 0 & 0 \\ 
 2\mathrm{Z} & 0 & \mathrm{I}  & & \cdots & 0 & 0 \\
 0 &  \mathrm{Z} & 0 & \mathrm{I} & \cdots & 0 & 0 \\
\vdots  & \vdots  & \mathrm{Z}   & \ddots  & \ddots & 0 & 0 \\ 
 \vdots & \vdots & \vdots & &  \ddots &  \mathrm{I} & 0 \\
0 & 0 & 0 &   \cdots &  \mathrm{Z} & 0 &  \mathrm{I} \\ 
0  & 0 & 0 & 0 & \cdots &  \mathrm{Z} & \Zr^{h+1}
  \end{matrix}\right) \left [\begin{matrix}\vb \\ q^r \Lc_{1}\vb \\
  \\ q^{2r} \Lc_{2} \vb \\   \vdots \\ q^{(h-1)r} \Lc_{h-1}  \vb \\ q^{hr} \Lc_{h}  \vb\end{matrix}
  \right ]
  = \lambda_{j,r} \left [\begin{matrix} \vb  \\ q^r \Lc_{1}  \vb \\
  \\ q^{2r} \Lc_{2}  \vb \\   \vdots \\ q^{(h-1)r} \Lc_{h-1}  \vb \\ q^{hr} \Lc_{h} \vb \end{matrix}\right ].
\end{equation*} 
Row 0 on the left is $q^r\Lc_1 \vb  = q^r(q^j+q^{-j})\vb$, which equals  $\lambda_{j,r} \vb$, the entry in  row 0 on the right.
Row 1 says \; 
$2q^{2r}\vb + q^{2r} \Lc_2 \vb  = q^{2r}\left(\Lc_0 + \Lc_2\right) \vb
= q^{2r}(q^j + q^{-j})\Lc_1 \vb = \lambda_{j,r} q^r \Lc_1 \vb.$ \newline
\noindent Now for rows $2 \le s \le h-1$, we have
\begin{align*} \Zr q^{(s-2)r} \Lc_{s-2} \vb + q^{sr} \Lc_s \vb & = q^{sr} \left( \Lc_{s-2} + \Lc_s\right)\vb  
= q^{sr} (q^j + q^{-j}) \Lc_{s-1} \vb = \lambda_{j,r} q^{(s-1)r} \Lc_{s-1} \vb.
\end{align*}
Finally, for the last row,  recall that $h = \frac{n-1}{2}$ so that $h+1 = \frac{n+1}{2}$.   Then on the left we have
\begin{align*} \Zr q^{(h-1)r} \Lc_{h-1} \vb + \Zr^{h+1} q^{hr}\Lc_h \vb & =\left( q^{(h+1)r} \Lc_{h-1} +q^{\big(2(h+1)+h\big)r} \Lc_h\right)\vb  =q^{(h+1)r}\left( \Lc_{h-1} + \Lc_h\right)\vb.  
\end{align*}
On the right, the last entry is 
$$\lambda_{j,r}q^{hr} \Lc_h \vb = q^{(h+1)r} (q^j + q^{-j}) \Lc_h = q^{(h+1)r}\big( \Lc_{h+1} + \Lc_{h-1}\big)\vb.$$
So comparing the left and right sides, we see that   the argument boils down to whether $\Lc_h \vb = \Lc_{h+1}\vb$.
But since
$$\Lc_h(q^j+q^{-j}) = q^{hj} + q^{-hj} = q^{-(h+1)j} + q^{(h+1)j} = \Lc_{h+1}(q^j+q^{-j}),$$ the left and right sides are indeed equal,
so (b) holds.

(c) The connection with the Chebyshev polynomials $\Uc_k(t)$ in \eqref{eq:LV}  is the one that will be most helpful in proving part (c).
Recall we know by Proposition \ref{prop:qrels}(a) that for $t = x+x^{-1}$ and all $k \ge 1$, 
$$\Uc_k(t)  = x^{k} + x^{k-2} + \dots + x^{-(k-2)} + x^{-k} =  \frac{x^{k+1}- x^{-(k+1)}}{x - x^{-1}}.$$
Therefore,  
$$\Vc_k(t) = \Uc_k(t) - \Uc_{k-1}(t) =  \frac{x^{k+1}- x^{-(k+1)}}{x - x^{-1}} -  \frac{x^{k}- x^{-k}}{x - x^{-1}}$$ 
for all $k \ge 2$. In particular, taking $k = h+1$ for $h = \frac{n-1}{2}$, and assuming $x^n = 1$, we obtain
\begin{align}\begin{split}\label{eq:Vch1} \Vc_{h+1}(t) &= \frac{x^{h+2}- x^{-(h+2)}}{x - x^{-1}} -  \frac{x^{h+1}- x^{-(h+1)}}{x - x^{-1}} \\ 
&= \frac{x^{-(h-1)}- x^{h-1}}{x - x^{-1}}- \frac{x^{-h}- x^{h}}{x - x^{-1}}   
 =  \frac{x^{h}- x^{-h}}{x - x^{-1}} -  \frac{x^{h-1}- x^{-(h-1)}}{x - x^{-1}} = \Vc_{h-1}(t). \end{split}\end{align}

 We will use \eqref{eq:Vch1} and identify $x$ with $q$ when we argue that the following equation holds: 
  \begin{align*}
 [q^{hr} \Vc_{h} \mathbf{w}\;\; \ldots \;\; q^{r} \Vc_{1} \mathbf{w}\;\,  \mathbf{w}]  \left( \begin{matrix} 0 & \mathrm{I} & 0  & & \cdots & 0 & 0 \\ 
 2\mathrm{Z} & 0 & \mathrm{I}  & & \cdots & 0 & 0 \\
 0 &  \mathrm{Z} & 0 & \mathrm{I} & \cdots & 0 & 0 \\
\vdots  & \vdots  & \mathrm{Z}   & \ddots  & \ddots & 0 & 0 \\ 
 \vdots & \vdots & \vdots & &  \ddots &  \mathrm{I} & 0 \\
0 & 0 & 0 &   \cdots &  \mathrm{Z} & 0 &  \mathrm{I} \\ 
0  & 0 & 0 & 0 & \cdots &  \mathrm{Z} & \Zr^{h+1}
  \end{matrix}\right) = \lambda_{j,r} [q^{hr} \Vc_{h} \mathbf{w}\;\; \ldots \;\; q^{r} \Vc_{1} \mathbf{w}\;\,  \mathbf{w}].
  \end{align*}
Consider column $h$ on both sides (numbering columns $h$ to $0$ from left to right).    On the left we have 
$2 q^{(h-1)r} \Vc_{h-1} \mathbf{w}\Zr = 2 q^{(h+1)r} \Vc_{h-1}  \mathbf{w}$.   On the right we have for column $h$,  
$$\lambda_{j,r} q^{hr}\Vc_h  \mathbf{w} =  q^{(h+1)r}(q^j + q^{-j})\Vc_{h}  \mathbf{w} = q^{(h+1)r}\left(\Vc_{h+1}+\Vc_{h-1}\right) \mathbf{w}
= 2q^{(h+1)r} \Vc_{h-1}\mathbf{w}$$ by \eqref{eq:Vch1},  so the two are equal. 

Now for $s=h,\dots,2$,  column $s-1$ on the left gives $q^{sr}\Vc_s  \mathbf{w} + q^{(s-2)r}\Vc_{s-2}  \mathbf{w} \Zr =
q^{sr}\left(\Vc_s + \Vc_{s-2}\right)\mathbf{w} $.   The corresponding column on the right has entry 
$$\lambda_{j,r} q^{(s-1)r}\Vc_{s-1}\mathbf{w}  = q^{sr} (q^j + q^{-j})\Vc_{s-1} = q^{sr}\left(\Vc_s + \Vc_{s-2}\right)\mathbf{w},$$
so the two are equal.   Finally, for column 0, we have on the left 
\begin{align*} q^r \Vc_1 \mathbf{w}  + \mathbf{w} \Zr^{h+1} & = q^r \Vc_1\mathbf{w} + q^{2(h+1)r} \mathbf{w}  
= q^r (\Vc_1 + 1)\mathbf{w}  \\ & = q^r (q^j+q^{-j})\mathbf{w} = \lambda_{j,r}\mathbf{w}, \end{align*} 
which is precisely the entry in column 0 on the right-hand side.  

We have produced $\frac{n(n+1)}{2}$ right (and left) eigenvectors with distinct eigenvalues $\lambda_{j,r}$, for $ 0\le j\le \frac{n-1}{2}, r\in \ZZ_n$,  for the $\frac{n(n+1)}{2} \times \frac{n(n+1)}{2}$ matrix $\Nf_\VV$, so the $\lambda_{j,r}$
are exactly the eigenvalues of $\Nf_\VV$.  \end{proof}

\subsection{Further Questions}  In this paper, we have proven results on McKay matrices of arbitrary finite-dimensional Hopf algebras
and illustrated them for the Drinfeld double $\Df_n$ of the Taft algebra, but there remain many interesting open questions, even for semisimple Hopf algebras. 
  \begin{itemize}
  \item When is the McKay matrix symmetric or normal, hence orthogonally diagonalizable?  It is shown in \cite{W} that the McKay matrix corresponding to any simple module is orthogonally diagonalizable when $\Af$ is semisimple and almost cocommutative, and we have shown in Corollary \ref{cor:Q} that if $\Af$ is semisimple and $\VV$ is self-dual, then $\Mf_{\VV}$ is symmetric.  The McKay matrix $\McV$, $\VV = \VV(2,0)$,  for the nonsemisimple  Drinfeld double $\Df_n$ is not symmetric,  and for  the algebra $\Af$ that is (14) in Kashina's classification  \cite{K} of 16-dimensional semisimple Hopf algebras there is a module $\VV \not \cong \VV^*$ such that $\Mf_\VV$ is not
  symmetric.   
  \item For which Hopf algebras do the (generalized) right eigenvectors of McKay matrices correspond to columns in something 
 that can be regarded as a character table?
  
\item When can all the (right) eigenvectors of the McKay matrix $\Mf_\VV$ be obtained from traces of grouplike elements?  
We have shown this is possible for $\Df_n$ in Sec. \ref{S3.6}.  It is possible for Radford's Hopf algebra $\Af(n,m)$ which is 
also not semisimple \cite[Exer. 10.5.9]{Radford}, but fails to be true for the Kac-Palyutkin algebra
which is semisimple \cite{KP}.  We have seen in Sec. \ref{S3.9} that for  $\Df_n$,  only $n$ of the $\frac{n(n+1)}{2}$ linearly independent left eigenvectors can be realized as
trace vectors of grouplike elements on projective covers.
  \item  Under what assumptions can the (generalized) eigenvectors of McKay matrices be related to central elements of  the Hopf algebra $\Af$ or cocommutative elements in $\Af^*$?  
  \item  When are the eigenvalues of the fusion matrix $\Nf_\VV$ obtained by  tensoring a maximal independent set of indecomposable
  projective modules  with $\VV$ the same as the eigenvalues for the McKay matrix $\McV$?
    They are for Drinfeld double $\Df_n$ and $\VV = \VV(2,0)$. 
  \item What can be said about the (generalized) eigenvectors of matrices that encode the fusion relations  in the more general context of tensor or fusion categories (see e.g.  \cite{GKP}, \cite{Sh})?  In \cite{ENO}, Etingof, Nikshych, and Ostrik introduced the Frobenius-Perron dimension of a fusion category as the spectral radius of a matrix representing the fusion relations.   
 \item When $n$ is even, do the (generalized) eigenvectors of the McKay matrices for tensoring 
$\Df_n$-modules have expressions in terms of Chebyshev polynomials, and what can be said about the multiplicities of the eigenvalues?
   \end{itemize}


\end{document}